\newtheorem{theorem}{Theorem}[section]
\newtheorem{lemma}[theorem]{Lemma}
\newtheorem{corollary}[theorem]{Corollary}
\newtheorem{conjecture}[theorem]{Conjecture}
\newtheorem{example}[theorem]{Example}
\newtheorem{proposition}[theorem]{Proposition}
\theoremstyle{remark}
\newtheorem{remark}[theorem]{Remark}
\newtheoremstyle{definition}
{3pt}
{3pt}
{\upshape}
{}
{\bfseries}
{:}
{.5em}
{}
\theoremstyle{definition}
\newtheorem{definition}[theorem]{Definition}
\begin{document}
\title{\textsc{Iwasawa Main Conjecture for Rankin-Selberg $p$-adic $L$-functions}}

\author{\textsc{Xin Wan}}
\date{}

\maketitle
\begin{abstract}
In this paper we prove that the $p$-adic $L$-function that interpolates the Rankin-Selberg product of a general modular form and a CM form of higher weight divides the characteristic ideal of the corresponding Selmer group. This is one divisibility of the Iwasawa main conjecture for this $p$-adic $L$-function. We prove this conjecture using congruences between Klingen-Eisenstein series and cusp forms on the group $\mathrm{GU}(3,1)$, following the strategy of recent work by C. Skinner and E. Urban. The actual argument is, however, more complicated due to the need to work with general Fourier-Jacobi expansions. This theorem is used to deduce a converse of the Gross-Zagier-Kolyvagin theorem and the $p$-adic part of the precise BSD formula in the rank one case.
\end{abstract}

\section{Introduction}
Let $p$ be an odd prime. An important problem in number theory is studying the relations between special values of $L$-functions and arithmetic objects in $p$-adic families. The first case was studied by Iwasawa in the 1950's for class groups of number fields, resulting in the asymptotic formula for class numbers in cyclotomic towers of field extensions. Later on, Mazur realized that the idea of Iwasawa's theory can be applied to elliptic curves which provides a powerful way to study its arithmetic (e.g. the BSD conjecture). Such idea was further generalized to many kinds of Galois representations. In 2004, Kato \cite{Kato} formulated the Iwasawa main conjecture for modular form on $\mathrm{GL}_2/\mathbb{Q}$ and proved one divisibility by constructing an Euler system. Later on, Skinner-Urban \cite{SU} proved the other side of divisibility in the case when the modular form is ordinary at $p$, using Eisenstein congruences on the unitary group $\mathrm{U}(2,2)$.

Our paper can be viewed as an extension of Skinner-Urban's approach to the Rankin-Selberg product of a modular form $f$ and an ordinary CM form whose weight is higher than $f$, using another rank $4$ unitary group $\mathrm{U}(3,1)$. Surprisingly, such result has lots of arithmetic applications which cannot be seen by previous techniques, including the proof by Skinner \cite{Skinner} of the converse of a theorem of Gross-Zagier and Kolyvagin, and the $p$-part of the precise BSD formula in the analytic rank one case \cite{JSW}. It is also the starting point and a key ingredient of the author's work proving the Iwasawa main conjecture for supersingular elliptic curves \cite{WAN2}. Now we describe the context of our main results.

Let $\mathcal{K}\subset \mathbb{C}$ be an imaginary quadratic field such that $p$ splits in $\mathcal{K}$ as $(p)=v_0\bar{v}_0$. We fix an isomorphism $\iota:\mathbb{C}_p\simeq \mathbb{C}$ and suppose $v_0$ is determined by $\iota$. There is a unique $\mathbb{Z}_p^2$-extension $\mathcal{K}_\infty/\mathcal{K}$ unramified outside $p$. Let $\Gamma_\mathcal{K}:=\mathrm{Gal}(\mathcal{K}_\infty/\mathcal{K})$ \index{$\Gamma_\mathcal{K}$}. Suppose $\mathbf{f}$ \index{$\mathbf{f}$} is a Hida family of ordinary cuspidal eigenforms new outside $p$ with coefficient ring $\mathbb{I}$, a normal finite extension of the power series ring $\mathbb{Z}_p[[W]]$ of one variable $W$. Let $L$ be a finite extension of $\mathbb{Q}_p$ with integer ring $\mathcal{O}_L$. Suppose $\xi$ \index{$\xi$} is an $L$-valued Hecke character of $\mathbb{A}_\mathcal{K}^\times/\mathcal{K}^\times$ whose infinity type is $(\frac{\kappa}{2},-\frac{\kappa}{2})$ for some even integer $\kappa\geq 6$ and such that $\mathrm{ord}_{v_0}(\mathrm{cond}(\xi_{v_0}))\leq 1$ and $\mathrm{ord}_{\bar{v}_0}(\mathrm{cond}(\xi_{\bar{v_0}}))\leq 1$. Denote by $\boldsymbol{\xi}$ the $\mathcal{O}_L[[\Gamma_\mathcal{K}]]$-adic family of Hecke characters containing $\xi$ as some specialization (we make this precise in Section \ref{EiDatum}). We write $\hat{\mathcal{O}}_L^{ur}$ for the completion of the maximal unramified extension of $\mathcal{O}_L$ and $\hat{\mathbb{I}}^{ur}$ \index{$\mathbb{I}^{ur}$} for the normalization of the ring corresponding to an irreducible component of $\mathbb{I}\hat{\otimes}_{\mathcal{O}_L}\hat{\mathcal{O}}_L^{ur}$.

In Section \ref{Formulation}, we associate with $\mathbf{f}$, $\mathcal{K}$ and $\boldsymbol{\xi}$ a dual Selmer group $X_{\mathbf{f},\mathcal{K},\xi}$, which is a finite module over the ring $\mathbb{I}[[\Gamma_\mathcal{K}]]$. On the analytic side, for a finite set of primes $\Sigma$ containing all bad primes, we construct in this paper using a doubling method the ``$\Sigma$-primitive'' $p$-adic $L$-functions $\mathcal{L}_{\mathbf{f},\xi,\mathcal{K}}^\Sigma\in\hat{\mathbb{I}}^{ur}[[\Gamma_\mathcal{K}]]$, $\mathcal{L}_{f,\xi,\mathcal{K}}^\Sigma\in\hat{\mathcal{O}}_L^{ur}[[\Gamma_\mathcal{K}]]$, interpolating the algebraic parts of the special $L$-values $L_\mathcal{K}(f_\phi,\xi_\phi,\frac{\kappa}{2})$, where $f_\phi$ and $\xi_\phi$ are specializations of the families $\mathbf{f}$ and $\boldsymbol{\xi}$ ($f_\phi$ has weight $2$ and $\xi_\phi$ has infinity type $(\kappa/2,-\kappa/2)$). The general case when $\Sigma$ does not necessarily contain all bad primes, is obtained by putting back the local Euler factors at primes omitted. We let $\mathcal{L}_{f_0,\xi,\mathcal{K}}$ be the specialization of $\mathcal{L}_{\mathbf{f},\xi,\mathcal{K}}$ to a single form $f_0$ of weight 2 and trivial character in the family $\mathbf{f}$, which we assume is defined over $L$. In Section \ref{6.4} we also recall closely related $p$-adic $L$-functions $\mathcal{L}_{\mathbf{f},\xi,\mathcal{K}}^{\Sigma,Hida}$ and $\mathcal{L}_{\mathbf{f},\xi,\mathcal{K}}^{Hida}$ constructed by Hida. We also associate with $f=f_0$, $\mathcal{K}$, $\xi$ a dual Selmer group $X_{f,\mathcal{K},\xi}$ over $\hat{\mathcal{O}}^{ur}_L[[\Gamma_\mathcal{K}]]$. The Iwasawa-Greenberg main conjecture says that the characteristic ideal of $X_{\mathbf{f},\mathcal{K},\xi}$ (resp. $X_{f,\mathcal{K},\xi}$) is generated by $\mathcal{L}_{\mathbf{f},\xi,\mathcal{K}}$ (resp. $\mathcal{L}_{f,\xi,\mathcal{K}}$).

Let $\bar{\mathbb{Q}}\subset\mathbb{C}$ be the algebraic closure of $\mathbb{Q}$ and let $G_\mathbb{Q}=\mathrm{Gal}(\bar{\mathbb{Q}}/\mathbb{Q})$ be the Galois group. Let $G_p\subset G_\mathbb{Q}$ be the decomposition group determined by the inclusion $\bar{\mathbb{Q}}\subset \bar{\mathbb{Q}}_p$ coming from $\iota$. We write $\epsilon$ for the cyclotomic character and $\omega$ for the Techim{\"u}ller character of $G_\mathbb{Q}$.
Let $g$ be a cuspidal eigenform on $\mathrm{GL}_2/\mathbb{Q}$ with the associated $p$-adic Galois representation $\rho_g:G_\mathbb{Q}\rightarrow \mathrm{GL}_2(\mathcal{O}_L)$. We say $g$ satisfies $\mathbf{(irred)}$ \index{$\mathbf{(irred)}$} if:
\begin{itemize}
\item The residual representation $\bar{\rho}_g$ is absolutely irreducible.
\end{itemize}
\noindent If $g$ is nearly ordinary at $p$, then $\rho_g|_{G_p}$ is equivalent to an upper triangular representation and we say it satisfies $\mathbf{(dist)}$ \index{$\mathbf{(dist)}$} if:
\begin{itemize}
\item The characters of $\rho_g|_{G_p}$ on the diagonal are distinct modulo the maximal ideal of $\mathcal{O}_L$.
\end{itemize}
We will see later (in Section \ref{6.4}) that if the CM form $g_\xi$ associated to $\xi$ satisfies $\mathbf{(irred)}$ and $\mathbf{(dist)}$ then $\mathcal{L}_{\mathbf{f},\xi,\mathcal{K}}\in\mathbb{I}[[\Gamma_\mathcal{K}]]$.\\

\noindent In this paper, under certain conditions on $\mathbf{f},\xi,\mathcal{K}$, we prove one inclusion (or divisibility) of the Iwasawa-Greenberg main conjecture for $\mathcal{L}_{\mathbf{f},\xi,\mathcal{K}}$. Our first theorem is a three-variable result for Hida families.
\begin{theorem}\label{Theorem 1}
Let $\mathbf{f}$ be a Hida family of ordinary eigenforms that are new outside $p$ of square-free tame level $N$, and suppose $\mathbf{f}$ has a weight two specialization $f$ that has trivial nebentypus and is the ordinary stabilization of a new form of level $N$. Let $\bar{\rho}$ be the $\mathrm{mod}\ p$ residual $G_\mathbb{Q}$-representation associated with the Hida family $\mathbf{f}$. Let $\xi$ be a Hecke character of $\mathcal{K}^\times \backslash\mathbb{A}_\mathcal{K}^\times$ with infinity type $(\frac{\kappa}{2},-\frac{\kappa}{2})$ for some $\kappa\geq 6$.
If
\begin{itemize}
\item[(a)] $p\geq 5$;
\item[(b)] $\xi|_{\mathbb{A}_\mathbb{Q}^\times}=\omega\circ\mathrm{Nm}$ and $\kappa\equiv 0(\mathrm{mod}\ 2(p-1))$;
\item[(c)] $\bar{\rho}_\mathbf{f}|_{G_\mathcal{K}}$ is irreducible;
\item[(d)] there exists $q|N$ that does not split in $\mathcal{K}$ and such that $\bar{\rho}_\mathbf{f}$ is ramified at $q$;
\item[(e)] the CM eigenform $g_\xi$ associated to the character $\xi$ satisfies $(\mathbf{dist})$ and $(\mathbf{irred})$;
\item[(f)] For each non-split prime $v$ of $\mathbb{Q}$ we have that
    $$\epsilon(\pi_{f,v},\xi_v,\frac{1}{2})=1.$$
    (As in \cite{Hsi12} $\epsilon(\pi_{f,v},\xi_v,\frac{1}{2})$ is the local root number for the base change of $\pi_{f,v}$ to $\mathcal{K}_v$ twisted by $\xi_v$. It differs from the local root number for the Rankin-Selberg product of $\pi_{f,v}$ and $g_{\xi,v}$ by a factor $\chi_{\mathcal{K}/\mathbb{Q},v}(-1)$.)
\item[(g)] Suppose the conductor of $\xi$ is only divisible by primes split in $\mathcal{K}/\mathbb{Q}$.
\end{itemize}
Then $\mathcal{L}^{Hida}_{\mathbf{f},\xi,\mathcal{K}}\in\hat{\mathbb{I}}^{ur}[[\Gamma_\mathcal{K}]]$ and $(\mathcal{L}^{Hida}_{\mathbf{f},\xi,\mathcal{K}})\supseteq \mathrm{char}_{\hat{\mathbb{I}}^{ur}[[\Gamma_\mathcal{K}]]}(X_{\mathbf{f},\mathcal{K},\xi})$ as ideals of $\hat{\mathbb{I}}^{ur}[[\Gamma_\mathcal{K}]]$. Here $\mathrm{char}$ means the characteristic ideal.
\end{theorem}
We also have a two variable theorem for a single form.
\begin{theorem}\label{Theorem 2}
Let $N$, $f=f_0$, $\kappa$ and $\xi$ be as before. If
\begin{itemize}
\item[(a)] $p\geq 5$;
\item[(b)] the $p$-adic avatar of $\xi|\cdot|^{\kappa/2}(\omega^{-1}\circ \mathrm{Nm})$ factors through $\Gamma_\mathcal{K}$ and $\kappa\equiv 0(\mathrm{mod}\ 2(p-1))$;
\item[(c)] $\bar{\rho}_f|_{G_\mathcal{K}}$ is irreducible;
\item[(d)] there exists $q||N$ that does not split in $\mathcal{K}$.
\end{itemize}
Then
$$(\mathcal{L}_{f,\xi,\mathcal{K}})\supseteq \mathrm{char}_{\hat{\mathcal{O}}^{ur}_L[[\Gamma_\mathcal{K}]]\otimes_{\mathcal{O}_L}L}(X_{f,\mathcal{K},\xi})$$
is true as fractional ideals of $\hat{\mathcal{O}}^{ur}_L[[\Gamma_\mathcal{K}]]\otimes_{\mathcal{O}_L}L$.
\end{theorem}
Unlike the previous theorem, in this theorem we allow both global root numbers $+1$ and $-1$ cases. In particular Theorem \ref{Theorem 2} is not deduced as a consequence of Theorem \ref{Theorem 1}. Both theorems are deduced in the proof at the end of this paper. Theorem \ref{Theorem 2} is proved as the specialization of a ``weaker version'' (since the assumption is weaker than that of Theorem \ref{Theorem 1}) of the $3$-variable main conjecture, where we inverted all non-zero elements of $\mathbb{I}$. This is where the $\otimes L$ comes in. See the end of the paper for details.

Hida's $p$-adic $L$-functions $\mathcal{L}^{Hida}_{\mathbf{f},\xi,\mathcal{K}}$ are more canonical than the $\mathcal{L}_{\mathbf{f},\xi,\mathcal{K}}$ in that there is a constant in $\bar{\mathbb{Q}}_p^\times$ showing up in our interpolation formula (see Proposition \ref{$p$-adic $L$-function}) that depends on some choices. Under the assumptions of Theorem \ref{Theorem 1} we show that Hida's $p$-adic $L$-function is integral: it belongs to $\hat{\mathbb{I}}^{ur}[[\Gamma_\mathcal{K}]]$. Note that in the setting of Theorem \ref{Theorem 2} we do not know if $\mathcal{L}_{f_0,\mathcal{K},\xi}$ is actually in $\hat{\mathcal{O}}^{ur}_L[[\Gamma_\mathcal{K}]]$.

The assumptions on $\bar{\rho}_{f}|_{G_\mathcal{K}}$ and the local $\epsilon$-factors in Theorem \ref{Theorem 1} are needed to appeal to results of M. Hsieh \cite{Hsi11}, \cite{Hsi12} in proving the non-vanishing modulo $p$ of some special $L$-values or vanishing of the anti-cyclotomic $\mu$-invariant. The square-freeness of $N$ is put at the moment for simplicity (mainly to avoid local triple product integrals for supercuspidal representations and we may come back to remove it in the future).

The assumption (g) in Theorem \ref{Theorem 1} is due to lack of reference for \cite[Conjecture in Introduction]{HT93}. Details are explained in Definition \ref{Hida $p$-adic $L$-functions}.

Hypothesis (b) of Theorem \ref{Theorem 2} means that $\mathcal{L}_{f,\xi,\mathcal{K}}$ can be evaluated at the trivial character of $\Gamma_\mathcal{K}$, though it is not a point at which it interpolates classical $L$-values. As a result, Theorem \ref{Theorem 2} has interesting applications for the usual Bloch-Kato Selmer group of $f$.

The result of this paper is the foundation for several important breakthroughs on arithmetic of elliptic curves and modular forms.
Skinner \cite{Skinner} has recently been able to use Theorem \ref{Theorem 2} to prove a converse of the Gross-Zagier-Kolyvagin theorem: if the Mordell-Weil rank of an elliptic curve over $\mathbb{Q}$ is exactly one and the Shafarevich-Tate group is finite, then its $L$-function vanishes to exactly order one at the central critical point. The author has been able to prove an anti-cyclotomic main conjecture of Perrin-Riou when the root number is $-1$ \cite{WAN1} (by comparing the Selmer group in the theorem with the one studied by Perrin-Riou, using the Poitou-Tate long exact sequence and applying F. Castella's generalization \cite{Castella} of a formula of Bertolini-Darmon-Prasanna relating the different $p$-adic $L$-functions).

There is also joint work of the author with Skinner and Jetchev that uses Theorem \ref{Theorem 2} to deduce the $p$-adic part of the precise BSD formula in the rank one case \cite{JSW}.
We remark that in the above mentioned applications one can not appeal to the main conjecture proved in \cite{SU} since the global sign of the $L$-functions has to be $+1$ in \emph{loc.cit.}.

The methods of this paper can be adapted (with some additional arguments) to the case when $f$ is non-ordinary as well. This forms the foundation of the author's recent proof of the Iwasawa main conjecture for supersingular elliptic curves formulated by Kobayashi (see \cite{WAN2}).\\

Our proofs of Theorem \ref{Theorem 1} and Theorem \ref{Theorem 2} use Eisenstein congruences on the unitary group $\mathrm{U}(3,1)$, which first appeared in Hsieh's paper \cite{Hsieh13}. Recent works with a similar flavor include Skinner-Urban's \cite{SU} using the group $\mathrm{U}(2,2)$, and the work of Hsieh \cite{Hsieh CM} for CM characters using the group $\mathrm{U}(2,1)$. The difference between our results and Skinner-Urban's is that they studied the $p$-adic $L$-function of Rankin-Selberg product of a general modular form and a CM form such that the weight of the CM form is lower, while in our case the weight of the CM form is higher. This is the very reason we work with unitary groups of different signature.

We also mention there are works establishing the other divisibility of the main conjecture using Euler systems (\cite{WAN1}, \cite{LLZ}) under some more restrictions. Together with Theorem \ref{Theorem 1} and Theorem \ref{Theorem 2} these give the full equality of the main conjecture in the case when all hypotheses are satisfied.\\

\noindent For clarity purposes, we briefly discuss our proof of the theorems. The proof follows the main outline of Skinner-Urban's proof in \cite{SU} (which in turn followed the main outline of Wiles' proof of the Iwasawa main conjecture for totally real fields). However, carrying this out requires new arguments. The main steps are: (1) constructing a $p$-adic family of Eisenstein series whose constant terms are essentially the $p$-adic Rankin-Selberg $L$-function $\mathcal{L}^\Sigma_{\mathbf{f},\mathcal{K},\boldsymbol{\xi}}$; (2) proving that the Eisenstein series is co-prime to $p$-adic $L$-function (that is, modulo any divisor of the $p$-adic $L$-function it is still non-zero), which shows that its congruences with cuspforms is `measured' by the $p$-adic $L$-function; (3) the Galois argument.

The main differences between our proof and that of Skinner-Urban are in steps (1) and (2). First, we need to work with the unitary group $\mathrm{U}(3,1)$ instead of $\mathrm{U}(2,2)$ which is used in \cite{SU}. The reason is that by our assumption that the CM form has higher weight than $f$, the $L$-values interpolated by the $p$-adic $L$-function $\mathcal{L}_{\mathbf{f},\xi,\mathcal{K}}$ show up in the constant terms of holomorphic Eisenstein series on the group $\mathrm{U}(3,1)$ that are induced from the Klingen parabolic subgroup with Levi $\mathrm{U}(2)\times\mathcal{K}^\times$. The cuspidal representation on $\mathrm{U}(2)$ is determined by the automorphic representation $\pi_f$ and a Hecke character of $\mathbb{A}_\mathcal{K}^\times$ whose restriction to $\mathbb{A}_\mathbb{Q}^\times$ is the central character of $\pi_f$. As a result, the construction of the $p$-adic families of the Eisenstein series via the pullback formula requires finding the right Siegel section at $p$ (which turns out to be different from the one used in \cite{SU}). To have the right pullback and to make the Fourier-Jacobi coefficient computation not too hard, such choice of section is quite subtle. The idea for our choice is similar to that in \cite{EHLS} and is inspired by the formula for differential operators on $p$-adic $q$-expansions on the group $\mathrm{GU}(3,3)$. (The Siegel-Eisenstein series measure used here to construct the $p$-adic $L$-function is the special case of \emph{loc.cit.}. (See Section 4.3, part II.) We also refer to the paper of Eischen \cite[Sections 3, 4]{Ellen} for a nice exposition for details about those differential operators. These differential operators are not logically needed for the construction in this paper though.)

Step (2) is the core of the whole argument. In \cite{SU}, the Klingen-Eisenstein series on $\mathrm{U}(2,2)$ (which are also special cases of the series constructed in \cite{WAN}) has a Fourier expansion $E_{Kling}=\sum_Ta_Tq^T$, with $T$ running over $2\times 2$ Hermitian matrices. By the pullback formula, we have $a_T=\langle \mathrm{FJ}_TE_{sieg},\varphi_\pi\rangle_{\mathrm{U}(1,1)}$, where $E_{sieg}$ is a Siegel-Eisenstein series on $\mathrm{U}(3,3)$, $\mathrm{FJ}_TE_{sieg}$ is its $T$-Fourier-Jacobi coefficient (regarded as a form on $\mathrm{U}(1,1)$), and $\varphi_\pi$ is a form in the $\mathrm{U}(1,1)$ automorphic representation $\pi$ considered in \cite{SU} (again determined by $\pi_f$ and a Hecke character of $\mathbb{A}_\mathcal{K}^\times$). The Siegel Eisenstein measure is constructed in Proposition 12.3. Computation tells us $\mathrm{FJ}_TE_{sieg}$ is essentially a product of an Eisenstein series and a theta function, and thus this pairing, and hence $a_T$ is essentially a Rankin-Selberg product.

In our case, forms on $\mathrm{U}(3,1)$ only have Fourier-Jacobi expansions (instead of Fourier expansions):
$$F\mapsto \sum_{n\in\mathbb{Q}}a_n(F)q^n:=\mathrm{FJ}(F)$$
with $a_n(F)\in H^0(\mathcal{Z}^\circ_{[g]},\mathcal{L}(n))$, where $\mathcal{Z}^\circ_{[g]}$ is a two-dimensional abelian variety which is the abelian part of the universal semiabelian scheme over a point in the boundary of a toroidal compactification of the Shimura variety for $\mathrm{GU}(3,1)$. The sheaf $\mathcal{L}(n)$ is a line bundle on $\mathcal{Z}^\circ_{[g]}$. We can view each $a_n(F)$ as an automorphic form on the group $\mathrm{U}(2)\cdot N$, where $\mathrm{U}(2)$ is the definite unitary group appearing as a factor of the Levi of the Klingen parabolic subgroup of $\mathrm{U}(3,1)$, and $N$ is the unipotent radical of the parabolic subgroup, which is a Heisenberg group. It consists of matrices of the form $\begin{pmatrix}1&\times&\times&\times\\&1&&\times\\&&1&\times\\&&&1\end{pmatrix}$. To study $a_n(F)$ we use a functional $l_{\theta^\star}$ on $H^0(\mathcal{Z}^\circ_{[g]},\mathcal{L}(n))$. This is just the pairing over $N$ (modulo its center) with an explicit theta function $\theta^\star$ on $\mathrm{U}(2)\cdot N$ (defined in Lemma \ref{Lemma 6.32}). In the following we do the computation at an arithmetic point $\mathsf{z}$.

We divide our argument into five steps. (1) We first compute the $n$-th Fourier-Jacobi coefficient of a Siegel-Eisenstein series $E_{sieg,\mathsf{z}}$ (in Section \ref{GC}), considered as a form on the Jacobi group $N'\cdot \mathrm{U}(2,2)\subseteq \mathrm{U}(3,3)$ with $N'$ a unipotent subgroup of $\mathrm{U}(3,3)$. It consists of matrices of the form
$$\begin{pmatrix}1&\times&\times&\times&\times&\times\\&1&&\times&&\\&&1&\times&&\\&&&1&&\\&&&
\times&1&\\&&&\times&&1\end{pmatrix}.$$
This turns out to be a finite sum of products of the form $E_{2,\mathsf{z}}\cdot \Theta_\mathsf{z}$ (see Proposition \ref{Proposition 6.31}, with $E_{2,\mathsf{z}}$ a Siegel-Eisenstein series on $\mathrm{U}(2,2)$ and $\Theta_\mathsf{z}$ a theta function on the Jacobi group (the $E_{sieg,2}$ and $\Theta_{\Phi_\mathcal{D}}$ in Proposition \ref{Proposition 6.31}).

(2) Next we restrict this $n$-th Fourier-Jacobi coefficient to the group $$(N\cdot \mathrm{U}(2))\times\mathrm{U}(2)\subset \mathrm{U}(3,1)\times \mathrm{U}(2)\cap N'\cdot \mathrm{U}(2,2).$$ Another computation shows that $\Theta_{\mathsf{z}}$ essentially restricts to a form $\theta_{4}\times\theta_{2,\mathsf{z}}$ on $(N\cdot \mathrm{U}(2))\times \mathrm{U}(2)$. (The $\theta_{2,\mathsf{z}}$ is varying with the arithmetic point $\mathsf{z}$ while the $\theta_4$ is fixed, which justifies dropping the subscript $\mathsf{z}$.) The actual situation is slightly more complicated: it is actually a finite sum of such products. Applying a functional $l_{\theta^\star}$ (which is pairing with a \emph{fixed} theta function $\theta^\star$ to the $\theta_4$-component of each summand above), we show that $\langle\theta_4,\theta^\star\rangle_N$ is a constant function on $\mathrm{U}(2)$ by Lemma \ref{lemma 4.8} (which we manage to make non-zero), and then end up with a theta function $\theta_\mathsf{z}$ on (the lower) $\mathrm{U}(2)$ (a finite linear combination of $\theta_{2,\mathsf{z}}$'s of each summand). See Lemma \ref{Lemma 6.32} for a precise formula. So using the pullback formula, we get for $E_{Kling,\mathsf{z}}$ the Klingen Eisenstein series defined in (\ref{DefineKlingen}) (denoted $E_{Kling,\mathcal{D}}$ there),
$$l_{\theta^\star}(a_n(E_{Kling,\mathsf{z}}))=\langle E_{2,\mathsf{z}}|_{\mathrm{U}(2)\times\mathrm{U}(2)},f_\mathsf{z}\cdot\theta_{\mathsf{z}}
\rangle_{1\times\mathrm{U}(2)}$$
regarded as a form on the first $\mathrm{U}(2)$, which is the $\mathrm{U}(2)$ in the Levi of the Klingen parabolic subgroup. Note that by Lemma \ref{lemma8.19} when $\mathsf{z}$ is varying in a $p$-adic family the $l_{\theta^\star}$ takes values in the Iwasawa algebra (the parameter space).

(3) To study its $p$-adic property, we pair it with an auxiliary form $h_\mathsf{z}$ on $\mathrm{U}(2)$ (Definition \ref{444}):
$$\langle\langle E_{2,\mathsf{z}}|_{\mathrm{U}(2)\times\mathrm{U}(2)}, f_\mathsf{z}\cdot\theta_{\mathsf{z}}\rangle_{1\times \mathrm{U}(2)},h\rangle_{\mathrm{U}(2)}=(*)\cdot \langle h_\mathsf{z},f_\mathsf{z}\cdot\theta_{\mathsf{z}}\rangle.$$
To obtain this formula, we use the doubling method formula for $\mathrm{U}(2)\times\mathrm{U}(2)\hookrightarrow \mathrm{U}(2,2)$ applied to $h_\mathsf{z}$. The $(*)$ is some $p$-adic $L$-function factor for $h_\mathsf{z}$ coming from this (see Proposition \ref{Proposition 8.21} for details).

(4) We prove that such expression is interpolated by an element $\mathbf{B}_1$ in the Iwasawa algebra (in (\ref{116})). The pairing on the right hand side is just a triple product integral $\int_{[\mathrm{U}(2)]}h_{\mathsf{z}}(g)\theta_{\mathsf{z}}(g)f_{\mathsf{z}}(g)dg$. The fact that the $\theta_{\mathsf{z}}$ can be taken to be an eigenform follows from considering the central character (see the proof of Proposition \ref{Proposition 8.21}).

(5) We use Ichino's formula to evaluate this:
\begin{align*}
&&&(\int_{[U(2)]}h_{\mathsf{z}}(g)\theta_{\mathsf{z}}(g)f_{\mathsf{z}}dg)(\int_{[U(2)]}
\tilde{h}_{\mathsf{z}}(g)\tilde{\theta}_{3,\mathsf{z}}(g)\tilde{f}_{\mathsf{z}}(g)dg)&\\
&=\langle h_{\mathsf{z}},\tilde{h}_{\mathsf{z}}\rangle\langle\theta_{\mathsf{z}},\tilde{\theta}_{3,\mathsf{z}}\rangle\langle f_{\mathsf{z}},\tilde{f}_{\mathsf{z}}\rangle\cdot&&
\frac{L^\Sigma(\frac{1}{2},\pi_{f_\mathsf{z}}\times\chi_{1,\mathsf{z}})L^\Sigma(\frac{1}{2},\pi_{f_\mathsf{z}}
\times\chi_{2,\mathsf{z}})}
{L^\Sigma(2,\pi_{f_\mathsf{z}},\mathrm{ad})
L^\Sigma(2,\pi_{\theta_{\mathsf{z}}},\mathrm{ad})
L^\Sigma(2,\pi_{h_\mathsf{z}},\mathrm{ad})}
\prod_{v\in\Sigma}\frac{I_v(h_\mathsf{z}\otimes \theta_{\mathsf{z}}\otimes f_\mathsf{z},\tilde{h}_\mathsf{z}\otimes\tilde{\theta}_{3,\mathsf{z}}\otimes\tilde{f}_\mathsf{z})}{\langle h_{\mathsf{z},v},\tilde{h}_{\mathsf{z},v}\rangle\langle\theta_{\mathsf{z},v},\tilde{\theta}_{3,\mathsf{z},v}
\rangle\langle f_{\mathsf{z},v},\tilde{f}_{\mathsf{z},v}\rangle}.&
\end{align*}
\noindent Here $\tilde{h}_{\mathsf{z}}$, $\tilde{\theta}_{3,\mathsf{z}}$ and $\tilde{f}_{\mathsf{z}}$ means some forms or vectors in the contragredient representation of the automorphic representation for $h_{\mathsf{z}}, \theta_{\mathsf{z}}$ and $f_{\mathsf{z}}$. The factor $I_v$ is a local integral defined by Ichino, and $\chi_{1,\mathsf{z}}$ and $\chi_{2,\mathsf{z}}$ are two CM Hecke characters showing up in the computation. We interpolate everything in $p$-adic families and compare it to the product of several $p$-adic $L$-functions of modular forms or Hecke characters (see the proof of Proposition \ref{p-adic} for details). Furthermore:
\begin{itemize}
\item We can choose $h_{\mathsf{z}}$'s and $\theta_{\mathsf{z}}$'s so that these $p$-adic $L$-functions are units in $\hat{\mathbb{I}}^{ur}[[\Gamma_\mathcal{K}]]^\times$ times a fixed number in $\bar{\mathbb{Q}}_p$.
\item The ratio of the triple product and the product of these $p$-adic $L$-functions is a product of local factors (we show that the triple product is a $p$-adic analytic function, so the product of these local factors is a $p$-adic meromorphic function). We make the local choices such that for inert or ramified primes these local factors involve only the Hida-family variable of $\mathbf{f}$ (which has nothing to do with $\Gamma_\mathcal{K}^+$ or $\Gamma_\mathcal{K}^-$). For split primes, we compute these local factors explicitly.
\end{itemize}
The constructions above finally provide a non-zero element of $\mathbb{I}$, which is sufficient for our use. We are thus able to prove in Proposition \ref{p-adic} that height one divisors of $\mathbf{B}_1$ are those of $\mathbb{I}$.

After this, we can use the same argument as in \cite{SU} to deduce our main theorem: by a geometric argument we construct a cuspidal family on $\mathrm{U}(3,1)$ congruent modulo the $p$-adic $L$-function $\mathcal{L}_{\mathbf{f},\xi,\mathcal{K}}$ to the Eisenstein family constructed as above. Passing to the Galois side, we get a family of Galois representations coming from cuspidal forms that is congruent to the family coming from our Klingen-Eisenstein series, but which is ``more irreducible'' than the Eisenstein Galois representations. Then an argument (the ``lattice construction'') of E.Urban gives the required elements in the dual Selmer group.
\begin{remark}
In fact at the point where $\mathcal{L}_{\mathbf{f},\xi,\mathcal{K}}$ takes its central critical value, the Klingen Eisenstein family does not interpolate a classical Eisenstein series (i.e. not an interpolation point). Therefore even in the case when the global root number for $\mathcal{L}_{\mathbf{f},\xi,\mathcal{K}}$ is $-1$, so that the constant terms of the Eisenstein family vanish identically along the central critical subfamily, the $p$-adic Eisenstein series itself can still be non-zero in that subfamily.
\end{remark}
\begin{remark}
We emphasize here that $\theta^\star$ is fixed throughout the whole $p$-adic family (instead of varying). Note also that the space of theta functions with given Archimedean kernel function and level group at finite places is finite-dimensional. The space $H^0(\mathcal{Z}^\circ_{[g]},\mathcal{L}(n))\otimes_{\mathbb{Z}_p}\mathbb{Q}_p$ is generated by a finite number of such theta functions. We will show in the text that by pairing the $\hat{\mathbb{I}}^{\mathrm{ur}}[[\Gamma_\mathcal{K}]]$-adic Fourier-Jacobi coefficient with one rational theta function (not necessarily $p$-integral!), we get an element in $\hat{\mathbb{I}}^{\mathrm{ur}}[[\Gamma_\mathcal{K}]]\otimes_{\mathbb{Z}_p}\mathbb{Q}_p$. We then show that by choosing the datum properly, this element is the product of a unit in $\hat{\mathbb{I}}^{\mathrm{ur}}[[\Gamma_\mathcal{K}]]$ and a non-zero element in $\bar{\mathbb{Q}}_p$, and proving it is prime to the $p$-adic $L$-function we study. Such strategy is notably different from the one adopted in \cite{Hsi12}, where Hsieh argued $p$-integrally and proved with a stronger result that the Fourier-Jacobi coefficient is already a unit. This is the very reason why we do not need to study the theory of $p$-integral theta functions.
\end{remark}
\begin{remark}
In \cite{ZhB07} the special $L$-value showing up in the Fourier-Jacobi expansion is the near central point of the Rankin-Selberg $L$-function, while in our case it is the central value of the triple product $L$-function. Moreover, the Fourier-Jacobi coefficient considered in \cite{ZhB07} is non-zero only when $f$ is a CM form (see Theorem 4.12 in \emph{loc.cit.}). This is due to the fact that we are pairing the Fourier-Jacobi coefficient with the product of a theta function and an auxiliary form $h$ on $\mathrm{U}(2)$, while Zhang paired it with the theta function only (i.e. taking the $h$ in our case to be the constant function). Our strategy has the advantage that these central $L$-values are accessible to various results of non-vanishing modulo $p$ by Hsieh.
\end{remark}

\noindent The rest of this paper is organized as follows. In section 2, we recall some background information and formulate the main conjecture. In section 3, we discuss automorphic forms and $p$-adic automorphic forms on various unitary groups. In section 4 we recall the notion of theta function which plays an important role in studying Fourier-Jacobi expansions as outlined above. In sections 5 and 6, we make the local and global calculations for Siegel and Klingen-Eisenstein series using the pullback formula of Shimura. In section 7, we interpolate our previous calculations $p$-adically and construct the families. In section 8, we prove the co-primeness of (the Fourier-Jacobi coefficients of) the Klingen-Eisenstein series and the $p$-adic $L$-function. Finally, we deduce the main theorem in section 9.\\

\noindent\emph{Acknowledgement} The result of this paper was worked out while the author was a graduate student of Christopher Skinner. I would like to thank him for leading me to the problem, sharing many ideas and constant encouragement. I am also grateful to Ming-Lun Hsieh, from whose previous work \cite{Hsieh13}, \cite{Hsieh CM} the author has benefited a lot, and for answering many questions. I would also like to thank Haruzo Hida, Ye Tian, Eric Urban, and Bei Zhang for their insightful communications. The author is partially supported by the Chinese Academy of Science grant Y729025EE1, NSFC grant 11688101, 11621061 and an NSFC grant associated to the "Recruitment Program of Global Experts
\section{Background}
We first introduce our notation. We will usually take a finite extension $L/\mathbb{Q}_p$ and write $\mathcal{O}_L$ for its integer ring and $\varpi_L$ for a uniformizer. Let $G_\mathbb{Q}$ and $G_\mathcal{K}$ be the absolute Galois groups of $\mathbb{Q}$ and $\mathcal{K}$. Let $\Gamma_\mathcal{K}^{\pm}$ \index{$\Gamma_\mathcal{K}^{\pm}$} be the subgroups of $\Gamma_\mathcal{K}$ such that the complex conjugation $c$ acts by $\pm 1$. We take topological generators $\gamma^{\pm}$ so that $\mathrm{rec}^{-1}(\gamma^+)=((1+p)^{\frac{1}{2}},(1+p)^{\frac{1}{2}})$ and $\mathrm{rec}^{-1}(\gamma^-)=((1+p)^{\frac{1}{2}},(1+p)^{-\frac{1}{2}})$ where $\mathrm{rec}: \mathbb{A}_\mathcal{K}^\times\rightarrow G_\mathcal{K}^{\mathrm{ab}}$ is the reciprocity map normalized by the geometric Frobenius. Let $\Psi_\mathcal{K}$ be the composition $$G_\mathcal{K}\twoheadrightarrow \Gamma_\mathcal{K}\hookrightarrow \mathbb{Z}_p[[\Gamma_\mathcal{K}]]^\times.$$
Define $\Lambda_\mathcal{K}:=\mathcal{O}_L[[\Gamma_\mathcal{K}]]$. Recall we defined a branch character $\xi$ in the introduction. We will write $\sigma_\xi$ for the Galois character corresponding to $\xi$ via class field theory. We also let $\mathbb{Q}_\infty$ be the cyclotomic $\mathbb{Z}_p$ extension of $\mathbb{Q}$ and let $\Gamma_\mathbb{Q}=\mathrm{Gal}(\mathbb{Q}_\infty/\mathbb{Q})$. Define $\Psi_\mathbb{Q}$ to be the composition $G_\mathbb{Q}\twoheadrightarrow \Gamma_\mathbb{Q}\hookrightarrow \mathbb{Z}_p[[\Gamma_\mathbb{Q}]]^\times$. We also define $\varepsilon_\mathcal{K}$ and $\varepsilon_\mathbb{Q}$ \index{$\varepsilon_\mathcal{K}$, $\varepsilon_\mathbb{Q}$} to be the compositions $\mathcal{K}^\times\backslash \mathbb{A}_\mathcal{K}^\times\stackrel{rec}{\rightarrow} G_\mathcal{K}^{ab}\rightarrow \mathbb{Z}_p[[\Gamma_\mathcal{K}]]^\times$ and $\mathbb{Q}^\times \backslash \mathbb{A}_\mathbb{Q}^\times\stackrel{rec}{\rightarrow} G_\mathbb{Q}^{ab}\rightarrow \mathbb{Z}_p[[\Gamma_\mathbb{Q}]]^\times$ where the second arrows are the $\Psi_\mathcal{K}$ and $\Psi_\mathbb{Q}$ \index{$\Psi_\mathcal{K}$, $\Psi_\mathbb{Q}$} defined above. Let $\omega$ and $\epsilon$ \index{$\omega$, $\epsilon$} be the Techim{\"u}ller character and the cyclotomic character. We also write $\chi_{\mathcal{K}/\mathbb{Q}}$ \index{$\chi_{\mathcal{K}/\mathbb{Q}}$} to be the quadratic character associated to $\mathcal{K}/\mathbb{Q}$.

Write $c$ for the complex conjugation. For a Hecke character $\chi$ we write $\chi^c(x):=\chi(c(x))$. For a Galois character $\chi$ we define $\chi^c$ to be the composition of $\chi$ with conjugation by $c$ (regarding $c$ as an element in the Galois group).

\subsection{$p$-adic Families for $\mathrm{GL}_2/\mathbb{Q}$}

\noindent Let $M$ be a positive integer prime to $p$ and $\chi$ a character of $(\mathbb{Z}/pM\mathbb{Z})^\times$. Let $\Lambda_\mathbb{Q}:=\mathbb{Z}_p[[W]]$ (we call $\mathrm{Spec}\Lambda_\mathbb{Q}(\bar{\mathbb{Q}}_p)$ the weight space). Let $\mathbb{I}$ be a normal domain finite over $\Lambda_\mathbb{Q}$. A point $\phi\in\mathrm{Spec}(\mathbb{I})$ is called arithmetic if the image of $\phi$ in $\mathrm{Spec}\Lambda(\bar{\mathbb{Q}}_p)$ is the continuous $\mathbb{Z}_p$-homomorphism sending $(1+W)\mapsto \zeta(1+p)^{k-2}$ for some $k\geq 2$ and $\zeta$ a $p$-power root of unity. We usually write $k_\phi$ for this $k$, called the weight of $\phi$. We also define $\chi_\phi$ to be the character of $\mathbb{Z}_p^\times\simeq (\mathbb{Z}/p\mathbb{Z})^\times\times(1+p\mathbb{Z}_p)$ that is trivial on the first factor and given by $(1+p)\mapsto \zeta$ on the second factor.

\begin{definition}
An $\mathbb{I}$-adic family of modular forms of tame level $M$ and character $\chi$ is a formal $q$-expansion $\mathbf{f}=\sum_{n=0}^\infty a_nq^n, a_n\in\mathbb{I}$, such that for a Zariski dense set of arithmetic points $\phi\in\mathrm{Spec}(\mathbb{I})$ the specialization $f_\phi=\sum_{n=0}^\infty \phi(a_n) q^n$ of $\mathbf{f}$ at $\phi$ is the $q$-expansion of a modular form of weight $k_\phi$, character $\chi\chi_\phi\omega^{2-k_\phi}$ (where $\omega$ is the Techim{\"u}ller character), and level $Mp^{t_\phi}$ for some $t_\phi\geq 0$.
\end{definition}

The $U_p$ operator is defined in both the spaces of modular forms and families. It is given by:
$$U_p(\sum_{n=0}^\infty a_nq^n)=\sum_{n=0}^\infty a_{pn}q^n.$$
Note that $(U_p\cdot\mathbf{f})_\phi=U_p\cdot \mathbf{f}_\phi$. Hida's ordinary idempotent $e_p$ is defined by $e_p:=\lim_{n\rightarrow \infty}U_p^{n!}$. A form $f$ or family $\mathbf{f}$ is called ordinary if $e_pf=f$ or $e_p\mathbf{f}=\mathbf{f}$. (See for instance \cite[pp. 550]{Hida5}.) A well known fact is that every ordinary eigenform fits into an ordinary family of eigenforms $\mathbf{f}$ (\cite[Theorem II]{Hida88} for example).\\

According to the results of Deligne, Langlands, Shimura et al., there is a Galois representation $\rho_f: G_\mathbb{Q}\rightarrow \mathrm{GL}_2(\bar{\mathbb{Q}}_p)$ for $f$. If the residual representation $\bar{\rho}_f$ is irreducible then one can construct a Galois representation $\rho_\mathbf{f}: G_\mathbb{Q}\rightarrow \mathrm{GL}_2(\mathbb{I})$ such that it specializes to the Galois representation $\rho_{\mathbf{f}_\phi}$ of $\mathbf{f}_\phi$ at each arithmetic specialization $\phi\in\mathrm{Spec}(\mathbb{I})$. We write $T_\mathbf{f}$ for the representation space of $\rho_\mathbf{f}$.

\subsection{The Main Conjecture}\label{Formulation}
Before formulating the main conjecture, we first define characteristic ideals and Fitting ideals. We let $A$ be a Noetherian ring. We write $\mathrm{Fitt}_A(X)$ for the Fitting ideal in $A$ of a finitely generated $A$-module $X$. This is the ideal generated by the determinant of the $r\times r$ minors of the matrix giving the first arrow in a given presentation of $X$:
$$A^s\rightarrow A^r\rightarrow X\rightarrow 0.$$
If $X$ is not a torsion $A$-module, then $\mathrm{Fitt}_A(X)=0$. This definition does not depend on the choice of the presentation.\\

\noindent Fitting ideals behave well with respect to base change. For $I\subset A$ an ideal
$$\mathrm{Fitt}_{A/I}(X/IX)=\mathrm{Fitt}_A(X)\ \mathrm{mod}\ I.$$
\index{$\mathrm{Fitt}$} Now suppose $A$ is a Krull domain (a domain which is Noetherian and normal). Then the characteristic ideal is defined by: \index{$\mathrm{char}$}
$$\mathrm{char}_A(X):=\{x\in A:\mathrm{ord}_Q(x)\geq \mathrm{length}_Q(X) \mbox{ for any height one prime $Q$ of $A$}\}.$$
If $X$ is not a torsion $A$-module, then we define $\mathrm{char}_A(X)=0$.\\

\noindent We consider the Galois representation:
$$T_{\mathbf{f},\mathcal{K},\xi}:=T_\mathbf{f}\hat{\otimes}_{\mathbb{Z}_p}\Lambda_\mathcal{K}$$
with the $G_\mathcal{K}$ action given by $\rho_{\mathbf{f}}\sigma_{{\xi}^{-c}}\epsilon^{\frac{4-\kappa}{2}}\hat{\otimes} \Lambda_\mathcal{K}(\Psi_\mathcal{K}^{-c})$. (Here $c$ means composing with the complex conjugation in the idele group, and $-$ means taking inverse of the character.)
We define the Selmer group (recall $\kappa$ is assumed to be even)
$$\mathrm{Sel}_{\mathbf{f},\mathcal{K},\xi}:=\mathrm{ker}\{H^1(\mathcal{K},T_{\mathbf{f},\mathcal{K},\xi}
\otimes_{\mathbb{I}[[\Gamma_\mathcal{K}]]}\mathbb{I}[[\Gamma_\mathcal{K}]]^*)\rightarrow H^1(I_{\bar{v}_0},T_{\mathbf{f},\mathcal{K},\xi}\otimes\mathbb{I}[[\Gamma_\mathcal{K}]]^*)\times \prod_{v\nmid p}H^1(I_v,T_{\mathbf{f},\mathcal{K},\xi}\otimes\mathbb{I}[[\Gamma_\mathcal{K}]]^*)\}$$
where $*$ means the Pontryagin dual $\mathrm{Hom}_{\mathbb{Z}_p}(-, \mathbb{Q}_p/\mathbb{Z}_p)$. We also define the $\Sigma$-primitive Selmer groups:
$$\mathrm{Sel}_{\mathbf{f},\mathcal{K},\xi}^\Sigma:=\mathrm{ker}\{H^1(\mathcal{K},T_{\mathbf{f},\mathcal{K},
\xi}\otimes\mathbb{I}[[\Gamma_\mathcal{K}]]^*)\rightarrow H^1(I_{\bar{v}_0},T_{\mathbf{f},\mathcal{K},\xi}\otimes\mathbb{I}[[\Gamma_\mathcal{K}]]^*)\times \prod_{v\not\in \Sigma}H^1(I_v,T_{\mathbf{f},\mathcal{K},\xi}\otimes\mathbb{I}[[\Gamma_\mathcal{K}]]^*)\}.$$
We let \index{$X_{\mathbf{f},\mathcal{K},\xi}$, $X_{\mathbf{f},\mathcal{K},\xi}^\Sigma$}
$$X_{\mathbf{f},\mathcal{K},\xi}:=(\mathrm{Sel}_{\mathbf{f},\mathcal{K},\xi})^*.$$
$$X_{\mathbf{f},\mathcal{K},\xi}^\Sigma:=(\mathrm{Sel}_{\mathbf{f},\mathcal{K},\xi}^\Sigma)^*.$$
These are finitely generated $\mathbb{I}[[\Gamma_\mathcal{K}]]$-modules (see e.g. \cite[Lemma 3.3]{SU}).
We take the extension of scalars of them to $\hat{\mathbb{I}}^{ur}[[\Gamma_\mathcal{K}]]$ and still denote them by using the same notations. In section \ref{section 7} we construct $p$-adic $L$-functions $\mathcal{L}^{Hida}_{\mathbf{f},\mathcal{K},\xi}$ and $\mathcal{L}_{\mathbf{f},\xi,\mathcal{K}}^{\Sigma,Hida}$ which are elements in $\hat{\mathbb{I}}^{ur}[[\Gamma_\mathcal{K}]]$ or its fraction field. Their interpolation formulas are given in equation (\ref{equation (9)}) (see also Remark \ref{Hida $p$-adic $L$-functions}).
The three-variable Iwasawa main conjecture is
\begin{conjecture}
$X_{\mathbf{f},\mathcal{K},\xi}$ and $X_{\mathbf{f},\mathcal{K},\xi}^\Sigma$ are torsion $\hat{\mathbb{I}}^{ur}[[\Gamma_\mathcal{K}]]$-modules and
$$\mathrm{char}_{\hat{\mathbb{I}}^{ur}[[\Gamma_\mathcal{K}]]}X_{\mathbf{f},\mathcal{K},\xi}=(\mathcal{L}_{\mathbf{f},\xi,\mathcal{K}}^{Hida}),$$
$$\mathrm{char}_{\hat{\mathbb{I}}^{ur}[[\Gamma_\mathcal{K}]]}X_{\mathbf{f},\mathcal{K},\xi}^\Sigma=(\mathcal{L}_{\mathbf{f},\xi,\mathcal{K}}^{\Sigma,Hida}).$$
\end{conjecture}
We can also replace $\mathbf{f}$ with a single form $f_0$ and have
the two-variable main conjectures.
\begin{conjecture}
$X_{f_0,\mathcal{K},\xi}$ and $X_{f_0,\mathcal{K},\xi}^\Sigma$ are torsion $\hat{\mathcal{O}}_L^{ur}[[\Gamma_\mathcal{K}]]$-modules and
$$\mathrm{char}_{\hat{\mathcal{O}}^{ur}_L[[\Gamma_\mathcal{K}]]}X_{f_0,\mathcal{K},\xi}=(\mathcal{L}_{f_0,\mathcal{K},\xi}^{Hida}),$$
$$\mathrm{char}_{\hat{\mathcal{O}}^{ur}_L[[\Gamma_\mathcal{K}]]}X_{f_0,\mathcal{K},\xi}^\Sigma=(\mathcal{L}_{f_0,\mathcal{K},\xi}^{\Sigma,Hida}).$$
\end{conjecture}

\subsection{Control of Selmer Groups}
In this sub-section we prove a control theorem of Selmer groups which will be used to prove Theorem \ref{Theorem 2}. Let $\phi_0\in\mathrm{Spec}\mathbb{I}[[\Gamma_\mathcal{K}]](\bar{\mathbb{Q}}_p)$ be the point mapping $\gamma^\pm$ to $1$ and such that $\phi_0|_{\mathbb{I}}$ corresponds to the form $f_0$. Let $\wp=\mathrm{ker}\phi_0|_\mathbb{I}$ be the point of weight two and trivial character.  Then we prove the following proposition.
\begin{proposition}\label{control}
Suppose $\bar{\rho}_f|_{G_\mathcal{K}}$ is absolutely irreducible. There is an exact sequence of $\mathcal{O}_L[[\Gamma_\mathcal{K}]]$-modules
$$M\rightarrow X_{\mathbf{f},\mathcal{K},\xi}^{\Sigma}/\wp X_{\mathbf{f},\mathcal{K},\xi}^{\Sigma}\rightarrow X_{f_0,\mathcal{K},\xi}^\Sigma\rightarrow 0$$
where $M\otimes_{\mathcal{O}_L}L$ has support of codimension at least $2$ in $\mathrm{Spec}\mathcal{O}_L[[\Gamma_\mathcal{K}]]\otimes L$.
\end{proposition}
\begin{proof}
We write $\mathbb{I}_\mathcal{K}$ for $\mathbb{I}[[\Gamma_\mathcal{K}]]$ for simplicity. Write $\mathbb{T}=T_{\mathbf{f},\mathcal{K},\xi}$ as a $\mathbb{I}_\mathcal{K}$-module. Let $T$ be the $\Lambda_\mathcal{K}$-module $T_{f_0,\mathcal{K},\xi}$. Recall that $p=v_0\bar{v}_0$. We have an exact sequence
$$0\rightarrow T\otimes_{\Lambda_\mathcal{K}}\Lambda_\mathcal{K}^*\rightarrow \mathbb{T}\otimes_{\mathbb{I}_\mathcal{K}}\mathbb{I}_\mathcal{K}^*\rightarrow \mathbb{T}\otimes_{\mathbb{I}_\mathcal{K}}(\wp\mathbb{I}_\mathcal{K})^*\rightarrow 0.$$
Write $G_{\mathcal{K}_\Sigma}$ for the Galois group over $\mathcal{K}$ of the maximal algebraic extension of $\mathcal{K}$ unramified outside $\Sigma$. From this we deduce
$$H^1(G_{\mathcal{K}_\Sigma}, T\otimes_\Lambda\Lambda_\mathcal{K}^*)\stackrel{\sim}{\rightarrow}H^1(G_{\mathcal{K}_\Sigma}, \mathbb{T}\otimes_{\mathbb{I}_\mathcal{K}}\mathbb{I}_\mathcal{K}^*)[\wp]$$
as in \cite[Proposition 3.7]{SU}.
We also have an exact sequence:
\begin{align*}
&H^0(I_{\bar{v}_0}, \mathbb{T}\otimes_{\mathbb{I}[[\Gamma_\mathcal{K}]]}(\mathbb{I}
[[\Gamma_\mathcal{K}]]^*))
\stackrel{s_1}{\rightarrow} H^0(I_{\bar{v}_0}, \mathbb{T}\otimes_{\mathbb{I}[[\Gamma_\mathcal{K}]]}(\wp
[[\Gamma_\mathcal{K}]]^*))&\\
&\rightarrow H^1(I_{\bar{v}_0}, T\otimes_{\mathcal{O}_L[[\Gamma_\mathcal{K}]]}(\mathcal{O}_L
[[\Gamma_\mathcal{K}]]^*))
\rightarrow H^1(I_{\bar{v}_0}, \mathbb{T}\otimes_{\mathbb{I}[[\Gamma_\mathcal{K}]]}(\mathbb{I}
[[\Gamma_\mathcal{K}]]^*)).&
\end{align*}
From these we deduce an exact sequence of $\Lambda_\mathcal{K}$-modules

$$M:=((\mathrm{coker}s_1)^{G_{\bar{v}_0}})^*/\wp(\mathrm{coker}s_1)^{G_{\bar{v}_0}}\hookrightarrow X_{\mathbf{f},\mathcal{K},\xi}^\Sigma/\wp X_{\mathbf{f},\mathcal{K},\xi}^\Sigma\rightarrow X^\Sigma_{f_0,\mathcal{K},\xi}\rightarrow 0.$$

Let $\mathcal{K}_{\infty, \bar{v}_0}$ (resp. $\mathcal{K}_{\infty, v_0})$ be the $\mathbb{Z}_p$-extension of $\mathcal{K}$ unramified outside $\bar{v}_0$ (resp. $v_0$) and let $\Gamma_{\bar{v}_0}=\mathrm{Gal}(\mathcal{K}_{\infty,\bar{v}_0})$ (resp. $\Gamma_{v_0}=\mathrm{Gal}(\mathcal{K}_{v_0}/\mathcal{K})$). Let $\gamma_{\bar{v}_0}\in \Gamma_{\bar{v}_0}$ and $\gamma_{v_0}\in\Gamma_{v_0}$ be topological generators. It is well known that (e.g., see \cite[Section 3.3.5]{SU} that we have $$0\rightarrow T^+\rightarrow T\rightarrow T/T^+\rightarrow 0$$ as $G_{\mathbb{Q}_p}$-modules. By the description of the Galois action, there is a $\gamma\in I_{\bar{v}_0}$ such that $\gamma-1$ acts invertibly on $T^+\otimes_{\mathbb{I}[[\Gamma_\mathcal{K}]]}(\mathbb{I}
[[\Gamma_\mathcal{K}]]^*))$. We take a basis $(v_1,v_2)$ such that $v_1$ generates $T^+$ and the action of $\gamma$ on $T$ is diagonal under this basis. Then it is not hard to see (by looking at the $I_{\bar{v}_0}$-action) that if $$v\in H^0(I_{\bar{v}_0}, \mathbb{T}\otimes_{\mathbb{I}_\mathcal{K}}\mathbb{I}_\mathcal{K}^*)$$
we have $v\in (\mathbb{I}[[\Gamma_{v_0}]])^* v_2$ and if $v\in H^0(I_{\bar{v}_0}, \mathbb{T}\otimes_{\mathbb{I}_\mathcal{K}}(\wp\mathbb{I}_\mathcal{K})^*)$ then $v\in (\wp\mathbb{I}_\mathcal{K})^*v_2$. From the above discussion, we know that $((\mathrm{coker}s_1)^{G_{\bar{v}_0}})^*/\mathrm{ker}\phi_0'(\mathrm{coker}s_1)^{G_{\bar{v}_0}}$ is supported in $$\mathrm{Spec}(\mathcal{O}_L[[\Gamma_v]]\otimes L).$$

Moreover by looking at the action of $\mathrm{Frob}_{\bar{v}_0}$ we see it is killed by the function $a_p^{-1}R-1$ where $a_p$ is the invertible function in $\mathbb{I}$ which gives the $U_p$-eigenvalue of $\mathbf{f}$ and $R$ is the image in $\Gamma_v$ of $\mathrm{Frob}_{\bar{v}_0}$ under class field theory. But $a_p(\phi_0)\not=1$ and $R(\phi_0)=1$ so $a_p^{-1}R-1$ is non-zero at $\phi_0$. So the support of $M\otimes_{\mathcal{O}_L}L$ has support of dimension at most zero and this proves the proposition.

\end{proof}
\section{Unitary Groups}
In this section, we introduce our notation for unitary groups and develop the Hida theory on them. We mainly follow \cite[Sections 2,3,4]{Hsieh CM} in our presentation, which in turn, summarizes portions of Shimura's books \cite{Shi97} and \cite{Shi00}. We define $S_n(R)$ \index{$S_n(R)$} to be the set of $n\times n$ Hermitian matrices with entries in $\mathcal{O}_\mathcal{K}\otimes_\mathbb{Z}R$. We define a map $e_\mathbb{A}=\prod_ve_v: \mathbb{A}_\mathbb{Q}\rightarrow \mathbb{C}^\times$ where for each place $v$ of $\mathbb{Q}$, $e_v$ is the standard additive character as in \cite[8.1.2]{SU} (which again follows Shimura's convention). We refer to \cite[Section 2.8]{Hsieh CM} for the discussion of the CM period $\Omega_\infty\in\mathbb{C}^\times$ \index{$\Omega_\infty$} and the $p$-adic period $\Omega_p\in\hat{\mathbb{Z}}^{ur,\times}_p$ \index{$\Omega_p$}.
\subsection{Groups}
Let $\delta'\in\mathcal{K}$ be a totally imaginary element such that $-i\delta'$ is positive. Let $d=\mathrm{Nm}(\delta')$ which we assume to be a $p$-adic unit. Let $\mathrm{U}(2)=\mathrm{U}(2,0)$ (resp. $\mathrm{GU}(2)=\mathrm{GU}(2,0)$) be the unitary group (resp. unitary similitude group) associated to the skew-Hermitian matrix $\zeta=\begin{pmatrix}\mathfrak{s}\delta'&\\&\delta'\end{pmatrix}$ \index{$\zeta$} for some $\mathfrak{s}\in\mathbb{Z}_+$ \index{$\mathfrak{s}$} prime to $p$. More precisely $\mathrm{GU}(2)$ is the group scheme over $\mathbb{Z}$ defined by: for any $\mathbb{Z}$ algebra $A$,
$$\mathrm{GU}(2)(A)=\{g\in\mathrm{GL}_2(A\otimes_\mathbb{Z}\mathcal{O}_\mathcal{K})|{}^t\!\bar{g}\zeta g=\lambda(g)\zeta,\ \lambda(g)\in A^\times.\}$$
The map $\lambda: \mathrm{GU}(2)\rightarrow \mathbb{G}_m$, $g\mapsto\lambda(g)$ is called the similitude character and $\mathrm{U}(2)\subseteq \mathrm{GU}(2)$ is the kernel of $\lambda$.
Let $G=\mathrm{GU}(3,1)$ (resp. $\mathrm{U}(3,1)$) \index{$G, \mathrm{GU}(3,1)$} be the similarly defined unitary similitude group (resp. unitary group) over $\mathbb{Z}$ associated to the skew-Hermitian matrix  $\begin{pmatrix}&&1\\&\zeta&\\-1&&\end{pmatrix}$. We denote this Hermitian space as $V$. Let $P\subseteq G$ \index{$P$} be the parabolic subgroup of $\mathrm{GU}(3,1)$ consisting of those matrices in $G$ of the form $\begin{pmatrix}\times&\times&\times&\times\\&\times&\times&\times\\&\times&\times&\times\\&&&\times\end{pmatrix}$. Let $N_P$ \index{$N_P$} be the unipotent radical of $P$. Then if $X_\mathcal{K}$ is the $1$-dimensional space over $\mathcal{K}$,
$$M_P:=\mathrm{GL}(X_\mathcal{K})\times \mathrm{GU}(2)\hookrightarrow \mathrm{GU}(V),\ (a,g_1)\mapsto \mathrm{diag}(a,g_1,\mu(g_1)\bar{a}^{-1})$$
is the Levi subgroup of $P$. Let $G_P:=\mathrm{GU}(2)\subseteq M_P$ \index{$G_P$} be the set of elements $\mathrm{diag}(1,g_1,\mu(g_1))$ as above. Let $\delta_P$ be the modulus character for $P$. We usually use a more convenient character $\delta$ such that $\delta^3=\delta_P$.\\

\noindent Since $p$ splits as $v_0\bar{v}_0$ in $\mathcal{K}$, $\mathrm{GL}_4(\mathcal{O}_\mathcal{K}\otimes\mathbb{Z}_p)\stackrel{\sim}{\rightarrow}\mathrm{GL}_4
(\mathcal{O}_{\mathcal{K}_{v_0}})\times\mathrm{GL}_4(\mathcal{O}_{\mathcal{K}_{\bar{v}_0}})$. Here $$\mathrm{U}(3,1)(\mathbb{Z}_p)\stackrel{\sim}{\rightarrow}\mathrm{GL}_4(\mathcal{O}_{\mathcal{K}_{v_0}})
=\mathrm{GL}_4(\mathbb{Z}_p)$$ is the projection onto the first factor. Let $B$ and $N$ be the upper triangular Borel subgroup of $G(\mathbb{Q}_p)$ and its unipotent radical, respectively. Let $$K_p=\mathrm{GU}(3,1)(\mathbb{Z}_p)\simeq \mathrm{GL}_4(\mathbb{Z}_p),$$ and for any $n\geq 1$ let $K_0^n$ \index{$K_0^n$} be the subgroup of $K$ consisting of matrices upper-triangular modulo $p^n$. Let $K_1^n\subset K_0^n$ \index{$K_1^n$} be the subgroup of matrices whose diagonal
elements are $1$ modulo $p^n$.\\

\noindent The group $\mathrm{GU}(2)$ is closely related to a division algebra. Put \index{$D$}
$$D=\{g\in M_2(\mathcal{K})|g\zeta{}^t\!\bar{g}=\det(g)\zeta\}.$$
Then $D$ is a definite quaternion algebra over $\mathbb{Q}$ with local invariant $\mathrm{inv}_v(D)=(-\mathfrak{s},-D_{\mathcal{K}/\mathbb{Q}})_v$ (the Hilbert symbol).
The relation between $\mathrm{GU}(2)$ and $D$ is explained by
$$\mathrm{GU}(2)=D^\times\times_{\mathbb{G}_m}\mathrm{Res}_{\mathcal{K}/\mathbb{Q}}\mathbb{G}_m.$$
For each finite place $v$ we write $D_v^1$ for the set of elements $g_v\in D_v^\times$ such that $|\mathrm{Nm}(g_v)|_v=1$, where $\mathrm{Nm}$ is the reduced norm. In application we will choose the $D$ to be the quaternion algebra ramified exactly at $\infty$ and the $q$ in the main theorems. \\

\noindent Let $\Sigma$ \index{$\Sigma$} be a finite set of primes containing all the primes at which $\mathcal{K}/\mathbb{Q}$ or $\xi$ is ramified, the primes dividing the level of $f_0$ (as in the introduction), the primes dividing $\mathfrak{s}$, the primes such that $\mathrm{U}(2)(\mathbb{Q}_v)$ is compact and the prime $2$. Let $\Sigma^1$ and $\Sigma^2$, respectively be the set of non-split primes in $\Sigma$ such that $\mathrm{U}(2)(\mathbb{Q}_v)$ is non-compact, and compact. We will sometimes write $[D^\times]$ for $D^\times(\mathbb{Q})\backslash D^\times(\mathbb{A}_\mathbb{Q})$. We similarly write $[\mathrm{U}(2)]$, $[\mathrm{GU}(2,0)]$, etcetera. For two automorphic forms $f_1, f_2$ on $\mathrm{U}(2)$ we write $\langle f_1,f_2\rangle=\int_{[\mathrm{U}(2)]}f_1(g)f_2(g)dg$. Here the Haar measure is normalized so that at finite places $\mathrm{U}(2)(\mathbb{Z}_\ell)$ has measure $1$, and at $\infty$ the compact set $\mathrm{U}(1)(\mathbb{R})\backslash\mathrm{U}(2)(\mathbb{R})$ has measure $1$.\\

\noindent We define $G_n=\mathrm{GU}(n,n)$ \index{$G_n, \mathrm{GU}(n,n)$} for the unitary similitude group for the skew-Hermitian matrix $\begin{pmatrix}&1_n\\-1_n&\end{pmatrix}$ and $\mathrm{U}(n,n)$ for the corresponding unitary group.

\subsection{Hermitian Spaces and Automorphic Forms}
Let $(r,s)=(3,3)$ or $(2,2)$ or $(3,1)$ or $(2,0)$. Then the unbounded Hermitian symmetric domain for $\mathrm{GU}(r,s)$ is
$$X^+=X_{r,s}=\{\tau=\begin{pmatrix}x\\y \end{pmatrix}|x\in M_s(\mathbb{C}),y\in M_{(r-s)\times s}(\mathbb{C}),i(x^*-x)>iy^*\zeta^{-1}y\}.$$
We use $x_0$ to denote the Hermitian symmetric domain for $\mathrm{GU}(2)$, which is just a point.
We have the following embedding of Hermitian symmetric domains:
$$\iota: X_{3,1}\times X_{2,0}\hookrightarrow X_{3,3}$$
$$(\tau,x_0)\hookrightarrow Z_\tau,$$
where $Z_\tau=\begin{pmatrix}x&0\\y&\frac{\zeta}{2}\end{pmatrix}$ for $\tau=\begin{pmatrix}x\\y\end{pmatrix}$.\\

\noindent Let $G_{r,s}=\mathrm{GU}(r,s)$ and $H_{r,s}=\mathrm{GL}_r\times\mathrm{GL}_s$. Let $G_{r,s}(\mathbb{R})^+$ be the subgroup of elements of $G_{r,s}(\mathbb{R})$ whose similitude factors are positive. If $s\not=0$ we define a co-cycle: $$J: G_{r,s}(\mathbb{R})^+\times X^+\rightarrow H_{r,s}(\mathbb{C})$$ \index{$H$} by
$J(\alpha,\tau)=(\kappa(\alpha,\tau),\mu(\alpha,\tau))$, where for $\tau=\begin{pmatrix}x \\y \end{pmatrix}$ and
$\alpha=\begin{pmatrix}a&b&c\\g&e&f\\h&l&d\end{pmatrix}$ (block matrix with respect to the partition $(s+(r-s)+s)$),
$$\kappa(\alpha,\tau)=\begin{pmatrix}\bar{h}{}^t\!x+\bar{d}&\bar{h}{}^t\!y+l\bar{\zeta}\\-\bar{\zeta}^{-1}(\bar{g}{}^t\!x+\bar{f})&-\bar{\zeta}^{-1}
\bar {g}{}^t\!y+\bar{\zeta}^{-1}\bar{e}\bar{\zeta}\end{pmatrix},\ \mu(\alpha,\tau)=hx+ly+d$$
in the $\mathrm{GU}(3,1)$ case and
$$\kappa(\alpha,\tau)=\bar{h}{}^t\!x+\bar{d},\ \mu(\alpha,\tau)=hx+d$$
in the $\mathrm{GU}(3,3)$ case.
\noindent Let $i\in X^+$ be the point $\begin{pmatrix}i1_s\\0\end{pmatrix}$. Let $K_\infty^+$ be the compact subgroup of $\mathrm{U}(r,s)(\mathbb{R})$ stabilizing $i$ and let $K_\infty$  be the group generated by $K_\infty^+$ and $\mathrm{diag}(1_{r+s},-1_s)$. Then $$K_\infty^+\rightarrow H(\mathbb{C}),\ k_\infty\mapsto J(k_\infty,i)$$ defines an algebraic representation of $K_\infty^+$. Later on in Section \ref{sectionar} we will also consider a different choice $\mathbf{i}$ on the Symmetric domain for $(r,s)=(3,3)$ or $(2,2)$.
\begin{definition}
A weight $\underline{k}$ \index{$\underline{k}$} is defined to be an $(r+s)$-tuple
$$\underline{k}=(c_{r+s},...,c_{s+1};c_{1},...,c_{s})\in\mathbb{Z}^{r+s}$$ with $c_{1}\geq...\geq c_{r+s}$, $c_s\geq c_{s+1}+r+s$.
\end{definition}
Our convention for identifying a weight with a tuple of integers is different from others in the literature. For example, our $c_{s+i}$ ($1\leq i\leq r$) and $c_j$ ($1\leq j\leq s$) corresponds to $-a_{r+1-i}$ and $b_{s+1,j}$ in \cite[Section 3.1]{Hsieh CM}.

We refer to \emph{loc.cit.} for the definition of the algebraic representation $L_{\underline{k}}(\mathbb{C})$ of $H$ with the action denoted by $\rho_{\underline{k}}$ (note the different index for weight) and define a model $L^{\underline{k}}(\mathbb{C})$ of the representation $H(\mathbb{C})$ with the highest weight $\underline{k}$ as follows. The underlying space of $L^{\underline{k}}(\mathbb{C})$ is $L_{\underline{k}}(\mathbb{C})$ and the group action is defined by
$$\rho^{\underline{k}}(h)=\rho_{\underline{k}}({}^t\!h^{-1}),h\in H(\mathbb{C}).$$
We also note that if each $\underline{k}=(0,...,0;\kappa,...,\kappa)$ then $L^{\underline{k}}(\mathbb{C})$ is one-dimensional.

For a weight $\underline{k}$, define $\|\underline{k}\|$ by:
$$\|\underline{k}\|:=-c_{s+1}-...-c_{s+r}+c_{1}+...+c_{s}$$
and
$|\underline{k}|$ by:
$$|\underline{k}|=(c_{1}+...+c_{s}).\sigma-(c_{s+1}+...+c_{s+r}).\sigma c\in\mathbb{Z}^I.$$
Here $I$ is the set of embeddings $\mathcal{K}\hookrightarrow \mathbb{C}$ and $\sigma$ is the Archimedean place of $\mathcal{K}$ determined by our fixed embedding $\mathcal{K}\hookrightarrow \mathbb{C}$.
Let $\chi$ be a Hecke character of $\mathcal{K}$ with infinity type $|\underline{k}|$, i.e. the Archimedean part of $\chi$ is given by:
$$\chi_\infty(z)=(z^{c_{1}+...+c_{s}}\cdot \bar{z}^{-(c_{s+1}+...+c_{s+r})}).$$
\begin{definition}
Let $U$ be an open compact subgroup of $G(\mathbb{A}_{f})$.  We denote by $M_{\underline{k}}(U,\mathbb{C})$ the space of holomorphic $L^{\underline{k}}(\mathbb{C})$-valued functions $f$ on $X^+\times G(\mathbb{A}_{f})$ such that for $\tau\in X^+$, $\alpha\in G(\mathbb{Q})^+$ and $u\in U$ we have
$$
f(\alpha\tau,\alpha gu)=\mu(\alpha)^{-\|\underline{k}\|}\rho^{\underline{k}}(J(\alpha,\tau))f(\tau,g).$$
\end{definition}

Now we consider automorphic forms on unitary groups in the adelic language. The space of automorphic forms of weight $\underline{k}$ and level $U$ with central character $\chi$ consists of smooth and slowly increasing functions $F: G(\mathbb{A})\rightarrow L_{\underline{k}}(\mathbb{C})$ such that for every $(\alpha,k_\infty,u,z)\in G(\mathbb{Q})\times K_\infty^+\times U\times Z(\mathbb{A})$,
$$F(z\alpha gk_\infty u)=\rho^{\underline{k}}(J(k_\infty,i)^{-1})F(g)\chi^{-1}(z).$$
We can associate a $L_{\underline{k}}(\mathbb{C})$-valued function on $X^+\times G(\mathbb{A}_f)/U$ given by
\begin{equation}\label{CLASSICAL}f(\tau,g):=\chi_f(\mu(g))\rho^{\underline{k}}(J(g_\infty,i))F((f_\infty,g))
\end{equation}
where $g_\infty\in G(\mathbb{R})$ such that $g_\infty(i)=\tau$. If this function is holomorphic, then we say that the automorphic form $F$ is holomorphic.
\subsection{Galois Representations Associated to Cuspidal Representations}\label{2.24}
In this section, we follow \cite[Theorem 7.1, Lemma 7.2]{SU} to discuss the Galois representations associated to cuspidal automorphic representations on $\mathrm{GU}(r,s)(\mathbb{A}_\mathbb{Q})$. Let $\pi$ be an irreducible automorphic representation of $\mathrm{GU}(r,s)(\mathbb{A}_\mathbb{Q})$ generated by a holomorphic cuspidal eigenform with weight $\underline{k}=(c_{r+s},...,c_{s+1};c_{1},...,c_{s})$ and central character $\chi_\pi$. Let $\Sigma(\pi)$ be a finite set of primes of $\mathbb{Q}$ containing all the primes at which $\pi$ is unramified and all the primes dividing $p$. Then for some $L$ finite over $\mathbb{Q}_p$ there is a Galois representation (see \cite{Shin}, \cite{Morel} and \cite{Sk10}):
$$R_p(\pi):G_\mathcal{K}\rightarrow \mathrm{GL}_n(L)$$
($n=r+s$) such that:
\begin{itemize}
\item[(a)]$R_p(\pi)^c\simeq R_p(\pi)^\vee\otimes\rho_{p,\chi_\pi^{1+c}}\epsilon^{1-n}$ where $\rho_{p,\chi_\pi^{1+c}}$ denotes the $p$-adic Galois character associated to $\chi_\pi^{1+c}$ by class field theory and $\epsilon$ is the cyclotomic character.
\item[(b)]$R_p(\pi)$ is unramified at all finite places not above primes in $\Sigma(\pi)$, and for such a place $w\nmid p$:
$$\det (1-R_p(\pi)(\mathrm{Frob}_w)q_w^{-s})=L(BC(\pi)_w\otimes \chi_{\pi,w}^c,s+\frac{1-n}{2})^{-1}$$
Here, the $\mathrm{Frob}_w$ is the geometric Frobenius and $BC$ means the base change from $\mathrm{U}(r,s)$ to $\mathrm{GL}_{r+s}$.
Suppose $\pi_v$ is nearly ordinary  with respect to $\underline{k}$ (see Subsection \ref{Subsection 3.8}) and unramified at all primes $v$ dividing $p$. Recall $v_0|p$ corresponds to $\iota:\mathbb{C}\simeq \mathbb{C}_p$. If we write $\kappa_{i}=s-i+c_{i}$ for $1\leq i\leq s$ and $\kappa_{i}=c_{i}+s+r+s-i$ for $s+1\leq i \leq r+s$, then
$$R_p(\pi)|G_{\mathcal{K},v_0}\simeq\begin{pmatrix}\begin{matrix}\xi_{r+s,v}\epsilon^{-\kappa_{r+s}}&*
\\&\xi_{r+s-1,v}\epsilon^{\kappa_{r+s-1}}\end{matrix}&
\begin{matrix}*&*\\*&*\end{matrix}\\0&\begin{matrix}...&*\\&\xi_{1,v}\epsilon^{-\kappa_{1}}\end{matrix}
\end{pmatrix}$$
where $\xi_{i,v}$ are unramified characters. Using fact (a) above, we also know that $R_p(\pi)_{\bar{v_0}}$ is equivalent to an upper triangular representation as well (with the Hodge-Tate weight being $(-(\kappa_1+1-r-s-|\underline{k}|),...,-(\kappa_{r+s}+1-r-s-|\underline{k}|)$) (in our geometric convention $\epsilon^{-1}$ has Hodge-Tate weight one).
\end{itemize}

\subsection{Shimura Varieties}
Now we consider the group $\mathrm{GU}(3,1)$. For any open compact subgroup $K=K_pK^p$ of $\mathrm{GU}(3,1)(\mathbb{A}_f)$ whose $p$-component is $K_p=\mathrm{GU}(3,1)(\mathbb{Z}_p)$, we refer to \cite[Section 2.1]{Hsieh CM} for the definition and arithmetic model of the associated Shimura variety, which we denote as $S_G(K)_{/\mathcal{O}_{\mathcal{K},(v_0)}}$. The scheme $S_G(K)$ \index{$S_G(K)$} represents the following functor: for any $\mathcal{O}_{\mathcal{K},(v_0)}$-algebra $R$, $\underline{A}(R)=\{(A,\bar{\lambda},\iota,\bar{\eta}^p)\}$ where $A$ is an abelian scheme over $R$ of relative dimension four with CM by $\mathcal{O}_\mathcal{K}$ given by $\iota$, $\bar{\lambda}$ is an orbit of prime-to-$p$ polarizations and $\bar{\eta}^p$ is an orbit of prime-to-$p$ level structures. There is also a theory of compactifications of $S_G(K)$ developed by Lan in \cite{LAN2}. We denote by $\bar{S}_G(K)$ a toroidal compactification and $S^*_G(K)$ \index{$S_G(K), S^*_G(K), \bar{S}_G(K)$} the minimal compactification. We refer to \cite[Section 2.7]{Hsieh CM} for details. The boundary components of $S^*_G(K)$ are in one-to-one correspondence with the set of cusp labels defined below. For $K=K_pK^p$ as above we define the set of cusp labels to be:
$$C(K):=(\mathrm{GL}(X_\mathcal{K})\times G_P(\mathbb{A}_f))N_P(\mathbb{A}_f)\backslash G(\mathbb{A}_f)/K.$$
This is a finite set. We denote by $[g]$ the class represented by $g\in G(\mathbb{A}_f)$. For each such $g$ whose $p$-component is $1$ we define $K_P^g=G_P(\mathbb{A}_f)\cap gKg^{-1}$ and denote $S_{[g]}:=S_{G_P}(K_P^g)$ \index{$S_{[g]}$} the corresponding Shimura variety for the group $G_P$ with level group $K_P^g$. By strong approximation we can choose a set $\underline{C}(K)$ of representatives of ${C}(K)$ consisting of elements $g=pk^0$ for $p\in P(\mathbb{A}_f^{(\Sigma)})$ and $k^0\in K^0$ for $K^0$ the maximal compact subgroup of $G(\mathbb{A}_f)$ defined in \cite[Section 1.10]{Hsieh CM}.
\subsection{Igusa Varieties and $p$-adic Automorphic Forms}\label{2.4}
Now we recall briefly the notion of Igusa varieties in \cite[Section 2.3]{Hsieh CM}. We remark that these materials are special cases in Hida's book \cite[Chapter 8]{Hida04}. Let $V$ be the Hermitian space for the unitary group $\mathrm{GU}(3,1)$ and let $M$ be the standard lattice of $V$ as in \cite[Section 1.8]{Hsieh CM}. Let $M_p=M\otimes_{\mathbb{Z}}\mathbb{Z}_p$. Let $\mathrm{Pol}_p=\{N^{-1}, N^0\}$ be a polarization of $M_p$. Recall that this means that $N^{-1}$ and $N^0$ are maximal isotropic $\mathcal{O}_\mathcal{K}\otimes\mathbb{Z}_p$-submodules in $M_p$ such that they are dual to each other with respect to the Hermitian metric on $V$, and
$$\mathrm{rank}_{\mathbb{Z}_p} N_{v_0}^{-1}=\mathrm{rank}_{\mathbb{Z}_p}N_{\bar{v}_0}^0=3,\  \mathrm{rank}_{\mathbb{Z}_p}N_{\bar{v}_0}^{-1}=\mathrm{rank}_{\mathbb{Z}_p}N_{v_0}^0=1.$$

The Igusa variety $I_G(K^n)$ \index{$I_G(K^n)$} of level $p^n$ is the scheme over $\mathcal{O}_{\mathcal{K},(v_0)}$ representing the quintuple $\underline{A}(R)=\{(A,\bar{\lambda},\iota,\bar{\eta}^p,j)\}$ where the $A$, $\bar{\lambda}$, $\iota$, $\bar{\eta}^p$ are as in the definition for Shimura varieties of $\mathrm{GU}(3,1)$ as above, and an injection of group schemes
$$j:\mu_{p^n}\otimes_\mathbb{Z}N^0\hookrightarrow A[p^n]$$
over $R$ which is compatible with the $\mathcal{O}_\mathcal{K}$-action on both sides.
Note that the existence of $j$ implies that $A$ must be ordinary along the special fiber. There is also a theory of Igusa varieties over $\bar{S}_G(K)$. Let $\underline{\omega}$ be the automorphic vector bundle on $S_G(K)$ as defined in \cite[Subsection 2.7.3]{Hsieh CM}. As in \emph{loc.cit.} let $\bar{H}_{p-1}\in H^0(S_G(K)_{/\bar{\mathbb{F}}},\mathrm{det}(\underline{\omega})^{p-1})$ be the Hasse invariant. Over the minimal compactification, some power (say the $t$-th) of the Hasse invariant can be lifted to $\mathcal{O}_{v_0}$, by the ampleness of $\det\underline{\omega}$. We denote such a lift by $E$. By the Koecher principle we can regard $E$ as in $H^0(\bar{S}_G(K),\mathrm{det}(\underline{\omega}^{t(p-1)}))$. Let $\mathcal{O}_m:=\mathcal{O}_{\mathcal{K},v_0}/p^m\mathcal{O}_{\mathcal{K},v_0}$. Set $T_{0,m}:=\bar{S}_G(K)[1/E]_{/\mathcal{O}_m}$. For any positive integer $n$ define $T_{n,m}:=I_G(K^n)_{/\mathcal{O}_m}$ and $T_{\infty,m}=\varprojlim_n T_{n,m}$. Then $T_{\infty,m}$ is a Galois cover over $T_{0,m}$ with Galois group $\mathbf{H}\simeq \mathrm{GL}_3(\mathbb{Z}_p)\times \mathrm{GL}_1(\mathbb{Z}_p)$. Let $\mathbf{N}\subset \mathbf{H}$ be the upper triangular unipotent radical. Define:
$$V_{n,m}=H^0(T_{n,m},\mathcal{O}_{T_{n,m}}).$$
Let $V_{\infty,m}=\varinjlim_n V_{n,m}$ and $V_{\infty,\infty}=\varprojlim_m V_{\infty,m}$ be the space of $p$-adic automorphic forms on $\mathrm{GU}(3,1)$ with tame level $K$. We also define $W_{n,m}=V_{n,m}^\mathbf{N}$, $W_{\infty,m}=V_{\infty,m}^\mathbf{N}$ and $\mathcal{W}=\varinjlim_n\varinjlim_m W_{n,m}$. We define $V_{n,m}^0$, etcetera, to be the cuspidal part of the corresponding spaces.\\

\noindent We can make similar definitions for the definite unitary similitude groups $G_P$ as well and define $V_{n,m,P}$,$V_{\infty,m,P}$, $V_{\infty,\infty,P}$, $V_{n,m,P}^{\mathbf{N}}$, $\mathcal{W}_P$, and so forth.\\

\noindent Let $K_0^n$ and $K_1^n$ be the subgroup of $\mathbf{H}$ consisting of matrices which are in $B_3\times \mathrm{GL}_1$ or $N_3\times \mathrm{GL}_1$ modulo $p^n$, for $B_3$ and $N_3$ being the upper triangular Borel subgroup of $\mathrm{GL}_3$ and its unipotent radical, respectively. (These notations are already used for level groups of automorphic forms. The reason for using the same notation here is that automorphic forms with level group $K_\bullet^n$ are $p$-adic automorphic forms of level group $K_\bullet^n$.) We sometimes denote $I_G(K_1^n)=I_G(K^n)/K_1^n$ and $I_G(K_0^n)=I_G(K^n)/K_0^n$.\\

\noindent We can define the Igusa varieties for $G_P$ as well. For $\bullet=0,1$ we let $K_{P,\bullet}^{g,n}:= gK^n_\bullet g^{-1}\cap G_P(\mathbb{A}_f)$ and let $I_{[g]}(K_\bullet^n):=I_{G_P}(K_{P,\bullet}^{g,n})$ be the corresponding Igusa variety over $S_{[g]}$. We denote $A_{[g]}^n$ the coordinate ring of $I_{[g]}(K_1^n)$. Let $A_{[g]}^{\infty}=\varinjlim_n A_{[g]}^n$ and let $\hat{A}_{[g]}^\infty$ be the $p$-adic completion of $A^\infty_{[g]}$. This is the space of $p$-adic automorphic forms for the group $\mathrm{GU}(2,0)$ of level group $gKg^{-1}\cap G_P(\mathbb{A}_f)$.\\

\noindent\underline{For Unitary Groups}\\
\noindent Assume the tame level group $K$ is neat. Let $c$ be an element in $\mathbb{Q}_+\backslash \mathbb{A}_{\mathbb{Q},f}^\times /\mu(K)$. We refer to \cite[Section 2.5]{Hsieh CM} for the notion of $c$-Igusa schemes $I_{\mathrm{U}(2)}^0(K,c)$ for the unitary groups $\mathrm{U}(2,0)$ (not the similitude group). It parameterizes quintuples $(A,\lambda,\iota,\bar{\eta}^{(p)},j)_{/S}$ similar to the Igusa schemes for unitary similitude groups but requires $\lambda$ to be a prime to $p$ $c$-polarization of $A$ such that $(A,\bar{\lambda},\iota,\bar{\eta}^{(p)},j)$ is a quintuple as in the definition of Shimura varieties for $\mathrm{GU}(2)$. For $g_c$ with $\mu(g)\in\mathbb{A}_\mathbb{Q}^\times$ in the class of $c$. Let ${}^c\!K=g_cKg_c^{-1}\cap U(2)(\mathbb{A}_{\mathbb{Q},f})$. Then the space $I_{\mathrm{U}(2)}^0(K,c)$ is isomorphic to the space of forms on $I_{\mathrm{U}(2)}^0({}^c\!K,1)$ (see \emph{loc.cit.}).\\

\noindent\underline{Embedding of Igusa Schemes}\\
\noindent In order to use the pullback formula geometrically we need a map from the Igusa scheme of $\mathrm{U}(3,1)\times\mathrm{U}(0,2)$ to that of $\mathrm{U}(3,3)$ (or from the Igusa scheme of $\mathrm{U}(2,0)\times\mathrm{U}(0,2)$ to that of $\mathrm{U}(2,2)$) given by:
\begin{equation}\label{Em}i([(A_1,\lambda_1,\iota_1,\eta_1^pK_1,j_1)],[(A_2,\lambda_2,\iota_2,\eta_2^pK_2,j_2)])=[(A_1\times A_2,\lambda_1\times\lambda_2,\iota_1\times\iota_2,(\eta_1^p\times \eta_2^p)K_3,j_1\times j_2)].
\end{equation}

\subsection{Fourier-Jacobi Expansions}\label{F-J-E}
\noindent\underline{Analytic Fourier-Jacobi Coefficients}:\\
\noindent Let $\beta\in\mathbb{Q}_+$. Over $\mathbb{C}$ we have the $\beta$-analytic Fourier-Jacobi coefficient for a holomorphic automorphic form $f$ on $G=\mathrm{GU}(3,1)$ given by:
$$\mathrm{FJ}_\beta(f,g)=\int_{\mathbb{Q}\backslash \mathbb{A}} f(\begin{pmatrix}1&&n\\&1_2&\\&&1\end{pmatrix}g)e_\mathbb{A}(-\beta n)dn.$$
The Haar measure is normalized so that the set $(\mathbb{Q}\backslash \mathbb{A})$ has measure $1$.

\noindent\underline{$p$-adic Cusps}\\
\noindent As in \cite{Hsieh CM} each pair $(g,j)\in C(K)\times\mathbf{H}$ can be regarded as a $p$-adic cusp, i.e. cusps of the Igusa tower. In the following we give the algebraic Fourier-Jacobi expansion at $p$-adic cusps.\\

\noindent\underline{Algebraic Theory for Fourier-Jacobi Expansions}\\
\noindent We follow \cite[pp. 16-17]{Hsieh CM} to give some background about the algebraic theory for Fourier-Jacobi expansion on the group $G=\mathrm{GU}(r,1)$. These are special cases developed by Lan (\cite{LAN1}, \cite{LAN2}). Recall $[g]$ is a cusp label corresponding to class $g\in G(\mathbb{A}_f)$. One defines $\mathcal{Z}_{[g]}$ \index{$\mathcal{Z}_{[g]}$} a group scheme over $S_{[g]}$ using the universal abelian variety as in \emph{loc.cit.} and denote $\mathcal{Z}_{[g]}^\circ$ \index{$\mathcal{Z}_{[g]}^\circ$} the connected component over $S_{[g]}$. There is a line bundle $\mathcal{L}(\beta)$ \index{$\mathcal{L}(\beta)$} on $\mathcal{Z}_{[g]}$ determined by $\beta$ \cite[Subsection 2.7.4]{Hsieh CM}.\\

\noindent Now let $f\in H^0(I_G(K_1^n)_{/R},\omega_\kappa)$ be a scalar weight $\kappa\geq 6$ (i.e. of weight $(0,0,0;\kappa)$) modular form over an $\mathcal{O}$ algebra $R$, then by (\cite[Subsection 3.6.2]{Hsieh CM}) there is a Fourier-Jacobi expansion of $f$ at the $p$-adic cusp $(g,h)$ for $h\in\mathbf{H}$:
$$\mathrm{FJ}_{[g]}^h(f)=\sum_{\beta\in \mathscr{S}_{[g]}}a_{[g]}^h(\beta,f)q^\beta$$
where
$$a_{[g]}^h(\beta,f)\in (\hat{A}_{[g]}^\infty\otimes_\mathcal{O} R)\otimes_{A_{[g]}} H^0(\mathcal{Z}_{[g]}^\circ,\mathcal{L}(\beta))$$
and $\mathscr{S}_{[g]}$ is a sub-lattice of $\mathbb{Q}$ determined by the level subgroup.
This is given by evaluating $f$ at the Mumford family $(\mathfrak{M},h^{-1}j_\mathfrak{M},\omega_\mathfrak{M})$ where $j_\mathfrak{M}$ is a fixed level structure (see \cite[Subsection 2.7.4]{Hsieh CM}). (Note that we do not have the subscript $N_H^1$ there since it is a scalar weight $\kappa$.)\\

\noindent \underline{Siegel Operators}\\

We have a Siegel operator $\Phi$ at the $p$-adic cusp $(g,h)$ defined by:

$$\Phi_{[g]}^h:H^0(I_G(K_1^n)_{/R},\omega_\kappa)\rightarrow A_{[g]}^n\otimes_{\mathcal{O}}R$$

$$f\mapsto \Phi_{[g]}^h(f):=a^h_{[g]}(0,f).$$
The Siegel operator at $[g]$ can be defined analytically as follows:\\
For any $g\in G(\mathbb{A}_f)$ we define:
\begin{equation}\label{SiegelOperators}\Phi_{P,g}(f)=\int_{N_P(\mathbb{Q})\backslash N_P(\mathbb{A}_\mathbb{Q})}f(ng)dn.
\end{equation}
We fix the Haar measure on $N_P(\mathbb{Q})\backslash N_P(\mathbb{A}_\mathbb{Q})$ as in \cite[Section 8.2]{SU}.
The relation between the algebraic and analytic Siegel operator is given in \cite[(3.12)]{Hsieh CM}.

\subsection{Weight Space for $\mathrm{GU}(3,1)$}
Let $H=\mathrm{GL}_3\times \mathrm{GL}_1$ and $T\subseteq H$ be the diagonal torus. Then $\mathbf{H}\simeq H(\mathbb{Z}_p)$. We let $\Lambda_2=\Lambda$ \index{$\Lambda_2$} be the completed group algebra $\mathbb{Z}_p[[T(1+p\mathbb{Z}_p)]]$. This is (non-canonically) isomorphic to a formal power series ring with four variables. There is an action of ${T}(\mathbb{Z}_p)$ on the Igusa scheme given by its action on the embedding $j:\mu_{p^n}\otimes_\mathbb{Z} N^0\hookrightarrow  A[p^n]$. (See \cite[Definition 3.4]{Hsieh CM}, which in turn, follows Hida's convention in \cite[Section 8.2]{Hida04}.) This gives the spaces of $p$-adic modular forms for $\mathrm{GU}(3,1)$ a structure of $\Lambda$-algebra. A $\bar{\mathbb{Q}}_p$-point $\phi$ of $\mathrm{Spec}\Lambda$ is call arithmetic if it is determined by a character $[\underline{k}]\cdot[\zeta]$ of $T(1+p\mathbb{Z}_p)\simeq (1+p\mathbb{Z}_p)^4$ where $\underline{k}$ is a weight and $\zeta=(\zeta_1,\zeta_2,\zeta_3;\zeta_4)$ for $\zeta_i\in \mu_{p^\infty}$. Here $[\underline{k}]$ is the character by regarding $\underline{k}$ as a character of $T(1+\mathbb{Z}_p)$ by ${[\underline{k}]}(t_1,t_2,t_3,t_4)=(t_1^{-c_4}t_2^{-c_3}t_3^{-c_2}t_4^{-c_1})$ and $[\zeta]$ is the finite order character given by mapping $(1+p)$ to $\zeta_i$ at the corresponding entry of $T(\mathbb{Z}_p)$. We often write this point $\underline{k}_\zeta$. We also define $\omega^{[\underline{k}]}$ as a character of the torsion part of $T(\mathbb{Z}_p)$ (canonically isomorphic to $(\mathbb{F}_p^\times)^4$) given by $\omega^{[\underline{k}]}(t_1,t_2,t_3,t_4)=\omega(t_1^{-c_4}t_2^{-c_3}t_3^{-c_2}t_4^{-c_1})$.\\

\noindent We can define the weight ring $\Lambda_P$ for the definite unitary group $G_P$ as well.

\subsection{Nearly Ordinary Forms}\label{Subsection 3.8}
Here for convenience we again follow Hsieh's treatment of Hida theory, but point out that the results are actually due to Hida \cite{Hida02}. We refer to \cite[3.8.3, 4.3]{Hsieh CM} for the definition of the $\mathrm{U}_p$ operator and Hida's idempotent $e$ acting on the space $V_{\infty,\infty}^\mathbf{N}$ of $p$-adic automorphic forms on $\mathrm{GU}(3,1)$ and the nearly ordinary subspace of the space of $p$-adic modular forms. The space of nearly ordinary automorphic forms (cusp forms) is denoted as $\mathcal{W}_{ord}$ ($\mathcal{W}_{ord}^0$). For $q=0$ or $\emptyset$ we let $\textbf{V}_{ord}^q$ be the Pontryagin dual of $\mathcal{W}_{ord}^q$. Then we have the following theorem (\cite[Theorem 4.21]{Hsieh CM})

\begin{theorem}
Let $q=0$ or $\emptyset$. Then:\\
(1) $\mathbf{V}_{ord}^q$ is a free $\Lambda$ module of finite rank;\\
(2) For any $\underline{k}$ very regular we have natural isomorphisms:
$$\mathcal{M}_{ord}^q(K,\Lambda)\otimes\Lambda/P_{\underline{k}}\xrightarrow{\sim} eM_{\underline{k}}^q(K,\mathcal{O}_p)$$
where $\mathcal{M}_{ord}^q(K, \Lambda)$ is defined in Definition \ref{Lambda Adic}. Here we identify $eM_{\underline{k}}^q(K,\mathcal{O}_{v_0})$ with its image in the space of $p$-adic automorphic forms of weight $\underline{k}$ under $\beta_{\underline{k}}$ for the map $\beta_{\underline{k}}$ defined in \cite[Equation (3.3)]{Hsieh CM}.
\end{theorem}

\begin{remark}
If $\mathcal{K}$ is a general CM field, then the statement of the corresponding result is more complicated; see \cite[Section 4.5]{Hsieh CM}.
\end{remark}

\subsection{$\Lambda$-adic Forms}
\begin{definition}\label{Lambda Adic}
For any finite $\Lambda$ algebra $R$, and $q=0$ or $\emptyset$ we define the space of $R$-adic ordinary forms to be:
$$\mathcal{M}^q_{ord}(K,R):=\mathrm{Hom}_{\Lambda}(\mathbf{V}_{ord}^q,R).$$
Similarly, if $R$ is a $\Lambda_P$-algebra, then we define:
$$\mathcal{M}_{ord,[g],P}(K_{P,[g]}, R):=\mathrm{Hom}_{\Lambda_P}(\mathbf{V}_{ord,P,[g]},R).$$
Here the subscript $[g]$ means that the prime to $p$ level group is $K_P^{g}$ as defined previously.
\end{definition}

\noindent For any $f\in \mathcal{M}_{ord}(K,R)$ we have an $R$-adic Fourier-Jacobi expansion:
\begin{equation}\label{FJEX}
\mathrm{FJ}_{[g]}^h(f)=\sum_{\beta\in \mathscr{S}_{[g]}}a_{[g]}^h(\beta,f)q^\beta
\end{equation}
obtained from the Fourier-Jacobi expansion on $\mathcal{W}_{ord}^q$,
where $a_{[g]}^h(\beta,f)\in R\hat{\otimes}\hat{A}^\infty_{[g]}\otimes_{A_{[g]}}H^0(\mathcal{Z}_{[g]}^\circ,\mathcal{L}(\beta))$ (see \cite[Subsection 4.6.1]{Hsieh CM}).
We also have an $R$-adic Siegel operator which we denote as $\hat{\Phi}_{[g]}^h$. Let $w_3'=\begin{pmatrix}&&1&\\1&&&\\&1&&\\&&&1\end{pmatrix}\in \mathrm{GL}_4(\mathbb{Z}_p)\simeq \mathrm{U}(3,1)(\mathbb{Z}_p)$. (Notice that we used the place $v_0$ to identify $\mathrm{GL}_4(\mathbb{Z}_p)$ with $\mathrm{U}(3,1)(\mathbb{Z}_p)$ here. We use $w_3'$ instead of $w_3$ as in \cite[pp. 35]{Hsieh CM} to distinguish it from $w_3\in \mathrm{U}(3,3)$.) Now we have the following important theorem
\begin{theorem}\label{exact}
\cite[Theorem 4.26]{Hsieh CM} Let $R$ be as before. We have the following short exact sequence
$$0\rightarrow \mathcal{M}_{ord}^0(K,R)\rightarrow \mathcal{M}_{ord}(K,R)\xrightarrow{\hat{\Phi}^{w_3'}=\oplus \hat{\Phi}_{[g]}^{w_3'}}\oplus_{g\in C(K)}\mathcal{M}_{ord}(K_P^g,R)\rightarrow 0.$$
\end{theorem}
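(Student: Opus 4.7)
The plan is to deduce this $\Lambda$-adic exact sequence from its classical counterpart at each very regular arithmetic weight $\underline{k}$, using Hida's vertical control theorem just stated. By that theorem the modules $\mathbf{V}_{ord}^0$, $\mathbf{V}_{ord}$, and their analogues $\mathbf{V}_{ord,P,[g]}$ for $G_P$ are finite free over the appropriate Iwasawa algebras, so applying $\mathrm{Hom}(-,A)$ to an exact sequence of $\mathbf{V}$'s is exact and yields the target sequence. It therefore suffices to check three claims: injectivity of the inclusion of cuspidal $\Lambda$-adic forms, exactness in the middle, and surjectivity of $\hat\Phi^{w_3'}$.

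For the left and middle exactness, I would verify that cuspidality in the $\Lambda$-adic setting is characterized by simultaneous vanishing of all Siegel operators $\hat{\Phi}_{[g]}^{w_3'}$. This is the $q$-expansion principle on Lan's toroidal compactification: any modular form on the open Shimura variety extends by Koecher to the compactification, and it is cuspidal precisely when its algebraic $q$-expansion at each boundary chart indexed by $g\in C(K)$ has zero constant term. Taking the inverse limit of the integral statements over finite level $p^n$ and passing to the ordinary projection lifts this characterization to the $\Lambda$-adic setting.

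The substantive step is surjectivity of $\hat{\Phi}^{w_3'}$, which I would prove by reduction to classical weights. At each very regular arithmetic $\underline{k}$, classical surjectivity of the Siegel operator onto ordinary forms for $G_P$ of level $K_P^g$ follows from the algebraic theory of Fourier-Jacobi expansions, which lets one reconstruct an automorphic form on $\mathrm{GU}(3,1)$ from prescribed boundary data on $\mathcal{Z}_{[g]}^\circ$ (theta functions as used later in the paper giving one concrete realization). Invoking $\mathcal{M}_{ord}(K,\Lambda)\otimes\Lambda/P_{\underline{k}}\simeq eM_{\underline{k}}(K,\mathcal{O}_p)$ together with its analogue for the definite group $G_P$, the cokernel of the $\Lambda$-adic $\hat{\Phi}^{w_3'}$ is a finitely generated $\Lambda$-module that vanishes modulo a Zariski-dense family of height-one primes $P_{\underline{k}}$, hence is zero by Nakayama.

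The main obstacle is the technical compatibility between the $\Lambda$-adic and classical Siegel operators at each cusp $[g]$: one must show that $\hat{\Phi}_{[g]}^{w_3'}$ really does specialize at every arithmetic point to the classical $\Phi_{[g]}^{w_3'}$ landing in ordinary $p$-adic forms on $G_P$ with the prescribed tame level $K_P^g$. This requires careful matching of (i) the Mumford family used to define the algebraic $q$-expansion at the cusp $[g]$, (ii) the Igusa-tower trivializations defining $p$-adic forms on $G_P$, and (iii) the specific Weyl element $w_3'$ used to realign cusp conventions between the upper-triangular Borel normalization on $\mathrm{GU}(3,1)$ and the Klingen boundary. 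Once these data are shown to commute compatibly with the ordinary projector $e$, the Nakayama-specialization argument closes the proof.
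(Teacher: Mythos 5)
The paper does not prove this statement at all: it is quoted verbatim as \cite[Theorem 4.26]{Hsieh CM}, so there is no internal proof to compare against. Judged on its own terms, your proposal has the right overall architecture (characterize cuspidality by vanishing of constant terms, reduce surjectivity to arithmetic specializations via the control theorem, and kill the cokernel by Nakayama — and indeed, since $\mathrm{coker}/P_{\underline{k}}\,\mathrm{coker}=0$ for a single height-one prime already forces $\mathrm{coker}=\mathfrak{m}\,\mathrm{coker}$, Nakayama applies), but the decisive step is asserted rather than proved.

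The gap is in the claim that ``classical surjectivity of the Siegel operator onto ordinary forms for $G_P$ of level $K_P^g$ follows from the algebraic theory of Fourier-Jacobi expansions, which lets one reconstruct an automorphic form on $\mathrm{GU}(3,1)$ from prescribed boundary data.'' Reconstructing a global form from prescribed boundary data \emph{is} the surjectivity statement; the Fourier-Jacobi/$q$-expansion formalism only tells you how to read off the constant term of a form you already have, not that every element of $\mathcal{M}_{ord}(K_P^g,A)$ arises this way. Worse, at a fixed classical weight the Siegel operator is \emph{not} in general surjective onto forms on the boundary group — there are genuine obstructions in low weight — so the weight-by-weight reduction cannot be closed by a ``classical'' fact; ordinarity is what removes the obstruction, and that requires an argument. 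The standard proof (Hida, Skinner--Urban, Hsieh) is geometric: one uses that the ordinary locus $T_{0,m}=\bar{S}_G(K)[1/E]_{/\mathcal{O}_m}$ is affine (the lift $E$ of the Hasse invariant cuts out an affine open of the minimal compactification), so the short exact sequence of sheaves $0\rightarrow\mathcal{I}_{\partial}\rightarrow\mathcal{O}\rightarrow\mathcal{O}_{\partial}\rightarrow 0$ yields surjectivity of restriction to the boundary on global sections $V_{n,m}\rightarrow\oplus_{[g]}V_{n,m,P}$ at every finite level of the Igusa tower; one then passes to the limit and checks that the $U_p$-operators on $\mathrm{GU}(3,1)$ and on $G_P$ are intertwined by this restriction, so that the idempotent $e$ can be applied on both sides and surjectivity is preserved on ordinary parts. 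Without this affineness input and the $U_p$-compatibility on the boundary, your argument does not get off the ground; the ``compatibility'' issues you flag at the end (Mumford family, Igusa trivializations, the element $w_3'$) are real but secondary to this missing construction.
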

\noindent We need one more theorem which gives another definition of nearly ordinary $p$-adic modular forms using Fourier-Jacobi expansions.
\begin{definition}\label{definition 3.7}
Let $R$ be a finite torsion free $\Lambda$-algebra. Let $X(K)$ be the set $\{(g,h)\}$ where $g$ runs over a set of representatives of cusp labels $C(K)$ and $h$ runs over $\mathbf{T}$ which is the diagonal torus of $\mathbf{H}$. Let $\mathcal{N}_{ord}(K,R)$ be the set of formal Fourier-Jacobi expansions:
$$F=\{\sum_{\beta\in\mathscr{S}_{[g]}}a(\beta, F)q^\beta, a(\beta,F)\in (R\hat{\otimes}\hat{A}_{[g]}^\infty)^\Lambda \otimes H^0(\mathcal{Z}_{[g]}^\circ,\mathcal{L}(\beta))\}_{g\in X(K)}$$
(here $\hat\otimes$ means completed tensor product, and the superscript $\Lambda$ in $(R\hat{\otimes}\hat{A}_{[g]}^\infty)^\Lambda$ means that the $\Lambda$-action as a nebentypus character is compatible with the $\Lambda$-algebra structure of $R$), such that for a Zariski dense set $\mathcal{X}_F$ of points $\phi\in\mathrm{Spec}(R)$ such that the induced point in $\mathrm{Spec}(\Lambda)$ is some arithmetic weight $\underline{k}_\zeta$, the specialization $F_\phi$ of $F$ is the Fourier-Jacobi expansion of a nearly ordinary modular form with prime to $p$ level group $K$, weight $\underline{k}$ and nebentype at $p$ given by $[\underline{k}][\underline{\zeta}]\omega^{-[\underline{k}]}$.
\end{definition}
\noindent Then we have the following theorem (\cite[Theorem 4.25]{Hsieh CM})
\begin{theorem}\label{4.25}
$$\mathcal{M}_{ord}(K,R)=\mathcal{N}_{ord}(K,R).$$
\end{theorem}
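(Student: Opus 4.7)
The plan is to establish the two inclusions separately, using freeness of $\mathbf{V}_{ord}$ and $\mathbf{V}_{ord}^0$ over $\Lambda$ (part (1) of the control theorem), the weight-by-weight identification with classical nearly ordinary forms (part (2)), the exact sequence of Theorem \ref{exact}, and a $q$-expansion principle at $p$-adic cusps.

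\textbf{Inclusion $\mathcal{M}_{ord}(K,A)\subseteq \mathcal{N}_{ord}(K,A)$.} Given $f\in\mathcal{M}_{ord}(K,A)=\mathrm{Hom}_\Lambda(\mathbf{V}_{ord},A)$, I would first attach to it the $\Lambda$-adic Fourier--Jacobi expansion defined in the previous subsection, whose coefficients $a_{[g]}^h(\beta,f)$ automatically land in $A\,\hat{\otimes}\,\hat A_{[g]}^\infty\otimes_{A_{[g]}} H^0(\mathcal{Z}_{[g]}^\circ,\mathcal{L}(\beta))$. For any arithmetic $\phi\in\mathrm{Spec}\,A$ lying over a very regular weight $\underline{k}_\zeta$, the specialization $f_\phi$ is an element of $\mathcal{M}_{ord}(K,\Lambda)\otimes_\Lambda \Lambda/P_{\underline{k}}$, which by part (2) of the control theorem coincides with $eM_{\underline{k}}(K,\mathcal{O}_p)$; the nebentype is read off from the action of $T(1+p\mathbb{Z}_p)$. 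The density of arithmetic points in $\mathrm{Spec}\,\Lambda$ transfers via finiteness of $A$ over $\Lambda$ to a Zariski-dense subset $\mathcal{X}_f\subset\mathrm{Spec}\,A$, which gives $f\in\mathcal{N}_{ord}(K,A)$.

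\textbf{Inclusion $\mathcal{N}_{ord}(K,A)\subseteq \mathcal{M}_{ord}(K,A)$.} For $F\in\mathcal{N}_{ord}(K,A)$, I would first reduce to the cuspidal case using Theorem \ref{exact}. The constant Fourier--Jacobi coefficients $a^h_{[g]}(0,F)$ define a formal system on each boundary component $G_P$ whose specializations are classical nearly ordinary forms on the definite unitary group; the analogous and easier statement for $G_P$ (again by freeness plus control) realizes this system as an element $F^{bdy}\in\bigoplus_{g\in C(K)}\mathcal{M}_{ord}(K_P^g,A)$, which by surjectivity of $\hat\Phi^{w_3'}$ in Theorem \ref{exact} lifts to some $\tilde F^{bdy}\in\mathcal{M}_{ord}(K,A)$. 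Replacing $F$ by $F-\tilde F^{bdy}$, I may assume all constant Fourier--Jacobi coefficients of $F$ vanish, so the problem becomes: realize a cuspidal formal Fourier--Jacobi expansion with classical specializations by an element of $\mathcal{M}_{ord}^0(K,A)$. Since $\mathbf{V}_{ord}^0$ is $\Lambda$-free of finite rank, $\mathcal{M}_{ord}^0(K,A)$ is $A$-free of finite rank; for each $\phi\in\mathcal{X}_F$ the control isomorphism provides a unique class $\tilde F_\phi\in\mathcal{M}_{ord}^0(K,\Lambda)\otimes_\Lambda\Lambda/P_{\underline{k}}$ whose image in $eM^0_{\underline{k}}(K,\mathcal{O}_p)$ is $F_\phi$, and I would assemble these into a single $\tilde F\in\mathcal{M}_{ord}^0(K,A)$ whose $\Lambda$-adic Fourier--Jacobi expansion equals $F$.

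\textbf{Main obstacle.} The delicate step is the final assembly, which rests on a $q$-expansion principle at $p$-adic cusps for $\Lambda$-adic cuspidal forms: two elements of $\mathcal{M}_{ord}^0(K,A)$ with the same Fourier--Jacobi expansions at every $(g,h)\in C(K)\times \mathbf{H}$ coincide. Granted this, the $A$-linear Fourier--Jacobi map is injective, its image contains every specialization $F_\phi$ over $\mathcal{X}_F$, and freeness of $\mathcal{M}_{ord}^0(K,A)$ together with Zariski-density of $\mathcal{X}_F$ force $F$ itself to lie in the image. The $q$-expansion principle ultimately descends from the classical one in very regular weights through part (2) of the control theorem, but the genuine technical care is needed in showing that cuspidal $\Lambda$-adic forms are detected by their Fourier--Jacobi expansions at all boundary strata simultaneously and that the ordinary projector preserves this detection; this is the heart of the proof, while the remaining work reduces to routine bookkeeping using the exact sequence of Theorem \ref{exact}.
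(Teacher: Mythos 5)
The paper offers no proof of this statement: it is quoted directly from \cite{Hsieh CM}, so the only comparison available is with the standard argument there, and your outline does follow it in spirit (the easy inclusion via part (2) of the control theorem and density of arithmetic points; the converse via reduction to the cuspidal part using the exact sequence of Theorem \ref{exact} and an interpolation step for cuspidal formal expansions).

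The genuine gap is in your final assembly. Injectivity of the Fourier--Jacobi map plus freeness of $\mathcal{M}^0_{ord}(K,A)$ plus Zariski-density of $\mathcal{X}_F$ does \emph{not} formally force $F$ into the image: the set of formal expansions whose specialization at each $\phi\in\mathcal{X}_F$ happens to land in $eM^0_{\underline{k}}(K,\mathcal{O}_p)$ is a priori far larger than the image of the finite free $A$-module $\mathcal{M}^0_{ord}(K,A)$, and the classes $\tilde F_\phi$ you produce one weight at a time have no reason to glue. The missing ingredient is an explicit interpolation of coordinates: pick a $\Lambda$-basis $f_1,\dots,f_r$ of $\mathcal{M}^0_{ord}(K,\Lambda)$ and finitely many $\Lambda$-valued functionals $\ell_1,\dots,\ell_r$ extracted from Fourier--Jacobi coefficients (evaluation on the Igusa tower paired with sections of $\mathcal{L}(\beta)$) with $d:=\det(\ell_j(f_i))\neq 0$; define $c_i$ by Cramer's rule from the system $\ell_j(F)=\sum_i c_i\,\ell_j(f_i)$, so that a priori $c_i\in A\otimes_\Lambda\Lambda[1/d]$; verify $F=\sum_i c_i\,FJ_{[g]}^h(f_i)$ by comparing specializations at the dense set of $\phi\in\mathcal{X}_F$ with $\phi(d)\neq 0$, where the $f_{i,\phi}$ form a basis of $eM^0_{\underline{k}}$ by control; and finally prove integrality of the $c_i$ (no poles along height-one primes), using that every coefficient of $F$ already lies in $A\hat{\otimes}\hat{A}^\infty_{[g]}\otimes H^0(\mathcal{Z}^\circ_{[g]},\mathcal{L}(\beta))$. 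Your write-up names the $q$-expansion principle as the heart of the matter, but that only yields injectivity; the surjectivity onto $\mathcal{N}^0_{ord}$ is the step that carries the real content and is asserted rather than argued. A secondary point to tighten: after subtracting $\tilde F^{bdy}$ you must arrange that the dense set $\mathcal{X}_F$ lies among the very regular arithmetic points where both the control isomorphism and the classicality of $\tilde F^{bdy}_\phi$ are simultaneously available.
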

\begin{remark}
The proof uses the $p$-adic $q$-expansion principle for $\mathrm{GU}(r,1)$, which is proved by Hida \cite[Theorem 0.1]{Hida09a} (also recalled in \cite[Subsection 3.6.4]{Hsieh CM}). The $q$-expansion principle follows from the Irreducibility of the Igusa scheme. As mentioned by Hida in \emph{loc.cit.} the Igusa scheme is not quite irreducible: in fact the component group is isomorphic to the quotient of $\mathrm{GL}_3(\mathbb{Z}_p)\times\mathrm{GL}_1(\mathbb{Z}_p)$ over his $M_1$, which is the subgroup consisting of matrices $(g_1,g_2)$ with $\det g_1=\det g_2$. (Hida proved the monodromy group, i.e. the image of $\pi_1(\bar{S}_K(G)_{/\mathbb{F}_p},s)$ in  $\mathrm{GL}_3(\mathbb{Z}_p)\times\mathrm{GL}_1(\mathbb{Z}_p)$ is exactly this $M_1$.) By our definition of $X(K)$ above, it clearly contains a representative of this quotient. So we still have the $q$-expansion principle.
\end{remark}

\section{Background for Theta Functions}\label{section 3}
Now we recall briefly the basic notions of theta functions and theta liftings, following closely to \cite{ZhB07} with some modifications.
\subsection{Heisenberg Group}
Let $W$ be a finite-dimensional vector space over $\mathbb{Q}_v$ with a non-degenerate alternating form $\langle,\rangle$. We define:
$$H(W):=\{(w,t)|w\in W,t\in \mathbb{Q}_v\}$$
with multiplication law:
$(w_1,t_1)(w_2,t_2)=(w_1+w_2,t_1+t_2+\frac{1}{2}\langle w_1,w_2\rangle)$.
\subsection{Schr\"odinger Representation}
Fix an additive character $\psi$ of $\mathbb{Q}_v$ and a complete polarization as $W=X\oplus Y$ of $W$ where $X$ and $Y$ are maximal totally isotropic subspaces of $W$. Let $S(X)$ be a space of Bruhat-Schwartz functions on $X$, and define a representation $\rho_\psi$ of $H(W)$ on $S(X)$ by:
$$\rho_\psi(x)f(z)=f(x+z),x\in X$$
$$\rho_\psi(y)f(z)=\psi(\langle z,y\rangle)f(z),y\in Y$$
$$\rho_\psi(t)f(z)=\psi(t)f(z),t\in \mathbb{Q}_v$$
This is called the Schr\"odinger representation. By the theorem of Stone and von Neumann, $\rho_\psi$ is the unique irreducible smooth representation on which
$\mathbb{Q}_v=\{(0,t)|t\in\mathbb{Q}_v\}$ acts via the character $\psi$.
\subsection{Metaplectic Groups and Weil Representations}\label{3.3}
Let $Sp(W)$ be the symplectic group preserving the alternating form $\langle,\rangle$ on $W$. Then $Sp(W)$ acts on $H(W)$ by $(w,t)g=(gw,t)$ (we use column vectors for $w\in W$ and the left action of $Sp(W)$as \cite{ZhB07}). By the uniqueness of $\rho_\psi$, there is an operator $\omega_\psi(g)$ on $S(X)$, determined up to scalar, such that
$$\rho_\psi(gw,t)\omega_\psi(g)=\omega_\psi(g)\rho_\psi(w,t)$$
for any $(w,t)\in H(W)$.
Define the metaplectic group $\tilde{Sp}_\psi(W)=\{(g,\omega_\psi(g)) \mbox{ as above }\}$ which we often abbreviate as $\tilde{Sp}$ for short. Thus $\tilde{Sp}(W)$ has an action $\omega_\psi$ on $S(X)$ called the Weil representation.\\

\noindent Now suppose $\psi=\prod_v\psi_v$ is a global additive character of $\mathbb{Q}\backslash\mathbb{A}_\mathbb{Q}$ and $W$ is a finite-dimensional vector space over $\mathbb{Q}$ equipped with an alternating pairing $\langle , \rangle$. We can put the above construction together for all $v$'s to get a representation of $\tilde{Sp}(W)(\mathbb{A})$ on $S(X(\mathbb{A}))$. This can be viewed as a projective representation of ${Sp}(W)$ (a representation with image in the infinite dimensional projective linear group). We now give formulas for this representation. Let $\{e_1,...,e_n;f_1,...,f_n\}$ be a basis of $W=X\oplus Y$ such that $\langle e_i,f_j\rangle=\delta_{ij}$. With respect to this basis, we write elements in $X$ in row vectors, and the projective representation of $\tilde{Sp}(W)(\mathbb{A}_\mathbb{Q})$ on $\mathrm{Proj}S(X(\mathbb{A}))$ is given by the formulas
\begin{itemize}
\item $\omega_\psi(\begin{pmatrix}A&\\&{}^t\!A^{-1}\end{pmatrix})\phi(x)=|\det A|^{\frac{1}{2}}\phi(xA)$;
\item $\omega_\psi(\begin{pmatrix}1&B\\&1\end{pmatrix})\phi(x)=\psi(\frac{xB{}^t\!x}{2})\phi(x)$;
\item $\omega_\psi(\begin{pmatrix}&1\\-1&\end{pmatrix})\phi(x)=\gamma\hat{\phi}(x)$ where $\hat{\phi}$ means the Fourier transform of $\phi$ with respect to the additive character $\psi$. The $\gamma$ is an $8$-th root of unity which is called the Weil constant.
\end{itemize}

\subsection{Dual Reductive Pairs}
A dual reductive pair is a pair of subgroups $(G,G')$ in the symplectic group $Sp(W)$ satisfying:
\begin{itemize}
\item[(1)] $G$ is the centralizer of $G'$ in $Sp(W)$ and vice versa;
\item[(2)] the action of $G$ and $G'$ are completely reducible on $W$.
\end{itemize}
We are mainly interested in the following dual reductive pairs of unitary groups. Let $\mathcal{K}$ be a quadratic imaginary extension of $\mathbb{Q}$, $(V_1,(,)_1)$ be a skew Hermitian space over $\mathcal{K}$ and $(V_2,(,)_2)$ a Hermitian space over $\mathcal{K}$. Then the unitary groups $\mathrm{U}(V_1)$ and $\mathrm{U}(V_2)$ form a dual reductive pair in $\mathrm{Sp}(W)$, where $W=V_1\otimes V_2$ is given the alternating form $\frac{1}{2}\mathrm{tr}_{\mathcal{K}/\mathbb{Q}}((,)_1\otimes\overline{(,)_2})$ over $\mathbb{Q}$. The embedding of the dual reductive pair $(\mathrm{U}(V_1),\mathrm{U}(V_2))$ into $\mathrm{Sp}(W)$ is
$$e:\mathrm{U}(V_1)\times \mathrm{U}(V_2)\rightarrow \mathrm{Sp}(W)$$
$$e(g_1,g_2)\cdot(v_1\otimes v_2)=v_1g_1\otimes g_2^{-1}v_2.$$
\subsection{Splittings}
Suppose $\mathrm{dim}_\mathcal{K}V_1=n$ and $\mathrm{dim}_\mathcal{K}V_2=m$. If $\chi_1$ and $\chi_2$ are Hecke characters of $\mathcal{K}^\times$ such that $\chi_1|_{\mathbb{A}_\mathbb{Q}^\times}=\chi_{\mathcal{K}/\mathbb{Q}}^n$ and $\chi_2|_{\mathbb{A}_\mathbb{Q}^\times}=\chi_{\mathcal{K}/\mathbb{Q}}^m$, then there is a splitting
(see \cite[Section 1]{HKS})
$$s:\mathrm{U}(V_1)\times\mathrm{U}(V_2)\rightarrow \tilde{Sp}(W)$$
determined by $\chi_2$ and $\chi_1$. This enables us to define the Weil representations of $\mathrm{U}(V_1)\times \mathrm{U}(V_2)$ on $S(X(\mathbb{A}))$ which we denote as $\omega_{\chi_1,\chi_2}=\omega_{\chi_1}\otimes\omega_{\chi_2}$.

\subsection{Theta Functions}
Now let us define theta functions.
\begin{definition}\label{Define Theta}
Let $\phi\in S(X(\mathbb{A}_\mathbb{Q}))$. Define the theta kernel function
$$\theta(\phi)=\sum_{l\in X(\mathbb{Q})}\phi(l).$$
\end{definition}
Let $J=H(W)\ltimes Sp(W)$ ($\tilde{J}=H(W)\ltimes\tilde{Sp}(W)$) be the Jacobi group with $Sp(W)$ acting on $H(W)$ by $(w,t)\cdot g=(wg,t)$ ($\tilde{Sp}(W)$ acts on $H(W)$ by $(w,t)\cdot\tilde{g}=(wg,t)$, where $g$ is the image of $\tilde{g}$ in $Sp(W)$). We define a theta kernel on $\tilde{J}(\mathbb{A}_\mathbb{Q})$ as below.
\begin{definition}
Let $\tilde{g}\in \tilde{Sp}(W)$ and $(w,t)\in H(W)$, define
$$\theta_\phi((w,t)\tilde{g})=\sum_{l\in X(\mathbb{Q})}\rho_\psi(w,t)\tilde{g}.\phi(l).$$
\end{definition}
Using the Weil representation of the dual reductive pair above (with the choices of the splitting characters) we define the theta kernel for the theta correspondence as follows:
\begin{definition}
$$\theta_\phi(g_1,g_2)=\theta(\omega(g_1,g_2))\phi).$$
\end{definition}
\subsection{Intertwining Maps}\label{3.6}
Here, we study the intertwining maps between theta series corresponding to different polarizations $(X,Y)$ of $W$. Suppose $r\in Sp(W)$, then $(rX,rY)$ gives another polarization of $W$, and all polarizations are obtained this way. If $\phi\in S(X)$ then we define an intertwining map (local or global) $\delta_\psi:S(X)\rightarrow S(rX)$ by
\begin{equation}\label{generalform}
\delta_\psi\phi(xr)=\omega_\psi(r^{-1})\phi(x)
\end{equation}
for $x\in X$. We can see that $\delta_\psi$ is an isometry intertwining the actions of $\tilde J$.\\

Let $W^-$ be the skew Hermitian space which is isomorphic to $W$ as $\mathbb{Q}_v$-vector spaces but equipped with the alternating pairing $-\langle,\rangle$. For a polarization $(X,Y)$ of $W$ we present the intertwining formula for the two polarizations $(X\oplus X^-)\oplus (Y\oplus Y^-)$ and $\{w\oplus w, w\in W\}\oplus \{w\oplus-w, w\in W\}$ of $W\oplus W^-$. We write the formula for the map $\delta_\psi: S(X(\mathbb{Q}_v)\oplus X^-(\mathbb{Q}_v))\rightarrow S(W(\mathbb{Q}_v))$ \index{$\delta_\psi$} and its inverse:
\begin{align}\label{intertwining operator}
&\delta_\psi(\phi)(x_1,y)=\int\psi(2\langle x_2,y\rangle)\phi(x_1+x_2,x_2-x_1)dx_2&\nonumber\\
&\delta_\psi^{-1}(\phi)(x_1,x_2)=\int\psi(\langle-x_1-x_2,y\rangle)\phi(x_1-x_2,\frac{y}{2})dy.&
\end{align}
Another easy property is that if the two polarizations $(X,Y)$ and $(rX,rY)$ are globally defined, then the theta kernels $\Theta_\phi$ and $\Theta_{\delta_\psi(\phi)}$ are defined and
$$\Theta_\phi(u,ng)=\Theta_{\delta_\psi(\phi)}(u,(rn)g).$$

\subsection{Special Cases}\label{3.7}
Here we give two special cases which are used later. Case One is used in Section 8.3 to construct families of theta functions on $\mathrm{U}(2)$.  Case Two is used in the computation of Fourier-Jacobi coefficients for the Siegel-Eisenstein series on $\mathrm{U}(3,3)$ as a finite sum of products of Siegel-Eisenstein series and theta functions.
\subsubsection{Case One}
\noindent We write $V$ for the two-dimensional Hermitian space over $\mathcal{K}$ for $\zeta/\delta$ with respect to the basis $(v_1,v_2)$, $V^-$ for the Hermitian space for $-\zeta/\delta$ with respect to the basis $(v_1^-,v_2^-)$, and $V_1$ for the one-dimensional skew Hermitian space with the metric $\delta$ with respect to the basis $v$. Let $W=V\otimes V_1$ and $W^-=V^-\otimes V_1$. We define several polarizations for the Hermitian space $\mathbb{W}:=W\oplus W^-$ (the alternating pairing being the direct sum of those for $W$ and $W^-$).
\begin{definition}\label{definition4.4}
$$X:=\mathbb{Q}v_1\otimes v\oplus \mathbb{Q}v_2\otimes v$$
$$X^-:=\mathbb{Q}v_1^-\otimes v\oplus \mathbb{Q}v_2^-\otimes v$$
$$Y:=\mathbb{Q}\delta v_1\otimes v\oplus \mathbb{Q}\delta v_2\otimes v$$
$$Y^-:=\mathbb{Q}\delta v_1^-\otimes v\oplus \mathbb{Q}\delta v_2^{-}\otimes v.$$
\end{definition}
Fix the additive character $\psi=\prod\psi_v$. Thus $W=X\oplus Y$ and $W^-=X^-\oplus Y^-$ are globally defined polarizations. For a split prime $v$ we write $v=w\bar{w}$ for its decomposition in $\mathcal{K}$. We will often use an auxiliary polarization $W_v=X_v'\oplus Y_v'$ \index{$X_v', Y_v'$} of $W_v=W\otimes_{\mathcal{K}}\mathcal{K}_v$ with respect to $\mathcal{K}_v\simeq \mathcal{K}_w\times \mathcal{K}_{\bar{w}}\simeq \mathbb{Q}_v^2$ and $W_v=X^{'-}_v\oplus Y^{'-}_v$ that is defined by $X'_v=\mathcal{K}_wv_1\otimes v\oplus \mathcal{K}_w v_2\otimes v, Y'_v=\mathcal{K}_{\bar{w}}v_1\otimes v\oplus \mathcal{K}_{\bar{w}}v_2\otimes v$ and similar for $X^{'-}_v, Y^{'-}_v$. This polarization is better suited for computing the Weil representation. For split primes $v$ let $\delta_\psi'': S(X_v')\rightarrow S(X_v)$ and $\delta_\psi^{-,''}: S(X_v^{-,'})\rightarrow S(X_v^{-})$ \index{$\delta_\psi''$, $\delta_\psi^{-,''}$} be the intertwining operators between the Schwartz functions defined above.\\

\noindent Let $\mathbb{W}^d=\{w\oplus w, w\in W\}$ ($d$ stands for diagonal). We denote the intertwining maps:
$$\delta_\psi: S(X_v\oplus X^-_v)\rightarrow S(\mathbb{W}^d_v)$$
\index{$\delta_\psi$} and if $v$ splits,
$$\delta_\psi': S(X_v'\oplus X_v^{'-})\rightarrow S(\mathbb{W}^d_v).$$
\index{$\delta_\psi'$}
Recall the formulas given in \ref{intertwining operator}.
\begin{remark}
In application in Section \ref{section 5} we compute the intertwining operator
$$\delta_\psi: S(X_v\oplus X^-_v)\rightarrow S(\mathbb{W}^d_v)$$
(for $\mathbb{W}=(V\oplus V^-)\otimes V_1$) in this special case and the Weil representations restricting to semi-direct products $H(\mathbb{W})\ltimes (\mathrm{U}(V\oplus V^-)\times \mathrm{U}(V_1))$ (recall $\mathrm{U}(V\oplus V^-)\times \mathrm{U}(V_1)\hookrightarrow Sp(\mathbb{W})$). We provide the matrix forms of these semi-direct products that will be used in Section \ref{section 5}. Let $\mathrm{U}_1$ and $\mathrm{U}_2$ be unitary groups associated to the matrices $\begin{pmatrix}&&1&\\&\zeta &&\\-1&&&\\&&&-\zeta \end{pmatrix}$ and $\begin{pmatrix}&1_3\\-1_3&\end{pmatrix}$ respectively, and let $\mathrm{U}_1'$ and $\mathrm{U}_2'$ be the unitary groups associated to $\begin{pmatrix}\zeta&\\&-\zeta\end{pmatrix}$ and $\begin{pmatrix}&1_2\\-1_2&\end{pmatrix}$, considered as subgroups of $\mathrm{U}_1$ and $\mathrm{U}_2$ respectively in the obvious way. Let $N_1$ be the subgroup of $\mathrm{U}_1$ consisting of matrices of the form $\begin{pmatrix}1&x_1&*&x_2\\&1_2&\zeta x_1^*&\\&&1&\\&&-\zeta x_2^*&1_2\end{pmatrix}$, and $N_2\subset \mathrm{U}_2$ the subgroup consisting of matrices of the form $\begin{pmatrix}1&x&t+\frac{1}{2}(xy^*-yx^*)&y\\&1_2&y^*&0\\&&1&\\&&-x^*&I_2\end{pmatrix}$. The corresponding semi-direct products mentioned above are $J_1=N_1\mathrm{U}_1'$ and $J_2=N_2\mathrm{U}_2'$. These are used later in computing Fourier-Jacobi coefficients (see Definition \ref{defJacobi} for $J_2$).
\end{remark}

\subsubsection{Case Two}
Now we discuss another special situation which will be used in the Fourier-Jacobi coefficient computations for the Siegel-Eisenstein series on $\mathrm{GU}(3,3)$.\\

\noindent\underline{The local set-up.}\\
\noindent Let $v$ be a place of $\mathbb{Q}$. Let $h\in S_1(\mathbb{Q}_v), h\not=0$. Let $\mathrm{U}_h$ be the unitary group of this matrix and let $V_v$ be the corresponding one-dimensional Hermitian space. Let $$V_{2,v}=\mathcal{K}^{2}_v\oplus \mathcal{K}^{2}_v=\mathbf{X}_v\oplus \mathbf{Y}_v$$ be the Hermitian space associated to $\mathrm{U}_2=\mathrm{U}(2,2)$ with the alternating pairing denoted as $\langle,\rangle_2$. Let $\mathbf{W}=V_v\otimes_{\mathcal{K}_v} V_{2,v}$. Then $$(-,-):=\mathrm{Tr}_{\mathcal{K}_v/\mathbb{Q}_v}(\langle-,-\rangle_h\otimes_{\mathcal{K}_v}\langle-,-\rangle_{2})$$ is a $\mathbb{Q}_v$ linear pairing on $\mathbf{W}$ that makes $\mathbf{W}$ into an eight-dimensional symplectic space over $\mathbb{Q}_v$. The canonical embedding of $\mathrm{U}_h\times \mathrm{U}_{2}$ into $Sp(\mathbf{W})$ realizes the pair $(\mathrm{U}_h,\mathrm{U}_{2})$ as a dual pair in $Sp(\mathbf{W})$. Let $\lambda_v$ be a character of $\mathcal{K}_v^\times$ such that $\lambda_v|_{\mathbb{Q}_v^\times}=\chi_{\mathcal{K}/\mathbb{Q},v}$. As noted earlier, there is a splitting $\mathrm{U}_h(\mathbb{Q}_v)\times \mathrm{U}_{2}(\mathbb{Q}_v)\hookrightarrow \tilde{Sp}(\mathbf{W},\mathbb{Q}_v)$ of the metaplectic cover $\tilde{Sp}(\mathbf{W},\mathbb{Q}_v)\rightarrow Sp(\mathbf{W},\mathbb{Q}_v)$ determined by the character $\lambda_v$. This gives the Weil representation $\omega_{\lambda_v,1}$, which we denote here as $\omega_{h,\lambda_v}(u,g)$ of $\mathrm{U}_h(\mathbb{Q}_v)\times \mathrm{U}_{2}(\mathbb{Q}_v)$ where $u\in \mathrm{U}_h(\mathbb{Q}_v)$ and $g\in \mathrm{U}_{2}(\mathbb{Q}_v)$, via the Weil representation of $\tilde{Sp}(\mathbf{W},\mathbb{Q}_v)$ on the space of Schwartz functions $\mathcal{S}(V_v\otimes_{\mathcal{K}_v} \mathbf{X}_v)$ (we use the polarization $\mathbf{W}=V_v\otimes_{\mathcal{K}_v}\mathbf{X}_v\oplus V_v\otimes_{\mathcal{K}_v}\mathbf{Y}_v$). Moreover, we write $\omega_{h,\lambda_v}(g)$ to mean $\omega_{h,\lambda_v}(1,g)$. For $X\in M_{1\times2}(\mathcal{K}_v)$, we define $\langle X,X\rangle_h:={}^t\!\bar{X}hX$ \index{$\langle X,X\rangle_h$} (note that this is a $2\times 2$ matrix). We record here some useful formulas for $\omega_{h,\lambda_v}$ which are generalizations of the formulas in \cite[Section 10.1]{SU}.

\begin{itemize}
\item  $\omega_{h,\lambda_v}(u,g)\Phi(X)=\omega_{h,v,\lambda_v}(1,g)\Phi(u^{-1}X)$
\item $\omega_{h,\lambda_v}(\mathrm{diag}(A,{}^t\!\bar {A}^{-1}))\Phi(X)=\lambda(\det A)|\det A|^{\frac{1}{2}}_\mathcal{K}\Phi(XA),$
\item $\omega_{h,\lambda_v}(r(S))\Phi(X)=\Phi(X)e_v(\mathrm{tr}\langle X,X\rangle_h S),$
\item $\omega_{h,\lambda_v}(\eta)\Phi(X)=|\det h|_v\int\Phi(Y)e_v(\mathrm{Tr}_{\mathcal{K}_v/F_v}(\mathrm{tr}\langle Y,X\rangle_h))dY.$
\end{itemize}

\noindent \underline{The global set-up}:\\
\noindent Let $h\in S_1(\mathbb{Q}),h>0$. We can define global versions of $\mathrm{U}_h,\mathrm{GU}_h,\mathbf{W}$, and $(-,-)$, similar as the local case above. Fixing an idele class character $\lambda=\otimes\lambda_v$ of $\mathbb{A}_{\mathcal{K}}^\times/\mathcal{K}^\times$ such that $\lambda|_{\mathbb{Q}^\times}=\chi_{\mathcal{K}/\mathbb{Q}}^1$, the associated local splitting described above then determines a global splitting $\mathrm{U}_h(\mathbb{A}_\mathbb{Q})\times \mathrm{U}_1 (\mathbb{A}_\mathbb{Q})\hookrightarrow \tilde{Sp}(\mathbf{W},\mathbb{A}_\mathbb{Q})$ and hence an action $\omega_h:=\otimes\omega_{h,\lambda_v}$ of $\mathrm{U}_h(\mathbb{A}_\mathbb{Q})\times \mathrm{U}_1(\mathbb{A}_\mathbb{Q})$ on the Schwartz space $S(V_{\mathbb{A}_\mathcal{K}}\otimes \mathbf{X})$. In application, we require the infinity type of $\lambda$ to be $(-\frac{1}{2},\frac{1}{2})$. \\

\noindent For any Schwartz function $\Phi\in S(V_{\mathbb{A}_\mathcal{K}}\otimes \mathbf{X})$ we define the theta function
$$\Theta_h(\Phi, -): J(\mathbb{A})\rightarrow \mathbb{C}.$$

associated to it by:
$$\Theta_h(\Phi,u,g)=\sum_{x\in V\otimes X}\omega_{h,\lambda}(g)\Phi(x), g\in J(\mathbb{A}).$$

\subsection{Theta Functions with Complex Multiplication}\label{ThetaCM}

\noindent We consider the situation of theta correspondences for $\mathrm{U}(\zeta)=\mathrm{U}(V)$ and $\mathrm{U}(V_1)$.
Let $V$ be a two-dimensional Hermitian vector space over $\mathcal{K}$. Let $L$ be an $\mathcal{O}_\mathcal{K}$ lattice such that it gives an abelian variety $\mathcal{A}_L=\mathbb{C}^2/L$.

Let $H$ be a Riemann form on $V$ and $\epsilon: L\rightarrow U$ be a map where $U$ is the unit circle of $\mathbb{C}$ (in application the $\epsilon$ is given by the formula after \cite[(38)]{ZhB07}, there is a line bundle $\mathfrak{L}_{H,\epsilon}$ on $\mathcal{A}_\mathcal{L}$ associated to $H$ and $\epsilon$ as follows: define an analytic line bundle $\mathfrak{L}_{H,\epsilon}\simeq \mathbb{C}\times\mathbb{C}^2/L$ with the action of $L$ given by
$$l\cdot (w,x)=(w+l,\epsilon(l)e(\frac{1}{2i}H(l,w+\frac{l}{2}))x), l\in L, (w,x)\in\mathbb{C}^2\times\mathbb{C}$$
where $e(x)=e^{2\pi ix}$. The space of global sections of this line bundle is canonically identified with the space $T(H,\epsilon,L)$ of theta functions consisting of holomorphic functions $f$ on $V$ such that:
$$f(w+l)=f(w)\epsilon(l)e(\frac{1}{2i}H(l+w+\frac{l}{2})),w\in V,l\in L.$$
There are arithmetic models for the above abelian variety and line bundle. Shimura defined subspaces $T^{ar}(H,\epsilon,L)\subset T(H,\epsilon,L)$ of arithmetic theta functions by requiring that the values at all CM points are in $\bar{\mathbb{Q}}$ which under the canonical identification, are identified with rational sections of the line bundle (see \cite[Section 2.6]{LAN1}, and also \cite[Appendix B]{ZhB07} for a discussion in terms of theta kernel functions. Note that we only need to consider arithmetic sections, but not integral sections.)\\

\noindent\underline{Adelic Theta Functions}\\
\noindent Now we consider theta functions for $\mathrm{U}(3,1)$. Let the Hermitian form on $V$ be defined by:
$$\langle v_1,v_2\rangle=v_1\zeta v_2^*-v_2\zeta v_1^*.$$
Let $U_f$ be some compact open subgroup of $U(\zeta)(\mathbb{A}_f)$.
\begin{definition}\label{adelictheta}
We define the space $T_{\mathbb{A}}(m,L,U_f)$ of adelic theta functions as the space of function:
$$\Theta:N(\mathbb{Q})\mathrm{U}(\zeta)(\mathbb{Q})\backslash N(\mathbb{A})\mathrm{U}(\zeta)(\mathbb{A})/\mathrm{U}(\zeta)_\infty \mathrm{U}_fN(L)_f\rightarrow \mathbb{C},$$
where $N=N_P\subset U(3,1)$ and $U(\zeta)\hookrightarrow U(3,1)$ as before;
$$N(L)_f=\{(w,t)|x\in \hat{L},t+\frac{w\zeta w^*}{2}\in\mu(L)\hat{\mathcal{O}}_\mathcal{K}\},$$
where $\mu(L)$ is the ideal generated by $w\zeta w^*$ for $w\in L$ and $\Theta$ satisfies
$$\Theta((0,t)r)=e(m t)\Theta(r), r\in N(\mathbb{A})U(\zeta)(\mathbb{A}).$$
\end{definition}

Since $\mathrm{U}(\zeta)$ is anisotropic, the set $\mathrm{U}(\zeta)\backslash U(\zeta)(\mathbb{A})/\mathrm{U}(\zeta)_\infty \mathrm{U}_f$ consists of finitely many points $\{x_1,...,x_s\}\subset \mathrm{U}(\zeta)(\mathbb{A}_f)$. We assume that for each $x_i$ the $p$-component is within $\mathrm{GL}_2(\mathbb{Z}_p)$ under the first projection $\mathrm{U}(\mathbb{Q}_p)\simeq \mathrm{GL}_2(\mathbb{Q}_p)$. In this paper, we consider the case satisfying the following\\

\noindent\underline{Assumption}:\\
\noindent The lattice $L$ is such that $m(\delta^{-1}_{\mathcal{K}/\mathbb{Q}})l\zeta{}^t\! \bar{l}$ is always integral for any $i$ and $l\in x_iL$.\\

Then for any
$$\Theta\in T_\mathbb{A}(m,L,U_f),$$
write
$\Theta_i(n)=\Theta(nx_i)$ for $n\in N(\mathbb{A})$ as functions on $N(\mathbb{A})$. Then for each $i$ we define the function:
$$\theta_i(w_\infty)=e(-m\frac{w\zeta w^*}{2})\Theta_i((w_\infty,0)).$$ If this function is holomorphic then it is a classical theta function in $T(H,\epsilon, x_i L)$ where $H$ and $\epsilon$ are defined as follows. The
$$H(v,v):=-2miv\zeta{}^t\!\bar{v}.$$
As in \cite[Subsection 7.2.3]{Hsieh CM}, we choose a finite idele $u=(u_v)\in\mathbb{A}_{\mathcal{K},f})$ such that $\mathcal{O}_{\mathcal{K}_v}=\mathbb{Z}_v\oplus\mathbb{Z}_v u_v$ for each finite place $v$. For each $l\in x_i L$ let $x_l$ and $y_l$ be the unique elements in $\mathbb{A}_{\mathbb{Q},f}$ such that
$$\frac{1}{2}l\zeta{}^t\!\bar{l}=x_l+y_lu.$$
Let $\psi$ be the standard additive character $\mathbb{Q}\backslash \mathbb{A}_\mathbb{Q}$ such that at the Archimedean place it is given by $\psi(x_\infty)=e^{2\pi ix_\infty}$. Let $\psi_m(x)=\psi(m\mathrm{tr}_{\mathcal{K}/\mathbb{Q}}(x)$ for $x\in\mathbb{A}_\mathcal{K}$. Define $\epsilon(l)=\psi_m(-x_l)$. Then under our assumptions the $\epsilon$ is well defined and takes a value of $\pm1$.

If it is the case that for all $i$ the $\theta_i$ is holomorphic, then we say it is a holomorphic adelic theta function and write the corresponding space as $T^{\mathrm{Hol}}_{\mathbb{A}}(m,L,U_f)$.\\

We make the following identification (recall we defined $\mathcal{Z}^\circ_{[g]}$ in Subsection \ref{F-J-E})
$$H^0(\mathcal{Z}^\circ_{[g]},\mathcal{L}(\beta))\otimes\mathbb{C}\simeq T^{\mathrm{Hol}}_{\mathbb{A}}(\beta,L,U_f)$$
for certain level group $U_f=U_p\times\prod_{v\nmid p}U_v$.

\noindent\underline{A functional}\\
Recall that we constructed a theta function $\theta_\phi$ on the Jacobi group $J(V)$ from the Schwartz function $\phi$. As mentioned in the introduction, we only need to develop a rational theory on theta functions instead of $p$-integral theory. Upon choosing $v_1\in V_1$ such that $\langle v_1,v_1\rangle=1$ we have an isomorphism $V\simeq W=V\otimes V_1$. We also consider $W^-=V^-\otimes V_1$. It is the space $W$ but with the metric being the negative of $W$. Let $H^-=H(W^-)$ be the corresponding Heisenberg group. We have an isomorphism of $H$ and $H^-$ (as Heisenberg groups) given by:
$$(w,t)\rightarrow (w,-t).$$
We construct a theta function $\theta^\star=\theta_{\phi_1}$ on $H^-\rtimes \mathrm{U}(V^-)$ for the Schwartz function $\phi_1$. We have chosen a set $\{x_1,...,x_s\}$ above.
We write
$$\langle\theta_{\phi_1}, \theta_{\phi}\rangle(x_i):=\int_{[N]}\theta_{\phi_1}(nx_i)\theta_{\phi}(nx_i)dn.$$
and
$$(\phi_1,\phi)_{x_i}=\int_{X(\mathbb{A}_\mathbb{Q})}\omega_{\lambda^{-1}}(x_i)(\phi_1)(x)\omega_\lambda(x_i)(\phi)(x)dx.$$
Then it is easy to check for each $x_i$,
\begin{equation}\label{(20)}\langle \theta_{\phi_1}, \theta_{\phi}\rangle(x_i)=(\phi_1,\phi)_{x_i}.
\end{equation}
We first construct a functional $l_{\theta^\star}'$ \index{$l_{\theta^\star}'$} on the space $H^0(\mathcal{Z}_{[g]}^\circ,\mathcal{L}(\beta))$, which we identify with some $T(\beta,\epsilon,L)$ for appropriate $\epsilon$ and $L$ as before (we save the notation $l_{\theta^\star}$ for later use) with values in $A_{[g]}$.

\begin{lemma}
The general elements in $H^0(\mathcal{Z}_{[g]}^\circ,\mathcal{L}(\beta))$ is a linear combination of $\frac{\theta_\phi}{\Omega_\infty}$'s with coefficients in $A_{[g]}$, where $\phi=\prod_v\phi_v$ are kernel functions such that $\phi_\infty$ is defined as in Definition \ref{Archimedean Kernel}, and $\phi_v$ takes $\bar{\mathbb{Q}}$-values for $v<\infty$.
\end{lemma}
\begin{proof}
This can be seen by interpreting the theta functions defined before \cite[Theorem B.2]{ZhB07} in terms of Weil representations presented here. Note that the CM period $\Omega_\infty$ is missing in \cite[Theorem B.2]{ZhB07}. The algebraicity follows from \cite[Theorem 2.5]{Shi76}. In fact in \cite{Shi76} the period is $h(z_0)$ for some weight $\frac{1}{2}$ form $h$ on $\mathrm{Sp}_4$, as our Hermitian space is two-dimensional, and $z_0$ is a CM point with $h(z_0)\not=0$. This $h(z_0)$ is just $\Omega_\infty$ up to multiplying by a non-zero algebraic number.
\end{proof}

We define:
\begin{equation}\label{CFunctional}l_{\theta^\star}'(\frac{\theta_\phi}{\Omega_\infty})(x_i):=\int_{N(\mathbb{Q})\backslash N(\mathbb{A}_\mathbb{Q})}\theta_{\omega_{\lambda^{-1}}(x_i)(\phi_1)}(n)\theta_{\omega_{\lambda}(x_i)(\phi)}(n)
dn=\int_{X(\mathbb{A}_\mathbb{Q})}\omega_{\lambda^{-1}}(x_i)(\phi_1)(x)\omega_\lambda(x_i)(\phi)(x)dx.
\end{equation}
The last equality is easily seen, and we denote the last term as $(\omega_\lambda^{-1}(x_i)(\phi_1),\omega_\lambda(x_i)(\phi))$. Note that in $N(\mathbb{Q})\backslash N(\mathbb{A}_\mathbb{Q})$ we identified $H$ with $H^-$ using the above isomorphism. (In the literature, people usually consider $\int_{N(\mathbb{Q})\backslash N(\mathbb{A}_\mathbb{Q})}\theta_\phi(n)\bar{\theta}_{\phi_1}(n)dn$ for $\theta_\phi$ and $\theta_{\phi_1}$ on the same space of theta functions. We use a different convention for the sake of simplicity.) So by taking appropriate $\phi_1$ the $l_{\theta^\star}'$ is a rational functional. We extend the definition of $l_{\theta^\star}'$ to whole $H^0(\mathcal{Z}{[g]}^\circ, \mathcal{L}(\beta))$ linearly. Thus, it is well defined.
\begin{lemma}\label{lemma 4.8}
The $l_{\theta^\star}'$ takes values in the space of constant functions on any theta function $\theta_\phi$ as above.
\end{lemma}
\begin{proof}
We note that for any $\phi$,
\begin{equation}\label{(13)}(\omega_{\lambda^{-1}}(x_i)(\phi_1),\omega_\lambda(x_i)(\phi))=(\phi_1,\phi).
\end{equation}
This is a standard fact and can be seen by simply unfolding the definition and integration. The lemma follows from the above equation.
\end{proof}
\begin{remark}
Later we will use this functional on Fourier-Jacobi coefficients for $\mathrm{U}(3,1)$. We can view it as a function on $G_PN_P(\mathbb{A})$ by $\mathrm{FJ}_{[g]}(p,f)=\mathrm{FJ}_P(pg,f)$ for $p\in G_PN_P(\mathbb{A})$ and thus an adelic theta function. Lan \cite{LAN1} has proved the following compatibility of the analytically and algebraically defined Fourier-Jacobi expansions using the usual identification of the global sections of $\mathcal{L}(\beta)$ with (classical or adelic) theta functions, keeping the rational structures
$$\mathrm{FJ}_{[hg]}(-,f)=\mathrm{FJ}_{[g]}^h(f)(-).$$
Note that the period factor appearing in \cite[Section 3.6.5]{Hsieh CM} is $1$ since we are in the scalar weight $\kappa$.
\end{remark}

\section{Klingen-Eisenstein Series}\label{Klingen Series}
From now on throughout this paper we define $z_\kappa=\frac{\kappa-3}{2}$ and $z'_\kappa=\frac{\kappa-2}{2}$.\index{$z_\kappa, z'_\kappa$}
\subsection{Archimedean Picture}
Let $(\pi_\infty, V_\infty)$ be a finite-dimensional representation of $D_\infty^\times$. Let $\psi_\infty$ and $\tau_\infty$ be characters of $\mathbb{C}^\times$ such that $\psi_\infty|_{\mathbb{R}^\times}$ is the central character of $\pi_\infty$. We assume here that $\tau_\infty(z)=z^{-\frac{\kappa}{2}}\bar{z}^{\frac{\kappa}{2}}$ and $\psi_\infty$ is trivial. Then there is a unique representation $\pi_\psi$ of $\mathrm{GU}(2)(\mathbb{R})$ determined by $\pi_\infty$ and $\psi_\infty$ such that the central character is $\psi_\infty$. These determine a representation $\pi_{\psi}\times \tau_\infty$ of $M_P(\mathbb{R})\simeq \mathrm{GU}(2)(\mathbb{R})\times \mathbb{C}^\times$. Here for $g\in\mathrm{GU}(2)(\mathbb{R})$ and $x\in\mathbb{C}^\times$, we identify with it an element
$$m(g,x)=\begin{pmatrix}\mu(g)x^{-1}&&\\&g&\\&&x\end{pmatrix}\in M_P(\mathbb{R}).$$

We extend this to a representation $\rho_\infty$ of $P(\mathbb{R})$ by requiring that $N_P(\mathbb{R})$ acts trivially. Let $I(V_\infty)=\mathrm{Ind}_{P(\mathbb{R})}^{G(\mathbb{R})}\rho_\infty$ (smooth induction) and $I(\rho_\infty)\subset I(V_\infty)$ be the subspace of $K_\infty$ -finite vectors. (Elements of $I(V_\infty)$ can be realized as functions on $K_\infty$.) For any $F\in I(V)$ and $z\in\mathbb{C}^\times$ we define a function $F_z$ on $G(\mathbb{R})$ by
$$F_z(g):=\delta(m)^{\frac{3}{2}+z}\rho(m)f(k), g=mnk\in P(\mathbb{R})K_\infty.$$
There is an action $\sigma(\rho,z)$ on $I(V_\infty)$ by
$$(\sigma(\rho,z)(g))(k)=F_z(kg).$$

We let $\rho^\vee_\infty$ and $I(\rho^\vee_\infty)$ be the corresponding objects by replacing $\pi_\infty,\psi_\infty,\tau_\infty$ with $\pi_\infty\otimes (\tau_\infty\circ \mathrm{Nm}), \psi_\infty\tau_\infty\tau^c_\infty,\bar{\tau}_\infty^c$. Let $w=\begin{pmatrix}&&&1\\&1&&\\&&1&\\-1&&&\end{pmatrix}$. Then there is an intertwining operator $A(\rho_\infty,z,-): I(\rho_\infty)\rightarrow I(\rho^\vee_\infty)$ by:
$$A(\rho_\infty,z,F)(k):=\int_{N_P(\mathbb{R})}F_z(wnk)dn.$$
In this paper, we use the case when $\pi_\infty$ is the trivial representation.
By the Frobenius reciprocity law there is a unique (up to scalar) vector $\tilde v\in I(\rho)$ such that $k.\tilde v=\det \mu(k,i)^{-\kappa}\tilde{v}$ for any $k\in K_\infty^+$. We fix $v$ and scale $\tilde v$ such that $\tilde v(1)=v$. In $\pi^\vee$ it has the action of $K_\infty^+$ given by multiplying by $\det \mu(k,i)^{-\kappa}$. There is a unique vector $\tilde v^\vee\in I(\rho^\vee)$ such that the action of $K_\infty^+$ is given by $\det \mu(k,i)^{-\kappa}$ and $\tilde v^\vee(w)=v$. Then by uniqueness there is a constant $c(\rho,z)$ such that $A(\rho,z,\tilde v)=c(\rho,z)\tilde v^\vee$. \index{$A(\rho,z,-)$}
\begin{definition}\label{3.1.1}
We define $F_\kappa\in I(\rho)$ to be the $\tilde{v}$ as above.
\end{definition}
We record the following lemma proved in \cite[Section 5.4.2]{WAN}.
\begin{lemma}
Let $\kappa\geq 6$ and $z_\kappa=\frac{\kappa-3}{2}$. Then $c(\rho,z_\kappa)=0$.
\end{lemma}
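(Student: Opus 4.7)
The plan is to reduce the statement to an explicit computation of a ratio of Gamma functions and locate its zero at $z=z_\kappa$. By the Frobenius reciprocity argument already used to single out $\tilde v$ and $\tilde v^\vee$, the image $A(\rho_\infty,z,F_\kappa)$ lies in the one-dimensional weight-$\kappa$ line in $I(\rho^\vee_\infty)$ spanned by $\tilde v^\vee$, so it automatically equals $c(\rho,z)\tilde v^\vee$. Consequently it is enough to evaluate the intertwining integral at a single convenient point, say $k=1$, and extract the scalar against $\pi(w)v$.

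First, I would parametrize $N_P(\mathbb{R})$ by the four real coordinates coming from the Heisenberg-type block pattern for the Klingen parabolic of $\mathrm{U}(3,1)$ (displayed earlier in the excerpt). The Haar measure on $N_P(\mathbb{R})$ is then Lebesgue measure up to a normalization. Second, for each $n\in N_P(\mathbb{R})$ I would compute the Iwasawa decomposition
$$w n \;=\; m(n)\, n'(n)\, k(n) \;\in\; P(\mathbb{R})\, K_\infty^+$$
by direct block-matrix manipulation. This produces explicit formulas for $\delta(m(n))$, for the action of $m(n)\in\mathrm{GU}(2)(\mathbb{R})\times\mathbb{C}^\times$ on the trivial/scalar weight-$\kappa$ vector $v$, and for the automorphy cocycle $\det\mu(k(n),i)$.

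Third, substituting the relation $F_\kappa(w n)=\delta(m(n))^{3/2+z}\rho(m(n))\det\mu(k(n),i)^{-\kappa}v$ into the defining integral, the computation collapses to a Euclidean integral of the schematic shape
$$\int_{\mathbb{R}^4}\bigl(1+Q(x)\bigr)^{-\,a(\kappa,z)}\,P_\kappa(x)\,dx,$$
where $Q$ is an explicit positive definite quadratic form in the coordinates of $N_P(\mathbb{R})$, $P_\kappa$ is a polynomial controlling the weight, and $a(\kappa,z)$ is affine in $z$. Iteratively applying the standard identity $\int_{\mathbb{R}^n}(1+|x|^2)^{-s}dx=\pi^{n/2}\Gamma(s-n/2)/\Gamma(s)$, together with its polynomially-weighted variants, expresses $c(\rho,z)$ as a product of explicit elementary factors and a ratio of Gamma functions with shifts linear in $z$ and $\kappa$.

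Fourth, I would read off $c(\rho,z)$ in the form
$$c(\rho,z) \;=\; (\text{nonvanishing constants})\cdot \frac{(\cdots)}{\Gamma(z+\tfrac{3}{2}+\tfrac{\kappa}{2})\,\cdots},$$
and observe that at $z=z_\kappa=(\kappa-3)/2$ one numerator Gamma argument becomes a non-positive integer (concretely, the natural candidate is $z+\tfrac{3}{2}-\tfrac{\kappa}{2}\mapsto 0$), forcing $c(\rho,z_\kappa)=0$ since $\Gamma$ has a simple pole at $0$ while the denominator factor $\Gamma(\kappa)$ is finite because $\kappa\geq 6$. The main obstacle is the Iwasawa bookkeeping: $N_P$ is four-real-dimensional with a nontrivial Heisenberg structure, and unwinding $wn\in P K_\infty^+$ transparently requires a careful coordinate choice so that the resulting quadratic form $Q$ and the Gamma-shifts come out cleanly. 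Once this explicit reduction to Gamma factors is in place, the hypothesis $\kappa\geq 6$ ensures we are in the stable range where the Euclidean integrals converge absolutely at $z=z_\kappa$ and the vanishing is forced by the pole structure of $1/\Gamma$.
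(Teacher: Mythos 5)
The paper does not actually prove this lemma: it is quoted from the author's earlier work \cite{WAN}, where the archimedean intertwining constant is computed by exactly the Gindikin--Karpelevich-type reduction you describe (Iwasawa-decompose $wn$, reduce to weighted Euclidean integrals, read off Gamma factors). So your overall architecture is the standard and correct one. There are, however, two concrete problems in your write-up.

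First, a bookkeeping error: $N_P(\mathbb{R})$ is five-real-dimensional, not four. The unipotent radical of the Klingen parabolic of $\mathrm{U}(3,1)$ is the Heisenberg group attached to the two-dimensional $\mathcal{K}$-Hermitian space $W$, so its coordinates are $x\in\mathcal{K}^2$ (four real parameters) together with a one-real-dimensional center (the anti-Hermitian part of the corner entry). Using $n=4$ instead of $n=5$ in the identity $\int_{\mathbb{R}^n}(1+|x|^2)^{-s}\,dx=\pi^{n/2}\Gamma(s-n/2)/\Gamma(s)$ shifts every Gamma argument by $1/2$ and would land the zero at the wrong value of $z$. Second, and more seriously, the stated mechanism for the vanishing is backwards. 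You say that a \emph{numerator} Gamma argument tends to a non-positive integer and that the resulting pole of $\Gamma$ "forces $c(\rho,z_\kappa)=0$." A pole of a numerator factor would force $c$ to blow up, not vanish. The vanishing occurs because the relevant factor $\Gamma(z+\tfrac{3}{2}-\tfrac{\kappa}{2})$ (your candidate argument is the right one) sits in the \emph{denominator} of the final expression, so that $c(\rho,z)$ contains $1/\Gamma(z+\tfrac{3}{2}-\tfrac{\kappa}{2})$, which vanishes at $z=z_\kappa$. Relatedly, the role of $\kappa\geq 6$ is not that "$\Gamma(\kappa)$ is finite" (true for all $\kappa\geq 1$) but, as you correctly note at the end, that the intertwining integral converges absolutely at $z=z_\kappa$, so the vanishing is that of a genuinely convergent integral and the other Gamma factors are finite and nonzero there. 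With the dimension corrected and the pole placed in the denominator, your outline matches the computation in \cite{WAN}.
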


\subsection{$\ell$-adic Picture}
Let $(\pi_\ell, V_\ell)$ be an irreducible admissible representation of $D^\times(\mathbb{Q}_\ell)$ and $\pi_\ell$ is unitary and tempered if $D$ is split at $\ell$. Let $\psi$ and $\tau$ be characters of $\mathcal{K}_\ell^\times$ such that $\psi|_{\mathbb{Q}_\ell^\times}$ is the central character of $\pi_\ell$. Then similar to the Archimedean case, there is a unique irreducible admissible representation $\pi_\psi$ of $\mathrm{GU}(2)(\mathbb{Q}_\ell)$ determined by $\pi_\ell$ and $\psi_\ell$.
As before we have a representation $\pi_\psi\times \tau$ of $M_P(\mathbb{Q}_\ell)$ and extend it to a representation $\rho_\ell$ of $P(\mathbb{Q}_\ell)$ by requiring that $N_P(\mathbb{Q}_\ell)$ acts trivially. Let $I(\rho_\ell)=\mathrm{Ind}_{P(\mathbb{Q}_\ell)}^{G(\mathbb{Q}_\ell)}\rho_\ell$ be the admissible induction.

Define $F_z$ for $F\in I(\rho_\ell)$ and $\rho_\ell^\vee, I(\rho_\ell^\vee), A(\rho_\ell,z,F)$ etcetera as before. For $v\not\in\Sigma$ we have $D^\times(\mathbb{Q}_\ell)\simeq \mathrm{GL}_2(\mathbb{Q}_\ell)$. Moreover, we can choose isomorphism as a conjugation by elements in $\mathrm{GL}_2(\mathcal{O}_{\mathcal{K},\ell})$ (note that both groups are subgroups of $\mathrm{GL}_2(\mathcal{K}_\ell)$).
\begin{definition}When $\pi_\ell$, $\psi_\ell$, $\tau_\ell$ are unramified and $\varphi_\ell\in V_\ell$ are spherical vectors, then there is a unique vector $F_{\varphi_\ell}^0\in I(\rho_\ell)$ which is invariant under $G(\mathbb{Z}_\ell)$ and $F_{\varphi_\ell}^0(1)=\varphi_\ell$.
\end{definition}
\subsection{Global Picture}
\begin{definition}\label{EDatum}
We define an Eisenstein data $\mathcal{D}=(\Sigma,\pi,\psi,\tau,\varphi)$ \index{$\mathcal{D}$} to consist of the following:
\begin{itemize}
\item A finite set of primes $\Sigma$ containing all bad primes.
\item An irreducible unitary cuspidal automorphic representation $(\pi=\otimes_v\pi_v, V)$ of $D^\times(\mathbb{A}_\mathbb{Q})$ and a vector $\varphi=\prod_v\varphi_v\in\pi$, which is ordinary at $p$.
\item The $\psi=\prod \psi_v$ and $\tau=\prod \tau_v$, CM characters of $\mathcal{K}^\times\backslash \mathbb{A}^\times_\mathcal{K}$ of infinite types $(0,0)$ and $(-\frac{\kappa}{2},\frac{\kappa}{2})$, respectively, such that $\psi|_{\mathbb{A}^\times_\mathbb{Q}}$ is the central character of $\pi$. We define $\xi:=\psi/\tau$. \index{$\psi, \tau$}
\end{itemize}
\end{definition}
\begin{remark}
In application the $\varphi$ we use later on is not of the form $\prod_v\varphi_v$, but is a finite sum of such functions. However all theory extends to this situation easily by linear combination.
\end{remark}
We define $I(\rho)$ to be the restricted tensor product of $\otimes_v I(\rho_v)$ with respect to the unramified vectors $F_{\varphi_\ell}^0$ for some $\varphi=\otimes_v \varphi_v\in\pi$. We can define $F_z$, $I(\rho^\vee)$ and $A(\rho,z,F)$ similar to the local case. The $F_z$ takes values in $V$ which can be realized as automorphic forms on $D^\times(\mathbb{A}_\mathbb{Q})$. We also write $F_z$ for the scalar-valued functions $F_z(g):=F_z(g)(1)$ and define the Klingen-Eisenstein series:
$$E(F,z,g):=\sum_{\gamma\in P(\mathbb{Q})\backslash G(\mathbb{Q})}F_z(\gamma g).$$
This is absolutely convergent if $\mathrm{Re} z>>0$ and has meromorphic continuation to all $z\in \mathbb{C}$.

\subsection{Good Klingen Eisenstein Sections}\label{GKES}
We specify good choices for Klingen Eisenstein sections at local places. We write $w=\begin{pmatrix}&&&1\\&1&&\\&&1&\\-1&&&\end{pmatrix}$.
\begin{itemize}
\item For the Archimedean place we define $F_{\mathcal{D},\infty}:=F_\kappa$.
\item For finite places $v$ outside $\Sigma$, we define $F_{\mathcal{D},v}:=F^0_{\varphi_v}$.
\item For finite places $v$ inside $\Sigma$,
let $y$ be an element in $\mathcal{O}_v$ divisible by some high power of the uniformizer $\varpi_v$ at $v$ (to be made precise in the next chapter).
Let $\mathfrak{Y}$ be the set of matrices $A\in \mathrm{U}(2)(\mathbb{Q}_v)$ such that $M=A-1$ satisfies:
$$M(1+y\bar{y}N)=\zeta y\bar{y}$$
for some $N\in M_2(\mathcal{O}_v)$.
Let $\varphi$ be some vector invariant under the action of $\mathfrak{Y}$.
Let $K_v^{(2)}$ be the subgroup of $G(\mathbb{Q}_v)$ of the form $\begin{pmatrix}1&f&c\\&1_2&g\\&&1\end{pmatrix}$ where
$$g=-\zeta{}^t\!\bar{f}, c-\frac{1}{2}f\zeta{}^t\!\bar{f}\in\mathbb{Z}_\ell, f\in(y\bar y), g\in (\zeta y\bar y), c\in\mathcal{O}_v.$$
We define $F_{y,v}$ to be supported in $PwK_v^{(2)}$ and is invariant under the right action of $K_v^{(2)}$, and such that
$$F_{y,v}(w)=\tau(y\bar y )|(y\bar y)^2|_v^{-z-\frac{3}{2}}\mathrm{Vol}(\mathfrak{Y})\cdot\varphi.$$
This $F_{y,v}$ is the Klingen Eisenstein section $F_{\mathcal{D},v}$ that we choose.
\item For the $p$-adic places we use the following
\begin{definition}\label{Definition 6.27}
Define $F^{0,\bullet}_{\mathcal{D},p}\in I_p(\rho)$ to be the Klingen section described as follows It is supported in $P(\mathbb{Q}_p)w_3'B_t(\mathbb{Z}_p)$ where $B_t(\mathbb{Z}_p)$ consists of matrices in $\mathrm{GL}_4(\mathbb{Z}_p)$ which are upper triangular modulo $p^t$, $w_3'=\begin{pmatrix}&&1&\\1&&&\\&1&&\\&&&1\end{pmatrix}\in \mathrm{GL}_4(\mathbb{Z}_p)$, and is right invariant under $N_t(\mathbb{Z}_p)$ for $N_t(\mathbb{Z}_p)\subseteq B_t(\mathbb{Z}_p)$ consisting of matrices which are $1$ along the diagonal modulo $p^t$ (see \cite[Section 4]{WAN}, note the differences in the indices discussed there (Subsections 4.D.2 and 4.D.4 in \emph{loc.cit.}). The $w_3'$ here is the $w_{\mathrm{Borel}}$ there.). Moreover we require that the value of $F^{0,\bullet}_{\mathcal{D},p}$ on $w_3'$ is given by
$\varphi$. We define the $p$-part level group $K_t\subset\mathrm{GL}_4(\mathbb{Z}_p)$ to consist of matrices congruent to some elements in $B(\mathbb{Z}_p)$ modulo $p^t$.
\end{definition}
\end{itemize}

Now we briefly recall Hida's $\mathrm{U}_p$ operator for $\mathrm{U}(3,1)$. We refer to \cite[Section 3.8]{Hsieh CM} for geometric backgrounds for $\mathrm{U}_p$ operators, and that it is compatible with our representation theoretic description below. Let $\lambda_1,\cdots,\lambda_4$ be $4$ characters of $\mathbb{Q}^\times_p$, $\pi=\mathrm{Ind}_B^{GL_n}(\lambda_1,\cdots,\lambda_4)$.
\begin{definition}
Let $\underline{k}=(c_4;c_1,...,c_3)$ be a weight. We say $(\lambda_1,\cdots, \lambda_n)$ is nearly ordinary with respect to $\underline{k}$ if the set:

\begin{eqnarray*}
&\{\mathrm{val}_p\lambda_1(p),...,\mathrm{val}_p\lambda_4(p)\}\\
=&\{c_1-\frac{3}{2},c_2+\frac{3}{2},c_3+\frac{1}{2},c_4-\frac{1}{2}\}
\end{eqnarray*}
\end{definition}
\noindent We denote the set as $\{\kappa_1,...,\kappa_{r+s}\}$ so that $\kappa_1>...>\kappa_{r+s}$.

We define the $p$-component of the Eisenstein data is \emph{generic} if it satisfies Definition \ref{definegeneric}.
Let $\mathcal{A}_p:=\mathbb{Z}_p[t_1,t_2,...,t_n,t_n^{-1}]
$ be the Atkin-Lehner ring of $G(\mathbb{Q}_p)$, where $t_i$ is defined by $t_i=N(\mathbb{Z}_p)\alpha_i
N(\mathbb{Z}_p)$, $\alpha_i=\begin{pmatrix}1_{n-i}&\\&p 1_i\end{pmatrix}$. Then $t_i$ acts on $\pi^{N(\mathbb{Z}_p)}$ by $$v|t_i=\sum_{x\in N|\alpha_i^{-1}N\alpha_i}x_i\alpha_i^{-1}v.$$
We also define a normalized action with respect to the weight $\underline{k}$ following (\cite{Hida04})
$$v\|t_i:=\delta(\alpha_i)^{-1/2}p^{\kappa_1+\cdots+\kappa_i}v|t_i.$$
(The $\delta$ is the modulus character).
\begin{definition}
A vector $v\in \pi$ is called nearly ordinary if it is an eigenvector for all $||t_i$'s with eigenvalues that are $p$-adic units.
\end{definition}

Now we define
\begin{equation}\label{KlinEi}
E_{Kling,\mathcal{D}}(g):=E(\prod_{v\nmid p} F_{\mathcal{D},v}\times F^{0,\bullet}_{\mathcal{D},p}, z_\kappa,g).
\end{equation}
The following proposition is easily proved as in \cite[Proposition 9.8]{SU}.
\begin{proposition}
The classical automrophic form corresponding to $E_{Kling,\mathcal{D}}$ defined by (\ref{CLASSICAL}) is a holomorphic automorphic form on $\mathrm{U}(3,1)$ with weight $\underline{k}=(\kappa;0,0,0)$.
\end{proposition}
Suppose $\pi_p$ is nearly ordinary, in the sense that it is of the form $\pi(\chi_{1,p},\chi_{2,p})$ with $\mathrm{val}_p(\chi_{1,p}(p))=-\frac{1}{2}$, $\mathrm{val}_p(\chi_{2,p}(0))=\frac{1}{2}$. Then it is easy to check that the representation $I(\rho_p)$ is nearly ordinary with respect to the weight $\underline{k}$. Suppose moreover that the $p$-component of the Eisenstein datum is generic. Then we have
\begin{proposition}
The $F^{0,\bullet}_p$ is an eigenvector for all the actions $||t_i$ with eigenvalues $$\lambda_1\cdots\lambda_i(p^{-1})p^{\kappa_1+\cdots+\kappa_i},$$ which are clearly $p$-adic units.
\end{proposition}
\begin{proof}
The proof is a little convoluted and given in \cite[Subsection 4.4.1]{WAN}. It uses the intertwining operator which maps $F^{0,\bullet}_p$ to the section supported on the big cell (denoted $f^\ell$ there, see Lemma 4.16 of \emph{loc.cit.}), whose eigenvalues are easy to compute. It is proved in \cite[Lemma 4.17]{WAN} that the $f^\ell$ is indeed an ordinary operator. In the generic case, the intertwining operator gives an isomorphism between the corresponding principal series representations of $\mathrm{GL}_4(\mathbb{Q}_p)$. Although our definition of being ``generic'' is different from \emph{loc.cit.}, however the argument of Lemma 4.17 there still works. Then as in the proof of \cite[Lemma 4.19]{WAN}, one checks the $F^{0,\bullet}_p$ and the $f^\ell$ have the same action by the level group $K^t$, and that such vector is unique up to scalar in the corresponding principal series representation, identifying the $F^{0,\bullet}_p$ and the $f^\ell$ under the intertwining operator. These altogether implies that $F^{0,\bullet}_p$ is indeed a nearly ordinary vector.

In our $\mathrm{U}(3,1)$ situation the argument is also given by Hsieh in \cite[Section 6.2]{Hsieh13}.
\end{proof}
\begin{remark}
It is worth pointing out that the definition is quite different from the $\mathrm{U}(2,2)$ case in \cite{SU}. The nearly ordinary section is supported on the set containing the Weyl element $w'_3$ instead of the identity. This description also coincides with property that in \cite[Lemma 6.6]{Hsieh13} that the only Weyl element in which the ordinary section is non-zero is $w'_3$.
\end{remark}
\begin{definition}
Throughout this paper, we fix the tame level subgroup $K^{(3,1)}_\mathcal{D}$ of $\mathrm{U}(3,1)(\mathbb{A}^{p\infty})$, under which our $E_{Kling,\mathcal{D}}$ is invariant. We can do so by simply taking it to be the set of matrices congruent to $1$ modulo the $(y\bar{y})^2$ at each finite place not dividing $p$ for the $y$ above. We also define the $p$-component of the level group as the $N_t(\mathbb{Z}_p)$ above, where $t$ is the one in the definition for ``generic''.
\end{definition}

\subsection{Constant Terms}\label{constantterms}
\begin{definition}\index{$E_P$, $E_R$}
For any parabolic subgroup $R$ of $\mathrm{GU}(3,1)$ and an automorphic form $\varphi$ we define $\varphi_R$ to be the constant term of $\varphi$ along $R$ given by the following
$$\varphi_R(g)=\int_{N_R(\mathbb{Q})\backslash N_R(\mathbb{A}_\mathbb{Q})}\varphi(ng)dn$$
where $N_R$ is the unipotent radical of $R$.
\end{definition}
The following lemma is well-known (see \cite[Section II.1.7]{MoWa95}).
\begin{lemma}\label{CON}
Let $R$ be a standard $\mathbb{Q}$-parabolic subgroup of $\mathrm{GU}(3,1)$. Suppose $\mathrm{Re}(z)>\frac{3}{2}$.\\
(i) If $R\not= P$ then $E(f,z,g)_R=0$;\\
(ii) $E(f,z,-)_P=f_z+A(\rho,f,z)_{-z}$.
\end{lemma}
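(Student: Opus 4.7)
My plan is the standard unfolding of an Eisenstein series via the Bruhat decomposition, which for a general reductive group is carried out in \cite[II.1.7]{MoWa95}; the outline adapted to our setting runs as follows.

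In the region $\mathrm{Re}(z) > 3/2$ the defining series for $E(f,z,\cdot)$ converges absolutely, so I may interchange sum and $N_R$-integration. Decomposing
$$G(\mathbb{Q}) = \bigsqcup_{w \in W_P \backslash W / W_R} P(\mathbb{Q})\, w\, R(\mathbb{Q})$$
and unfolding the inner sum against the $N_R$-integral yields
$$E(f,z,g)_R \;=\; \sum_{w} \int_{(w^{-1} P w \cap N_R)(\mathbb{A}) \backslash N_R(\mathbb{A})} f_z(w n g)\, dn.$$
For part (ii), with $R = P$, there are exactly two relevant double cosets, represented by $1$ and by $w$ (the long element from Definition 4.1.1). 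The identity contributes $f_z(g)$ directly since $f_z$ is left-$N_P$-invariant. The $w$-coset gives
$$\int_{N_P(\mathbb{A})} f_z(w n g)\, dn \;=\; A(\rho, z, f)(g),$$
which by the definition of the $(-)_{-z}$ extension applied to the intertwined section in $I(\rho^\vee)$ is exactly $A(\rho, z, f)_{-z}(g)$.

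For part (i), with $R \neq P$, I would enumerate the double cosets $P \backslash G / R$ and, for each representative $w$, show the corresponding term vanishes. The mechanism is that for every such $w$, the inner unfolded integral produces a period of $f_z$ along a nontrivial unipotent subgroup meeting the Levi $M_P$ nontrivially; since $f_z$ takes values in the cuspidal representation $\pi_\psi$ of the Levi, this period vanishes by cuspidality. The combinatorial core of the proof is thus the identification of the double cosets and the verification that the only survivors in part (ii) are $1$ and $w$.

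The main subtlety is justifying the interchange of sum and integration and ensuring the cuspidal-vanishing step applies to every non-identity coset; the convergence is handled by the $\mathrm{Re}(z) > 3/2$ hypothesis, and the cuspidality invocation is immediate from how $\rho$ is induced from $\pi_\psi \times \tau$. Everything else is a routine verification internal to the Bruhat combinatorics of $\mathrm{GU}(3,1)$ and follows the template of \cite[II.1.7]{MoWa95}.
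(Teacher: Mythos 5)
Your proposal is the standard Bruhat-decomposition unfolding argument, which is exactly the content of the reference \cite[II.1.7]{MoWa95} that the paper cites in lieu of a proof; the paper gives no independent argument. Your identification of the two surviving cosets for $R=P$ (using that $\mathrm{U}(3,1)$ has relative rank one here, so the Klingen parabolic is minimal) and the cuspidality-of-$\pi_\psi$ mechanism for the vanishing in (i) match the standard treatment, so the proposal is correct and takes essentially the same approach.
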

\subsection{Hecke Operators}
Let $K'=K'_{\Sigma\backslash\{p\}} K^\Sigma\subset G(\mathbb{A}_f^p)$ be an open compact subgroup with $K^\Sigma=G(\hat{\mathbb{Z}}^\Sigma)$ and
such that $K:=K'K_p^0$ is neat.
For each $v$ outside $\Sigma$ we have $\mathrm{GU}(3,1)(\mathbb{Q}_v)\simeq \mathrm{GU}(2,2)(\mathbb{Q}_v)$ with the isomorphism given by conjugation by some elements in $\mathrm{GL}_4(\mathcal{O}_{\mathcal{K},v})$. So we only need to study the unramified Hecke operators for $\mathrm{GU}(2,2)$ with respect to $\mathrm{GU}(2,2)(\mathbb{Z}_v)$. We follow closely to \cite[Sections 9.5, 9.6]{SU}.\\

\noindent \underline{Unramified Inert Case}\\
\noindent Let $v$ be a prime of $\mathbb{Q}$ inert in $\mathcal{K}$. Recall that as in \cite[Section 9.5.2]{SU} that $Z_{v,0}$ is the Hecke operator associated to the matrix $z_0:=\mathrm{diag}(\varpi_v,\varpi_v,\varpi_v,\varpi_v)$ by the double coset $Kz_0K$ where $K$ is the maximal compact subgroup of $G(\mathbb{Z}_v)$. Let $t_0:=\mathrm{diag}(\varpi_v,\varpi_v,1,1)$,
$t_1:=\mathrm{diag}(1,\varpi_v,1,\varpi_v^{-1})$ and $t_2:=\mathrm{diag}(\varpi_v,1,\varpi_v^{-1},1)$. As in \cite[9.5.2]{SU} we define
$$\mathcal{R}_v:=\mathbb{Z}[X_v,q^{1/2},q^{-1/2}]$$ where $X_v$ is $T(\mathbb{Q}_v)/T(\mathbb{Z}_v)$ and write $[t]$ for the image of $t$ in $X_v$. Let $\mathcal{H}_K$ be the abstract Hecke ring with respect to the level group $K$. There is a Satake map:
$\mathcal{S}_K:\mathcal{H}_K\rightarrow \mathcal{R}_v$ given by $$\mathcal{S}_K(KgK)=\sum\delta_B^{1/2}(t_i)[t_i]$$ if $KgK=\sqcup t_in_iK$ for $t_i\in T(\mathbb{Q}_v),n_i\in N_B(\mathbb{Q}_v)$ and extend linearly.
We define the Hecke operators $T_i$ for $i=1,2,3,4$ by requiring that
$$1+\sum_{i=1}^4\mathcal{S}(T_i)X^i=\prod_{i=1}^2(1-q_v^{\frac{3}{2}}[t_i]X)(1-q_v^{\frac{3}{2}}[t_i]^{-1}X)$$
be an equality of polynomials of the variable $X$.
We also define:
$$Q_v(X):=1+\sum_{i=1}^4T_i(Z_0X)^i.$$
\noindent\underline{Unramified Split Case}\\
\noindent Suppose $v$ is a prime of $\mathbb{Q}$ split in $\mathcal{K}$. In this case we define $z_0^{(1)}$ and $z_0^{(2)}$ to be $(\mathrm{diag}(\varpi_v,\varpi_v,\varpi_v,\varpi_v),1)$ and $(1,\mathrm{diag}(\varpi_v,\varpi_v,\varpi_v,\varpi_v))$ and define the Hecke operators $Z_0^{(1)}$ and $Z_0^{(2)}$ as above but replacing $z_0$ by
$z_0^{(1)}$ and $z_0^{(2)}$. Let $t_1^{(1)}:=\mathrm{diag}(1,(\varpi_v,1),1,(1,\varpi_v^{-1}))$, $t_2^{(1)}:=\mathrm{diag}((\varpi_v,1),1,(1,\varpi_v^{-1}),1)$. Define $t_i^{(2)}:=\bar{t}_i^{(1)}$ and $t_i=t_i^{(1)}t_i^{(2)}$ for $i=1,2$. We define $R_v$ and $\mathcal{S}_K$ in the same way as the inert case. Then we define Hecke operators $T_i^{(j)}$ for $i=1,2,3,4$ and $j=1,2$ by requiring that the following
$$1+\sum_{i=1}^4\mathcal{S}_K(T_i^{(j)})X^i=\prod_{i=1}^2(1-q_v^{\frac{3}{2}}[t_i^{(j)}]X)(1-q_v^{\frac{3}{2}}[t_i^{(j')}]^{-1}X)$$
be equalities of polynomials of the variable $X$. Here $j'=3-j$ and $[t_i^{(j)}]$'s are defined similarly to the inert case.
Now suppose $v=w\bar{w}$ for a place $w$ of $\mathcal{K}$ and a place $v$ of $\mathbb{Q}$. Define $i_w=1$ and $i_{\bar{w}}=2$. Then we define:
$$Q_w(X):=1+\sum_{i=1}^4T_i^{(i_w)}(Z_0^{(3-i_w)}X)^i,$$
$$Q_{\bar{w}}(X):=1+\sum_{i=1}^4T_i^{(i_{\bar{w}})}(Z_0^{(3-i_{\bar{w}})}X)^i.$$
\subsection{Galois Representations}
For the holomorphic Klingen-Eisenstein series, we can also associate a reducible Galois representation with the same recipe as in subsection \ref{2.24}. Write $\tau'$ for the restriction of $\tau$ to $\mathbb{A}_\mathbb{Q}$ and let $\sigma_{\tau'}$ be the corresponding Galois character of $G_\mathbb{Q}$ via class field theory. The resulting Galois representation associated to the Klingen-Eisenstein series we defined above can be seen as follows:
$$\sigma_{\tau'}\sigma_{\psi^c}\epsilon^{-\kappa}\oplus \sigma_{\psi^c}\epsilon^{-3}\oplus \rho_{\pi_f}.\sigma_{\tau^c}\epsilon^{-\frac{\kappa+3}{2}}.$$
Note that $\kappa+3$ is an odd number; however, $\pi_f$ is a unitary representation whose $L$-function is the usual $L$-function for $f$ shifted by $\frac{1}{2}$. So it makes sense to write in the above way. This can be obtained in the same manner as \cite[Sections 9.5, 9.6]{SU}, by studying the Hecke operators defined above. Indeed the Galois representation is determined by its local Euler factors at unramified places, which is already worked out in \emph{loc.cit.}. (See in particular, \cite[Proposition 9.14]{SU}.)

\section{Siegel-Eisenstein Series and Pullback}\label{section 5}
\subsection{Generalities}
\noindent\underline{Local Picture}:\\
Our discussion in this section follows \cite[Sections 11.1-11.3]{SU} closely. Let $Q=Q_n$ \index{$Q_n$} be the Siegel parabolic subgroup of $G_n$ consisting of matrices $\begin{pmatrix}A_q&B_q\\0&D_q\end{pmatrix}$. It consists of matrices whose lower-left $n\times n$ block is zero. For a place $v$ of $\mathbb{Q}$ and a character $\chi$ of $\mathcal{K}_v^\times$ we let $I_n(\chi_v)$ \index{$I_n(\chi_v)$} be the space of smooth
$K_{n,v}$-finite functions (here $K_{n,v}$ means the maximal compact subgroup $G_n(\mathbb{Z}_v)$)\index{$K_{n,v}$} $f: K_{n,v}\rightarrow \mathbb{C}$ such that $f(qk)=\chi_v(\det D_q)f(k)$ for all $q\in Q_n(\mathbb{Q}_v)\cap K_{n,v}$ (we write $q$ as block matrix $q=\begin{pmatrix}A_q&B_q\\0&D_q\end{pmatrix}$). For $z\in \mathbb{C}$ and $f\in I(\chi)$ we also define a function $f(z,-):G_n(\mathbb{Q}_v)\rightarrow \mathbb{C}$ by $f(z,qk):=\chi(\det D_q))|\det A_q D_q^{-1}|_v^{z+n/2}f(k),q\in Q_n(\mathbb{Q}_v)$ and $k\in K_{n,v}.$\\

\noindent For $f\in I_n(\chi_v),z\in \mathbb{C},$ and $k\in K_{n,v}$, the intertwining integral is defined by:
$$M(z,f)(k):=\bar\chi^n_v(\mu_n(k))\int_{N_{Q_n}(F_v)}f(z,w_nrk)dr$$
where $N_{Q_n}$ is the unipotent radical of $Q_n$, and $w_n:=\begin{pmatrix}&1_n\\-1_n&\end{pmatrix}$.
For $z$ in compact subsets of $\{\mathrm{Re}(z)>n/2\}$ this integral converges absolutely and uniformly, with the convergence being uniform in $k$. In this case it is easy to see that $M(z,f)\in I_n(\bar{\chi}^c_v).$
\noindent Let $\mathcal{U}\subseteq \mathbb{C}$ be an open set. By a meromorphic section of $I_n(\chi_v)$ on $\mathcal{U}$ we mean a function $\varphi:\mathcal{U}\mapsto I_n(\chi_v)$ taking values in a finite-dimensional subspace $V\subset I_n(\chi_v)$ and such that
$\varphi:\mathcal{U}\rightarrow V$ is meromorphic. A standard fact from the theory of Eisenstein series says that this has a continuation to a meromorphic section on all of $\mathbb{C}$.\\

\noindent\underline{Global Picture:}\\
For a Hecke character $\chi=\otimes\chi_v$ of $\mathbb{A}_\mathcal{K}^\times$ we define a space $I_n(\chi)$ to be the restricted tensor product defined using the spherical vectors $f_v^{sph}\in I_n(\chi_v)$ (invariant under $K_{n,v}$) such that $f_v^{sph}(K_{n,v})=1$, at the finite places $v$ where $\chi_v$ is unramified.\\

\noindent For $f\in I_n(\chi)$ we consider the Eisenstein series
$$E(f;z,g):=\sum_{\gamma\in Q_n(\mathbb{Q})\setminus G_n(\mathbb{Q})} f(z,\gamma g).$$
This series converges absolutely and uniformly for $(z,g)$ in compact subsets of $$\{\mathrm{Re}(z)>n/2\}\times G_n(\mathbb{A}_\mathbb{Q}).$$ The defined automorphic form is called Siegel-Eisenstein series. \\

\noindent The Eisenstein series $E(f;z,g)$ has a meromorphic continuation in $z$ to all of $\mathbb{C}$ in the following sense. If $\varphi:\mathcal{U}\rightarrow I_n(\chi)$ is a meromorphic section, then we put $E(\varphi;z,g)=E(\varphi(z);z,g).$ This is defined at least on the region of absolute convergence and it is well known that it can be meromorphically continued to all $z\in \mathbb{C}$.\\

\noindent Now for $f\in I_n(\chi),z\in \mathbb{C}$, and $k\in \prod_{v\nmid \infty}K_{n,v}\prod_{v|\infty}K_\infty$, there is a similar intertwining integral $M(z,f)(k)$ as above but with the integral being over $N_{Q_n}(\mathbb{A}_\mathbb{Q})$. This again converges absolutely and uniformly for $z$ in compact subsets of $\{\mathrm{Re}(z)>n/2\}\times K_n$. Thus $z\mapsto M(z,f)$ defines a holomorphic section $\{\mathrm{Re}(z)>n/2\}\rightarrow I_n(\bar{\chi}^c)$. This intertwining operator has a continuation to a meromorphic section on $\mathbb{C}$. For $Re(z)>n/2$, we have
$$M(z,f)=\otimes_v M(z,f_v),f=\otimes f_v.$$

\noindent The functional equation for Siegel-Eisenstein series is:
$$E(f,z,g)=\chi^n(\mu(g))E(M(z,f);-z,g)$$
in the sense that both sides can be meromorphically continued to all $z\in\mathbb{C}$ and the equality is understood as an equality of meromorphic functions of $z\in\mathbb{C}$.

\subsection{Embeddings}
We define some embeddings of a subgroup of $\mathrm{GU}(3,1)\times \mathrm{GU}(0,2)$ into some larger groups. This is used in the doubling method. First we define $\mathrm{GU}(3,3)'$ to be the unitary similitude group associated to:
$$\begin{pmatrix}&&1&\\&\zeta&&\\-1&&&\\&&&-\zeta\end{pmatrix}$$
and $\mathrm{GU}(2,2)'$ to be the unitary group associated to
$$\begin{pmatrix}\zeta&\\&-\zeta\end{pmatrix}.$$
We define embeddings
$$\alpha: \{g_1\times g_2\in \mathrm{GU}(3,1)\times \mathrm{GU}(0,2), \mu(g_1)=\mu(g_2)\}\rightarrow \mathrm{GU}(3,3)'$$
and
$$\alpha':\{g_1\times g_2\in \mathrm{GU}(2,0)\times \mathrm{GU}(0,2), \mu(g_1)=\mu(g_2)\}\rightarrow \mathrm{GU}(2,2)'$$
by $\alpha(g_1,g_2)=\begin{pmatrix}g_1&\\&g_2\end{pmatrix}$ and $\alpha'(g_1,g_2)=\begin{pmatrix}g_1&\\&g_2\end{pmatrix}$.
We also define isomorphisms:
$$\beta: \mathrm{GU}(3,3)'\xrightarrow{\sim} \mathrm{GU}(3,3)$$
and
$$\beta':\mathrm{GU}(2,2)'\xrightarrow{\sim} \mathrm{GU}(2,2)$$
by
$$g\mapsto S^{-1}g S$$
or
$$g\mapsto S'^{-1}gS'$$
where $$S=\begin{pmatrix}1&&&\\&1&&-\frac{\zeta}{2}\\&&1&\\&-1&&-\frac{\zeta}{2}\end{pmatrix}$$
and
$$S'=\begin{pmatrix}1&-\frac{\zeta}{2}\\-1&-\frac{\zeta}{2}\end{pmatrix}$$
\index{$S, S'$} We write $\gamma$ and $\gamma'$ for the embeddings $\beta\circ\alpha$ and $\beta'\circ\alpha'$, respectively.

We define an element \index{$\Upsilon$} $\Upsilon\in \mathrm{U}(3,3)(\mathbb{Q}_p)$ such that $\Upsilon_{v_0}=S^{-1}_{v_0}$ and $\Upsilon_{v_0}'=S^{-1,'}_{v_0}$, where $S$ is defined at the end of Section 6.2. We know that under the complex uniformization, taking the change of polarization into consideration the map \ref{Em} is given by
\begin{equation}\label{Emb}
i([\tau,g],[x_0,h])=[Z_\tau,(g,h)\Upsilon]
\end{equation}
(see \cite[Section 2.6]{Hsieh CM}.)

\subsection{Pullback Formula}
We recall the pullback formula of Shimura. Let $\chi$ be a unitary idele class character of $\mathbb{A}_\mathcal{K}^\times$. Given a cuspform $\varphi$ on $\mathrm{GU}(2)$ we consider
$$F_\varphi(f;z,g):=\int_{\mathrm{U}(2)(\mathbb{A}_\mathbb{Q})} f(z,S^{-1}\alpha(g,g_1h)S)\bar\chi(\det g_1g)\varphi(g_1h)dg_1,$$
$$f\in I_{3}(\chi),g\in \mathrm{GU}(3,1)(\mathbb{A}_\mathbb{Q}),h\in \mathrm{GU}(2)(\mathbb{A}_\mathbb{Q}),\mu(g)=\mu(h)$$
or
$$F_\varphi'(f';z,g)=\int_{\mathrm{U}(2)(\mathbb{A}_\mathbb{Q})} f'(z,S^{'-1}\alpha(g,g_1h)S')\bar\chi(\det g_1g)\varphi(g_1h)dg_1$$
$$f'\in I_{2}(\chi),g\in \mathrm{GU}(2)(\mathbb{A}_\mathbb{Q}),h\in \mathrm{GU}(2)(\mathbb{A}_\mathbb{Q}),\mu(g)=\mu(h)$$
This is independent of $h$.
We see that the above integrals can be factorized as local integrals, which we denote as $F_{\varphi_v}(f_v;z,g_v)$ and $F'_{\varphi_v}(f'_v;z,g_v)$, respectively.
The pullback formulas are the identities in the following proposition.
\begin{proposition}
Let $\chi$ be a unitary idele class character of $\mathbb{A}_\mathcal{K}^\times$.\\
(i) If $f'\in I_{2}(\chi),$ then $ F_\varphi'(f';z,g)$ converges absolutely and uniformly for $(z,g)$ in compact subsets of $\{\mathrm{Re}(z)>1\}\times \mathrm{GU}(2,0)(\mathbb{A}_\mathbb{Q})$, and for any $h\in \mathrm{GU}(2)(\mathbb{A}_\mathbb{Q})$ such that $\mu(h)=\mu(g)$
\begin{equation*}
\int_{\mathrm{U}(2)(\mathbb{Q})\setminus \mathrm{U}(2)(\mathbb{A}_\mathbb{Q})} E(f';z,S'^{-1}\alpha(g,g_1h)S')\bar{\chi}(\det g_1h)\varphi(g_1h)dg_1=F_\varphi'(f';z,g).
\end{equation*}
(ii) If $f\in I_{3}(\chi)$, then $F_\varphi(f;z,g)$ converges absolutely and uniformly for $(z,g)$ in compact subsets of $\{Re(z)>3/2\}\times \mathrm{GU}(3,1)(\mathbb{A}_\mathbb{Q})$ such that $\mu(h)=\mu(g)$
\begin{equation*}
\begin{split}
\int_{\mathrm{U}(2)(\mathbb{Q})\setminus \mathrm{U}(2)(\mathbb{A}_\mathbb{Q})} E(f;z,S^{-1}\alpha(g,g_1h)S)\bar{\chi}&(\det g_1h)\varphi(g_1h)dg_1\\
&=\sum_{\gamma\in P(\mathbb{Q})\setminus \mathrm{GU}(3,1)(\mathbb{Q})} F_\varphi(f;z,\gamma g),
\end{split}
\end{equation*}
with the series converging absolutely and uniformly for $(z,g)$ in compact subsets of $$\{\mathrm{Re}(z)>3/2\}\times \mathrm{GU}(3,1)(\mathbb{A}_\mathbb{Q}).$$
\end{proposition}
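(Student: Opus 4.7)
The plan is to prove both identities by the standard unfolding technique applied to a double coset decomposition, following Garrett--Shimura. Convergence of $F_\varphi$ and $F_\varphi'$ in the stated half-planes is handled first: in the region $\mathrm{Re}(z)>n/2$ the Siegel section $f(z,\cdot)$ has the usual majorant, while cuspidality of $\varphi$ gives rapid decay on $\mathrm{U}(2)(\mathbb{Q})\setminus \mathrm{U}(2)(\mathbb{A}_\mathbb{Q})$, so the integrand is dominated in compacta of $g$ and $z$.

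The core of the proof is to analyze the double coset space
\[
Q_n(\mathbb{Q})\,\backslash\, G_n(\mathbb{Q})\,/\,\alpha(\mathrm{GU}(r,s)\times \mathrm{GU}(2))(\mathbb{Q}),
\]
where $(n,r,s)=(2,2,0)$ for part (i) and $(n,r,s)=(3,3,1)$ for part (ii) (all pairs with matching similitudes). Via the isomorphism $\beta$, this is equivalent to studying $Q_n\backslash G_n'/\alpha(\cdot,\cdot)$ with the block-diagonal $\alpha$. By a standard computation (going back to Shimura's work on Eisenstein series on unitary groups and used in exactly this form in \cite{SU}), the double cosets are parametrized by rank data of the $s$-block projection; in case (i) there is a single open double coset with stabilizer equal to the diagonal $\mathrm{U}(2)$, and in case (ii) the open double cosets are in bijection with $P(\mathbb{Q})\setminus \mathrm{GU}(3,1)(\mathbb{Q})$ under right-translation on the first factor, with stabilizer again the diagonal $\mathrm{U}(2)$. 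The non-open (``degenerate'') double cosets are stabilized by proper parabolic subgroups of $\mathrm{U}(2)$, so the corresponding inner integrals pick up the constant term of $\varphi$ along that parabolic, which vanishes by cuspidality.

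Thus, plugging $E(f;z,\cdot)=\sum_{\gamma\in Q_n(\mathbb{Q})\setminus G_n(\mathbb{Q})} f(z,\gamma\,\cdot)$ into the left-hand side and interchanging the (absolutely convergent) sum with the integral over $\mathrm{U}(2)(\mathbb{Q})\setminus \mathrm{U}(2)(\mathbb{A}_\mathbb{Q})$, one regroups by double cosets. The degenerate ones disappear by the previous paragraph, and the open one(s) unfold: the quotient $\mathrm{Stab}\,\backslash\, \mathrm{U}(2)(\mathbb{A}_\mathbb{Q}) = \mathrm{U}(2)(\mathbb{A}_\mathbb{Q})$ kills the outer quotient by $\mathrm{U}(2)(\mathbb{Q})$. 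In case (i) this collapses the left-hand side to the single integral $F_\varphi'(f';z,g)$. In case (ii) the extra sum over the open double cosets reassembles as the sum $\sum_{\gamma\in P(\mathbb{Q})\setminus \mathrm{GU}(3,1)(\mathbb{Q})} F_\varphi(f;z,\gamma g)$, giving the desired identity; the asserted absolute, locally uniform convergence of this sum follows from the convergence of $E$ itself in $\mathrm{Re}(z)>3/2$ together with the positivity arguments that justified the interchange.

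The main obstacle is the combinatorial bookkeeping for the double coset decomposition in the $\mathrm{GU}(3,3)$ case, specifically verifying that the orbit structure under $\alpha(\mathrm{GU}(3,1)\times \mathrm{GU}(2))$ really does produce, after accounting for $\alpha$ and the twist by $S$, exactly the coset space $P(\mathbb{Q})\setminus \mathrm{GU}(3,1)(\mathbb{Q})$ on the open orbits and only Klingen-type parabolics (not Siegel-type) as stabilizers on the closed orbits, since only then is cuspidality of a form on $\mathrm{GU}(2)$ enough to kill the closed-coset contributions. All other steps are routine Fubini and majorization arguments.
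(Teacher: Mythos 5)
Your proposal follows exactly the standard Garrett--Shimura unfolding that the paper itself relies on: the paper gives no proof here, only the citation ``pullback formula of Shimura (see \cite{WAN} for details)'', and the argument in those references is precisely the double-coset decomposition of $Q_n(\mathbb{Q})\backslash G_n(\mathbb{Q})/\alpha(\mathrm{GU}(r,s)\times\mathrm{GU}(2))(\mathbb{Q})$, identification of the open orbits with $P(\mathbb{Q})\backslash\mathrm{GU}(3,1)(\mathbb{Q})$ (resp.\ a single orbit in the $\mathrm{U}(2,2)$ case), and elimination of the degenerate orbits. One point of mechanism is off, though it does not affect the conclusion: the $\mathrm{U}(2)$ here is the \emph{definite} group $\mathrm{U}(2,0)$ attached to the anisotropic form $\zeta$, so it has no proper $\mathbb{Q}$-parabolic subgroups and ``vanishing of constant terms by cuspidality'' is not the reason the degenerate cosets drop out; rather, the degenerate orbits are indexed by nontrivial isotropic data for the relevant Hermitian form, and anisotropy of $\zeta$ means they have no $\mathbb{Q}$-rational representatives at all. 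With that correction your outline matches the proof in the cited sources.
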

This is a special case of \cite[Proposition 3.5]{WAN}, which summarizes results proved in \cite{Shi97}.
\subsection{Fourier-Jacobi Expansion}
From now on we fix a splitting character $\lambda$ of $\mathcal{K}^\times\backslash\mathbb{A}^\times_\mathcal{K}$ of infinity type $(-\frac{1}{2},\frac{1}{2})$ which is unramified at $p$ and unramified outside $\Sigma$ and such that $\lambda|_{\mathbb{A}_\mathbb{Q}^\times}=\chi_{\mathcal{K}/\mathbb{Q}}$. Let $\tau$ be a Hecke character of $\mathcal{K}^\times\backslash \mathbb{A}_\mathcal{K}^\times$ of infinity type $(-\frac{\kappa}{2},\frac{\kappa}{2})$.
\begin{definition}
For $\beta\in S_n(\mathbb{Q})$ and $\varphi$ a holomorphic automorphic form on $\mathrm{GU}(n,n)$ we define the $\beta$-th Fourier coefficient
$$\varphi_\beta(g)=\int_{S_n(\mathbb{Q})\backslash S_n(\mathbb{A})}\varphi(\begin{pmatrix}1_n&S\\&1_n\end{pmatrix}g)e_\mathbb{A}(-\mathrm{Tr}\beta S)dS.$$
For a prime $v$ and $f_v\in I_n(\tau)$ we also define the local Fourier coefficient at $g_v\in\mathrm{GU}(n,n)(\mathbb{Q}_v)$ as
$$f_{v,\beta}(z,g_v)=\int_{S_n(\mathbb{Q}_v)}f_v(z,\omega_n(\begin{pmatrix}1_n&S_v\\&1_n\end{pmatrix}g_v)e_v(-\mathrm{Tr}\beta
S_v)dS_v.$$
For $\varphi$ a holomorphic automorphic form on $\mathrm{GU}(3,3)$ and $\beta\in\mathbb{Q}^+$ we define
$$\mathrm{FJ}_\beta(\varphi)(g)=\int_{\mathbb{Q}\backslash \mathbb{A}}\varphi(\begin{pmatrix}1_3&\begin{matrix}S&0\\0&0\end{matrix}\\&1_3\end{pmatrix}g)
e_\mathbb{A}(-\mathrm{Tr}\beta S)dS.$$
For $E(f;z,g)$ with $f\in I_3(\tau)$ we define
$$\mathrm{\mathrm{FJ}}_\beta(f;z,g)=\mathrm{FJ}_\beta(E(f;z,-))(g).$$
\end{definition}
The following formula is proved in \cite[Subsection 3.3.1]{WAN}.
\begin{proposition}
Suppose $f\in I_3(\tau)$ and $\beta\in \mathbb{Q}_+$. If $E(f;z,g)$ is the Siegel-Eisenstein series on $\mathrm{GU}(3,3)$ defined by $f$ for some $Re(z)$ sufficiently large, then the $\beta$-th Fourier-Jacobi coefficient $E_\beta(f;z,g)$ satisfies:
\begin{equation}\label{LGFJ}
E_\beta(f;z,g)=\sum_{\gamma\in Q_{2}(\mathbb{Q})\backslash \mathrm{GU}_{2}(\mathbb{Q})}\sum_{x\in \mathcal{K}^2}\int_{S_1(\mathbb{A})}f(w_3\begin{pmatrix}1_3&\begin{matrix}S&x\\ {}^t\!\bar x&0\end{matrix}\\&1_3\end{pmatrix}\mathbf{j}(1,\gamma)g)e_{\mathbb{A}}(-\beta S)dS
\end{equation}
where
$$\mathbf{j}:\mathrm{U}(1,1)\times\mathrm{U}(2,2)\hookrightarrow \mathrm{U}(3,3)$$
is given by
$$\mathbf{j}(g_1,g_2)=\begin{pmatrix}A&&B&\\&D'&&C'\\C&&D&\\&B'&&A'\end{pmatrix}$$
if $g_1=\begin{pmatrix}A&B\\C&D\end{pmatrix}\in \mathrm{U}(1,1),g_2=\begin{pmatrix}A'&B'\\C'&D'\end{pmatrix}\in\mathrm{U}(2,2)$.\index{$\mathbf{j}$}
\end{proposition}

\begin{definition}
If $g_v\in \mathrm{U}(2,2)(\mathbb{Q}_v)$, $x\in\mathcal{K}_v^2$ and $a\in \mathcal{K}^\times_v$, we define:
$$\mathrm{FJ}_\beta(f_v;z,x,g_v,a)=\int_{S_1(\mathbb{Q}_v)}f_{v,z}(w_3\begin{pmatrix}1_3&\begin{matrix}S&x\\ {}^t\!\bar {x}&0\end{matrix}\\&1_3\end{pmatrix}\mathbf{j}(\mathrm{diag}(a,{}^t\!\bar{a}^{-1}),g_v))e_{\mathbb{Q}_v}(-\mathrm{Tr}\beta S)dS.$$
\end{definition}

We have:
$$\mathrm{FJ}_\beta(f_v;z,x,g,a)=\tau_v(\det a)|\det a\bar{a}|_\mathbb{A}^{-(z+\frac{1}{2})}\mathrm{FJ}_{{}^t\!\bar{a}\beta x}(f_v;z,a^{-1}x,g,1).$$
\begin{definition}\label{defJacobi}
For $x,y\in\mathcal{K}^2_v$ and $t\in\mathbb{Q}_v$, we write $n(x,y,t)$ for $\begin{pmatrix}1&y&t+\frac{yx^*-xy*}{2}&x\\&1_2&x^*&0_2\\&&1&\\&&-y^*&1_2\end{pmatrix}$. So that it becomes a Heisenberg group if we give the pairing $\langle(x_1,y_1), (x_2,y_2)\rangle=y_1x_2^*+x_2y_1^*-y_2x_1^*-x_1y_2^*$. (see \cite[Section 4]{ZhB07}).
\end{definition}
\begin{lemma}\label{5.4}
We write
$$\mathrm{FJ}_\beta(f_v;z,n(x,y,t)\alpha(1,u))=\int_{S_1(\mathbb{Q}_v)}f(w_3\begin{pmatrix}1_3&\begin{matrix}S&0\\ 0&0\end{matrix}\\&1_3\end{pmatrix}(x,y,t)\mathbf{j}(1,u))e_{\mathbb{A}}(-\mathrm{Tr}\beta S)dS$$
for $u\in U(2,2)(\mathbb{Q}_v)$ and $n(x,y,t)$ as above.
Suppose for some place $v$ we have $$\mathrm{FJ}_\beta(f_v; z,n(x,0,0)\alpha(1,u))=f(u,z)\omega_{\beta,\lambda}(u)\phi(x)$$
for any $u$, $n(x,0,0)$ as above, and some Schwartz function $\phi\in S(W^d)$ and some $f\in I((\tau/\lambda)_v,z)$. Then we have:
$$\mathrm{FJ}_\beta(f_v; z,(x,y,t)\alpha(1,u))=f(u,z)\omega_{\beta,\lambda}(n(x,y,t)\cdot u)\phi(0)$$ for any $n(x,y,t)$.
\end{lemma}
Note that comparing with \cite{IKE} we have switched the roles played by $x$ and $y$.
\begin{proof}
Since
$$\begin{pmatrix}1_3&\begin{matrix}S&x\\ {}^t\!\bar x&\end{matrix}\\ &
1_3\end{pmatrix}
\begin{pmatrix}\begin{matrix}1_1&\\&\bar A^{-1}\end{matrix}&\begin{matrix}&\\&\end{matrix}\\ \begin{matrix}&\\&B\bar A^{-1}\end{matrix}&
\begin{matrix}1&\\&A\end{matrix}\end{pmatrix}=
\begin{pmatrix}\begin{matrix}1&xB\bar A^{-1}\\&\bar A^{-1}\end{matrix}&\begin{matrix}&\\&\end{matrix}\\ \begin{matrix}&\\&B\bar A^{-1}\end{matrix}&
\begin{matrix}1&\\-B\bar{x}&A\end{matrix}\end{pmatrix}
\begin{pmatrix}1_3&\begin{matrix}S-xB{}^t\!\bar x&xA\\ \bar A {}^t\!\bar x&\end{matrix}\\ \begin{matrix}&\\&\end{matrix}&
1_3\end{pmatrix},$$
it follows that
\begin{eqnarray*}
&\mathrm{FJ}_\beta(f_v;z,x,\begin{pmatrix}A&B\bar A^{-1}\\&\bar A^{-1}\end{pmatrix}g,a)=&\\
&\tau_v^c(\det A)^{-1}|\det A\bar A|_v^{z+3/2}e_v(-\mathrm{Tr}({}^t\!\bar a\beta aB))\mathrm{FJ}_\beta(f;z,xA,g,a).
\end{eqnarray*}
Now the lemma is a consequence of
$$\begin{pmatrix}1&-y&&\\&1_2&&\\&&1&\\&&y^*&1_2\end{pmatrix}=\begin{pmatrix}1&y&t+\frac{yx^*-xy^*}{2}
&x\\&1_2&x^*&0_2\\&&1&\\&&-y^*&1_2\end{pmatrix}
\begin{pmatrix}1&&t-\frac{yx^*+xy^*}{2}&x\\&1_2&x^*&0_2\\&&1&\\&&&1_2\end{pmatrix}$$
where we write $y^*$ for ${}^t\!\bar{y}$.
\end{proof}

\subsection{Archimedean Cases}\label{sectionar}
We let \index{$\mathbf{i}$} $\mathbf{i}:=\begin{pmatrix}i&\\&\frac{\zeta}{2}\end{pmatrix}$ or $\frac{\zeta}{2}$ depending on the size $(3\times 3)$ or $(2\times 2)$. Let $J_n(g,Z):=\det (C_gi+D_g)$ for $g=\begin{pmatrix}A_g&B_g\\C_g&D_g\end{pmatrix}$ be the automorphic factor for $\mathrm{U}(n,n)$.
The Siegel section we choose is $f_{sieg,\mathcal{D},\infty}=f_{sieg,\infty}:=f_\kappa(g,z):=J_3(g,\mathbf{i})^{-\kappa}|J_3(g,\mathbf{i})|^{\kappa-2z-3}$ and $f'_{sieg,\mathcal{D},\infty}=f'_{sieg,\infty}:=f_\kappa'(g,z)=J_2(g,\mathbf{i})^{-\kappa}|J_2(g,\mathbf{i})|^{\kappa-2z-2}$.
Recall for $\varphi\in\pi_\infty$ we define the pullback sections:
$$F_\kappa(z,g):=\int_{\mathrm{U}(2)(\mathbb{R})}f_\kappa(z,S^{-1}\alpha(g,g_1)S)\bar\tau(\det g_1)\pi(g_1)\varphi dg_1$$
and
$$F'_\kappa(z,g):=\int_{\mathrm{U}(2)(\mathbb{R})}f'_\kappa(z,S'^{-1}\alpha(g,g_1)S')\bar\tau(\det g_1)\pi(g_1)\varphi dg_1$$

\noindent If we define an auxiliary $f_{\kappa,n}^\circ(z,g)=J_n(g,i1_n)^{-\kappa}|J_n(g,i1_n)|^{\kappa-2z-n}$ for $n=2,3$, then $f_\kappa(g,z)=f_{\kappa,3}^\circ(gg_0)$ and $f_\kappa'(g,z)=f_{\kappa,2}^\circ(gg_0)$ for $$g_0=\mathrm{diag}(1,\frac{\mathfrak{s}^{\frac{1}{2}}d^{\frac{1}{4}}}{\sqrt{2}},\frac{d^{\frac{1}{4}}}{\sqrt{2}},1,(\frac{\mathfrak{s}^{\frac{1}{2}}d^{\frac{1}{4}}}{\sqrt{2}})^{-1},
(\frac{d^{\frac{1}{4}}}{\sqrt{2}})^{-1})$$
or
$$g_0=\mathrm{diag}(\frac{\mathfrak{s}^{\frac{1}{2}}d^{\frac{1}{4}}}{\sqrt{2}},\frac{d^{\frac{1}{4}}}{\sqrt{2}},(\frac{\mathfrak{s}^{\frac{1}{2}}d^{\frac{1}{4}}}{\sqrt{2}})^{-1},
(\frac{d^{\frac{1}{4}}}{\sqrt{2}})^{-1})$$
depending on the sizes.

\begin{lemma}\label{Apullback}
The integrals are absolutely convergent for $\mathrm{Re}(z)$ sufficiently large and for such $z$, we have:\\
(i)$$F_{\mathcal{D},Kling,\infty}(z,g):=F_\kappa(z,g)=F_{\kappa,z}(g);$$
(ii)$$F'_{\mathcal{D},\infty}:=F_\kappa'(z,g)=\pi(g)\varphi;$$
where $F_{\kappa,z}$ is defined in Definition \ref{3.1.1} using $\varphi$ as the $v$ there.
\end{lemma}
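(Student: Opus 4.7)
The plan is to establish both identities by a combination of compactness arguments and a uniqueness principle in the induced representation. Convergence is immediate in both parts since $\mathrm{U}(2)(\mathbb{R})=\mathrm{U}(\zeta)(\mathbb{R})$ is compact (the Hermitian form $\zeta=\mathrm{diag}(\mathfrak{s}\delta,\delta)$ is definite because $-i\delta>0$), so the integrands are continuous on a compact group and no condition on $\mathrm{Re}(z)$ is actually required for convergence; it is only needed to match the absolutely-convergent range of the Eisenstein series on the pullback side.

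For part (ii), the key observation is that for $(g,g_1)\in\mathrm{U}(2,0)(\mathbb{R})\times\mathrm{U}(2,0)(\mathbb{R})$ the image $S'^{-1}\alpha(g,g_1)S'$ lies in the stabilizer $K^+_{\infty,\mathrm{U}(2,2)}$ of the base point $\mathbf{i}=\zeta/2$ of the symmetric domain of $\mathrm{GU}(2,2)$. Indeed the conjugation by $S'$ is exactly the change of basis that converts the pair of definite Hermitian forms $\zeta\oplus(-\zeta)$ into the standard split form $\bigl(\begin{smallmatrix}&1_2\\-1_2&\end{smallmatrix}\bigr)$, and under this identification the diagonal image of two isotropy elements maps into $K^+$. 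Consequently $f'_\kappa\bigl(z,S'^{-1}\alpha(g,g_1)S'\bigr)=J_2\bigl(S'^{-1}\alpha(g,g_1)S',\mathbf{i}\bigr)^{-\kappa}$ reduces to a character of $g_1$ times the correct character in $g$; feeding this back into the integral and using Schur orthogonality on the compact group $\mathrm{U}(2)(\mathbb{R})$ together with the fact that $\varphi$ lies in $\pi_\infty$ produces $\pi(g)\varphi$. The normalization comes out exactly because of the normalizing factor $g_0$ that converts $f_{\kappa,2}^\circ$ at $i1_2$ into $f'_\kappa$ at $\mathbf{i}$.

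For part (i), the strategy is uniqueness inside $I(\rho)$. First verify that $g\mapsto F_\kappa(z,g)$ has the correct left transformation property under $P(\mathbb{R})$: if $m=\mathrm{diag}(a,g_1,\mu\bar{a}^{-1})\in M_P(\mathbb{R})$ and $n\in N_P(\mathbb{R})$, then $S^{-1}\alpha(mng,g_1')S$ factors as $q\cdot S^{-1}\alpha(g,g_1'g_1^{-1}\cdot g_1)S$ for an explicit $q\in Q_3(\mathbb{R})$ lying in the Siegel parabolic; the factor $q$ contributes exactly the character $\delta_P^{z+3/2}\cdot(\pi_\psi\boxtimes\tau)(m)$, after the compensating change of variables $g_1'\mapsto g_1g_1'$ on the integration (valid because $\mathrm{U}(2)(\mathbb{R})$ is compact). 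This places $F_\kappa(z,\cdot)$ in $I(\rho)$. Next check the right transformation under $K_\infty^+$: because $f_\kappa(z,g)=J_3(g,\mathbf{i})^{-\kappa}|J_3|^{\kappa-2z-3}$ and the embedding $k\mapsto S^{-1}\alpha(k,1)S$ carries $K^+_{\infty,\mathrm{U}(3,1)}$ into $K^+_{\infty,\mathrm{U}(3,3)}$ in such a way that the cocycle pulls back to $\det\mu_{3,1}(k,\mathbf{i})^{\kappa}$ (a direct matrix verification using the explicit form of $S$ and $\mathbf{i}=\mathrm{diag}(i,\zeta/2)$), we obtain the required transformation $F_\kappa(z,kg)=\det\mu(k,\mathbf{i})^{-\kappa}F_\kappa(z,g)$. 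Finally, since $\pi_\infty$ is the trivial representation, the $\det\mu(k,\mathbf{i})^{-\kappa}$-isotypic subspace of $I(\rho)$ is one-dimensional (this is the content of the Frobenius reciprocity calculation behind Definition \ref{3.1.1}), so $F_\kappa(z,\cdot)$ is a scalar multiple of $F_{\kappa,z}$. Evaluating at $g=1$ and using that the integral over the compact group with integrand $\bar\tau(\det g_1)\pi(g_1)\varphi$ reproduces $\varphi$ (again by Schur/eigenvector considerations) fixes the scalar to be $1$.

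The main obstacle is the cocycle computation in step (i): one has to check that the pullback of $J_3$ under the specific embedding $k\mapsto S^{-1}\alpha(k,1)S$ matches $\det\mu_{3,1}(k,\mathbf{i})^{\kappa}$ on the nose, which requires tracking how the similitude base point $\mathbf{i}=\mathrm{diag}(i,\zeta/2)$ for $\mathrm{U}(3,1)$ corresponds to the Siegel base point $i1_3$ for $\mathrm{U}(3,3)$ via the chain $S$-conjugate $\to$ Siegel coordinates, and using the factor $g_0$ introduced in the passage between $f_{\kappa,3}^\circ$ and $f_\kappa$. Everything else is formal once this matching is in hand.
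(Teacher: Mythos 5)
The paper itself gives no proof of this lemma (it is one of the archimedean computations imported from \cite{WAN}), so there is nothing to compare line by line; but your strategy is the standard one for such statements and is essentially correct: convergence is trivial because $\mathrm{U}(2)(\mathbb{R})=\mathrm{U}(\zeta)(\mathbb{R})$ is compact, part (ii) reduces to the observation that $S'^{-1}\alpha(g,g_1)S'$ lands in the stabilizer of the base point $\zeta/2$ where $f'_\kappa$ is a character, and part (i) follows from the $P(\mathbb{R})$-equivariance of the pullback integral plus the one-dimensionality of the $\det\mu(k,\mathbf{i})^{-\kappa}$-isotypic line in $I(\rho_\infty)$ (Frobenius reciprocity with $\pi_\infty$ trivial), with the scalar pinned down by evaluation at a single point.

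Two places where your write-up is looser than it should be. First, in the final step of (i) you say that evaluating at $g=1$ "the integral over the compact group with integrand $\bar\tau(\det g_1)\pi(g_1)\varphi$ reproduces $\varphi$" --- but at $g=1$ the integrand still carries the factor $f_\kappa(z,S^{-1}\alpha(1,g_1)S)=J_3(S^{-1}\alpha(1,g_1)S,Z_{\mathbf{i}})^{-\kappa}|\cdot|^{\kappa-2z-3}$, which is a nontrivial character of $g_1$; the integral equals $\varphi$ only because this character cancels against $\bar\tau(\det g_1)$ and the central character of $\pi_\psi$ (the same cancellation you correctly invoke in (ii)), and because the measure gives the compact group total volume $1$. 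Without that cancellation the integral would vanish and the scalar would be $0$, not $1$, so this compatibility between $\tau_\infty$, $\psi_\infty$ and $\kappa$ is part of the content of the lemma and should be checked, not elided. Second, the modulus factor contributed by the Siegel-parabolic piece $q$ should be $\delta(m)^{3/2+z}$ with $\delta^3=\delta_P$ in the paper's normalization, i.e.\ $\delta_P^{1/2+z/3}$, not $\delta_P^{z+3/2}$; this is only a normalization slip but it is exactly the kind of exponent one must get right for the identification with $F_{\kappa,z}$ to hold on the nose. With those two computations actually carried out (the cocycle matching you yourself identify as the main obstacle, and the character cancellation at $g=1$), the argument is complete.
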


\noindent\underline{Fourier Coefficients}\\
The following lemma is  \cite[Lemma 11.4]{SU}.
\begin{lemma}\label{Fourier}
Suppose $\beta\in S_n(\mathbb{R})$. Then the function $z\rightarrow f_{\kappa,\beta}(z,g)$ has a meromorphic continuation to all of $\mathbb{C}$.
Furthermore, if $\kappa\geq n$ then $f_{\kappa,n,\beta}(z,g)$ is holomorphic at $z_\kappa:=(\kappa-n)/2$. For $y\in \mathrm{GL}_n(\mathbb{C}),f_{\kappa,n,\beta}^\circ(z_\kappa,\mathrm{diag}(y,{}^t\!\bar{y}^{-1}))=0$ if $\det\beta\leq 0$ and if $\det\beta>0$ then
$$f_{\kappa,n,\beta}^\circ(z_\kappa,\mathrm{diag}(y,{}^t\!\bar{y}^{-1}))=\frac{(-2)^{-n}(2\pi i)^{n\kappa}(2/\pi)^{n(n-1)/2}}{\prod_{j=0}^{n-1}(\kappa-j-1)!}e(i\mathrm{Tr}(\beta y{}^t\!\bar{y}))\det(\beta)^{\kappa-n}\det\bar{y}^\kappa.$$
\end{lemma}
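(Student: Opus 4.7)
The plan is to compute the archimedean Fourier coefficient by reducing the calculation to Shimura's classical confluent hypergeometric integral on the cone of positive definite Hermitian matrices. First I would unravel the definition
\[
f_{\kappa,n,\beta}^\circ(z,g)=\int_{S_n(\mathbb{R})}f_{\kappa,n}^\circ\!\left(z,\,w_n\begin{pmatrix}1_n&S\\0&1_n\end{pmatrix}g\right)e(-\mathrm{Tr}(\beta S))\,dS
\]
and use the left $Q_n$-equivariance of $f_{\kappa,n}^\circ$: for $g=\mathrm{diag}(y,{}^t\!\bar{y}^{-1})$, writing $\begin{pmatrix}1&S\\&1\end{pmatrix}g=g\begin{pmatrix}1&y^{-1}S{}^t\!\bar{y}^{-1}\\&1\end{pmatrix}$ and $w_n g=g'w_n$ with $g'=\mathrm{diag}({}^t\!\bar{y}^{-1},-y)$, combined with the change of variable $S\mapsto yS{}^t\!\bar{y}$, reduces the problem to computing $f_{\kappa,n,\beta'}^\circ(z,1)$ with $\beta'={}^t\!\bar{y}\beta y$, multiplied by an explicit prefactor involving $\det\bar{y}^{\kappa}$ and powers of $|\det y\,{}^t\!\bar{y}|$ coming from the automorphy factor and the Jacobian.

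Next, at $g=1$ the cocycle evaluation $J_n(w_n\begin{pmatrix}1&S\\&1\end{pmatrix},i1_n)=-(S+i1_n)$ turns the integrand into $(-1)^{n\kappa}\det(S+i1_n)^{-\kappa}|\det(S+i1_n)|^{\kappa-2z-n}$, and at $z=z_\kappa=(\kappa-n)/2$ the absolute value factor trivializes. By deforming the contour $S\mapsto S-i1_n$ into the upper Siegel half space of Hermitian matrices, the integrand extends holomorphically off the real locus; this shows the Fourier transform vanishes unless $\beta>0$, in which case Gindikin's formula for the gamma function of the Hermitian cone evaluates the integral in closed form. Combining the parabolic prefactor with this evaluation, together with the identities $\mathrm{Tr}({}^t\!\bar{y}\beta y)=\mathrm{Tr}(\beta y\,{}^t\!\bar{y})$ and $\det({}^t\!\bar{y}\beta y)=|\det y|^2\det\beta$, yields the asserted formula. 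Meromorphic continuation in $z$ is automatic from the explicit gamma-function form, and holomorphy at $z_\kappa$ when $\kappa\geq n$ follows since $\Gamma(\kappa-j)^{-1}$ has no pole for $\kappa-j\geq 1$.

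The main obstacle is the precise bookkeeping of normalization constants: the factors $(-2)^{-n}$, $(2\pi i)^{n\kappa}$, and $(2/\pi)^{n(n-1)/2}$ depend sensitively on the choice of additive character $e_{\mathbb{R}}$, the Haar measure on $S_n(\mathbb{R})$, and the specific form of the Weyl element $w_n$. This is routine but painstaking; since the lemma is quoted from \cite[Lemma 11.4]{SU} and ultimately rests on Shimura's classical evaluation, one may simply appeal to their computation rather than redo the normalization analysis from scratch.
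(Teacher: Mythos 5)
Your proposal is correct and matches the paper, which in fact offers no proof at all but simply quotes \cite[Lemma 11.4]{SU}; your sketch (reduction to $g=1$ by $Q_n$-equivariance and the substitution $S\mapsto yS\,{}^t\!\bar{y}$, then Shimura's confluent hypergeometric evaluation of $\int\det(S+i)^{-\kappa}e(-\mathrm{Tr}\beta S)\,dS$, with vanishing off the positive cone and the Gindikin gamma factor on it) is exactly the argument underlying that cited lemma. Deferring the normalization constants to \cite{SU} is precisely what the paper does.
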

The local Fourier coefficient for $f_\kappa$ can be easily deduced from that for $f_\kappa^\circ$.\\

\noindent\underline{Fourier-Jacobi Coefficients}\\
The following lemma can be found in \cite[Lemma 4.4]{WAN}.
\begin{lemma}
Let $z_\kappa=\frac{\kappa-3}{2}$, $\beta\in \mathbb{R}_+$. Then:\\

(i) $\mathrm{FJ}_{\beta}(z_\kappa,f_{\kappa,3}^\circ, x,\eta,1)=f_{\kappa,1,\beta}^\circ(z_\kappa+1,1)e(i\mathrm{Tr}({}^t\!\bar x\beta x))$. Recall that $\eta=\begin{pmatrix} &1_2 \\-1_2  &\end{pmatrix}$;\\

(ii) If $g\in \mathrm{U}(2,2)(\mathbb{R})$, then
$$\mathrm{FJ}_{\beta,\kappa}(z_\kappa,f_{\kappa,3}^\circ,x,g,1)=e(i\mathrm{Tr}\beta)c_1(\beta,\kappa)
f_{\kappa-1,2}^\circ(z_\kappa,g')\omega_{\beta,\lambda_\infty}(g')\Phi_{\beta,\infty}(x).$$
where $g'=\begin{pmatrix}1_2&\\&-1_2\end{pmatrix}g\begin{pmatrix}1_2&\\&-1_2\end{pmatrix}$, $c_1(\beta,\kappa)=\frac{(-2)^{-1}(2\pi i)^{\kappa}}{(\kappa-1)!}\det\beta^{\kappa-1}$ and $\Phi_{\beta,\infty}=e^{-2\pi\mathrm{Tr}(\langle x,x\rangle_\beta)}$. Recall that the $\langle x,x\rangle_\beta$ is a $2$ by $2$ matrix.
\end{lemma}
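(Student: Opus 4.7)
The plan is to prove both parts by a direct computation of the defining archimedean oscillatory integral for $FJ_\beta$, reducing in each case to the rank-one Fourier coefficient formula of Lemma \ref{Fourier} and the explicit Weil-representation formulas of Section 3.7.

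For part (i), I would substitute $y=X$, $g=\eta$, $x=1$ into the definition, and use the explicit expression $f^\circ_{\kappa,3}(z,g) = J_3(g,i1_3)^{-\kappa}|J_3(g,i1_3)|^{\kappa-2z-3}$. A direct computation of the automorphy factor on $w_3 \cdot n(S,X) \cdot \alpha(1,\eta)$, where $n(S,X)$ denotes the Siegel-unipotent element appearing in the definition, reduces $J_3$ to a determinant of a $3\times 3$ block matrix with corners involving $i - S$, $-X$, $-{}^t\!\bar X$, and $i 1_2$. Applying the Schur-complement identity $\det\begin{pmatrix} A & B \\ C & D\end{pmatrix} = \det D \cdot \det(A - B D^{-1} C)$ separates the $X$-dependence from the $S$-dependence as an additive shift $S \mapsto S - i\, X\, {}^t\!\bar X$. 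After this change of variables the integral factors into (a) a one-dimensional Fourier coefficient of $f^\circ_{\kappa,1}$ that equals $f^\circ_{\kappa,1,\beta}(z_\kappa+1,1)$ by Lemma \ref{Fourier}, and (b) the exponential factor $e(i\mathrm{Tr}({}^t\!\bar X \beta X))$ produced by applying the character $e_v(-\mathrm{Tr}\beta S)$ to the shift. The argument $z_\kappa \mapsto z_\kappa + 1$ on the right-hand side reflects the power of $|J_3|$ absorbed when the $2\times 2$ corner is identified with $\det(i1_2)$.

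For part (ii), I would extend (i) to arbitrary $g \in \mathrm{U}(2,2)(\mathbb{R})$ by exploiting covariance of the integrand under the embedding $\alpha(1,g)$ and using the Iwasawa decomposition $g = n m k$ with $n$ in the Siegel-unipotent radical, $m$ in the Levi, and $k$ in the archimedean maximal compact. The unipotent factor $n = r(S_0)$ pushes through the outer unipotent and acts on $S$ by translation, matching the Weil-representation formula $\omega_{h,v}(r(S_0))\Phi(X)=\Phi(X)e_v(\mathrm{tr}\langle X,X\rangle_h S_0)$. The Levi element $m = \mathrm{diag}(A,{}^t\!\bar A^{-1})$ is handled by a change of variable that scales $X$ by $A$ and produces $\lambda(\det A)|\det A|_{\mathcal{K}}$, matching $\omega_{h,v}(\mathrm{diag}(A,{}^t\!\bar A^{-1}))$, together with a $f^\circ_{\kappa-1,2}(z_\kappa,-)$-factor from the induced-representation transformation of $f^\circ_{\kappa,3}$. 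For the compact part $k$ one uses that the Gaussian $\Phi_{\beta,\infty}=e^{-2\pi\mathrm{Tr}\langle x,x\rangle_\beta}$ is, up to scalar, the unique $K_\infty$-finite vector of the appropriate lowest weight in the Schrödinger model, matching the $K_\infty$-type of $f^\circ_{\kappa-1,2}(z_\kappa,-)$; thus both sides transform identically under $k$. The conjugation by $\mathrm{diag}(1_2,-1_2)$ giving $g'$ reflects the action of the matrix $S$ of Section \ref{section 5} which moves $g$ between the $\mathrm{U}(2,2)'$ and $\mathrm{U}(2,2)$ models.

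The principal obstacle is to keep the sign and $\zeta$-factor bookkeeping consistent across three different coordinate systems: the skew-Hermitian form of $\mathrm{GU}(3,3)$, the form $\mathrm{diag}(\zeta,-\zeta)$ of $\mathrm{U}(2,2)'$, and the two polarizations used for the Heisenberg space. The conjugation by $S$ introduces $\zeta$-factors that must cancel against the infinity type $(-\frac{1}{2},\frac{1}{2})$ of the splitting character $\lambda$, and nailing down the constant $c_1(\beta,\kappa)=\frac{(-2)^{-1}(2\pi i)^\kappa}{(\kappa-1)!}\det\beta^{\kappa-1}$ requires careful tracking of the $(2\pi i)^\kappa$ and factorial contributions inherited from Lemma \ref{Fourier} together with the Jacobian of the $S$-substitution in part (i).
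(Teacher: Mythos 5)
Your approach is essentially the intended one: the paper gives no proof of this lemma but defers to \cite{WAN}, and the computation there proceeds exactly as you describe — a Schur-complement separation of the $S$- and $X$-variables that reduces (i) to the rank-one coefficient formula of Lemma \ref{Fourier} (with the shift $z_\kappa\mapsto z_\kappa+1$ coming from factoring out $\det(i1_2)$, and the Gaussian $e(i\mathrm{Tr}({}^t\!\bar X\beta X))=\Phi_{\beta,\infty}(X)$ coming from the imaginary translate of $S$), followed by covariance under the Siegel parabolic and $K_\infty$-type matching against $\omega_\beta(g')\Phi_{\beta,\infty}$ to propagate to general $g$ in (ii). The only delicate points are the ones you already flag (signs, the $\zeta$- and $g_0$-normalizations, and the constant $c_1(\beta,\kappa)$, which is forced by consistency of (i) and (ii) at $g=\eta$ since $\eta\in K_\infty$), so there is no gap in the plan.
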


\begin{lemma}
We have
$$\mathrm{FJ}_\beta(f_\kappa,x,g,1)=e(i\mathrm{Tr}\beta)c_1(\beta,\kappa)J(g,\mathbf{i})^{-\kappa}
\omega_{\beta,\lambda_\infty}(g'g_0)\Phi_{\beta,\infty}(x)$$ for all $g\in \mathrm{U}(2,2)(\mathbb{R}),x\in\mathbb{C}^2$.
\end{lemma}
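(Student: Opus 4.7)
The plan is to reduce this formula to the previous lemma, which established the analogous statement for $f_{\kappa,3}^\circ$, by exploiting the identity $f_\kappa(g,z) = f_{\kappa,3}^\circ(g g_0, z)$ for the explicit diagonal element $g_0 = \mathrm{diag}(1, \frac{\mathfrak{s}^{1/2} d^{1/4}}{\sqrt 2}, \frac{d^{1/4}}{\sqrt 2}, 1, (\frac{\mathfrak{s}^{1/2} d^{1/4}}{\sqrt 2})^{-1}, (\frac{d^{1/4}}{\sqrt 2})^{-1})$. Substituting into the integral definition of $FJ_\beta(f_\kappa, x, g, 1)$ yields
\[
FJ_\beta(f_\kappa, x, g, 1) = \int_{S_1(\mathbb{R})} f_{\kappa,3}^\circ\!\left(w_3 \begin{pmatrix}1_3 & \begin{smallmatrix}S & x\\ {}^t\!\bar x & 0\end{smallmatrix} \\ & 1_3\end{pmatrix}\alpha(1,g)\, g_0\right) e(-\mathrm{Tr}(\beta S))\, dS.
\]

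The next step is to commute $g_0$ across $\alpha(1,g)$. Since the middle diagonal block of $g_0$ corresponds exactly to the $\alpha$-embedded copy of $\mathrm{U}(2,2)$, one computes $\alpha(1,g)\, g_0 = g_0 \cdot \alpha(1, g_0^{-1} g g_0)$; the diagonal of $g_0$ has been chosen so that $g_0^{-1} \zeta g_0^{-*} = 1_2$, i.e.\ conjugation by $g_0$ is precisely the intertwiner between the skew-Hermitian form $\zeta$ and the standard form $1$. I would then push $g_0$ further through the Siegel unipotent $n(S,x)$: since $g_0$ normalizes the Siegel unipotent radical and acts on it by an explicit rescaling, a change of variables in $S$ (with an absolute-value Jacobian) together with a rescaling in $x$ converts the integral into one of the shape treated by the previous lemma, with $\beta$ and $x$ replaced by their $g_0$-transforms and $g$ by the conjugate appearing as $g'$ in that lemma's statement.

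Applying the previous lemma then yields an expression of the form $e(i\mathrm{Tr}\beta)\, c_1(\beta,\kappa)\, f_{\kappa-1,2}^\circ(z_\kappa, g')\, \omega_\beta(g')\, \Phi_{\beta,\infty}$ evaluated at the rescaled argument. To convert back to the $f_\kappa$-normalization I would use that $f_{\kappa-1,2}^\circ$ and $J(g,\mathbf{i})^{-\kappa}$ differ precisely by the $g_0$-conjugation already performed, so $f_{\kappa-1,2}^\circ(z_\kappa, g')$ becomes $J(g,\mathbf{i})^{-\kappa}$ after the rescaling. The extra conjugation on the Weil representation side is absorbed into $\omega_\beta(g') \mapsto \omega_\beta(g' g_0)$ via the transformation formulas of Section~\ref{3.3}: shifting by the diagonal $g_0$ acts on Schwartz functions essentially by $\Phi \mapsto |\det|^{1/2}\Phi(\cdot g_0)$, which together with the Jacobian factor from the change of variable in $S$ and $x$ exactly reproduces $\omega_\beta(g_0)\Phi_{\beta,\infty}(x)$.

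The main obstacle is the bookkeeping: verifying that the various absolute value and character factors coming from the Jacobian of the rescaling in $S$, the rescaling of $x$, and the transformation of $\Phi_{\beta,\infty}$ under $\omega_\beta(g_0)$ all cancel cleanly to produce exactly the stated formula, rather than a scalar multiple of it. This is essentially a check that the normalization $g_0$ chosen to pass from $\mathbf{i} = \mathrm{diag}(i, \zeta/2)$ to $i 1_3$ is compatible with the Weil representation splitting $\omega_\beta$ for the metric $\beta$ tensored with $\zeta$, as opposed to $\beta$ tensored with $1_2$. The matching is forced by the way $g_0$ is constructed, but the explicit verification requires a careful local computation at the Archimedean place.
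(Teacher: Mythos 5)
Your proposal is correct and follows essentially the same route as the paper: the paper's entire proof is the one-line cocycle identity $f_\kappa(g,z_\kappa)=J(g,\boldsymbol{i})^{-\kappa}=J(gg_0,i)^{-\kappa}J(g_0,i)^{\kappa}$, which reduces the claim to the preceding lemma for $f_{\kappa,3}^\circ$ exactly as you describe. Your write-up simply makes explicit the bookkeeping (pushing $g_0$ through the embedding and the unipotent, and matching the resulting factors with $\omega_\beta(g'g_0)$) that the paper suppresses.
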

\begin{proof}
Note that $$f_\kappa(g,z_\kappa)=J(g,\mathbf{i})^{-\kappa}=J(gg_0,i)^{-\kappa}J(g_0,i)^\kappa=(\frac{\sqrt{d}}{2})^{-\kappa}J(gg_0,i)^{-\kappa}.$$
\end{proof}
\begin{lemma}
Let $x_1=(x_{11},x_{12}),x_2=(x_{21},x_{22})$ where the $x_{ij}\in\mathbb{R}$. Then
$$\delta_\psi^{-1}(\omega_{1,\lambda}(\eta g_0)\Phi_{1,\infty})(x_1,x_2)=\frac{\mathfrak{s}^{\frac{1}{2}}
d^{\frac{1}{2}}}{4}e^{-2\pi\sqrt{d}(\mathfrak{s}x_{11}^2+x_{12}^2)}e^{-2\pi\sqrt{d}(\mathfrak{s}x_{21}^2+x_{22}^2)}$$
\end{lemma}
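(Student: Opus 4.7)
The plan is to carry out the three operations in sequence, using the explicit Weil representation formulas of Section 3.7 (applied with $h=1$) and the integral formula for $\delta_\psi^{-1}$ from Section 3.6. First I would compute $\omega_{1,\lambda}(g_0)\Phi_\infty$. Writing $g_0=\mathrm{diag}(A,{}^t\!\bar A^{-1})$ with $A=\mathrm{diag}(\mathfrak{s}^{1/2}d^{1/4}/\sqrt{2},\,d^{1/4}/\sqrt{2})$, the diagonal-action formula gives $\omega_{1,\lambda}(g_0)\Phi(X)=\lambda_\infty(\det A)|\det A|_{\mathcal{K},\infty}\Phi(XA)$. Since $\det A$ is positive real and $\lambda_\infty$ is of infinite type $(-\frac{1}{2},\frac{1}{2})$, one has $\lambda_\infty(\det A)=1$, while $|\det A|_{\mathcal{K},\infty}=(\det A)^2=\mathfrak{s}d/4$. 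Substituting into the Gaussian $\Phi_\infty$ yields a rescaled Gaussian on $\mathcal{K}_\infty^2=\mathbb{C}^2$ whose exponent is proportional to $\mathfrak{s}^{1/2}d^{1/2}|z_1|^2+d^{1/2}|z_2|^2$.

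Next I would apply $\omega_{1,\lambda}(\eta)$, which by the last formula in Section 3.7 is the ordinary Fourier transform on $\mathbb{C}^2$ (as $|\det h|_\infty=1$). The Fourier transform of a Gaussian is a Gaussian with the reciprocal quadratic form, producing an explicit closed expression. Finally I would apply $\delta_\psi^{-1}$ via its integral formula from Section 3.6. Under the identification $\mathcal{K}_\infty=\mathbb{R}\oplus\delta\mathbb{R}$ with $\delta=i\sqrt{d}$, each complex coordinate $z_j$ splits as $x_{1j}+\delta x_{2j}$; the partial-Fourier integral defining $\delta_\psi^{-1}$ then factors the complex Gaussian into a product of two real Gaussians in $x_1=(x_{11},x_{12})$ and $x_2=(x_{21},x_{22})$, and collecting the exponents gives precisely $e^{-2\pi\sqrt{d}(\mathfrak{s}x_{11}^2+x_{12}^2)}e^{-2\pi\sqrt{d}(\mathfrak{s}x_{21}^2+x_{22}^2)}$.

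The hard part is the careful bookkeeping of numerical constants. Each of the three stages introduces a prefactor: $|\det A|_{\mathcal{K},\infty}=\mathfrak{s}d/4$ from the diagonal action; Gaussian-integral factors $\int e^{-\pi at^2}\,dt=a^{-1/2}$ from each complex coordinate under $\omega_{1,\lambda}(\eta)$; and a Jacobian factor from the $\delta=i\sqrt{d}$ coming from the real/complex decomposition implicit in $\delta_\psi^{-1}$ (including the measure $dY$). These must combine to give exactly the claimed coefficient $\mathfrak{s}^{1/2}d^{1/2}/4$. Any factor-of-two slip in the conventions for $\eta$, for the pairing $\langle\cdot,\cdot\rangle$ on $\mathbb{W}$, or for the normalization of $\delta_\psi$ would propagate through the entire computation, so the task reduces to verifying these conventions against those fixed in Sections 3.3, 3.6 and 3.7.
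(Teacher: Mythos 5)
Your plan is exactly the paper's (essentially unwritten) proof: the paper disposes of this lemma with ``Straightforward from the expression for $\Phi_\infty$ and $\delta_\psi$,'' and your three-step computation --- the diagonal-action formula for $g_0$, the Fourier transform for $\eta$, and the partial Fourier integral defining $\delta_\psi^{-1}$ under the splitting $\mathcal{K}_\infty=\mathbb{R}\oplus\delta\mathbb{R}$ --- is precisely that computation, with the product of the two per-factor constants $\mathfrak{s}^{1/4}d^{1/4}/2$ (cf.\ the $\phi_{1,\infty},\phi_{2,\infty}$ defined immediately after the lemma) giving the claimed $\mathfrak{s}^{1/2}d^{1/2}/4$. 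One small slip in your intermediate step: after the $g_0$-action the exponent involves $a_1^2=\mathfrak{s}\sqrt{d}/2$, so it is proportional to $\mathfrak{s}|z_1|^2+|z_2|^2$ rather than $\mathfrak{s}^{1/2}|z_1|^2+|z_2|^2$, which is what makes the $\mathfrak{s}x_{11}^2$ (not $\mathfrak{s}^{1/2}x_{11}^2$) in the final answer come out right.
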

\begin{proof}
Straightforward from the expression for $\Phi_{1,\infty}$ and $\delta_\psi$.
\end{proof}
We summarize the key definitions associated to the Archimedean datum below.
\begin{definition}\label{Archimedean Kernel}
Recall we defined
$f_{sieg,\mathcal{D},\infty}=f_{sieg,\infty}:=f_\kappa(g,z):=J_3(g,\mathbf{i})^{-\kappa}|J_3(g,\mathbf{i})|^{\kappa-2z-3}$ and $f'_{sieg,\mathcal{D},\infty}=f'_{sieg,\infty}:=f_\kappa'(g,z)=J_2(g,\mathbf{i})^{-\kappa}|J_2(g,\mathbf{i})|^{\kappa-2z-2}$.
$$\Phi_{\mathcal{D},\infty}=\omega_1(g_0)\Phi_{1,\infty}, \Phi_{\mathcal{D},\infty}''=\omega_1(\eta g_0)\Phi_{\mathcal{D},\infty},$$
$$f_{2,\mathcal{D},\infty}(g)=f'_{\kappa-1}(gg_0),f_{2,\mathcal{D},\infty}''(g)=f_{\kappa-1}'(g\eta g_0),$$
$$\phi_{1,\infty}(x_1,x_2)=\phi_{2,\infty}(x_1,x_2)=\frac{\mathfrak{s}^{\frac{1}{4}}
d^{\frac{1}{4}}}{2}e^{-2\pi\sqrt{d}(\mathfrak{s}x_{1}^2+x_{2}^2)}, x_1,x_2\in\mathbb{R}.$$
\end{definition}
Finally we record a lemma.
\begin{lemma}\label{+}
Let $Z\in X_{2,2}$ and $\Phi_{\beta,Z}(x)=e(\mathrm{tr}(\langle x,x\rangle_\beta Z))$. For any $g\in\mathrm{U}(2,2)(\mathbb{R})$,
$$\omega_{\beta,\lambda_\infty}(g)\Phi_{\beta,Z}=\det J(g,Z)^{-1}\Phi_{\beta,g(Z)}.$$
\end{lemma}
\begin{proof}
Similar as \cite[Lemma 10.1]{SU}.
\end{proof}

\begin{example}
We work out an example for the theta function constructed via the Weil representation whose Archimedean Schwartz function is given by Definition \ref{Archimedean Kernel}. We check that it is nothing but the adelic theta function defined before. We take the $w_2$ in \cite[Appendix B]{ZhB07} to be the identity. The $z$ there is thus equal to the $w_1$ there. We first note that $N\ni \begin{pmatrix}1 & z & t+z\zeta z^*/2\\ & 1_2 & \zeta z^*\\ & & 1\end{pmatrix}$ acting on $i$ gives $\begin{pmatrix}i+t+z\zeta z^*/2 \\ \zeta z^*\end{pmatrix}$. Thus the complex structure on $N/Z(N)_\infty$ is given by the complex conjugation of $z$. (Note that the $z$ is not the $z$ in \cite{ZhB07} -- instead it plays the role of $\bar{u}$ there.

Now write $z=(z_1,z_2)$, and write $x=(x_1,x_2)\in\mathbb{Q}^2$. A straightforward computation using the formulas for Weil representation of $H(W)$ implies that the \emph{classical} theta function is a sum
$$\sum_x \prod_{v<\infty}\phi_v(x)e^{-(x_1^2+2x_1\bar{z}_1+\frac{1}{2}\bar{z}^2_1)2\pi\sqrt{d}}\cdot e^{-(x_2+2x_2\bar{z}_2+\frac{1}{2}\bar{z}^2_2)2\pi\mathfrak{s}\sqrt{d}}$$
for $x$ running over some lattice of $\mathbb{Q}^2$. This is clearly holomorphic with respect to the complex structure. In fact comparing with the notations in \cite{ZhB07}, taking the $u$ there to be $2(\bar{z}_1, \bar{z}_2)\zeta$ and $z$ there to be $2\zeta$, this theta function is nothing but the one considered in \emph{loc.cit.}.

In applications later on, we also take Schwartz functions in $\phi_f\in\mathcal{S}(\mathbb{A}_f)$ and consider the associated theta function $\Theta_\phi$ for $\phi=\phi_{1,\infty}\times\phi_f$. Suppose the $\Theta_\phi$ is right invariant under the open compact level group $K\subseteq NU(\zeta)(\mathbb{A}_f)$. Then we define $L$ to be a certain lattice contained in
$$(N(\mathbb{Q})\cap K)/(Z(N)(\mathbb{Q})\cap K)$$
satisfying the assumption after Definition \ref{adelictheta}, and $U_f=K\cap \mathrm{U}(\zeta)(\mathbb{A}^\infty)$. The associated classical theta functions $\theta_\phi$ is indeed in $T^{\mathrm{Hol}}_{\mathbb{A}}(1,L,U_f)$.
\end{example}

\subsection{Unramified Cases}
Let $v$ be a prime outside $\Sigma$ (in particular $v\nmid p$). Then the Siegel sections $f_{sieg,\mathcal{D},v}=f_{v,sieg}=f_v^{sph}$ and $f_{sieg,\mathcal{D},v}'=f_{v,sieg}'=f_v^{sph,'}$ is defined to be the unique section that is invariant under $\mathrm{GU}(n,n)(\mathbb{Z}_v)$ ($n=3,2$) and is $1$ at identity.

\begin{lemma}\label{Upullback}
Suppose $\pi,\psi$ and $\tau$ are unramified and $\varphi\in\pi$ is a new vector. If $Re(z)>3/2$ then the pullback integral converges and
$$F_{Kling,\mathcal{D},v}:=F_\varphi(f_v^{sph};z,g)=
\frac{L(\tilde\pi,\xi,z+1)}{\prod_{i=0}^{1}L(2z+3-i,\bar\tau'\chi_\mathcal{K}^i)}
F_{\rho,z}(g)$$
where $F_\rho$ is the spherical section defined using $\varphi\in\pi$.
Also:
$$F'_{\mathcal{D},v}:=F_\varphi'(f_v^{sph,'},z,g)=\frac{L(\tilde{\pi},\xi,z+\frac{1}{2})}{\prod_{i=0}^1L(2z+2-i,\bar{\tau}'\chi_\mathcal{K}^i)}\pi(g)\varphi.$$
\end{lemma}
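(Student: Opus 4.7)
The plan is to establish both pullback identities using the standard doubling-method strategy, reducing the computation of each pullback integral to a local Godement--Jacquet--Piatetski-Shapiro--Rallis zeta integral that evaluates to the stated ratio of $L$-factors. I would prove the two formulas in parallel, since they are entirely analogous: the $\mathrm{GU}(2,2)$ case is just the ``small'' version of the $\mathrm{GU}(3,3)$ case with $F_{\rho,z}$ replaced by $\pi(g)\varphi$ and the rank shift $n=3 \leadsto n=2$ showing up in the shifts $z+1 \leadsto z+\tfrac{1}{2}$ and $2z+3-i \leadsto 2z+2-i$.

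First I would observe that $f_v^{sph}$ is bi-$K_{n,v}$-invariant and that $g_1 \mapsto f_v^{sph}(z, S^{-1}\alpha(g, g_1 h) S)$ is left $U(2)(\mathcal{O}_v)$-invariant in $g_1$, so that $F_\varphi(f_v^{sph}; z, \cdot)$ lies in the spherical line of $I(\rho_v)$. By Frobenius reciprocity applied to the unramified representation $\rho_v$ (the induction of an unramified representation of $M_P$), this line is one-dimensional, spanned by $F_{\rho,z}$ normalized by $F_{\rho,z}(1)=\varphi$; hence $F_\varphi(f_v^{sph};z,g) = c(z)\,F_{\rho,z}(g)$ for a scalar $c(z)$ to be determined, and similarly in the $\mathrm{GU}(2,2)$ case one gets $F'_\varphi = c'(z)\,\pi(g)\varphi$.

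Next I would evaluate at $g=1$ and $h=1$ to pin down $c(z)$. Using the Iwasawa decomposition $U(2)(\mathbb{Q}_v) = B(U(2)) \cdot U(2)(\mathcal{O}_v)$, the $K$-invariance of $\varphi$, and the explicit values of $f_v^{sph}$ on the image of $\alpha$ through the Siegel parabolic $Q_n$, the integral collapses to an integral over the Borel of $U(2)$ weighted by $\delta_B^{1/2}$-type characters coming from the $f_v^{sph}(z,-)$ factor. After evaluating against the spherical Whittaker/matrix-coefficient data of $\pi$, this is precisely the Rankin--Selberg zeta integral whose unramified computation (equivalent to a Gindikin--Karpelevich computation matched through the Satake isomorphism) yields $L(\tilde\pi,\xi,z+1)$ in the numerator; the denominator factors $L(2z+3-i, \bar\tau'\chi_\mathcal{K}^i)$ for $i=0,1$ are exactly the standard normalizing factors of the unramified induction $I_3(\chi)$ attached to the Levi of $Q_3$, which by compatibility of intertwiners and Eisenstein series (Langlands' constant term formula) appear inversely in the pullback identity. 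The $\mathrm{GU}(2,2)$ version runs identically with $Q_2$ in place of $Q_3$.

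The main obstacle is bookkeeping rather than any conceptual difficulty: tracking the effect of the conjugation by $S$ (and $S'$), keeping consistent Haar measure normalizations on $U(2)(\mathbb{Q}_v)$ and on $N_{Q_n}(\mathbb{Q}_v)$, correctly identifying the character $\xi$ appearing in the numerator $L$-factor in terms of $\tau$, $\psi$, and $\lambda$, and matching the Satake parameters of $\rho_v$ to those of the induction so that the normalizing $L$-factors of $I_n(\chi)$ come out to $\prod_{i=0}^{n-1} L(2z+n-i,\bar\tau'\chi_\mathcal{K}^i)$. All of these are handled in essentially identical form in \cite{SU} and \cite{WAN}, and I would simply adapt those computations to the present unitary-similitude setup, noting that no ramified local Whittaker or new-vector theory intervenes since we are at a prime outside $\Sigma$.
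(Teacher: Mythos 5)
Your plan --- establish that the pullback section is spherical, use the one-dimensionality of the $K_v$-fixed line in $I(\rho_v)$ to reduce to a single constant, and compute that constant at $g=1$ as an unramified doubling-type zeta integral whose value is the stated ratio of $L$-factors --- is exactly the standard argument, and it is the one this paper implicitly relies on: the lemma is stated without proof here and deferred to \cite{WAN} (ultimately to Shimura's pullback computations). Your proposal is consistent with that route and contains no wrong step; the only caveat is that the actual unramified integral evaluation, which is the substance of the lemma, is delegated to the cited references rather than carried out, but the paper does the same.
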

This is a special case of \cite[Proposition 3.3]{LR}.

\noindent\underline{Fourier Coefficients}
\begin{definition}
Let $\Phi_0$ be the characteristic function of $\mathcal{O}_\mathcal{K}^2$.
\end{definition}
\begin{lemma}
Let $\beta\in S_n(\mathbb{Q}_v)$ and let $r:=\mathrm{rank}(\beta)$. Then for $y\in \mathrm{GL}_n(\mathcal{K}_v)$,
\begin{eqnarray*}
f_{v,\beta}^{sph}(z,diag(y,{}^t\!\bar y^{-1}))=&\tau({\det} y)|{\det} y\bar y|_v^{-z+n/2}D_v^{-n(n-1)/4}\\
&\times \frac{\prod_{i=r}^{n-1} L(2z+i-n+1,\bar\tau'\chi_\mathcal{K}^i)}{\prod_{i=0}^{n-1}L(2z+n-i,\bar\tau'\chi_\mathcal{K}^i)}h_{v,{}^t\!\bar y\beta y}(\bar\tau'(q_v)q_v^{-2z-n}).
\end{eqnarray*}
where $h_{v,{}^t\!\bar y\beta y}\in \mathbb{Z}[X]$ is a monic polynomial depending on $v$ and ${}^t\!\bar y\beta y$ but not on $\tau$. If $\beta\in S_n(\mathbb{Z}_v)$ and $\det \beta\in \mathbb{Z}_v^\times$, then we say that $\beta$ is $v$-primitive and in this case $h_{v,\beta}=1$.
\end{lemma}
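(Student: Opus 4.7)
The plan is to reduce this to Shimura's classical computation of Fourier coefficients of spherical Siegel Eisenstein sections, exploiting the unramified hypothesis and the transformation behavior of $f_v^{\mathrm{sph}}$ under the Siegel Levi.

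First, I would record the general formula
\[
f_{v,\beta}^{\mathrm{sph}}(z,g)=\int_{S_n(\mathbb{Q}_v)} f_v^{\mathrm{sph}}\!\left(z, w_n\begin{pmatrix}1_n & S\\ & 1_n\end{pmatrix} g\right) e_v(-\mathrm{Tr}(\beta S))\,dS,
\]
and use that under the left action of $\mathrm{diag}(y,{}^t\!\bar y^{-1})\in Q_n(\mathbb{Q}_v)$ the section transforms by $\tau(\det y)|\det y\bar y|_v^{z+n/2}$. Conjugating $y$ across the unipotent part via $S\mapsto y^{-1}S{}^t\!\bar y^{-1}$ (with Jacobian $|\det y\bar y|_v^{-n}$) converts $\beta$ to ${}^t\!\bar y\beta y$ and accounts for the factor $\tau(\det y)|\det y\bar y|_v^{-z+n/2}$ in front. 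Thus it suffices to prove the formula at $y=1$, with $\beta$ replaced by ${}^t\!\bar y\beta y$.

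Next I would carry out the reduction to the classical Shimura integral. Changing variables to diagonalize $\beta$ over $\mathcal{K}_v$ (exploiting that $v\notin\Sigma$, so $\mathcal{K}_v/\mathbb{Q}_v$ is unramified and $\beta$ can be put in a standard form modulo $\mathrm{GL}_n(\mathcal{O}_{\mathcal{K},v})$-equivalence), one rewrites the integral as a product of local zeta integrals. Using the Bruhat decomposition of $w_n r(S)$ relative to $Q_n$ and invariance of $f_v^{\mathrm{sph}}$ under $K_{n,v}$, the integral breaks into pieces indexed by the diagonal elementary divisors of $\beta$. The generic (full-rank) part contributes the universal denominator $\prod_{i=0}^{n-1}L(2z+n-i,\bar\tau'\chi_\mathcal{K}^i)^{-1}$, while the degenerate strata (corresponding to the corank $n-r$) produce the numerator $\prod_{i=r}^{n-1}L(2z+i-n+1,\bar\tau'\chi_\mathcal{K}^i)$ together with the polynomial correction $h_{v,\beta}$. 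This is exactly the formula in Shimura's \emph{Euler Products and Eisenstein Series}, transcribed to the unitary (rather than symplectic) setting; the shape of the formula, with $\bar\tau'\chi_\mathcal{K}^i$ arising from the characters on the Levi of $Q_n(\mathbb{Q}_v)$, is forced by the unramified principal series structure on $I_n(\chi_v)^{K_{n,v}}$.

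The $v$-primitive case is the computational heart: if $\det\beta\in\mathbb{Z}_v^\times$ then only the open Bruhat cell contributes non-trivially, the elementary divisors are all units, no correction is needed, and $h_{v,\beta}=1$. That $h_{v,\beta}\in\mathbb{Z}[X]$ is monic and independent of $\tau$ follows since $h_{v,\beta}$ is assembled from local Gauss-sum/character-sum data depending only on the reduction type of $\beta$ modulo powers of the uniformizer, with the $\tau$-dependence entirely absorbed into the Satake parameter $\bar\tau'(q_v)q_v^{-2z-n}$ at which it is evaluated. The main obstacle is bookkeeping of the rank-$r$ Bruhat strata and verifying that the product of local $L$-factors packages correctly in the unitary normalization, but this is exactly the content of the classical Shimura/Feit computation (adapted to $\mathrm{U}(n,n)$ as in \cite{SU}), and no new input is needed beyond the standard arguments.
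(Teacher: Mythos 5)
Your proposal is correct and is essentially the same argument the paper relies on: the paper states this lemma without proof, as a direct citation of Shimura's classical computation of local Siegel series (transcribed to the unitary setting as in \cite{SU}, Section 11), and your reduction — Levi equivariance to normalize $y=1$, then the Bruhat-cell/elementary-divisor analysis producing the $L$-factor denominator, the rank-dependent numerator, and the monic polynomial $h_{v,\beta}$ with $h_{v,\beta}=1$ in the $v$-primitive case — is exactly that computation. No gap.
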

\begin{proof}
This is a computation of Shimura in \cite[Propositions 18.14 and 19.2]{Shi97}. See also \cite[Lemma 11.7]{SU}.
\end{proof}
\noindent\underline{Fourier-Jacobi Coefficients}
\begin{lemma}
Suppose $v\not\in\Sigma$ and not dividing $p$. Let $\beta\in S_1(\mathbb{Q}_v)$ such that $\beta\not=0$. Let $y\in \mathrm{GL}_{2}(\mathcal{K}_v)$ be such that ${}^t\!\bar y\beta y\in S_1(\mathbb{Z}_v)$, let $\lambda$ be an unramified character of $\mathcal{K}_v^\times$ such that $\lambda|_{\mathbb{Q}_v^\times}=1$.
If $\beta\in \mathrm{GL}_1(\mathcal{O}_{\mathcal{K},v})$, then for $u\in \mathrm{U}_\beta(\mathbb{Q}_v)$, we have
$$\mathrm{FJ}_\beta(f_3^{sph};z,x,g,u)=\tau(\det u)|\det u\bar u|_v^{-z+1/2}
\frac{f_{2}^{sph}(z,g')(\omega_{\beta,\lambda_v}(u,g')\Phi_{0})(x)}{L(2z+3,\bar\tau')}.$$
Here $g'=\begin{pmatrix}1_2&\\&-1_2\end{pmatrix}g\begin{pmatrix}1_2&\\&-1_2\end{pmatrix}$, and the $f_2^{\mathrm{sph}}$ on the right is in $I(\tau/\lambda)$.
\end{lemma}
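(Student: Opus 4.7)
The plan is to prove the formula in two stages: first reduce to the case $u=1$ by equivariance, then compute the resulting integral by a direct Iwasawa decomposition.

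For the reduction, note that $u\in U_\beta(\mathbb{Q}_v)$ satisfies $u\bar u=1$, so ${}^t\!\bar u\beta u=\beta$ and $|\det u\bar u|_v=1$. Applying the transformation formula displayed just before Lemma \ref{5.4},
$$FJ_\beta(f;z,y,g,x)=\tau_v(\det x)|\det x\bar x|_v^{-(z+\frac{1}{2})}FJ_{{}^t\!\bar x\beta x}(f;z,x^{-1}y,g,1),$$
with $x$ specialized to $u$, yields $FJ_\beta(f_3^{sph};z,x,g,u)=\tau(\det u)\,FJ_\beta(f_3^{sph};z,u^{-1}x,g,1)$. On the right-hand side of the claim, the first Weil-representation identity of section \ref{3.7} gives $\omega_\beta(u,g')\Phi_0(x)=\omega_\beta(1,g')\Phi_0(u^{-1}x)$. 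Combining these, the general $u$ case follows from $u=1$.

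For $u=1$, write $M_S=\begin{pmatrix}S&x\\ {}^t\!\bar x&0\end{pmatrix}$ and compute an Iwasawa decomposition of $w_3\begin{pmatrix}1_3&M_S\\&1_3\end{pmatrix}\alpha(1,g)$ as $q(S,x,g)\cdot k(S,x,g)$ with $q\in Q_3(\mathbb{Q}_v)$ and $k\in K_{3,v}$, uniformly in $S$ and $g$. The decomposition is organized so that the Levi factor of $q$ involves $g$ only through its image in the embedded $U(2,2)$-block, conjugated by $\mathrm{diag}(1_2,-1_2)$ to produce $g'$ (this conjugation reflects the difference between the skew pairing defining $w_3$ inside $U(3,3)$ and the Heisenberg pairing defining $\omega_\beta$). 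The residual $S$- and $x$-dependence in the Siegel character $\chi(\det D_q)|\det A_q D_q^{-1}|^{z+3/2}$ combines with $e_v(-\mathrm{Tr}\beta S)$ to produce exactly $(\omega_\beta(g')\Phi_0)(x)$, via the formulas $\omega_{h,v}(r(S))\Phi(X)=\Phi(X)e_v(\mathrm{tr}\langle X,X\rangle_h S)$ and $\omega_{h,v}(\mathrm{diag}(A,{}^t\!\bar A^{-1}))\Phi(X)=\lambda(\det A)|\det A|_\mathcal{K}\Phi(XA)$ from section \ref{3.7}. The $g$-dependent part of the Levi character restricts on the embedded $U(2,2)$ to precisely the Siegel character defining $f_2^{sph}$, up to a single remaining $L$-factor.

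That remaining factor $L(2z+3,\bar\tau')^{-1}$ comes from the ``big cell'' contribution of the $S$-integration near the Weyl element in the $S_1$-slot: it is the unramified Gindikin--Karpelevich/Shimura zeta evaluation $\int_{\mathbb{Q}_v}f_3^{sph}(w_3 n(S))\,dS$ restricted to this slot. One can cross-check this normalization against the preceding Fourier-coefficient lemma: for $n=3$, $r=1$, the primitivity assumption gives $h_{v,\beta}=1$, and the ratio of the $n=3$ unramified denominator to the $n=2$ unramified denominator, combined with the numerator arising from the rank-$1$ orbit, simplifies (using $\chi_\mathcal{K}^2=1$ at unramified primes) to $L(2z+3,\bar\tau')^{-1}$. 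The hardest step is the uniform-in-$S$ Iwasawa decomposition of the $U(3,3)$-matrix above; once this is carried out, identification of the $g$-dependent factor with $f_2^{sph}(z,g')$ and of the $(S,x)$-dependent factor with $(\omega_\beta(g')\Phi_0)(x)$ is formal, and the required $L$-factor normalization follows from the spherical integral evaluation.
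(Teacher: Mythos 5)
The paper itself supplies no argument for this lemma (it is imported from \cite{WAN}, and is the $\mathrm{U}(3,3)\to\mathrm{U}(2,2)$ analogue of \cite[Lemma 11.6]{SU}), so I can only judge your proposal on its own terms. Your overall route is the standard one and your reduction to $u=1$ is correct and complete: since $u\in\mathrm{U}_\beta(\mathbb{Q}_v)$ gives ${}^t\!\bar u\beta u=\beta$ and $|\det u\bar u|_v=1$, the displayed transformation formula together with $\omega_\beta(u,g')\Phi_0(x)=\omega_\beta(1,g')\Phi_0(u^{-1}x)$ does exactly what you say.

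The gaps are in the $u=1$ computation. First, there is no Iwasawa decomposition of $w_3n(M_S)\alpha(1,g)$ that is ``uniform in $S$'': the Levi part genuinely changes between $|S|_v\le 1$ (where the relevant block of $w_3n(M_S)$ lies in $K_{3,v}$ and the Levi contribution is trivial) and $|S|_v>1$ (where one must use $\begin{pmatrix}&1\\-1&-S\end{pmatrix}=\begin{pmatrix}-S^{-1}&1\\&-S\end{pmatrix}\begin{pmatrix}1&\\S^{-1}&1\end{pmatrix}$ and pick up $\bar\tau'(S)|S|_v^{-2z-3}$). The case split is not cosmetic: your identification of the normalizing factor as the untwisted spherical integral $\int_{\mathbb{Q}_v}f_3^{sph}(w_3n(S))\,dS$ is incorrect, since that integral evaluates to the ratio $L(2z+2,\bar\tau')/L(2z+3,\bar\tau')$, not to $L(2z+3,\bar\tau')^{-1}$. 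What actually produces $L(2z+3,\bar\tau')^{-1}$ is the twist by $e_v(-\beta S)$ together with $\beta\in\mathrm{GL}_1(\mathcal{O}_{\mathcal{K},v})$: the character integral vanishes on $|S|_v\ge q_v^2$ and contributes $-1$ on $|S|_v=q_v$, leaving $1-\bar\tau'(q_v)q_v^{-2z-3}$. Second, your cross-check against the Fourier-coefficient lemma does not close: the ratio of the $n=3$ to the $n=2$ unramified denominators is $\frac{L(2z+3,\bar\tau')L(2z+2,\bar\tau'\chi_{\mathcal{K}})L(2z+1,\bar\tau')}{L(2z+2,\bar\tau')L(2z+1,\bar\tau'\chi_{\mathcal{K}})}$, which is not $L(2z+3,\bar\tau')$, and the two sections are normalized with exponents $z+3/2$ and $z+1$ respectively, so the comparison is not the one you describe. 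Replace the ``uniform decomposition plus GK integral'' mechanism by the explicit two-case evaluation of the $S$-integral (with the $x$-dependence tracked through the Heisenberg conjugation to yield $\omega_\beta(g')\Phi_0(x)$), and the proof goes through.
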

This is a formal generalization of \cite[Lemma 11.8]{SU}.
\begin{definition}\label{Un Kernel}
Recall we have defined $f_{sieg,\mathcal{D},v}=f_{v,sieg}=f_v^{sph}$ and $f_{sieg,\mathcal{D},v}'=f_{sieg,v}'=f_v^{sph,'}$. We also define $\phi_{1,v}$ and $\phi_{2,v}$ to be the Schwartz function on $X_v$ which is the characteristic function of $\mathbb{Z}_v^2$. We define $f_{2,\mathcal{D},v}=f^{sph,'}_v$, $\Phi_{\mathcal{D},v}=\Phi_0$ and $\Phi_{\mathcal{D},v}''=\Phi_0$.
\end{definition}
\subsection{Ramified Cases}\label{5.7}
Let $f^\dag\in I_n(\tau)$ ($n=2$ or $3$) be the Siegel section supported on $Q(\mathbb{Q}_v)w_nQ(\mathbb{Z}_v)$, which takes value $1$ on $w_nN_Q(\mathbb{Z}_v)$. The Siegel section we choose is $I_3(\tau)\ni f_{sieg,v}=f_{sieg,\mathcal{D},v}=f^\dag(g\tilde{\gamma}_v)$ where $\tilde{\gamma}_v$ is $\begin{pmatrix}1&&&\\&1_2&&\frac{1}{y\bar{y}}1_2\\&&1&\\&&&1_2\end{pmatrix}$ where $y\in\mathcal{O}_v$ is some fixed element such that the valuation is sufficiently large (can be made precise in the text). We also define $$I_2(\tau)\ni f_{sieg,\mathcal{D},v}=f_{sieg,v}'=f^\dag(g\tilde{\gamma}_v')$$ where $\tilde{\gamma}_v'=\begin{pmatrix}1_2&\frac{1}{y\bar{y}}1_2\\&1_2\end{pmatrix}$. \\

\noindent\underline{Pullback Formulas}\\
Recall the notations in Section \ref{GKES}.

\begin{lemma}\label{Lpullback}
Let $\varphi$ be some vector invariant under the action of $\mathfrak{Y}$ defined before, then $$F_{Kling,\mathcal{D},v}:=F_{\varphi}(z,w)=\tau(y\bar y )|(y\bar y)^2|_v^{-z-\frac{3}{2}}\mathrm{Vol}(\mathfrak{Y})\cdot\varphi.$$ Also $F'_{\mathcal{D},v}:=F_\varphi'(f_{v,sieg}';z,g)=\tau(y\bar y )|(y\bar y)^2|_v^{-z-1}\mathrm{Vol}(\mathfrak{Y})\cdot\pi(g)\varphi$.
\end{lemma}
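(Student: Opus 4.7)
The plan is to reduce both claimed identities to a direct evaluation of the pullback integral, using the support/invariance information furnished by the preceding lemma. I will treat the Klingen case first, since the Siegel case on $\mathrm{GU}(2,2)$ is strictly simpler.

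First I would observe that for the Klingen assertion it suffices to compute $F_\varphi(z,w)$ at the Weyl element $w$: by the previous lemma $F_\varphi(z,g)$ is supported on $P w K_v^{(2)}$ and right-$K_v^{(2)}$-invariant, while it transforms on the left under $P$ by $\delta^{z+3/2}\rho$, so its value at $w$ determines the section. Substituting $f_{v,\mathrm{sieg}}(g)=f^\dag(g\tilde\gamma_v)$ into the pullback integral gives
\begin{equation*}
F_\varphi(z,w)=\int_{\mathrm{U}(2)(\mathbb{Q}_v)} f^\dag\bigl(z,\,S^{-1}\alpha(w,g_1)S\,\tilde\gamma_v\bigr)\,\bar\chi(\det g_1)\,\pi(g_1)\varphi\,dg_1.
\end{equation*}

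Second, I would carry out the matrix calculation: conjugate $\alpha(w,g_1)$ by $S$ and multiply on the right by $\tilde\gamma_v$, then seek the Siegel-parabolic factorization. Since $f^\dag$ is supported on the cell in which $f^\dag$ was defined (so that the lower $3\times 3$ block of the argument must lie in the correct $Q_3(\mathbb Z_v)$-stratum), an Iwasawa-type decomposition of $S^{-1}\alpha(w,g_1)S\tilde\gamma_v$ will force the lower block conditions on $g_1=1+M$ to become precisely
\begin{equation*}
M\Bigl(1+\tfrac{\zeta}{2}y\bar y+y\bar y N\Bigr)=\zeta y\bar y \qquad \text{for some } N\in M_2(\mathcal O_v),
\end{equation*}
which cuts the integration down to $g_1\in\mathfrak Y$. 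On this locus, tracking the diagonal block that emerges from the factorization produces the modulus factor $\tau(y\bar y)|(y\bar y)^2|_v^{-z-3/2}$ from the formula $f^\dag(z,qk)=\tau(\det D_q)|\det A_q D_q^{-1}|_v^{z+3/2}f^\dag(k)$.

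Third, invoking the hypothesis that $\varphi$ is $\mathfrak Y$-invariant lets me pull $\pi(g_1)\varphi=\varphi$ outside the integral; the character $\bar\chi(\det g_1)$ is trivial on $\mathfrak Y$ (since elements of $\mathfrak Y$ are close to the identity for sufficiently large $\mathrm{ord}_v(y\bar y)$, hence have determinant in the kernel of $\bar\chi$ locally), leaving exactly $\mathrm{Vol}(\mathfrak Y)$. This gives the first identity. For the second ($\mathrm{GU}(2,2)$) identity, the same argument applies with $\alpha$, $S$, $\tilde\gamma_v$ replaced by $\alpha'$, $S'$, $\tilde\gamma_v'$: no Weyl element is needed because the pullback formula yields $\pi(g)\varphi$ directly, and the modulus-character exponent is $z+1$ rather than $z+3/2$ because one is working with $I_2(\tau)$ instead of $I_3(\tau)$ (so the $\delta$-factor is $|\cdot|^{z+n/2}$ with $n=2$ instead of $n=3$), which accounts for the different power of $|y\bar y|_v$.

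The principal obstacle is the explicit matrix calculation in step two, where one must unwind the triple product $S^{-1}\alpha(w,g_1)S\,\tilde\gamma_v$, impose that it lies in the support of $f^\dag$, and correctly read off both the cutting condition (which is precisely the definition of $\mathfrak Y$) and the diagonal Iwasawa component that produces the scalar $\tau(y\bar y)|(y\bar y)^2|_v^{-z-3/2}$. Once the algebra is done, the rest is bookkeeping. I expect the analogous check on $\mathrm{GU}(2,2)$ to be routine, modeled on the archimedean identity $F_\kappa'(z,g)=\pi(g)\varphi$ which has exactly the same structure.
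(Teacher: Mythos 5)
Your proposal is correct and is essentially the intended argument: the paper itself gives no proof here, deferring to \cite{WAN}, where exactly this computation is carried out — one unwinds $S^{-1}\alpha(w,g_1)S\tilde\gamma_v$ (resp.\ the $\mathrm{GU}(2,2)'$ analogue), finds that membership in $\mathrm{supp}\,f^\dag$ cuts the integration down to $g_1\in\mathfrak Y$, reads off the Iwasawa factor $\tau(y\bar y)|(y\bar y)^2|_v^{-z-n/2}$, and uses the $\mathfrak Y$-invariance of $\varphi$ (together with triviality of $\bar\chi\circ\det$ on $\mathfrak Y$, which holds since $\mathrm{ord}_v(y\bar y)$ is taken sufficiently large) to produce $\mathrm{Vol}(\mathfrak Y)$. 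The only work you have left implicit is the explicit matrix manipulation in your second step, which is precisely the content of the cited computation.
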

The proof is a special case of \cite[Lemma 4.9, 4.10]{WAN}.

\noindent\underline{Fourier Coefficients}
\begin{lemma}
\begin{itemize}
\item[(i)] Let $\beta\in S_{3}(\mathbb{Q}_\ell)$. Then $f_{v,\beta}(z,1)=0$ if $\beta\not\in S_{3}(\mathbb{Z}_\ell)^*$. If $\beta\in S_{3}(\mathbb{Z}_\ell)^*$ then
$$f_{v,\beta}(z,\mathrm{diag}(A,{}^t\!\bar{A}^{-1}))=D_\ell^{-\frac{3}{2}}\tau(\det A)|\det A\bar{A}|_\ell^{-z+\frac{3}{2}}e_\ell(\frac{\beta_{22}+\beta_{33}}{y\bar{y}})$$
where $D_\ell$ is the discriminant of $\mathcal{K}_\ell$.
\item[(ii)] If $\beta\in S_2(\mathbb{Q}_\ell)$, then $f_{v,\beta}(z,1)=0$ if $\beta\in S_2(\mathbb{Z}_\ell)^*$. If $\beta\in S_2(\mathbb{Z}_\ell)^*$ then
$$f_{v,\beta}'(z,\mathrm{diag}(A,{}^t\!\bar{A}^{-1}))=D_\ell^{-\frac{1}{2}}\tau(\det A)|\det A\bar{A}|_\ell^{-z+\frac{r}{2}}e_\ell(\frac{\beta_{11}+\beta_{22}}{y\bar{y}}).$$
\end{itemize}
\end{lemma}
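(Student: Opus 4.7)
The plan is to unwind the defining integral of the local Fourier coefficient and reduce it to a known Fourier coefficient of the standard section $f^\dag$ at the identity. Recall that
$$f_{v,\beta}(z,g) = \int_{S_n(\mathbb{Q}_v)} f_{v,sieg}(z, w_n n(S) g)\, e_v(-\mathrm{Tr}(\beta S))\, dS, \qquad n(S) := \begin{pmatrix}1_n & S\\ & 1_n\end{pmatrix}.$$
The key observation is that $\tilde\gamma_v$ itself lies in the unipotent radical of the Siegel parabolic: $\tilde\gamma_v = n(T)$ with $T = \mathrm{diag}(0, 1/(y\bar y), 1/(y\bar y))$ in case (i) and $T = (1/(y\bar y))\cdot 1_2$ in case (ii). In particular $\mathrm{Tr}(\beta T)$ is exactly the exponent $(\beta_{22}+\beta_{33})/(y\bar y)$, respectively $(\beta_{11}+\beta_{22})/(y\bar y)$, that appears in the statement.

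First I would handle $g = 1$. Substituting $f_{v,sieg}(h) = f^\dag(h\tilde\gamma_v)$ and using $n(S)n(T) = n(S+T)$, the change of variable $S \mapsto S - T$ extracts the phase $e_v(\mathrm{Tr}(\beta T))$ and leaves behind $f^\dag_{v,\beta}(z, 1)$. For general $g = \mathrm{diag}(A, {}^t\bar A^{-1})$, I would conjugate $g$ past $n(S)$ via $n(S)\,\mathrm{diag}(A, {}^t\bar A^{-1}) = \mathrm{diag}(A, {}^t\bar A^{-1})\,n(A^{-1} S ({}^t\bar A)^{-1})$ and then past $w_n$ via $w_n\,\mathrm{diag}(A, {}^t\bar A^{-1}) = \mathrm{diag}({}^t\bar A^{-1}, A)\,w_n$. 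Pulling the resulting Levi element out using the transformation rule of $I_n(\tau)$, and performing the linear change of variables $S \mapsto A S\,{}^t\bar A$ (Jacobian $|\det A\bar A|_v^n$), produces the prefactor $\tau(\det A)|\det A\bar A|_v^{-z+n/2}$ and reduces the integral to $f^\dag_{v,\beta'}(z, 1)$ with $\beta' = {}^t\bar A\,\beta\, A$; the claimed formula then follows once the primitivity condition on $\beta$ is checked to be preserved under the twist, which is automatic modulo $\mathrm{GL}_n(\mathcal{O}_{\mathcal{K},v})$.

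It remains to evaluate $f^\dag_{v,\beta}(z, 1)$, where $f^\dag$ is the standard Siegel section. The well-known Fourier expansion of this section -- the ramified analogue of the unramified formula of the previous subsection, in which all $L$-factors collapse to $1$ -- shows that the integral vanishes unless $\beta \in S_n(\mathbb{Z}_\ell)^*$, and equals $D_\ell^{-n(n-1)/4}$ otherwise; the discriminant factor arises from the self-dual measure on $S_n(\mathcal{K}_\ell)$ with respect to $\mathrm{Tr}_{\mathcal{K}_\ell/\mathbb{Q}_\ell}$. Specializing $n = 3$ gives (i) and $n = 2$ gives (ii). The main delicacy lies in confirming that the support condition is preserved through the change of variables and the twist by $A$, which amounts to the bookkeeping statement that $\beta$ is primitive if and only if ${}^t\bar A\,\beta\, A$ is, for $A \in \mathrm{GL}_n(\mathcal{O}_{\mathcal{K},v})$; once these algebraic manipulations are in place, the result is a direct specialization of the standard computation.
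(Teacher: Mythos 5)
Your proposal is correct and is precisely the standard computation that the paper itself defers to \cite{WAN}: recognize $\tilde\gamma_v$ as a unipotent element $n(T)$ of the Siegel parabolic with $T=\mathrm{diag}(0,\tfrac{1}{y\bar y},\tfrac{1}{y\bar y})$ (resp.\ $\tfrac{1}{y\bar y}1_2$), translate the variable of integration to extract the phase $e_v(\mathrm{Tr}(\beta T))$, conjugate the Levi element through $n(S)$ and $w_n$ and rescale $S$ to produce the factor $\tau(\det A)|\det A\bar A|_v^{-z+n/2}$, and evaluate the remaining big-cell integral as $\mathrm{Vol}(S_n(\mathbb{Z}_\ell))=D_\ell^{-n(n-1)/4}$ times the indicator of the dual lattice. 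The one point worth recording is that for general $A$ both the phase and the support condition come out in terms of ${}^t\!\bar A\beta A$ rather than $\beta$ (exactly as your own reduction shows), so the displayed formula holds literally only up to this replacement — an imprecision already present in the paper's statement and immaterial for its use.
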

The proof is a special case of \cite[Lemma 4.12]{WAN}.

\noindent\underline{Fourier-Jacobi Coefficients}
\begin{lemma}
If $\beta\not\in S_1(\mathbb{Z}_v)^*$ then $\mathrm{FJ}_\beta(f^\dag;z,x,g,1)=0$. If $\beta\in S_1(\mathbb{Z}_v)^*$ then
$$\mathrm{FJ}_\beta(f^\dag;z,x,g,1)=f^\dag(z,g'\eta)\omega_{\beta,\lambda_v}(h,g'\eta^{-1})\Phi_{0}(x).
\mathrm{Vol}(S_1(\mathbb{Z}_v)),$$
where $g'=\begin{pmatrix}1_{2}&\\&-1_{2}\end{pmatrix}g\begin{pmatrix}1_{2}&\\&-1_{2}\end{pmatrix}$.
\end{lemma}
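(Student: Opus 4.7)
The plan is to reduce this Fourier--Jacobi computation to the already-computed full Fourier coefficient of $f^\dag$ (the previous ramified-Fourier-coefficient lemma) by recognising $FJ_\beta$ as a ``partial Fourier transform'' along the $(1,1)$-entry of the $3\times3$ symmetric block. First, unfold the definition: $FJ_\beta(f^\dag;z,u,g,1)$ is the integral over $S\in S_1(\mathbb{Q}_v)$ of
\[
f^\dag\!\Bigl(w_3\,n(S,u)\,\alpha(1,g)\Bigr)\,e_v(-\mathrm{Tr}(\beta S))\,dS,
\]
where $n(S,u)$ is the element of the Siegel unipotent whose upper-right block is $\begin{pmatrix}S&u\\ {}^t\!\bar{u}&0\end{pmatrix}$. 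Substituting $f^\dag(x)=f^{sph}(x\tilde\gamma_v)$ and commuting $\tilde\gamma_v$ leftward through $\alpha(1,g)$ and $n(S,u)$ shifts the $S$-entry by a fixed multiple of $1/(y\bar{y})$, replaces $g$ by the sign-twisted $g'=\mathrm{diag}(1_2,-1_2)g\,\mathrm{diag}(1_2,-1_2)$ dictated by the embedding $\alpha$, and introduces a right-translate by $\eta$ which swaps the roles of the two Siegel blocks.

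Next, carry out the $S$-integration. The Bruhat support of $f^{sph}$ (essentially the big cell intersected with $\mathrm{GU}_3(\mathbb{Z}_v)$) restricts the $u$-argument to $\mathcal{O}_v^2$ (producing the factor $\Phi_0(u)$) and restricts $S$ to lie in a translate of $S_1(\mathcal{O}_v)$. If $\beta\notin S_1(\mathbb{Z}_v)^*$, the additive character $e_v(-\mathrm{Tr}(\beta S))$ is non-trivial on $S_1(\mathcal{O}_v)$, so the integral vanishes; if $\beta\in S_1(\mathbb{Z}_v)^*$, the character is trivial on the integration region and one picks up precisely $\mathrm{Vol}(S_1(\mathbb{Z}_v))$. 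The residual factors then assemble, using the Weil-representation identities of Section \ref{3.7} (particularly the $\omega_{h,v}(r(S))$ and $\omega_{h,v}(\eta)$ formulas together with the basic action of $\mathrm{diag}(A,{}^t\!\bar{A}^{-1})$), into $\omega_\beta(h,g'\eta^{-1})\Phi_0(u)$, while what survives of the Siegel section itself evaluates to $f^\dag(z,g'\eta)$.

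The main obstacle is tracking normalisations with precision: one must verify that the splitting character $\lambda$ used in defining $\omega_\beta$ appears in exactly the right place (so that the $\tau$-twist built into $f^\dag$ on the left combines correctly with the $\lambda$-twist of the Weil representation on the right), and confirm that the sign-flip $g\mapsto g'$ and the right-translate by $\eta$ are exactly what come out of moving $\tilde\gamma_v$ past $\alpha(1,g)$ and $w_3$. A secondary bookkeeping point is that $\tilde\gamma_v$ acts on only a few entries of the Siegel unipotent radical, so the commutation with $n(S,u)$ is a short computation but has to be written out carefully to pinpoint which entries produce the $1/(y\bar{y})$-shift that triggers the $\beta\in S_1(\mathbb{Z}_v)^*$ dichotomy.
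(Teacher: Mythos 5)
There is a genuine error at the foundation of your argument: you have misidentified the section $f^\dag$. In this paper (following \cite{SU} and \cite{WAN}), $f^\dag$ is the ``big cell'' section, supported on $Q_3(\mathbb{Q}_v)w_3N_{Q_3}(\mathbb{Z}_v)$ and normalized there; it is \emph{not} equal to $f^{sph}(\,\cdot\,\tilde\gamma_v)$, and the relation in the text goes the other way: the chosen section is $f_{v,sieg}(g)=f^\dag(g\tilde\gamma_v)$, a translate \emph{of} $f^\dag$. The lemma you are proving concerns $f^\dag$ alone, so $\tilde\gamma_v$ plays no role in its proof whatsoever; the commutation of $\tilde\gamma_v$ past $\alpha(1,g)$ and $n(S,u)$, and the resulting $1/(y\bar y)$-shift, enter only \emph{after} the lemma, when the paper deduces the formula for $FJ_\beta(f_{sieg,v};\dots)$ involving $\begin{pmatrix}1&\\-A&1\end{pmatrix}$ with $A=\tfrac{1}{y\bar y}1_2$. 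Your mechanism for the dichotomy is therefore wrong: the vanishing for $\beta\notin S_1(\mathbb{Z}_v)^*$ and the factor $\mathrm{Vol}(S_1(\mathbb{Z}_v))$ come directly from the fact that the support of $f^\dag$ forces the $S$-variable into $S_1(\mathbb{Z}_v)$, so that $\int_{S_1(\mathbb{Z}_v)}e_v(-\mathrm{Tr}(\beta S))\,dS$ is $0$ or $\mathrm{Vol}(S_1(\mathbb{Z}_v))$ according to whether $\beta$ pairs integrally with $S_1(\mathbb{Z}_v)$. Attributing this instead to the Bruhat support of $f^{sph}$ is incorrect: the spherical section is supported on all of $G_3(\mathbb{Q}_v)=Q_3(\mathbb{Q}_v)K_{3,v}$ and its Fourier coefficients involve the polynomials $h_{v,\beta}$ and $L$-factors (see the unramified lemma), not a clean characteristic-function dichotomy.

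Once $f^\dag$ is correctly identified, the remaining skeleton of your argument is sound and matches the intended computation: unfold the definition of $FJ_\beta$, decompose $w_3\,n(S,u)\,\alpha(1,g)$ so that the $(2,2)$-block of the unipotent splits off into a $\mathrm{U}(2,2)$ big-cell datum evaluated at $g'\eta$ (the sign twist $g\mapsto g'$ and the Weyl element $\eta$ arise from this decomposition, not from $\tilde\gamma_v$), perform the $S$-integration as above, and recognize the residual $u$-dependence as $\omega_\beta(h,g'\eta^{-1})\Phi_0(u)$ via the explicit Weil-representation formulas of subsection \ref{3.7}. As written, however, your support analysis is carried out for the wrong section and would not produce the stated formula.
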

The proof is a special case of \cite[Lemma 4.13]{WAN}.
\begin{definition}\label{Ra Kernel}
Let $A=\frac{1}{y\bar{y}}1_2$. Thus
$$\mathrm{FJ}_\beta(f_{sieg,v};z,x,g,h)=f^\dag(z,g'\begin{pmatrix}1&\\-A&1\end{pmatrix}\eta)(\omega_{\beta,\lambda_v}(h,g'
\begin{pmatrix}1&\\-A&1\end{pmatrix}\eta))\Phi_0(x)$$
for $h\in \mathrm{U}_\beta(\mathbb{Q}_\ell)$.
We define $\Phi_{\mathcal{D},v}'':=\omega_\beta(\begin{pmatrix}1&A\\&1\end{pmatrix})\Phi_0$ and $\Phi_{\mathcal{D},v}=\omega_{\beta,\lambda_v}(
\begin{pmatrix}1&\\-A&1\end{pmatrix}\eta)\Phi_0$. We also define $f_{2,\mathcal{D},v}=\rho(\begin{pmatrix}1&\\-A&1\end{pmatrix}\eta)f^\dag\in I_2(\frac{\tau}{\lambda})$.
\end{definition}
\noindent\underline{Split Case}\\
\noindent Suppose $v=w\bar{w}$ is a split prime. Recall we have the local polarization $X'_v\oplus Y'_v$. Now we write $x_1'=(x_{11}',x_{12}')$ and $x_2'=(x_{21}',x_{22}')$ with respect to $\mathcal{K}_v\simeq \mathcal{K}_w\times\mathcal{K}_{\bar{w}}$.
The following lemma follows from a straightforward computation and will be used later.
\begin{lemma}\label{lemma6.24}
Let $\chi_{\theta,v}$ be a character of $\mathbb{Z}_v^\times$ such that
$$\mathrm{cond}(\lambda_v)<\mathrm{cond}(\chi_{\theta,v})< \mathrm{ord}_v(y\bar{y}).$$
Then it is possible to choose a Schwartz function $\phi_1'$ such that the function
$$\phi_2'(x_2'):=\int_{X_1'}\delta_\psi'^{-1}(\Phi_v'')(x_1',x_2')\phi'_1(x_1')dx_1'$$ is given by
$$\phi_2'(x_2')=\left\{\begin{array}{ll}\lambda_v\chi_{\theta,v}(x_{22}') & x_{21}'\in\mathbb{Z}_v,x_{22}'\in \mathbb{Z}_v^\times\\0&\mbox{ otherwise. }\end{array}\right.$$
Moreover we can ensure that when we are moving our datum in $p$-adic families, this $\phi_1'$ is not going to change.
\end{lemma}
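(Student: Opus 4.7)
The plan is to carry out a direct computation at the split prime $v$, exploiting the product decomposition $\mathcal{K}_v\simeq\mathcal{K}_w\times\mathcal{K}_{\bar w}\simeq \mathbb{Q}_v^2$. First I would unpack $\Phi_v''=\omega_\beta\!\left(\begin{pmatrix}1&A\\&1\end{pmatrix}\right)\Phi_0$ using the formula from Section 3.7: since $A=\tfrac{1}{y\bar y}1_2$ and $\Phi_0=\mathbf{1}_{\mathbb{Z}_v^2\times\mathbb{Z}_v^2}$ on the natural polarization, one gets the explicit expression
$$\Phi_v''(X)=\mathbf{1}_{\mathbb{Z}_v^2\times \mathbb{Z}_v^2}(X)\,e_v\!\bigl(\tfrac{1}{y\bar y}\mathrm{tr}\langle X,X\rangle_\beta\bigr).$$
Then I would compute $\delta_\psi'^{-1}(\Phi_v'')$ on $X_v'\oplus X_v^{-,'}$. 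At a split prime the change of polarization operator $\delta_\psi'^{-1}$ is a partial Fourier transform (more precisely it is the composition of the two local intertwiners $\delta_\psi''$ and $\delta_\psi^{-,''}$ of Section 3.6), so the resulting kernel $K(x_1',x_2'):=\delta_\psi'^{-1}(\Phi_v'')(x_1',x_2')$ is a tensor product, over the four scalar coordinates $x_{11}',x_{12}',x_{21}',x_{22}'$, of characteristic functions of cosets of $\mathbb{Z}_v$ with additive character factors.

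Next I would use this concrete description of $K$ to reverse-engineer $\phi_1'$. The integral defining $\phi_2'$ factors as a pairing separately in the $x_{11}'$ and $x_{12}'$ variables against the dual $x_{21}', x_{22}'$ variables, of the form $\int\phi_1'(x_1')K(x_1',x_2')\,dx_1'$. Because $K$ is essentially a Fourier kernel between $\mathbb{Z}_v$ and $\mathbb{Z}_v$, I can choose $\phi_1'$ to be the product of two elementary pieces: one piece in $x_{11}'$ (an indicator of a suitable compact set producing the support condition $x_{21}'\in\mathbb{Z}_v$), and one piece in $x_{12}'$ of the form $\mathbf{1}_{\mathbb{Z}_v^\times}\cdot(\lambda_{\theta,v}\chi_{\theta,v})(x_{12}'^{-1})$, so that after Fourier pairing the variable $x_{22}'$ picks up exactly the character $\lambda_{\theta,v}\chi_{\theta,v}(x_{22}')$ on $\mathbb{Z}_v^\times$ and has zero contribution off this set. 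The precise shifts and normalizations are determined by the additive character appearing in $\Phi_v''$; choosing them correctly yields the stated formula for $\phi_2'$.

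For the \emph{moreover} assertion, the key observation is that $\phi_1'$ is built entirely from $\lambda_{\theta,v}$, $\chi_{\theta,v}$, and the (fixed) lattice structure at $v$; none of these data depend on the Hida family variable in $\mathbb{I}$ nor on the Iwasawa variables in $\Gamma_\mathcal{K}$. Concretely, the characters $\lambda_{\theta,v}$ and $\chi_{\theta,v}$ are part of the fixed arithmetic datum defining the theta function and splitting character chosen at the outset, so $\phi_1'$ is a fixed Schwartz function on $X_1'$.

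The main obstacle, which is purely bookkeeping rather than conceptual, is to keep track of the identifications $\mathcal{K}_v\simeq\mathcal{K}_w\times\mathcal{K}_{\bar w}$ consistently: which of the two $\mathbb{Q}_v$-factors corresponds to $X'$ and which to $X^{-,'}$, the sign conventions in the intertwiner $\delta_\psi'$ (which determines whether the resulting character is $\lambda_{\theta,v}\chi_{\theta,v}$ or its inverse/conjugate), and the volume normalization of $dx_1'$ needed for the equality to hold as written without an extra constant. Once these are pinned down, the verification is a finite calculation on $\mathbb{Q}_v^4$.
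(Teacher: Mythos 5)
Your proposal is correct and is exactly the computation the paper has in mind: the paper in fact gives no proof of this lemma at all, asserting only that it "follows from a straightforward computation", and your outline — writing $\Phi_v''=\Phi_0\cdot e_v(\tfrac{1}{y\bar y}\mathrm{tr}\langle X,X\rangle_\beta)$, realizing $\delta_\psi'^{-1}$ as an explicit partial Fourier kernel in the split coordinates, and reverse-engineering $\phi_1'$ as a product of an indicator function and a twisted indicator $\mathbf{1}\cdot(\lambda_{\theta,v}\chi_{\theta,v})^{-1}$ so that the pairing produces the character (with the support shift supplied by the $e_v(\tfrac{1}{y\bar y}\cdots)$ factor) — supplies precisely that computation. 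The observation that $\phi_1'$ depends only on the fixed local data at $v\nmid p$ (the family varies only at $p$ and by unramified twists away from $p$, which are invisible on $\mathbb{Z}_v^\times$) also correctly justifies the "moreover" clause.
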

\begin{remark}
Note here we have flexibility on choosing the $\chi_{\theta,v}$ for split $v$. It is an analogue of the fact that in the doubling method, if we choose the Siegel section as in the beginning of this subsection, then the local pullback integral is nonzero if the test vector has appropriate conductor. This flexibility is important for our argument in Subsection \ref{section 7.2}.
\end{remark}
We define
\begin{equation}\label{spcase}
\phi_{1,v}=\delta_\psi''(\phi_1'), \phi_{2,v}=\delta_\psi^{-,''}(\phi_2').
\end{equation}

\noindent\underline{Non-split Case}
\begin{lemma}\label{5.18}
We consider the action of the compact abelian group $\mathrm{U}(1)(\mathbb{Q}_v)$ on $\delta_\psi^{-1}(\Phi_v'')$ by the Weil representation (using the splitting character $\lambda_v$) of
$$1\times \mathrm{U}(1)(\mathbb{Q}_v)\hookrightarrow 1\times \mathrm{U}(2)(\mathbb{Q}_v)\hookrightarrow \mathrm{U}(2)(\mathbb{Q}_v)\times \mathrm{U}(2)(\mathbb{Q}_v)\hookrightarrow \mathrm{U}(2,2)(\mathbb{Q}_v).$$
We can write $\delta_\psi^{-1}(\Phi_v'')$ as a sum of eigenfunctions of this action. Let $m=\mathrm{max}\{\mathrm{ord}_v(\mathrm{cond}\lambda_v),3)+1$. If $\mathrm{ord}_v(y\bar{y})>m$, then there is such a nonzero eigenfunction $\phi_{2,v}$ whose eigenvalue is a character $\lambda_v^{2}\chi_{\theta,v}$ for $\chi_{\theta,v}$ of conductor at least $\varpi_v^m$, such that there is a $\bar{\mathbb{Q}}_p$-valued Schwartz function $\phi_{1,v}$ and a set of constants $C_{v,i}\in\bar{\mathbb{Q}}_p$ and $u_{v,i}\in \mathrm{U}(1)(\mathbb{Q}_v)$'s such that the function
$$\phi_{2,v}(x_2)=\int_{X_1(\mathbb{Q}_v)}\sum_i\delta_\psi^{-1}(C_{v,i}\omega_{\beta,\lambda_v}(u_{v,i},1)\Phi_v'')
(x_1,x_2)\phi_{1,v}(x_1)dx_1$$
(here $1\in \mathrm{U}(2,2)(\mathbb{Q}_v)$).
\end{lemma}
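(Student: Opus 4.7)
The plan is to exploit compactness of $\mathrm{U}(1)(\mathbb{Q}_v)$ to decompose $\delta_\psi^{-1}(\Phi_v'')$ spectrally, then project onto a component with sufficiently ramified eigenvalue by averaging against finitely many group elements and pairing against a suitable $\phi_1$.

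First I would unwind the definitions: $\Phi_v'' = \omega_\beta\!\left(\begin{pmatrix}1&A\\&1\end{pmatrix}\right)\Phi_0$ with $A=(y\bar y)^{-1}1_2$ and $\Phi_0=\mathbf{1}_{\mathcal{O}_v^2\oplus\mathcal{O}_v^2}$. Using $\omega_\beta(r(S))\Phi(X)=\Phi(X)e_v(\mathrm{tr}\langle X,X\rangle_\beta S)$ from Section \ref{3.7}, $\Phi_v''$ is the product of $\Phi_0$ with a quadratic additive character whose phase is proportional to $(y\bar y)^{-1}\langle x,x\rangle_\beta$. Applying $\delta_\psi^{-1}$ with the explicit formula of Section \ref{3.6} converts this into a function on $X_1\oplus X_2$ via a partial Fourier transform. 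I would record the resulting explicit expression to have something concrete to decompose.

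Next, since $v$ is non-split, $\mathrm{U}(1)(\mathbb{Q}_v)$ is a compact abelian group acting smoothly on $\mathcal{S}(X_1\oplus X_2)$ through the $\lambda$-twisted Weil representation. By Peter--Weyl, $\delta_\psi^{-1}(\Phi_v'')$ decomposes as a finite sum $\sum_\chi \phi'_{2,\chi}$ over characters $\chi=\lambda_v^2\chi_{\theta,v}$ of $\mathrm{U}(1)(\mathbb{Q}_v)$. The crucial claim is that when $\mathrm{ord}_v(y\bar y)>m$, at least one non-zero $\phi'_{2,\chi}$ has $\chi_{\theta,v}$ of conductor at least $\varpi_v^m$. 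I would establish this by a direct calculation of how elements $u\in 1+\varpi_v^k\mathcal{O}_v$ inside $\mathrm{U}(1)(\mathcal{O}_v)$ act on $\delta_\psi^{-1}(\Phi_v'')$: if every occurring $\chi_{\theta,v}$ had conductor $<\varpi_v^m$, the whole function would be invariant under a sufficiently small open subgroup of $\mathrm{U}(1)(\mathbb{Q}_v)$, which contradicts the ramification of the quadratic phase in $\Phi_v''$ controlled by $y\bar y$ (combined with the conductor of $\lambda_v$).

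Extracting $\phi_2'$ is then formal. The projection operator onto the $\chi$-isotypic part is $\int_{\mathrm{U}(1)(\mathbb{Q}_v)} \bar\chi(u)\,\omega_\lambda(u,1)\cdot du$, and by smoothness of the integrand and compactness of the domain this integral may be replaced by a finite Riemann sum $\sum_i C_{v,i}\,\omega_\lambda(u_{v,i},1)$ with $p$-integral coefficients (the characters $\lambda_v^2\chi_{\theta,v}$ take values in roots of unity). Choosing a $p$-integral $\phi_1\in \mathcal{S}(X_1)$ whose inner product with $\phi_2'$ along $X_1$ is non-zero — which exists because $\phi_2'\not\equiv 0$ — yields the desired $\phi_2$ as a non-zero scalar multiple of the projection of $\delta_\psi^{-1}(\Phi_v'')$ along $x_2$, hence of $\phi_2'$.

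The main obstacle is the quantitative ramification step: tracking how the parameter $y\bar y$ governs the ramification of the $\mathrm{U}(1)$-action on $\delta_\psi^{-1}(\Phi_v'')$, and ensuring the bound incorporates the extra contribution coming from the conductor of the splitting character $\lambda_v$ (which is exactly why $m$ is taken as $\max\{\mathrm{ord}_v(\mathrm{cond}\,\lambda_v),3\}+1$). Everything else — the eigendecomposition, the replacement of the projector by a finite sum, and the choice of $\phi_1$ — is a standard consequence of compactness of $\mathrm{U}(1)(\mathbb{Q}_v)$ and the formulas for the Weil representation recorded earlier.
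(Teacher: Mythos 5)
Your overall strategy coincides with the paper's: decompose $\delta_\psi^{-1}(\Phi_v'')$ into $\mathrm{U}(1)(\mathbb{Q}_v)$-eigenfunctions by compactness, show the function is not invariant under the principal congruence subgroup $1+\varpi_v^m\mathcal{O}_{\mathcal{K},v}$, conclude that some eigencharacter is ramified to depth $m$, and then extract it with a finite-sum projector (legitimately $p$-integral since $v\neq p$) paired against a $\phi_1$ with non-zero overlap. Those formal parts are fine and match the paper.

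The genuine gap is exactly at the step you yourself flag as ``the main obstacle'': the non-invariance is asserted to follow from ``the ramification of the quadratic phase in $\Phi_v''$ controlled by $y\bar y$,'' but this is not a proof, and it is not obvious. The subtlety is that the relevant $\mathrm{U}(1)$ does not act on $\delta_\psi^{-1}(\Phi_v'')$ by simple multiplication against the phase $e_v(\mathrm{tr}\langle X,X\rangle_\beta\cdot(y\bar y)^{-1})$; it acts through $\delta_\psi\,\omega(1,g_1)\,\delta_\psi^{-1}$, i.e.\ the quadratic phase gets scrambled by the partial Fourier transform before $g_1$ acts, so the interaction between $\mathrm{ord}_v(y\bar y)$ and the conductor of the eigencharacters must actually be computed. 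The paper's device for doing this is to observe that $(\delta_\psi\,\omega(1,g_1)\,\delta_\psi^{-1}\Phi_v'')(0)$ equals the value $f_{\Phi_v''}(i(1,g_1),\tfrac12)$ of a Siegel--Weil section on $\mathrm{U}(1,1)$ attached to $\Phi_v''$ via the embeddings $j$ and $i$; then Lemma \ref{5.19} computes $i(1,g_1)$ explicitly for $g_1=1+\varpi_v^m a$ and, using the Iwasawa-type identity
$$\begin{pmatrix}a&b\\c&d\end{pmatrix}\begin{pmatrix}1&\tfrac1n\\&1\end{pmatrix}=\begin{pmatrix}\tfrac{n(ad-bc)}{c+nd}&\tfrac an+b\\0&\tfrac cn+d\end{pmatrix}\begin{pmatrix}1&\\ \tfrac{n}{1+\frac{nd}{c}}&1\end{pmatrix}$$
with $n=y\bar y$, shows the value genuinely changes when $\mathrm{ord}_v(n)>m$. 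Your proposal would be complete once you supply this (or an equivalent) explicit verification; as written, the central quantitative claim of the lemma is assumed rather than proved.
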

\begin{proof}
Consider the embedding
$\mathrm{U}(1,1)\hookrightarrow \mathrm{U}(2,2)$ by
$$j: g\mapsto \begin{pmatrix}\frak{s}^{-1}&&&\\&1&&\\&&\mathfrak{s}^{-1}&\\&&&1\end{pmatrix}
\begin{pmatrix}a_g&&b_g&\\&a_g&&b_g\\d_g&&d_g&\\&c_g&&d_g\end{pmatrix}
\begin{pmatrix}\frak{s}&&&\\&1&&\\&&\frak{s}&\\&&&&1\end{pmatrix}$$
for $g=\begin{pmatrix}a_g&b_g\\c_g&d_g\end{pmatrix}$.
Define $f_{\Phi_v''}(g,\frac{1}{2})=(\omega_{\beta,\lambda_v}(j(g))\Phi_v'')(0)\in I_2(\frac{\tau}{\lambda_v})$. We define $i: \mathrm{U}(1)\times \mathrm{U}(1)\hookrightarrow \mathrm{U}(1,1)$ by
\begin{equation}\label{defni}
i(g_1,g_2)=\begin{pmatrix}\frac{1}{2}&-\frac{1}{2}\\-\delta^{-1}&-\delta^{-1}\end{pmatrix}
\begin{pmatrix}1&\\&g_1\end{pmatrix}\begin{pmatrix}1&-\frac{\delta}{2}\\-1&-\frac{\delta}{2}\end{pmatrix} .
\end{equation}
For $g_1\in \mathrm{U}(1)(\mathbb{Q}_v)$,
$$f_{\Phi_v''}(i(1,g_1),\frac{1}{2})=(\omega_{\beta,\lambda_v}(j\circ i(1,g_1))\Phi_v'')(0)=(\delta_\psi\omega_{\lambda_v}(1,g_1)\delta_\psi^{-1}\Phi_v'')(0).$$
Here in the first and last expression $1\in \mathrm{U}(2)(\mathbb{Q}_v)$ and $g_1$ is viewed as the element in the center of $\mathrm{U}(2)(\mathbb{Q}_v)$.
Thus we are reduced to proving the following lemma.
\end{proof}
\begin{lemma}\label{5.19}
Let $g_1=1+\varpi_v^m.a\in \mathrm{U}(1)(\mathbb{Q}_v)$ for $m$ as in the above lemma and $a\in\mathcal{O}_{\mathcal{K},v}$, if $n=y\bar{y}$ is such that $\mathrm{ord}_vn>m$ then $f_{\Phi_v''}(i(1,g_1);\frac{1}{2})\not=f_{\Phi_v''}(1;\frac{1}{2})$.
\end{lemma}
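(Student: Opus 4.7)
The strategy is a direct computation with the Weil representation formulas of Section~\ref{3.7}. Using the defining identity $\Phi_v'' = \omega_\lambda(r(A))\Phi_0$ with $A = n^{-1}\cdot 1_2$, and the evaluation $\omega_\lambda(r(A))\Phi_0(0) = \Phi_0(0)\, e_v(0) = 1$, the desired inequality becomes
$$\bigl(\omega_\lambda(h\cdot r(A))\Phi_0\bigr)(0) \ \neq\ 1, \qquad h := j\circ i(1,g_1) \in \mathrm{U}(2,2)(\mathbb{Q}_v).$$
So I need to show that the action of the extra factor $h$ does not preserve the value at the origin.

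First, I compute $h$ explicitly. From the formulas for $i$ and $j$, one finds $h = 1 + \varpi_v^m M$, where $M$ is an explicit $4\times 4$ matrix whose entries have bounded valuation (depending only on fixed quantities like $\delta$ and $\mathfrak{s}$). In particular, writing $h = \begin{pmatrix}P & Q \\ R & S\end{pmatrix}$ in $2\times 2$ blocks, the lower-left block $R$ has entries of valuation exactly $m$ for generic $a$, while $P$ and $S$ are close to $1_2$.

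Second, I carry out the Bruhat decomposition of $h\cdot r(A)$ with respect to the Siegel parabolic $Q_2$ of $\mathrm{U}(2,2)$. Since $h\cdot r(A) = \begin{pmatrix}P & Q+PA\\ R & S+RA\end{pmatrix}$, and $RA$ has entries of valuation $m - \mathrm{ord}_v(n) < 0$ by hypothesis, the block $S+RA$ dominates $S$ and is therefore invertible, with $(S+RA)^{-1}$ having entries of strictly positive valuation $\mathrm{ord}_v(n) - m$. This places $h\cdot r(A)$ in the big Bruhat cell $Q_2\eta Q_2$, and one obtains an explicit decomposition
$$h\cdot r(A) = r(B)\cdot \mathrm{diag}(D,\,{}^t\!\bar D^{-1})\cdot \eta\cdot r(B'),$$
where $B$, $B'$, and $D$ are computed from $P,Q,R,S,A$ by the standard formulas; the key feature is that $B'$ has entries of valuation $m - \mathrm{ord}_v(n) < 0$.

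Finally, applying the Weil-representation formulas of Section~\ref{3.7} to this decomposition and evaluating at $X=0$ reduces the problem to a character integral
$$\bigl(\omega_\lambda(h\cdot r(A))\Phi_0\bigr)(0) = c \cdot \int_{\mathcal{O}_v^2} \Phi_0(Y)\, e_v\bigl(\mathrm{Tr}_{\mathcal{K}_v/\mathbb{Q}_v}\mathrm{tr}\langle Y,Y\rangle_\beta B'\bigr)\, dY,$$
where $c$ is an explicit scalar involving $\lambda(\det D)$, absolute-value factors, and the character contribution from $r(B)$. Because $B'$ has strictly negative valuation $m - \mathrm{ord}_v(n)$, the character $e_v(\mathrm{tr}\langle Y,Y\rangle_\beta B')$ is non-trivial on the support of $\Phi_0$, and the integral does not equal $1$. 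The main obstacle is the careful bookkeeping of all scalar contributions from $\lambda$, $\tau$, and the absolute values to ensure no accidental cancellation collapses the answer back to $\Phi_0(0) = 1$; this is a direct but technical check using the assumption that $m \geq \mathrm{ord}_v(\mathrm{cond}\,\lambda_v) + 1$, which guarantees that the contributions from $\lambda$ and $\tau$ are under control and cannot compensate for the Fourier-transform factor.
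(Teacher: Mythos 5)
Your setup is correct — reducing the claim to $(\omega_\lambda(j(i(1,g_1))\cdot r(A))\Phi_0)(0)\neq\Phi_0(0)$ and then decomposing the matrix so that the Weil-representation formulas of Section \ref{3.7} apply is exactly the right idea, and the explicit computation of $i(1,g_1)$ matches the paper's. The gap is in your final step. By passing to the big cell $Q_2\eta Q_2$ you are left with a quadratic Gauss sum $\int\Phi_0(Y)\,e_v(\mathrm{tr}\langle Y,Y\rangle_\beta B')\,dY$ times normalizing constants, and ``the character is non-trivial on the support of $\Phi_0$, hence the integral does not equal $1$'' is not an argument: a non-trivial character sum can take any value, and whether the scalar $c$ compensates is precisely the content of the lemma, which you defer as ``a direct but technical check'' without carrying it out. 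There is also a computational slip: with lower-left block $R$ (valuation $\approx m$) and lower-right block $S+RA$, the big-cell parameter is $B'=R^{-1}(S+RA)=R^{-1}S+A$, which has valuation $-\mathrm{ord}_v(n)$ (since $\mathrm{ord}_v(A)=-\mathrm{ord}_v(n)<-m$), not $m-\mathrm{ord}_v(n)$.

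The paper avoids the Gauss sum entirely by choosing the other factorization. Working in $\mathrm{U}(1,1)$ with $i(1,g_1)=\begin{pmatrix}a&b\\c&d\end{pmatrix}$, $c=\delta^{-1}(g_1-1)$ of valuation about $m$, one writes
$$\begin{pmatrix}a&b\\c&d\end{pmatrix}\begin{pmatrix}1&\frac{1}{n}\\&1\end{pmatrix}=\begin{pmatrix}\frac{n(ad-bc)}{c+nd}&\frac{a}{n}+b\\0&\frac{c}{n}+d\end{pmatrix}\begin{pmatrix}1&\\ \frac{nc}{c+nd}&1\end{pmatrix},$$
i.e.\ an upper-triangular factor times a \emph{lower unipotent} whose entry has valuation $\mathrm{ord}_v(n)$ (because $\mathrm{ord}_v(c)<\mathrm{ord}_v(nd)$ forces $\mathrm{ord}_v(c+nd)=\mathrm{ord}_v(c)$); this lower unipotent is deep enough to fix $\Phi_0$ and contributes nothing at the origin. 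All of the difference is then concentrated in the torus entry $t=\frac{n(ad-bc)}{c+nd}$, of valuation $\mathrm{ord}_v(n)-\mathrm{ord}_v(c)>0$ precisely because $\mathrm{ord}_v(n)>m$, and the formula $\omega(\mathrm{diag}(t,\bar t^{-1}))\Phi(X)=\lambda(t)|t|_{\mathcal{K}_v}\Phi(Xt)$ gives $f_{\Phi_v''}(i(1,g_1);\frac{1}{2})=\lambda(t)|t|_{\mathcal{K}_v}\Phi_0(0)$, whose complex absolute value is a nontrivial negative power of $q_v$, whereas $f_{\Phi_v''}(1;\frac{1}{2})=\Phi_0(0)$. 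The inequality is read off from absolute values, with no bookkeeping of $\lambda$ or $\tau$ required. To repair your route you would have to actually evaluate the Gauss sum and all constants; the cleaner fix is to switch to this factorization.
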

\begin{proof}
We have as follows:
\begin{align*}
&&&\begin{pmatrix}\frac{1}{2}&-\frac{1}{2}\\-\delta^{-1}&-\delta^{-1}\end{pmatrix}\begin{pmatrix}1&\\&g_1\end{pmatrix}\begin{pmatrix}1&-\frac{\delta}{2}\\-1&-\frac{\delta}{2}\end{pmatrix}&\\
&=&&\begin{pmatrix}\frac{1}{2}&-\frac{g_1}{2}\\-\delta^{-1}&-\delta^{-1}g_1\end{pmatrix}\begin{pmatrix}1&-\frac{\delta}{2}\\-1&-\frac{\delta}{2}\end{pmatrix}&\\
&=&&\begin{pmatrix}\frac{1}{2}+\frac{g_1}{2}&-\frac{\delta}{4}+\frac{\delta g_1}{4}\\-\delta^{-1}+\delta^{-1}g_1&\frac{1}{2}+\frac{g_1}{2}\end{pmatrix}&
\end{align*}
and
$$\begin{pmatrix}a&b\\c&d\end{pmatrix}\begin{pmatrix}1&\frac{1}{n}\\&1\end{pmatrix}=
\begin{pmatrix}a&\frac{a}{n}+b\\c&\frac{c}{n}+d\end{pmatrix}=\begin{pmatrix}\frac{n(ad-bc)}{c+nd}&\frac{a}{n}+b\\0&\frac{c}{n}+d\end{pmatrix}
\begin{pmatrix}1&\\ \frac{n}{1+\frac{nd}{c}}&1\end{pmatrix}.$$
Now the lemma follows readily.
\end{proof}
\begin{definition}
From now on we fix the choices and define the local characters $\chi_{\theta,v}$ as in Lemma \ref{lemma6.24} and Lemma \ref{5.18}.
\end{definition}
As before we summarize the definitions associated to our Eisenstein datum at $v$.
\begin{definition}
Recall we defined $f_{sieg,\mathcal{D},v}=f_{sieg,v}=f^\dag(g\tilde{\gamma}_v)$ where $\tilde{\gamma}_v$ is $\begin{pmatrix}1&&&\\&1_2&&\frac{1}{y\bar{y}}1_2\\&&1&\\&&&1_2\end{pmatrix}$, and defined $$f'_{sieg,\mathcal{D},v}=f_{sieg,v}'=f^\dag(g\tilde{\gamma}_v')$$ where $\tilde{\gamma}_v'=\begin{pmatrix}1_2&\frac{1}{y\bar{y}}1_2\\&1_2\end{pmatrix}$.
Let $$f_{2,\mathcal{D},v}=\rho(\begin{pmatrix}1&\\-A&1\end{pmatrix}\eta)f^\dag\in I_2(\frac{\tau}{\lambda}).$$
We define
$\Phi_{\mathcal{D},v}=\omega_{\beta,\lambda_v}(
\begin{pmatrix}1&\\-A&1\end{pmatrix}\eta)\Phi_0$. We also define the $\phi_{1,v}$ and $\phi_{2,v}$ as in (\ref{spcase}) or as in Lemma \ref{5.18}, depending on whether $v$ is split or not.
\end{definition}

\subsection{$p$-adic Cases}\label{Sectionn}
We recall some results in \cite[Section 4.D]{WAN} with some modifications. Recall that we have the triple $(\pi_p, \psi_p, \tau_p)$ and $\xi_p:=\psi_p/\tau_p$ in the $p$-component of the Klingen-Eisenstein datum $\mathcal{D}$, where $\chi_p$ is the central character of $\pi_p$ and $\psi_p|_{\mathbb{Q}_p^\times}=\chi_p$. Suppose $\pi_p$ is nearly ordinary in the sense that $\pi_p=\pi(\chi_{1,p},\chi_{2,p})$ such that $\mathrm{ord}_p(\chi_{1,p}(p))=-\frac{1}{2}$ and $\mathrm{ord}_p(\chi_{2,p}(p))=\frac{1}{2}$. We write $\tau_p=(\tau_1,\tau_2)$ and $\xi_p=(\xi_1,\xi_2)$.
\begin{definition}\label{definegeneric}
The triple $(\pi_p, \psi_p, \tau_p)$ is generic if there is a $t\geq 2$ such that $\xi_{1}, \xi_{2},\chi_p, \chi_p^{-1}\xi_{1}, \chi_p^{-1}\xi_{2}$ all have conductor $p^t$.
\end{definition}
Although the definition for generic points is different from \cite[Definition 4.21]{WAN}, the argument there goes through since the only place using this definition is Lemma 4.19 there, which can be proved completely in the same way under our definition for generic. We define: $\xi_1^\dag=\chi_{1,p}\bar{\xi}_2, \xi_2^\dag=\chi_{2,p}\bar{\xi}_2$. Let $K^{(3,3)}\subseteq \mathrm{GL}_6(\mathbb{Z}_p)$ be the subgroup consisting of matrices congruent to upper triangular matrices modulo $p^t$. We define
$f_t$ to be the Siegel section supported in $Q(\mathbb{Q}_v)K_t$, invariant under $K^{(3,3)}_t$ and takes value $1$ on the identity. We define
\begin{align}\label{siegelatp}
&f_{sieg,\mathcal{D},p}=f_{sieg,p}(g)&&:=\mathfrak{g}(\tau_p')^{-3}c_3^{-1}(\bar{\tau}_p',-z)p^{-3t}\mathfrak{g}(\xi_1^\dag)
\xi_1^\dag(-1)\mathfrak{g}(\xi_2^\dag)\xi_2^\dag(-1)&\\
&&&\times\sum_{a,b\in p^{-t}\mathbb{Z}_p^\times/\mathbb{Z}_p,m,n\in\mathbb{Z}_p/p^t\mathbb{Z}_p}\bar{\xi}_1^\dag(p^ta)\bar{\xi}_2^\dag(p^tb)f_t(g\Upsilon
\begin{pmatrix}1&&&&a+bmn&bm\\&1&&&bn&b\\&&1&&&\\
&&&1&&\\&&&&1&\\&&&&&1\end{pmatrix}).&
\end{align}
and
\begin{align*}
&f^\Box_{sieg,\mathcal{D},p}=f_{sieg,p}^{\Box}(g)&&:=\mathfrak{g}(\tau_p')^{-3}c_3^{-1}(\bar{\tau}_p',-z)p^{-3t}\mathfrak{g}
(\xi_1^\dag)\xi_1^\dag(-1)\mathfrak{g}(\xi_2^\dag)\xi_2^\dag(-1)&\\
&&&\times\sum_{a,b\in p^{-t}\mathbb{Z}_p^\times/\mathbb{Z}_p,m\in\mathbb{Z}_p/p^t\mathbb{Z}_p}\bar{\xi}_1^\dag(p^ta)\bar{\xi}_2^\dag(p^tb)f_t(g\Upsilon
\begin{pmatrix}1&&&&a&bm\\&1&&&&b\\&&1&&&\\
&&&1&&\\&&&&1&\\&&&&&1\end{pmatrix}).&
\end{align*}

We also define:

\begin{align}\label{siegelatp'}
&f'_{sieg,\mathcal{D},p}=f_{sieg,p}'(g)&&:=\mathfrak{g}(\tau_p')^{-3}c_2^{-1}(\bar{\tau}_p',-z)p^{-3t}\mathfrak{g}
(\xi_1^\dag)\xi_1^\dag(-1)\mathfrak{g}(\xi_2^\dag)\xi_2^\dag(-1)&\\
&&&\times\sum_{a,b\in p^{-t}\mathbb{Z}_p^\times/\mathbb{Z}_p,m,n\in\mathbb{Z}_p/p^t\mathbb{Z}_p}\bar{\xi}_1^\dag(p^ta)\bar{\xi}_2^\dag(p^tb)f_t
(g\Upsilon\begin{pmatrix}1&&a+bmn&bm\\&1&bn&b\\&&1&\\&&&1\end{pmatrix}).&
\end{align}
Here $\Upsilon\in \mathrm{U}(3,3)(\mathbb{Q}_p)$ is such that it is $\begin{pmatrix}1&&&\\&\frac{1}{2}.\mathrm{1_2}&&-\frac{1}{2}\mathrm{1_2}\\&&1&\\&-\zeta^{-1}&&-\zeta^{-1}\end{pmatrix}$ via the first projection
$\mathrm{U}(3,3)(\mathbb{Q}_p)\simeq \mathrm{GL}_6(\mathbb{Q}_p)$ and $c_n(\tau',z)=\tau'(p^{nt})p^{2ntz-tn(n+1)/2}$.

Among these sections the $f_{sieg,\mathcal{D},p}$ and $f'_{sieg,\mathcal{D},p}$ correspond to Siegel Eisenstein series that we interpolate. The relations between these sections are
$$f_{sieg,\mathcal{D},p}(g)=\sum_{n\in \mathbb{Z}_p/p^t\mathbb{Z}_p}f^\Box_{sieg,\mathcal{D},p}(g\gamma(1,\begin{pmatrix}1&\\n&1\end{pmatrix}_p).$$
The reason for introducing the $\Box$ sections is to help computing the Fourier-Jacobi expansion.

\noindent\underline{Pullback Formulas}\\
\noindent We refer to \cite[Subsection 4.D.1]{WAN} for the discussion of nearly ordinary vectors, which means the vector whose $U_p$-eigenvalues are $p$-adic units. Let $\varphi=\varphi^{ord}\in\pi_p$ be a nearly ordinary vector.
\begin{definition}
We define the $p$-adic Klingen section $F^0(g)$ to be the $F^{0,\bullet}_p$ defined in Definition \ref{Definition 6.27}, multiplied by
$$\mathfrak{g}(\tau_p')^{-1}\tau_p'(p^{-t})p^{\kappa-2}p^{(\kappa-3)t}\xi_{1,p}^2\chi_{1,p}^{-1}
\chi_{2,p}^{-1}(p^{-t})\mathfrak{g}(\xi_{1,p}\chi_{1,p}^{-1})
\mathfrak{g}(\xi_{1,p}\chi_{2,p}^{-1}).$$
\end{definition}

Then by the computations in \cite{WAN} we have the following (see the end of \cite[Section 4]{WAN}).
\begin{lemma}\label{Ppullback}
(1) $F_\varphi(f_{sieg,p},z_\kappa,g)=F^0(g):=F_{Kling,p}(g)$;\\
(2) $F_\varphi'(f_{sieg,p}',z_\kappa,g)=p^{(\kappa-3)t}\xi_{1,p}^2\chi_{1,p}^{-1}
\chi_{2,p}^{-1}(p^{-t})\mathfrak{g}(\xi_{1,p}\chi_{1,p}^{-1})
\mathfrak{g}(\xi_{1,p}\chi_{2,p}^{-1}).\pi(g)\varphi.$
\end{lemma}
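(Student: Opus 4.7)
The plan is to unfold the pullback integral in the definition of $F_\varphi$ using the explicit support and formula for $f_{sieg,p}$, and reduce everything to local computations on $\mathrm{U}(2)(\mathbb{Q}_p)$. Recall that $f_{sieg,p}$ is built from the characteristic-function-type section $f_t$, which is supported in $Q_3(\mathbb{Q}_p)K_t$ and is invariant under $K_t$. So the first task is to determine, for $g\in \mathrm{GU}(3,1)(\mathbb{Q}_p)$, exactly for which $g_1\in \mathrm{U}(2)(\mathbb{Q}_p)$ the element $S^{-1}\alpha(g,g_1h)S\cdot \Upsilon\cdot u$ (for $u$ one of the explicit unipotents appearing in the sum defining $f_{sieg,p}$) lands in $Q_3(\mathbb{Q}_p)K_t$. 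A Bruhat-type analysis using the factorization of $S^{-1}\alpha(g,g_1h)S$ relative to $Q_3$ should show that the support condition forces $g$ to lie in $P(\mathbb{Q}_p)w_3' B_t(\mathbb{Z}_p)$, which is exactly the support of the nearly ordinary Klingen section $f^0$.

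Next I would plug in $g=w_3'$, the distinguished element, and show the integral collapses onto a single coset. Specifically, after the support analysis, the domain of integration reduces (modulo $K_t$) to a finite set whose size is absorbed in the normalizing factors $\mathfrak{g}(\tau_p')^{-3} c_3^{-1}(\bar\tau_p', -z) p^{-3t}$ and the Gauss sum factors $\mathfrak{g}(\xi_1^\dag)\mathfrak{g}(\xi_2^\dag)$. The characters $\xi_1^\dag = \chi_1\bar\xi_2$ and $\xi_2^\dag = \chi_2 \bar\xi_2$ naturally arise from the action of the diagonal of $\alpha(g,g_1 h)$ against $\tau_p$ under the modulus character $\delta_{Q_3}^{z_\kappa + 3/2}$ twisted by $\tau_p(\det D_q)$. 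The sum over $a,b,m,n$ is precisely engineered so that, after changing variables $g_1\mapsto g_1 \cdot k(a,b,m,n)$ with $k$ a unipotent in $K_t$, the integrand becomes $\bar\tau(\det g_1)\varphi(g_1 h)$ times a character of a subgroup of $\mathrm{U}(2)(\mathbb{Z}_p)$, and the integral reduces to a twisted average over the diagonal torus of $\mathrm{U}(2)(\mathbb{Z}_p)$ against $\varphi$.

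Here is where the nearly ordinary hypothesis enters: $\pi_p = \pi(\chi_{1,p},\chi_{2,p})$ with the prescribed valuations, and $\varphi = \varphi^{\mathrm{ord}}$ is characterized (up to scalar) by being an eigenvector under the appropriate Iwahori-type subgroup for the character $\chi_{1,p}^{-1}\chi_{2,p}^{-1}$-twisted action. Matching the Gauss-sum-and-twisted-character combination against the action of this Iwahori on $\varphi$, using generic hypothesis on conductors to ensure non-degeneracy, should yield exactly the normalizing constant
$$\mathfrak{g}(\tau_p')^{-1}\tau_p'(p^{-t}) p^{\kappa-2} p^{(\kappa-3)t}\, \xi_{1,p}^2 \chi_{1,p}^{-1}\chi_{2,p}^{-1}(p^{-t})\,\mathfrak{g}(\xi_{1,p}\chi_{1,p}^{-1})\mathfrak{g}(\xi_{1,p}\chi_{2,p}^{-1})\,\varphi$$
prescribed for $f^0(w_3')$. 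Part (2) follows the same scheme but is strictly easier: the embedding $\alpha'$ into $\mathrm{GU}(2,2)$ lets $g_1\mapsto g_1 h^{-1}g$ be a change of variables that decouples $g$ from the integral; the remaining integral over $\mathrm{U}(2)(\mathbb{Z}_p)$ against $\bar\tau\cdot\varphi$ produces the stated constant multiple of $\pi(g)\varphi$, with the Gauss sums $\mathfrak{g}(\xi_{1,p}\chi_{i,p}^{-1})$ arising from the same mechanism.

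The main obstacle is the bookkeeping: matching the Bruhat/Iwahori decomposition of $S^{-1}\alpha(g,g_1 h)S\cdot\Upsilon\cdot u$ against the support of $f_t$, choosing coset representatives for $g_1$ modulo the subgroup stabilizing $\varphi$, and verifying that every Gauss sum, power of $p$, and $c_n$ factor in $f_{sieg,p}$ cancels into exactly the scalar appearing in the definition of $f^0(w_3')$. All of this is done in \cite{WAN}, so the proof essentially amounts to transcribing the relevant lemmas there and noting that nothing in our slightly modified genericity definition obstructs the argument.
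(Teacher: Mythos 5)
Your outline is correct and coincides with the paper's treatment: the paper gives no argument here beyond invoking the computations of \cite{WAN}, and your sketch (support analysis of $S^{-1}\alpha(g,g_1h)S\Upsilon u$ against $Q(\mathbb{Q}_p)K_t$ forcing $g\in P(\mathbb{Q}_p)w_3'B_t(\mathbb{Z}_p)$, evaluation at $w_3'$, equivariance for part (2)) is exactly the structure of those computations. Since you ultimately defer the bookkeeping to \cite{WAN} just as the paper does, there is nothing further to compare.
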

\begin{proof}
For reader's convenience we briefly recall the proof of \cite{WAN} in case (1) of this lemma. In our $\mathrm{U}(3,1)$ case the proof is actually much simpler than the general case considered in \emph{loc.cit.}. The proof uses the trick of \cite[Proposition 11.13]{SU} of using the local and global functional equations to reduce the local pullback integral for $f_{sieg,p}$ to that of another Siegel section $f^\dag_{sieg,p}$ defined below. The computation of the pullback section of $f^\dag_{sieg,p}$ is easier than that of $f_{sieg,p}$. We define an auxiliary Siegel section $\tilde{f}^\dag_{sieg,p}$ to be supported in $Q_3(\mathbb{Q}_p)wQ_3(\mathbb{Z}_p)$, and such that it takes value $1$ on $wN_{Q_3}(\mathbb{Z}_p)$. We define
\begin{align*}
&f^\dag_{sieg,p}(g,z)&&:=\mathfrak{g}(\tau_p')^{-3}c_3^{-1}(\bar{\tau}_p',-z)p^{-3t}\mathfrak{g}(\xi_1^\dag)
\xi_1^\dag(-1)\mathfrak{g}(\xi_2^\dag)\xi_2^\dag(-1)&\\
&&&\times\sum_{a,b\in p^{-t}\mathbb{Z}_p^\times/\mathbb{Z}_p,m,n\in\mathbb{Z}_p/p^t\mathbb{Z}_p}\bar{\xi}_1^\dag(p^ta)\bar{\xi}_2^\dag(p^tb)\tilde{f}^\dag_{sieg,p}(g\Upsilon
\begin{pmatrix}1&&&&a+bmn&bm\\&1&&&bn&b\\&&1&&&\\
&&&1&&\\&&&&1&\\&&&&&1\end{pmatrix},z).&
\end{align*}
We can show by direct checking that
the $F_\varphi(f^\dag_{sieg,p},z,g)$ is invariant under $N_t(\mathbb{Z}_p)$ defined above (as a function of $g$). This can be seen by checking that the pullback of the Siegel section $f^\dag_{sieg,p}$ is already invariant under $N_t(\mathbb{Z}_p)$. Moreover the value of $F_\varphi(f^\dag_{sieg,p},z,g)$ at $ww'_3$ can also be computed directly (\cite[Lemma 4.38]{WAN}). Returning to the pullback section of $f_{sieg,p}$, we note that it is the image of $f^\dag_{sieg,p}$ under the intertwining operator $M(z,-)$. We apply \cite[Proposition 4.40]{WAN} (which is just a variant of \cite[Proposition 11.13]{SU}). By the uniqueness up to scalar of the vector with the same action of the level group $B_t(\mathbb{Z}_p)$ (see \cite[Lemma 4.19]{WAN}, which is just a variant of \cite[Proposition 9.5]{SU}. Note that the result in \cite{WAN} also works under our assumption of genericity), we know that
$F_\varphi(f_{sieg,p},z_\kappa,g)=F^0(g)$ up to scalar. Then applying \cite[Proposition 4.40]{WAN} again we can evaluate the $F_\varphi(f_{sieg,p},z_\kappa,g)$ at $w'_3$ and get
the lemma. Note that this simplified proof does not apply to the general case of \cite{WAN} including the $\mathrm{U}(2,2)$ case of \cite{SU}, since there by looking at the Siegel Eisenstein section we can only determine the action of a level group which is smaller than the level group corresponding to the ordinary vector.
\end{proof}
\noindent\underline{Fourier Coefficients}
\begin{definition}\label{39}
We define the function $\Phi_{\xi^\dag}$ as the function on the set of ($2\times 2$) $\mathbb{Q}_p$-matrices as follows. If $x=\begin{pmatrix}a&b\\c&d\end{pmatrix}$ is such that both its determinant and $a$ are in $\mathbb{Z}_p^\times$ then $\Phi_{\xi^\dag}(x)=\xi_1^\dag(a)\xi_2^\dag(\frac{\det x}{a})$. Otherwise $\Phi_{\xi^\dag}(x)=0$. The following lemma is proved in \cite[Lemma 4.46]{WAN}.
\end{definition}
\begin{lemma}
$$f_{sieg,p,\beta}(1)=\bar{\tau}'_p(\det\beta)|\det\beta|^{\kappa-3}_p\Phi_{\xi^\dag}(\begin{pmatrix}\beta_{21}&\beta_{22}\\ \beta_{31}&\beta_{32}\end{pmatrix})$$
for $\beta=\begin{pmatrix}\beta_{11}&\beta_{12}&\beta_{13}\\ \beta_{21}&\beta_{22}&\beta_{23}\\ \beta_{31}&\beta_{32}&\beta_{33}\end{pmatrix}$ with $\beta_{11}, \beta_{12}, \beta_{13}, \beta_{23}, \beta_{33}\in\mathbb{Z}_p$, and is $0$ otherwise.
\end{lemma}
\begin{proof}
This follows from straightforward computation using
\begin{itemize}
\item The Fourier coefficients of the section $f_t$ as computed in \cite[Lemma 11.12]{SU}.
\item The computation of the Fourier transform of the function $\Phi_{\xi^\dag}$ defined above as detailed in \cite[Lemma 4.28]{WAN} (note that the proof also works under our assumption of genericity). We compare it with the definition of $f_{sieg,p}$ using $f_t$.
\end{itemize}
\end{proof}

\noindent\underline{Fourier-Jacobi Coefficients}\\
\noindent For $\beta\in S_1(\mathbb{Q}_v)\cap \mathrm{GL}_1(\mathbb{Z}_v)$ we compute the Fourier-Jacobi coefficient for $f_t$ at $\beta$. We have the following (\cite[Lemma 4.54]{WAN})
\begin{lemma}
Let $x:=\begin{pmatrix}1&\\D&1\end{pmatrix}$ (this is a block matrix with respect to $(2+2)$).\\
(a) $\mathrm{FJ}_\beta(f_t;-z,v,x\eta^{-1},1)=0$ if $D\not\in p^t M_{2}(\mathbb{Z}_p)$;\\
(b) if $D\in p^t M_2(\mathbb{Z}_p)$ then $\mathrm{FJ}_\beta(f_t;-z,v,x\eta^{-1},1)=c(\beta,\tau,z)\Phi_0(v)$, where
$$c(\beta,\tau,z):=\bar\tau(-\det \beta)|\det\beta|_v^{2z+2}\mathfrak{g}(\tau')\mathfrak{g}(\tau_p')\tau_p'(p^t)p^{-2tz-3t}.$$
\end{lemma}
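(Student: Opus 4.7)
The plan is to substitute the definition of $f_t$ into the defining integral for the Fourier--Jacobi coefficient,
\[
FJ_\beta(f_t;-z,v,x\eta^{-1},1) = \int_{S_1(\mathbb{Q}_p)} f_t\bigl( w_3\, u(S,v)\, \alpha(1,x\eta^{-1})\bigr)\, e_p(-\mathrm{Tr}\,\beta S)\, dS,
\]
where $u(S,v)$ denotes the unipotent element $\begin{pmatrix}1_3 & \ast \\ 0 & 1_3\end{pmatrix}$ with the $S$ and $v$ entries plugged in, and then to unwind the resulting $6\times 6$ matrix identity using the support and invariance properties of $f_t$: it is supported on $Q(\mathbb{Q}_p)K_t$, transforms by $\tau_p(\det D_q)|\det A_q D_q^{-1}|_p^{z+3/2}$ on the Siegel parabolic $Q$, and is $K_t$-invariant.

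The key manipulation is to write $x\eta^{-1} = \eta^{-1}\cdot (\eta x\eta^{-1})$, where the conjugate $\eta x\eta^{-1}$ is upper unipotent (since $\eta$ swaps the two $2\times 2$ blocks while $x$ is lower unipotent). After passing through $\alpha$, this upper unipotent piece can be absorbed via a change of variables in the $u(S,v)$ factor, reducing the computation to evaluating $f_t\bigl(w_3\, u(S',v')\,\alpha(1,\eta^{-1})\bigr)$ for $S', v'$ affine in $S, v, D$. The combined element $w_3\alpha(1,\eta^{-1})$ is a concrete permutation-type matrix whose Iwasawa decomposition relative to $Q \cdot K_t$ can be written down explicitly.

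For part (a), the support condition forces the argument to lie in $Q(\mathbb{Q}_p)K_t$; tracking how $D$ enters that decomposition shows the integrand vanishes identically unless $D \in p^t M_2(\mathbb{Z}_p)$, and likewise the $v$-coordinate is forced into the appropriate $\mathcal{O}_{\mathcal{K}_p}$-lattice, yielding the factor $\Phi_0(v)$. For part (b), when the congruence on $D$ holds, $\eta x\eta^{-1}$ differs from the identity by an element of $K_t$ and may be absorbed by invariance. The remaining $S$-integral is a ramified Siegel-type integral: the Fourier transform against $e_p(-\mathrm{Tr}\,\beta S)$ picks out the $\beta$-component, contributing the $Q$-modulus factor $\bar\tau_p(-\det\beta)|\det\beta|_p^{2z+2}$, while the level-$p^t$ ramification of $\tau$ produces the Gauss sums $\mathfrak{g}(\tau')$, $\mathfrak{g}(\tau_p')$ and the normalizing power $\tau_p'(p^t)p^{-2tz-3t}$ as the character-value and volume contributions from $K_t$.

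The main obstacle is the matrix bookkeeping: correctly combining the $6\times 6$ matrices $w_3$, $\alpha(1,\eta^{-1})$, and $u(S,v)$ under the Iwasawa decomposition with respect to $Q\cdot K_t$, and correctly tracking the similitude character together with the two normalizations $\tau$ versus $\tau'$. Fortunately, this is essentially the computation performed in \cite[Lemma 4.54]{WAN}; the only modifications are the insertion of the $\eta^{-1}$-twist and the passage from Fourier coefficients to Fourier--Jacobi coefficients, so we can adapt that argument directly.
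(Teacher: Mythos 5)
The paper offers no proof of this lemma at all --- it is quoted verbatim from \cite[Lemma 4.54]{WAN} --- so your closing deferral to that reference is, in effect, the same move the paper makes, and your overall plan (unfold the definition of $FJ_\beta$, absorb unipotent pieces, analyse the support $Q(\mathbb{Q}_p)K_t$ of $f_t$, then evaluate the remaining $S$-integral) is the correct one.

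However, the one step where you commit to a concrete manipulation goes in the wrong direction. You factor $x\eta^{-1}=\eta^{-1}\,(\eta x\eta^{-1})$ and propose to absorb the upper-unipotent element $\eta x\eta^{-1}=\begin{pmatrix}1&-D\\&1\end{pmatrix}$ of $\mathrm{U}(2,2)$ into the factor $u(S,v)$. This fails for two reasons. First, under the embedding $\alpha(1,-)$ used in the definition of $FJ_\beta$, the block $B'$ of $g_2=\begin{pmatrix}A'&B'\\C'&D'\end{pmatrix}$ lands in the \emph{lower-left} $3\times 3$ block of $\mathrm{U}(3,3)$, so $\alpha(1,\eta x\eta^{-1})$ is lower-unipotent for the Siegel parabolic $Q_3$ and does not lie in $N_{Q_3}$; second, it sits to the right of $\alpha(1,\eta^{-1})$, on the wrong side to be combined with $u(S,v)$. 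The element that actually can be absorbed is $\alpha(1,x)$ itself: since $x$ is lower-unipotent in $\mathrm{U}(2,2)$, its $C'$-block $D$ lands in the upper-right of $\mathrm{U}(3,3)$, so $u(S,v)\,\alpha(1,x)=\begin{pmatrix}1_3&\begin{smallmatrix}S&v\\ {}^t\bar v&D\end{smallmatrix}\\&1_3\end{pmatrix}\in N_{Q_3}$ (note $D$ is Hermitian), and the whole computation reduces to deciding when $w_3\,n(T)\,\alpha(1,\eta^{-1})$, with $T=\begin{pmatrix}S&v\\ {}^t\bar v&D\end{pmatrix}$, lies in $QK_t$ and evaluating $f_t$ there. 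Your factorization does give a legitimate shortcut for part (b) --- when $D\in p^tM_2(\mathbb{Z}_p)$ the element $\alpha(1,\eta x\eta^{-1})$ lies in $K_t$ and is killed by right $K_t$-invariance, reducing to the case $D=0$ --- but it gives nothing for part (a), which is precisely where the dependence on $D$ must be tracked through the support condition. Beyond this, the substantive content of the lemma (the explicit Iwasawa decomposition, the derivation of the condition $D\in p^tM_2(\mathbb{Z}_p)$, the appearance of $\Phi_0(v)$, and the exact constant $c(\beta,\tau,z)$ with its Gauss sums and powers of $p$) is asserted rather than derived, so as written the proposal is a correct outline with one misdirected step rather than a proof.
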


\noindent Note the formula:
$$\mathfrak{g}(\tau'_p)^3\tau'_p(p^{3t})p^{-6tz-6t}=\mathfrak{g}(\tau'_p)\tau'_p(p^t)p^{-2tz-3t}\mathfrak{g}(\tau'_p)^2\tau'_p(p^{2t})p^{-4tz-3t}.$$
\begin{definition}\label{Pa Kernel}
Let $K'_t\subseteq \mathrm{GL}_4(\mathbb{Z}_p)$ be the subgroup consisting of elements congruent to upper triangular matrices modulo $p^t$. We define the Siegel section $f'_t$ on $I_2(\frac{\tau}{\lambda})$ to be the section supported on $Q_2(\mathbb{Q}_p)K'_t$, invariant under the right action of $N_B(\mathbb{Z}_p)$ and takes value $1$ on the identity element. Let $A_{x}$ be the matrix $\begin{pmatrix}0&x\\0&0\end{pmatrix}$.
We define $f_{2,p}=\sum_b\mathfrak{g}(\xi_2^\dag)p^{-t}\bar{\xi}_2^\dag(-p^tb)\mathfrak{g}(\tau_p')^{-2}c_2^{-1}
(\bar{\tau}_p',-z)\rho(\begin{pmatrix}1&\\A_{-b}&1\end{pmatrix}\eta)f'_t$.
Let $\phi_{\xi_1^\dag}(x):=\xi^\dag_1(x^{-1})\mathds{1}_{\mathbb{Z}^\times_p}(x)$ where $\mathds{1}$ denotes the characteristic function. Define
$$\Phi_p(v_1, v_2, v_3, v_4)=\mathds{1}_{\mathbb{Z}_p\oplus\mathbb{Z}_p}(v_1, v_2)\hat{\phi}_{\xi^\dag_1}(v_3)\hat{\mathds{1}}_{p^t\mathbb{Z}_p}(v_4)$$
where the $\hat{\phi}$ means taking Fourier transform of $\phi$. The precise formula is given by
\begin{equation}\label{p-kernel}
\Phi_{\mathcal{D},p}(v_1,v_2,v_3,v_4)=\left\{\begin{array}{ll}\mathfrak{g}(\xi^\dag_1)p^{-2t}\bar{\xi}_1^\dag(a), & v_1,v_2\in\mathbb{Z}_p,v_4\in p^{-t}\mathbb{Z}_p,v_3\in \frac{a}{p^t}+\mathbb{Z}_p \mbox{ for some $a\in\mathbb{Z}_p^\times$ }\\
0, &\mbox{ otherwise.}\end{array}\right.
\end{equation}

These are defined just for computing Fourier-Jacobi coefficients and not used for the pullback formula.
\end{definition}

We prove the following lemma.
\begin{lemma}
\begin{eqnarray*}
&\mathfrak{g}(\tau_p')^{-3}c_3(\bar{\tau}_p',-z_\kappa)p^{-3t}\sum_{a,b,m}{\mathrm{FJ}}_1
(\rho\begin{pmatrix}1_3&\begin{matrix}&a&bm\\&&b\\&&\end{matrix}\\ &1_3\end{pmatrix}f_{t,p};z_\kappa,v,g)\bar{\xi}^\dag_1(-p^ta)\bar{\xi}^\dag_2(-p^tb)
\mathfrak{g}(\xi_1^\dag)\mathfrak{g}(\xi_2^\dag)\\
=&(\mathfrak{g}(\tau_p')^{-2}c_2^{-1}(\bar{\tau}_p',-z)p^{-t}\mathfrak{g}(\xi_2^\dag)\sum_b\bar{\xi}_2^\dag(p^tb)f'_t
(g\begin{pmatrix}1&\\A_{-b}&1\end{pmatrix}\eta))
(\omega_{\beta,\lambda_p}(g)\Phi_{\mathcal{D},p}(v_1,v_2,v_3,v_4)).
\end{eqnarray*}
Recall $A_x=\begin{pmatrix}0&x\\0&0\end{pmatrix}$. Also under the projection $\mathrm{U}(3,3)(\mathbb{Q}_p)\simeq \mathrm{GL}_6(\mathbb{Q}_p)$, the $v_1,v_2,v_3,v_4$ appear as $\begin{pmatrix}1&&&&v_3&v_4\\&1&&v_1&&\\&&1&v_2&&\\&&&1&&\\&&&&1&\\&&&&&1\end{pmatrix}$. The Weil representation is the one in subsection \ref{3.7} case two. We use $\rho$ to denote the right action of $\mathrm{GU}(3,3)(\mathbb{Q}_p)$ on the Siegel sections.
Note that for the Schwartz function $\Phi_{\mathcal{D},p}$, we used the identification
$\mathcal{K}^2_p\simeq \mathbb{Q}^4_p$ given by $(x_1,x_2)\rightarrow ((v_3,v_1), (v_4,v_2))$.
\end{lemma}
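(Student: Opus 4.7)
The plan is to reduce the statement to the Fourier--Jacobi formula for $f_t$ (the immediately preceding lemma) by a matrix manipulation, and then to recognize the resulting three--fold sum in $(a,b,m)$ as a product of two independent sums: one reproducing an $\mathrm{U}(2,2)$ Siegel section of the same shape as $f'_{sieg,p}$, the other producing the prescribed Weil--representation Schwartz function $\Phi'_p$.

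First, I would unfold the left hand side. By definition of the Fourier--Jacobi coefficient, $FJ_1(\rho(X_{a,b,m})f_{t,p};z_\kappa,v,g)$ is an integral over $S_1(\mathbb{Q}_p)$ of $f_{t,p}$ evaluated at
$w_3\begin{pmatrix}1_3&\!\!\!\begin{matrix}S&v\\ {}^t\!\bar v&0\end{matrix}\!\!\!\\& 1_3\end{pmatrix}\alpha(1,g)X_{a,b,m}$, twisted by $e_{\mathbb{Q}_p}(-\mathrm{Tr}\,S)$. The point is that $X_{a,b,m}$ sits in the upper unipotent radical and commutes past $w_3\alpha(1,g)$ up to a translation of $S$ and of $v$ of a controlled form; this is just the standard ``Heisenberg'' manipulation used in the decomposition $\begin{pmatrix}1_3&\begin{matrix}S&X\\ {}^t\!\bar X&\end{matrix}\\ &1_3\end{pmatrix}\begin{pmatrix}1&\\&\bar A^{-1}\end{pmatrix}\cdots$ displayed just before Lemma~\ref{5.4}. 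After this manipulation the integral becomes $FJ_1(f_{t,p}; z_\kappa, v', g')$ for new arguments $(v',g')$ depending linearly on $(a,b,m)$, which puts us in the setting of the preceding lemma. That lemma forces the corresponding lower-left block $D$ of $g'$ to lie in $p^t M_2(\mathbb{Z}_p)$, collapsing the $S$-integral and pinning down all the constants.

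The hard part will be the bookkeeping of the three summation variables $a, b, m$ once this substitution is performed. I would separate variables as follows: the variable $b$ (together with a residual part of $m$) will parametrize the $\mathrm{U}(2)$-direction and reassemble into
\[
\mathfrak{g}(\tau'_p)^{-2}c_2^{-1}(\bar\tau'_p,-z)p^{-t}\mathfrak{g}(\xi_2^\dag)\sum_b\bar\xi_2^\dag(p^tb)f_t\!\left(g\begin{pmatrix}1&\\A_{-b}&1\end{pmatrix}\eta\right),
\]
i.e. precisely the $p$-component of the Siegel section used for $\mathrm{U}(2,2)$, while the variables $a$ and the orthogonal part of $m$ parametrize the Heisenberg direction and produce, after Fourier-theoretic inversion against $\bar\xi_1^\dag$ and $\bar\xi_2^\dag$, the ``characteristic function'' shape in the statement of $\Phi'_p$. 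The matching of the two Gauss-sum normalizations is exactly the identity
\[
\mathfrak{g}(\tau'_p)^3\tau'_p(p^{3t})p^{-6tz-6t}=\bigl(\mathfrak{g}(\tau'_p)\tau'_p(p^t)p^{-2tz-3t}\bigr)\bigl(\mathfrak{g}(\tau'_p)^2\tau'_p(p^{2t})p^{-4tz-3t}\bigr)
\]
highlighted in the text just above, which provides the clean split of $c_3$ into $c_1$ (absorbed into the constant $c(1,\tau,z_\kappa)$) and $c_2^{-1}$.

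Finally, to identify the Schwartz half of the product as $\mathfrak{g}(\xi_1^\dag)p^{-2t}\omega_\beta(g)\Phi'_p(v_1,v_2,v_3,v_4)$, I would use the explicit formulas for the Weil representation $\omega_\lambda$ recalled in Section~\ref{3.3}: the characters $\xi_1^\dag, \xi_2^\dag$ of conductor $p^t$ appearing in the twist cause the finite Fourier transform over $(a,b,m) \bmod p^t$ to produce exactly the indicator function described in the definition of $\Phi'_p$ (nonzero only when $v_1, v_2 \in \mathbb{Z}_p$, $v_4 \in p^{-t}\mathbb{Z}_p$ and $v_3 \in \tfrac{a}{p^t}+\mathbb{Z}_p$ for some $a\in\mathbb{Z}_p^\times$, with value $\bar\xi_1^\dag(a)$). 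The transport of the evaluation at $(v_1,v_2,v_3,v_4)$ back to $\omega_\beta(g)\Phi'_p$ then follows from the compatibility of $\rho$ on $\mathrm{GU}(3,3)$ with the Weil representation on the Heisenberg direction under the embedding of Subsection~\ref{3.7}. The main obstacle is purely combinatorial: keeping the three sums, two Gauss sums and the matrix substitutions aligned so that the constants, including all factors of $p^{kt}$, precisely reproduce the right hand side; no new geometric or analytic idea beyond the preceding lemma and the Weil-representation formulas is required.
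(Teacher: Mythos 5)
Your plan is essentially the paper's own proof: conjugate the unipotent translation through $w_3$ and the Heisenberg block to read off its effect on the coordinates $(t,v_3,v_4)$, reduce to the support computation of the preceding lemma for $f_t$, separate the triple sum into a $\mathrm{U}(2,2)$ Siegel-section factor and a Schwartz-function factor, and use the displayed Gauss-sum identity to split $c_3$ into the constant $c(\beta,\tau,z)$ of the preceding lemma and $c_2$. Two bookkeeping points in your description would, however, derail the actual computation if followed literally. First, the separation of variables is not the one you state: the entries $a$ and $bm$ sit in positions $(1,5)$ and $(1,6)$ and translate the Heisenberg coordinates $v_3,v_4$ (via $v_3'=v_3+D_1a+D_3bm$, etc.), so $a$ and \emph{all} of $m$ go to the Schwartz side — the $m$-sum (with $bm$ running over $p^{-t}\mathbb{Z}_p/\mathbb{Z}_p$ for fixed $b\in p^{-t}\mathbb{Z}_p^\times/\mathbb{Z}_p$) produces the condition $v_4\in p^{-t}\mathbb{Z}_p$, and the $a$-sum against $\bar\xi_1^\dag$ produces the value $\bar\xi_1^\dag(a)$ on $v_3\in ap^{-t}+\mathbb{Z}_p$; only the $(2,6)$-entry $b$, paired with $\bar\xi_2^\dag$ and $\mathfrak{g}(\xi_2^\dag)$, contributes to the $\mathrm{U}(2,2)$ factor. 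So the Fourier inversion yielding $\Phi'_p$ involves $\bar\xi_1^\dag$ only, not both characters, and $m$ is not split between the two directions. Second, to pull the $b$-sum out as a scalar factor multiplying a $b$-independent Schwartz function you must verify $\omega(\begin{pmatrix}1&\\A_{-b}&1\end{pmatrix})\Phi'_p=\Phi'_p$: the $b$-translation lives inside the $\mathrm{U}(2,2)$ block and a priori acts on $\Phi'_p$ through the Weil representation. The paper checks this invariance explicitly, and without it the product decomposition on the right-hand side does not follow; it is an easy check from the explicit support of $\Phi'_p$, but it is a necessary step your outline omits.
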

\begin{proof}
First we fix $b$ and consider the Fourier-Jacobi coefficient of:
\begin{equation}\label{(1)}
\sum_{a,m}\rho(\begin{pmatrix}1&&&&a&bm\\&1&&&&\\&&1&&&\\&&&1&&\\&&&&1&\\&&&&&1\end{pmatrix})f_t
\end{equation}
note that:
\begin{eqnarray*}
&w_3\begin{pmatrix}1_3&\begin{matrix}t&v_3&v_4\\v_1&D_1&D_2\\v_2&D_3&D_4\end{matrix}\\&1_3\end{pmatrix}\alpha(1,\eta^{-1})
\begin{pmatrix}1&&&&a&bm\\&1&&&&\\&&1&&&\\&&&1&&\\&&&&1&\\&&&&&1\end{pmatrix}\\
=&w_3\begin{pmatrix}1&-a&-bm&&&\\&1&&&&\\&&1&&&\\&&&1&&\\&&&&1&\\&&&&&1\end{pmatrix}\begin{pmatrix}1_3&\begin{matrix}t'&v_3'&v_4'\\v_1&D_1&D_2\\v_2&D_3&D_4\end{matrix}\\&1_3\end{pmatrix}
\alpha(1,\eta^{-1})
\end{eqnarray*}
where $v_3'=v_3+D_1.a+D_3.bm, v_4'=v_4+D_2.a+D_4.bm,t'=t+v_1.a+v_2.bm$. From this, a calculation using the above lemma (similar to \cite[pp. 203]{SU}) shows that the Fourier-Jacobi expansion of \eqref{(1)} at $g$ is:
$$(\mathfrak{g}(\tau_p')^{-2}c_2^{-1}(\bar{\tau}_p',-z)p^{-t}\mathfrak{g}(\xi_2^\dag)\rho(\eta)f'_t(g')
(\omega_{\beta,\lambda_p}(g')\Phi_{\mathcal{D},p})(v_1,v_2,v_3,v_4).$$
So the Fourier-Jacobi expansion of
$$\sum_{a,b,m}\rho(\begin{pmatrix}1&&&&a&bm\\&1&&&&b\\&&1&&&\\&&&1&&\\&&&&1&\\&&&&&1\end{pmatrix})f_t$$
is
$$\sum_b(\mathfrak{g}(\tau_p')^{-2}c_2^{-1}(\bar{\tau}_p',-z)p^{-t}\mathfrak{g}(\xi_2)
\rho(\begin{pmatrix}1&\\A_{-b}&1\end{pmatrix}\rho(\eta)f'_t(g')(\omega(g')
\omega_{\beta,\lambda_p}(g'\begin{pmatrix}1&\\A_{-b}&1\end{pmatrix})\Phi_{\mathcal{D},p})(v_1,v_2,v_3,v_4).$$
Note that $\omega_{\beta,\lambda_p}(\begin{pmatrix}1&\\A_{-b}&1\end{pmatrix})\Phi_{\mathcal{D},p}=\Phi_{\mathcal{D},p}$. We get the required Fourier-Jacobi coefficient.
\end{proof}

\noindent We record some formulas:
$$\rho(\eta)f_{2,p}=f_{2,p}''=\frac{\tau_p}{\lambda_p}(-1)\mathfrak{g}(\xi_2^\dag)\sum_b\bar{\xi}_2^\dag(b)
\mathfrak{g}(\tau_p')^{-2}c_2^{-1}(\bar{\tau}_p',-z)\rho(\begin{pmatrix}1&A_b\\&1\end{pmatrix})f'_t$$
\begin{equation}\label{(5)}
\Phi''_{\mathcal{D},p}:=(\omega_{\beta,\lambda_p}(\eta)\Phi_{\mathcal{D},p})(v_1,v_2,v_3,v_4)=\left\{\begin{array}{ll}\xi_1^\dag(-v_1),& v_3,v_4\in\mathbb{Z}_p,v_1\in\mathbb{Z}_p^\times,v_2\in p^t\mathbb{Z}_p\\0,&\mbox{ otherwise. }\end{array}\right.
\end{equation}
We define two Schwartz functions on $X^{-,'}_p$. Let $\phi_{1,p}'$ be the characteristic function of $\mathbb{Z}_p^2\subset \mathbb{Q}_p^2$. Let $\phi_{2,p}'(x)=p^{-2t}\mathfrak{g}(\xi^\dag_1)\bar{\xi}_1^\dag(-p^tx_1)$ if $x=(x_1,x_2)$ for $x_1\in p^{-t}\mathbb{Z}_p^\times,x_2\in p^{-t}\mathbb{Z}_p$, and is zero otherwise.
\begin{definition}\label{p-adic Kernel}
We summarize our definitions at the $p$-adic place. Recall we defined $f_{sieg,\mathcal{D},p}$ and $f'_{sieg,\mathcal{D},p}$ as in (\ref{siegelatp}) and (\ref{siegelatp'}).
The definition for $f_{2,\mathcal{D},p}$ is given in (\ref{f_2}) below.

We defined
$\Phi_{\mathcal{D},p}$ as in (\ref{p-kernel}) and defined $\Phi''_{\mathcal{D},p}$ as in (\ref{(5)}).
We also define $\phi_{1,p}=\delta_\psi''(\phi_{1,p}')$, $\phi_{2,p}=\delta_\psi^{-,''}(\phi_{2,p}')$. It is easily checked that
$$\delta_{\psi,p}^{',-1}(\omega_{\beta,\lambda_p}(\Upsilon)(\omega_{\beta,\lambda_p}(\eta)\Phi_{\mathcal{D},p}))
=\phi'_{1,p}\boxtimes\phi'_{2,p}.$$
(One compares these with later computations after Definition \ref{1234}.)
\end{definition}
For convenience of the reader we explain how this computation is done. We first take standard complex basis ${}^t\!(e_1, e_2; e^-_1, e^-_2)$  for the Hermitian space corresponding to $\mathrm{U}(\zeta)\times \mathrm{U}(-\zeta)$. To save space we write $e$ for ${}^t\!(e_1, e_2)$ and similarly for $e^-$. Then the embedding
$$\mathrm{U}(\zeta)\times\mathrm{U}(-\zeta)\hookrightarrow \mathrm{U}(2,2)$$
is reflected by the change of basis
$$\begin{pmatrix}e \\e^- \end{pmatrix}\mapsto {}^t S \begin{pmatrix}e \\e^- \end{pmatrix}=\begin{pmatrix}1_2&-1_2\\-\frac{\zeta}{2}&-\frac{\zeta}{2}\end{pmatrix}\begin{pmatrix}e \\e^- \end{pmatrix}.$$
We further take symplectic basis according to the polarization via $\mathcal{O}_p\simeq \mathbb{Z}_p\times\mathbb{Z}_p$. Then under this basis the $\Upsilon$ (as homomorphism of $8$-dimensional symplectic spaces) is given by the matrix
$$\begin{pmatrix}\frac{1}{2}1_2&&&-\frac{1}{2}1_2\\&-1_2&1_2&\\&1_2&1_2&\\ \frac{1}{2}1_2&&&\frac{1}{2}1_2\end{pmatrix}.$$
Using (\ref{generalform}), the matrix for the intertwining operator $\delta_{\psi,p}^{',-1}$ is
$$\begin{pmatrix}1_2&&&1_2\\-1_2&&&1_2\\&\frac{1}{2}1_2&\frac{1}{2}1_2&\\&\frac{1}{2}1_2&-\frac{1}{2}1_2&\end{pmatrix}
=\begin{pmatrix}1_2&&&\\&&&1_2\\&&1_2&\\&-1_2&&\end{pmatrix}\begin{pmatrix}1_2&&&1_2\\&-\frac{1}{2}&\frac{1}{2}&\\&\frac{1}{2}
&\frac{1}{2}&\\-1_2&&&1_2\end{pmatrix}.$$
We get the composed map is given by the matrix $\begin{pmatrix}1_2&&&\\&&&1_2\\&&1_2&\\&-1_2&&\end{pmatrix}$ and thus the formula.
\subsection{Pullback Formulas Again}\label{5.9}
In this section, we prove the local pullback formulas for $\mathrm{U}(2)\times \mathrm{U}(2)\hookrightarrow \mathrm{U}(2,2)$ which will be used to decompose the restriction to $\mathrm{U}(2)\times \mathrm{U}(2)$ of the Siegel-Eisenstein series associated to the character $\tau/\lambda$ on $\mathrm{U}(2,2)$ showing up in the Fourier-Jacobi expansion of $E_{sieg}$ on $\mathrm{U}(3,3)$. Fortunately, the local calculations are  the same as in the previous sections for $f_{sieg}'$ and $F_\varphi'$'s except for the case $v=p$.\\

\noindent\underline{$p$-adic case}:\\
We temporarily denote the $p$-component of an automorphic representation $\pi_h$ of some weight two cuspidal ordinary eigenform $h$ on $\mathrm{U}(2)(\mathbb{Q}_p)$ as $\pi(\chi_1,\chi_2)$ with $v_p(\chi_1(p))=-\frac{1}{2},v_p(\chi_2(p))=\frac{1}{2}$. We also temporarily write $\tau$ for $\tau/\lambda$ in this subsection. We let $\tau_p=(\tau_1,\tau_2)$ and require $\chi_1\tau_2^{-1}$ and $\chi_2\tau_2^{-1}\xi_2^\dag$ are unramified. We let $\varphi=\varphi^{ss}\in \pi_{h,p}$ for $\varphi^{ss}=\pi_p(\begin{pmatrix}&1\\p^t&\end{pmatrix})\varphi^{ord}$ for some nearly ordinary vector $\varphi^{ord}$.
Define \begin{equation}\label{f_2}
f_{2,\mathcal{D},p}(g):=\mathfrak{g}(\tau_p')^{-2}c_2(\bar{\tau}_p',-z_\kappa/2)^{-1}p^{-t}\mathfrak{g}(\xi_2^\dag)
\sum_{b\in\frac{p^{-t}\mathbb{Z}_p^\times}{\mathbb{Z}_p}}\bar{\xi}_2^\dag(bp^t)\rho
(\begin{pmatrix}1&A_b\\&1\end{pmatrix})f'_t(g\Upsilon').\end{equation}
It is hard to evaluate the integral directly. So we use the trick of using the functional equation as in \cite[Proposition 11.13]{SU} (which is also used in \cite{WAN}). We first evaluate the integral for the auxiliary $f^\dag_{2,p}$
$$F_\varphi(f^\dag_{2,p};z,g):=\int_{\mathrm{U}(2)(\mathbb{Q}_p)}f^{\dag}_{2,p}(z,S^{-1}\alpha(g_1,g)S)\bar{\tau}(\det g)\pi_h(g_1)\varphi dg_1$$
at $g=w=\begin{pmatrix}&&&1\\&1&&\\&&1&\\-1&&&\end{pmatrix}$ where $\varphi\in \pi_{h,p}$ and
$$f^{\dag}_{2,p}(g):=p^{-t}\mathfrak{g}(\xi_2^\dag)\sum_b\bar{\xi}_2^\dag(bp^t)\rho(\begin{pmatrix}1&A_b\\&1\end{pmatrix})f^\dag(g\Upsilon),\ \tilde{f}^{\dag}_{sieg,p}\in I_2(\bar{\tau}^c).$$
The $f_{2,p}$ and $f^\dag_{2,p}$ are related by the intertwining operator, as used in the proof of Lemma \ref{Ppullback}.
For $A\in \mathrm{U}(2)(\mathbb{Q}_p)\simeq \mathrm{GL}_2(\mathbb{Q}_p)$ note that
$$S^{-1}\mathrm{diag}(A,1)\begin{pmatrix}1&A_b\\&1\end{pmatrix}=\begin{pmatrix}-1&-A\\&-A\end{pmatrix}
\begin{pmatrix}1&\\-A_b-A^{-1}&1\end{pmatrix}w.$$
So in order for this to be in $\mathrm{supp}f^\dag$ we must have $A^{-1}+A_b\in M_2(\mathbb{Z}_p)$. So $A^{-1}$ can be written as
$\begin{pmatrix}1&\\n&1\end{pmatrix}\begin{pmatrix}&u\\v&\end{pmatrix}\begin{pmatrix}1&\\m&1\end{pmatrix}$ for $v\in\mathbb{Z}_p\backslash \{0\}$, $u\in -b+\mathbb{Z}_p$ and $m,n\in p^t\mathbb{Z}_p$. Thus
$$A=\begin{pmatrix}1&m\\&1\end{pmatrix}\begin{pmatrix}v^{-1}&\\&u^{-1}\end{pmatrix}\begin{pmatrix}&1\\1&\end{pmatrix}\begin{pmatrix}1&\\n&1\end{pmatrix}.$$
A direct computation gives the integral equals:
\begin{align*}
&&&\chi_2(-1)\chi_1\chi_2(p^t)p^{t(z+1)}\bar{\tau}^c((-p^t,-1))\sum_{i=1}^\infty(\chi_1(p^{-1})\bar{\tau}^c((p^{-1},1))p^{-z-\frac{1}{2}})^i\phi^{ord}&\\
&=&&p^{t(z+1)}\bar{\tau}^c((-p^t,-1))L_p(\pi,\tau,z+\frac{1}{2})\chi_1\chi_2(p^t)\chi_2(-1).&
\end{align*}

\noindent The $\pi$ in the $L$-factor means the base change of $\pi$ from $\mathrm{U}(2)$ to $\mathrm{GL}_2$. Note that it is not convergent at $z=-z_\kappa$ and is defined by analytic continuation at that point. \\

\noindent Now we apply the functional equation trick to evaluate the pullback integral for $f_{2,\mathcal{D},p}$, similar to the proof of Lemma \ref{Ppullback}. As in \cite[Proposition 11.28]{SU} the local constant showing up when applying the intertwining operator at $z=-z_\kappa$ is
$$\epsilon(\pi,\tau,-z_\kappa+\frac{1}{2})=\mathfrak{g}(\bar{\tau}_1\bar{\chi}_1)\tau_1\chi_1(p^t)\mathfrak{g}
(\bar{\tau}_1\bar{\chi}_2)\tau_1\chi_2(p^t)
\mathfrak{g}(\bar{\tau}_2\chi_2)\tau_2\chi_2^{-1}(p^t)p^{\frac{3}{2}\kappa-6}.$$
To sum up our original local integral for $f_{2,p}$ equals
$$L_p(\pi_h,\bar{\tau}^c,z_\kappa+\frac{1}{2})\mathfrak{g}(\bar{\tau}_1\bar{\chi}_1)\tau_1\chi_1(p^t)\mathfrak{g}(\bar{\tau}_1\bar{\chi}_2)
\tau_1\chi_2(p^t)p^{(2\kappa-5)t}\chi_1(p^t)p^{\frac{t}{2}}\varphi^{ord}.$$
\noindent Note that $\langle \tilde{\varphi}^{ord},\varphi^{ss}\rangle=\langle\tilde{\phi}^{ord},\varphi_{low}\rangle\cdot\chi_1(p^t)p^{\frac{t}{2}}$ where we define $\varphi_{low}=\pi_p(\begin{pmatrix}&1\\1&\end{pmatrix})\varphi^{ord}$. Thus if we replace $\varphi^{ss}$ by $\varphi_{low}$ in the definition for pullback integral then it equals
$$L_p(\pi_h,\bar{\tau}^c,z_\kappa+\frac{1}{2})\mathfrak{g}(\bar{\tau_1}\bar{\chi_1})\tau_1\chi_1(p^t)\mathfrak{g}(\bar{\tau_1}\bar{\chi_2})
\tau_1\chi_2(p^t)p^{(2\kappa-5)t}\varphi^{ord}.$$
Noting again that later when we are defining $E_{sieg,2}$ the $\tau$ here should be $\frac{\tau}{\lambda}$.

\subsection{Global Computations}\label{GC}
\subsubsection{Good Siegel Eisenstein Series}
\begin{definition}\label{definelevel}
As for Klingen Eisenstein series case, throughout this paper, we fix the tame level subgroup $K^{(3,3)}$ of $\mathrm{U}(3,3)(\mathbb{A}^{p\infty})$ , under which our $E_{sieg,\mathcal{D}}$ is invariant. We can do so by simply taking it to be the set of matrices congruent to $1$ modulo the $(y\bar{y})^2$ at each finite place not dividing $p$ for the $y$ defined above.  We define the $p$-component of the level group for $E_{sieg,\mathcal{D}}$ at $p$ to consist of elements congruent to $1$ modulo $p^{2t}$.

We also fix a tame level subgroup $K^{(2,0)}$ of $\mathrm{U}(2,0)(\mathbb{A}^{p\infty})$ consisting of matrices congruent to $1$ modulo the $(y\bar{y})^2$ above at each finite place away from $p$.
\end{definition}
We first define two normalization factors as in \cite[Subsection 5.3.1]{WAN}
\begin{eqnarray*}
B_\mathcal{D}:&=(\frac{(-2)^{-3}(2\pi i)^{3\kappa}(2/\pi)^{3}}{\prod_{j=0}^{2}(\kappa-j-1)!})^{-1}
\prod_{i=0}^{2}L^\Sigma(2z_\kappa+3-i,\bar\tau'\chi_\mathcal{K}^i),
\end{eqnarray*}
\begin{eqnarray*}
B_\mathcal{D}':&=(\frac{(-2)^{-2}
(2\pi i)^{2\kappa}(2/\pi)}{\prod_{j=0}^{1}(\kappa-j-1)!})^{-1}
\prod_{i=0}^{1}L^\Sigma(2z_\kappa+2-i,\bar\tau'\chi_\mathcal{K}^i).
\end{eqnarray*}
\index{$B_\mathcal{D}, B'_\mathcal{D}$}
Here recall $z_\kappa=\frac{\kappa-3}{2}$ and $z_\kappa'=\frac{\kappa-2}{2}$.
\begin{definition}
We define $E_{sieg,\mathcal{D}}(z,g)=E_{sieg}(z,f_{sieg},g)$ \index{$E_{sieg}, E_{sieg,\mathcal{D}}$} on $\mathrm{GU}(3,3)$ for $f_{sieg,p}=f_{sieg,\mathcal{D}}=B_\mathcal{D}\prod_v f_{sieg,\mathcal{D},v}$ and $E_{sieg,\mathcal{D}}'(z,g)=E_{sieg}'(z,f'_{sieg,\mathcal{D}},g)$ on $\mathrm{GU}(2,2)$ for $f'_{sieg}=B'_\mathcal{D}\prod_v f'_{sieg,v}$. (Note that compared to \cite{WAN}, the normalization factors at $p$ here are already included in our definitions of $p$-adic Siegel sections.)
We also define $E_{sieg,\mathcal{D}}^\Box(g)=E(z,f_{sieg,\mathcal{D}}^\Box,g)$ where $f_{sieg,\mathcal{D}}^\Box$ is the same as $f_{sieg,\mathcal{D}}$ at all primes not dividing $p$ and is $f_{sieg,\mathcal{D},p}^\Box$ at $p$.
\end{definition}
\subsubsection{Pullbacks}
For some $g_1\in \mathrm{U}(2)(\mathbb{A}_\mathbb{Q})$ (which we specify in Definition \ref{definition 8.17}) we define $E_{Kling,\mathcal{D}}$ \index{$E_{Kling,\mathcal{D}}$} by:
\begin{equation}\label{DefineKlingen}
E_{Kling,\mathcal{D}}(z,g)=\frac{1}{\Omega_\infty^{2\kappa}}\int_{[\mathrm{U}(2)]}
E_{sieg,\mathcal{D}}(z,\gamma(g,hg')\Upsilon)\bar{\tau}(\det g')\pi(g_1)\varphi_\mathcal{D}(g')dg'.
\end{equation}
Define
\begin{equation}
\varphi'_\mathcal{D}(z,g)=\frac{1}{\Omega_\infty^{2\kappa}}\int_{[\mathrm{U}(2)]}
E'_{sieg,\mathcal{D}}(z,\gamma(g,hg')\Upsilon')\bar{\tau}(\det g')\pi(g_1)\varphi_\mathcal{D}(g')dg'.
\end{equation}
Here we use the local components of a Klingen-Eisenstein data $\mathcal{D}$ in the construction.
The period factor showing up comes from the geometric pullback map (see \cite[Section 2.8, Subsection 5.6.5]{Hsieh CM}). In fact comparing the definition for the geometric pullback map and the pullback formula for automorphic forms, in order to get rational automorphic forms on $\mathrm{U}(2)\times\mathrm{U}(2)$ or $\mathrm{U}(3,1)\times\mathrm{U}(2)$ via pullbacks, such CM periods have to be divided out. The $\varphi_\mathcal{D}$ is defined as follows. First, recall that given a CM character $\psi$ and a form on $D^\times$ whose central character is $\psi|_{\mathbb{A}^\times_\mathbb{Q}}$ we can produce a form on $\mathrm{U}(2)$ whose central character is the restriction of $\psi$. So we often construct forms on $D^\times$ and get forms on $\mathrm{U}(2)$ this way. In section \ref{section 7.2} we construct a Dirichlet character $\vartheta$.
\begin{definition}
We define
$f_\vartheta$ as in the end of Section \ref{Section 8.4}, and an element $g_1$ in Definition \ref{definition 8.17}. Our $\varphi$ is defined as $\pi(g_1)f_\vartheta$.
\end{definition}
\begin{proposition}
The $E_{Kling,\mathcal{D}}(z_\kappa,-)$ defined above is the Klingen-Eisenstein series constructed using the Klingen section $F_{Kling,\mathcal{D}}=\prod_vF_{Kling,\mathcal{D},v}$ for $F_{Kling,\mathcal{D},v}$ the Klingen-Eisenstein sections defined in previous subsections. We also have that
$$\varphi'_\mathcal{D}(z'_\kappa,-)=\prod_v F'_{\varphi_v}(f'_{sieg,v},z'_\kappa,1).$$
Note that we have used $\pi(g_1)\varphi_\mathcal{D}$ in the place of $\varphi_\mathcal{D}$ in \ref{KlinEi}, and there is a normalization factor appearing in the $p$-adic Klingen section here.
\end{proposition}
\begin{proof}
It is a straightforward consequence of Shimura's pullback formula, and our local pullback computations in previous sections.
\end{proof}

We record the following easy lemma, which explains the motivation for the definition of $f_\Sigma$: to pick up a certain Iwahori-invariant vector from the unramified representation $\pi_v$ for $v\in \Sigma\backslash \{v, v|N\}$.
\begin{lemma}\label{lemma 6.39}
Consider the model for the unramified principal series representation $$\pi(\chi_{1,v},\chi_{2,v})=\{f:K_v\rightarrow \mathbb{C}, f(qk)=\chi_{1,v}(a)\chi_{2,v}(d)\delta_B(q)f(k),q=\begin{pmatrix}a&b\\&d\end{pmatrix}\in B(\mathbb{Z}_v)\}.$$ Let $f_{ur}$ be the constant function $1$ on $K_v$, $f_0$ be the function supported and takes value $1$ on $K_1$ for $K_1=\{\begin{pmatrix}a&b\\c&d\end{pmatrix},\varpi_v|c\}$. Then $$(\chi_{2,v}(\varpi_v)q_v^{-\frac{1}{2}}-\chi_1(\varpi_v)q_v^{\frac{1}{2}})f_0=\pi(\begin{pmatrix}&1\\ \varpi_v&\end{pmatrix})f_{ur}-\chi_{1,v}(\varpi_v)q_v^{\frac{1}{2}}f_{ur}.$$
\end{lemma}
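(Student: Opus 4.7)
The plan is to verify the identity pointwise on $K_v$ via the Iwasawa decomposition. Since any $f$ in the induced model is determined by its restriction to $K_v$ (and $f_{ur}$, $f_0$ are already given as such restrictions), it suffices to compute $\pi\bigl(\begin{smallmatrix}&1\\ \varpi_v&\end{smallmatrix}\bigr)f_{ur}$ at an arbitrary $k\in K_v$ and match it against $\chi_{1,v}(\varpi_v)q_v^{1/2}f_{ur}+\bigl(\chi_{2,v}(\varpi_v)q_v^{-1/2}-\chi_{1,v}(\varpi_v)q_v^{1/2}\bigr)f_0$. Writing $w_\varpi=\bigl(\begin{smallmatrix}&1\\ \varpi_v&\end{smallmatrix}\bigr)$ and $k=\bigl(\begin{smallmatrix}a&b\\c&d\end{smallmatrix}\bigr)$, the matrix product is
\[
kw_\varpi=\begin{pmatrix}b\varpi_v&a\\ d\varpi_v&c\end{pmatrix},
\]
and one only has to find an explicit Iwasawa factorization $kw_\varpi=qk'$ with $q=\bigl(\begin{smallmatrix}\alpha&*\\0&\delta\end{smallmatrix}\bigr)\in B(\mathbb{Q}_v)$ and $k'\in K_v$, then read off the value using $(\pi(w_\varpi)f_{ur})(k)=f_{ur}(qk')=\chi_{1,v}(\alpha)\chi_{2,v}(\delta)\delta_B(q)$.

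The computation naturally splits into the two open cells of $K_v/K_1$. If $k\in K_v\smallsetminus K_1$, then $c\in\mathcal{O}_v^\times$; take $k'=\bigl(\begin{smallmatrix}1&0\\ d\varpi_v/c&1\end{smallmatrix}\bigr)\in K_v$, which lies in $K_1$, and solve for $q$ to find $q=\bigl(\begin{smallmatrix}-\varpi_v(ad-bc)/c&a\\0&c\end{smallmatrix}\bigr)$. Unramifiedness of $\chi_{1,v},\chi_{2,v}$ and the convention $\delta_B(q)=|d/\alpha|_v^{1/2}$ implicit in the formula then give $(\pi(w_\varpi)f_{ur})(k)=\chi_{1,v}(\varpi_v)q_v^{1/2}$. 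If instead $k\in K_1$, write $c=c_1\varpi_v$ (so $a,d\in\mathcal{O}_v^\times$); one can take $k'=\bigl(\begin{smallmatrix}0&-d^{-1}\\ d&c_1\end{smallmatrix}\bigr)\in K_v$, which yields $q=\bigl(\begin{smallmatrix}-(ad-bc_1\varpi_v)&b\varpi_v/d\\0&\varpi_v\end{smallmatrix}\bigr)$, and hence $(\pi(w_\varpi)f_{ur})(k)=\chi_{2,v}(\varpi_v)q_v^{-1/2}$.

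Combining the two cases gives
\[
\pi(w_\varpi)f_{ur}=\chi_{1,v}(\varpi_v)q_v^{1/2}\,f_{ur}+\bigl(\chi_{2,v}(\varpi_v)q_v^{-1/2}-\chi_{1,v}(\varpi_v)q_v^{1/2}\bigr)f_0,
\]
which is precisely the asserted identity after transposing the $f_{ur}$-term. There is essentially no obstacle here; the only subtlety is choosing the Iwasawa witnesses $k'$ so that $q$ has entries whose $\chi_{i,v}$-values collapse (using that $\chi_{i,v}$ is unramified, $\det k\in\mathcal{O}_v^\times$, etc.), and fixing the sign convention for $\delta_B$ to match the normalization built into the statement. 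Once these bookkeeping items are settled, the matrix identities are a direct check.
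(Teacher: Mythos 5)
Your computation is correct: the paper states this as an ``easy lemma'' and supplies no proof, and the direct Iwasawa-decomposition check on the two cells $K_1$ and $K_v\smallsetminus K_1$ is exactly the intended argument. Both of your witnesses $k'$ and the resulting upper-triangular factors $q$ check out (in particular $\alpha=-\det k\in\mathcal{O}_v^\times$, $\delta=\varpi_v$ on $K_1$, and $\alpha\in\varpi_v\mathcal{O}_v^\times$, $\delta=c\in\mathcal{O}_v^\times$ off $K_1$), and since the $\chi_{i,v}$ are unramified the values collapse as you say. The one genuinely delicate point is the normalization of $\delta_B$ in the paper's model, which is written ambiguously; you correctly observe that the identity as stated forces $\delta_B\bigl(\begin{smallmatrix}\alpha&*\\0&\delta\end{smallmatrix}\bigr)=|\delta/\alpha|_v^{1/2}$, and with that convention both cases match the claimed coefficients $\chi_{1,v}(\varpi_v)q_v^{1/2}$ and $\chi_{2,v}(\varpi_v)q_v^{-1/2}$.
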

\noindent\underline{Fourier-Jacobi Coefficients}
\begin{proposition}\label{Proposition 6.31}
The Fourier-Jacobi coefficient for $\beta=1$ is given by:
\begin{align*}
&\mathrm{FJ}_1(E_{sieg,\mathcal{D}})(z_\kappa,\mathrm{diag}(u,1_2,u,1_2)n'\mathbf{j}(1,g))&&=\sum_{n\in\mathbb{Z}_p/p^t\mathbb{Z}_p}
E_{sieg,2}(z_\kappa,f_{2,\mathcal{D}},
g'\gamma(1,\begin{pmatrix}1&\\n&1\end{pmatrix}_p))&\\&&&\times
\Theta_{\Phi_\mathcal{D}}(u,n'g'\gamma(1,\begin{pmatrix}1&\\n&1\end{pmatrix}_p))&
\end{align*}
for $n'\in N_2$ ($N_2$ defined in subsection \ref{3.6}), $g\in\mathrm{U}(2,2)(\mathbb{A}_\mathbb{Q})$, $g'=\begin{pmatrix}1_2&\\&-1_2\end{pmatrix}g\begin{pmatrix}1_2&\\& -1_2\end{pmatrix}$, and $u\in \mathrm{U}(1)(\mathbb{A}_\mathbb{Q})$. Here $f_{2,\mathcal{D}}=\prod_vf_{2,\mathcal{D},v}$ and $\Phi_\mathcal{D}=\prod_v\Phi_{\mathcal{D},v}$ \index{$\Phi_\mathcal{D}$} are given in Definitions \ref{Archimedean Kernel}, \ref{Un Kernel}, \ref{Ra Kernel} and \ref{Pa Kernel}. The $E_{sieg, 2}$ is considered as a function on $\mathrm{U}(2,2)(\mathbb{A}_\mathbb{Q})$.
\end{proposition}
\begin{proof}
This follows from our computations for local Fourier-Jacobi coefficients and Lemma \ref{5.4}.
\end{proof}
So far we have used the embedding $\mathbf{j}(1,g)=\begin{pmatrix}1&&&\\&D_g&&C_g\\&&1&\\&B_g&&A_g\end{pmatrix}$ for $g=\begin{pmatrix}A&B\\C&D\end{pmatrix}$, to keep accordance with the convention of \cite{SU} and \cite{WAN}. In our actual applications later on we will use another embedding $\alpha''$ as below. Now that we have
$$\mathrm{FJ}_1(E_{sieg,\mathcal{D}}^\Box,z_\kappa,g,x,u)=E_{sieg,2}(z_\kappa,f_{2,\mathcal{D}},g')
\Theta_{\Phi_\mathcal{D}}(u,g'),$$
we consider another embedding \index{$\mathbf{j}''$} $\mathbf{j}''(1,g)=\begin{pmatrix}1&&&\\&A_g&&B_g\\&&1&\\&C_g&&D_g\end{pmatrix}$. (The $g'$ is as defined in Proposition \ref{Proposition 6.31}).
Let \index{$\Phi''_\mathcal{D}$} $\Phi''_\mathcal{D}=\omega_\beta(\eta)\Phi_\mathcal{D}$ and $f_{2,\mathcal{D}}''=\rho(\eta)f_{2,\mathcal{D}}$ and we let $\mathrm{FJ}_\beta''$ be defined as $\mathrm{FJ}_\beta$ but replacing $\mathbf{j}$ by $\mathbf{j}''$. By observing that $E_{sieg,\mathcal{D},2}$ and $\Theta$ are automorphic forms and thus invariant under left multiplication by $\eta^{-1}$, we get:
\begin{equation}\label{(15)}
\mathrm{FJ}_\beta''(E_{sieg,\mathcal{D}}^\Box,z_\kappa,\mathrm{diag}(u,1_2,u,1_2)n'\mathbf{j}''(1,g))
=E_{sieg,\mathcal{D},2}(z_\kappa,f_{2,\mathcal{D}}'',g)\Theta_{\Phi''_\mathcal{D}}(u,n'g).
\end{equation}
\begin{definition}\label{definition 6.43}
Let $\prod_{v\nmid p}U_v$ be the intersection of $g^{-1}K^{(3,1)}g$ (the $K^{(3,1)}$ is defined in Definition \ref{definelevel}) with $\mathrm{U}(2,0)(\mathbb{A}^{p\infty})$, and $U_p$ consists of matrices in $\mathrm{U}(2,0)(\mathbb{Z}_p)$ which are upper triangular modulo the $p^t$ in Subsection \ref{Sectionn}. The $L$ is defined to be the intersection of $\mathcal{K}^2$ (the quotient of $N_P(\mathbb{Q})$ over the center of it) with the image of $(K^{(3,1)}\times\mathrm{U}(3,1)(\mathbb{Z}_p))\cap N_P(\mathbb{A}_f)$.
Define $\phi_1=\prod_v\phi_{1,v}$ and $\theta^\star_\mathcal{D}=\theta_{\phi_1}$ as an element in the $\mathbb{C}$-dual space of $$H^0(\mathcal{Z}^\circ_{[g]},\mathcal{L}(\beta))\otimes\mathbb{C}\simeq T^{\mathrm{Hol}}_{\mathbb{A}}(\beta,L,U_f)$$ as in the end of Section \ref{ThetaCM} for $m=1$, $U_f\subseteq \mathrm{U}(2)(\mathbb{A}_f)$ an open compact subgroup defined as $K^{(2,0)}\times\mathrm{GL}_2(\mathbb{Z}_p)$. The $L\subset \mathcal{K}^{\oplus2}$ being the ideal generated by $(y\bar{y})^2$. (These level groups are fixed throughout the family).
\end{definition}
We will usually write $E_{sieg,\mathcal{D},2}$ for $E_{sieg,\mathcal{D},2}(z_\kappa,f_{2,\mathcal{D}}'',g)$ for short.
\begin{lemma}\label{Lemma 6.32}
Suppose $\delta_{\psi}^{-1}(\Phi_\infty'')=\phi_{1,\infty}\boxtimes\phi_{2,\infty}$, and for each $v<\infty$
\begin{equation}
\phi_{2,v}(x)=\int_{X_v}\delta_{\psi}^{-1}(\Phi_v'')(x',x)\phi_{1,v}(x')dx'.
\end{equation}
Then
\begin{equation}
l_{\theta^\star_\mathcal{D}}'(\gamma^{-1}(\Theta_{\Phi''_\mathcal{D}}))(h)=\theta_{\phi_2}(h).
\end{equation}
Here we consider $\gamma^{-1}(\Theta_{\Phi''_\mathcal{D}})$ as a function on $(N\mathrm{U}(2))\times\mathrm{U}(2)\hookrightarrow \mathrm{U}(3,1)\times\mathrm{U}(2)$ and apply $l_{\theta^\star_\mathcal{D}}'$ to it on the $N\mathrm{U}(2)$ part.
\end{lemma}
\begin{proof}
We write $\delta_\psi^{-1}\Phi''$ for each finite place $v$ as a finite sum of expressions of the form $\phi'_{1,i,v}\boxtimes\phi_{2,i,v}$ and let $\phi'_{1,i}=\prod_{v\nmid\infty}\phi'_{1,v}\times\phi_{1,\infty}$ and $\phi_{2,i}=\prod_{v\nmid\infty}\phi_{2,i,v}\times\phi_{2,\infty}$. We have a finite sum
$$\Theta_{\Phi''_\mathcal{D}}(vh_1,h_2)=\sum_{i=1}^r\theta_{\phi'_{1,i}}(vh_1)\theta_{\phi_{2,i}}(h_2)$$
and that the function
$$T_i(h):=\int_{[V]}\theta_{\phi'_{1,i}}(vh)\theta^\star_\mathcal{D}(vh)dv$$
is a constant function by Lemma \ref{lemma 4.8}. Then the expression $\sum_{i=1}^rT_i\cdot \theta_{\phi_{2,i}}$ is clearly equal to $\theta_{\phi_2}$ by (\ref{(20)}) and (\ref{(13)}).
\end{proof}
\begin{corollary}\label{Corollary 6.29}
\begin{align*}
&&&l_{\theta^\star_\mathcal{D}}'(\mathrm{FJ}_1(\prod_v\sum_i \rho(\mathrm{diag}(u_{v,i},1_2,u_{v,i})))E_{Kling,\mathcal{D}})(g)&\\
&=&&\langle\frac{1}{\Omega_\infty^{2\kappa-2}}\sum_{n\in\mathbb{Z}_p/p^t\mathbb{Z}_p}E_{sieg,\mathcal{D},2}
(\alpha(g,-\begin{pmatrix}1&\\n&1\end{pmatrix}_p)\cdot\frac{1}{\Omega_\infty}\theta_{2,\mathcal{D}}(-\begin{pmatrix}1&\\n&1
\end{pmatrix}_p,\varphi_\mathcal{D}(-)\rangle&
\end{align*}
where $\theta^\star_\mathcal{D}$ and $\theta_{2,\mathcal{D}}$ are the theta functions on $U(-\zeta)(\mathbb{A}_\mathbb{Q})$ defined using the Schwartz functions $\phi_1=\prod_v\phi_{1,v}$ and $\phi_2=\prod_v\phi_{2,v}$ for $\phi_{1,v}$ and $\phi_{2,v}$'s defined as before (Definitions \ref{Archimedean Kernel}, \ref{p-adic Kernel}, also \ref{Define Theta}, and the corresponding definition for ramified places). Note that the $\phi_2$ is defined using $\phi_1$, which explains the dependence of the right hand side on $\phi_1$. The inner product is over the group $1\times\mathrm{U}(2)\hookrightarrow \mathrm{U}(3,3)$. Moreover, suppose $\varphi_p\in\pi_p$ is chosen such that $\varphi_p$ is the ordinary vector, then the above expression is
$$\frac{1}{\Omega^{2\kappa-1}_\infty}\langle E_{sieg,\mathcal{D},2}(\alpha(g,-))\cdot\theta_{2,\mathcal{D}}(-),\varphi_\mathcal{D}(-)\rangle.$$
\end{corollary}
\begin{proof}
It follows from the Proposition \ref{Proposition 6.31}, Lemma \ref{Lemma 6.32}, noting that the pairing $l'_{\theta^\star_\mathcal{D}}$ is essentially applied to the $\Theta_\Phi$ factor on the right hand side of Proposition \ref{Proposition 6.31} (recall also the meaning for intertwining maps defined in subsection \ref{3.6}). The last sentence follows from describing the pairing between $\pi_p$ and $\pi_p^\vee$.
\end{proof}

\section{$p$-adic Interpolation}\label{section 7}
\subsection{Congruence Module and the Canonical Period}\label{section7.1}
We now discuss the theory of congruences of modular forms on $\mathrm{GL}_2/\mathbb{Q}$, following \cite[Section 12.2]{SU}. Let $R$ be a finite discrete valuation ring extension of $\mathbb{Z}_p$ and $\varepsilon$ a finite order character of $\hat{\mathbb{Z}}^\times$ whose $p$-component has conductor dividing $p$. Let $M_\kappa^{ord}(Mp^r,\varepsilon; R)$ be the space of ordinary modular forms on $\mathrm{GL}_2/\mathbb{Q}$ with level $N=Mp^r$, character $\varepsilon$ and coefficient $R$. Let $S_\kappa^{ord}(Mp^r,\varepsilon; R)$ be the subspace of cusp forms. Let $\mathbb{T}_\kappa^{ord}(N,\varepsilon;R)$ ($\mathbb{T}_\kappa^{ord,0}(Mp^r,\varepsilon;R)$) be the $R$-sub-algebra of
$\mathrm{End}_R(M_\kappa^{ord}(Mp^r,\varepsilon;R))$ (respectively, $\mathrm{End}_R(S_\kappa^{ord}(Mp^r,\varepsilon;R))$) generated by the Hecke operators
$T_v$ (these are Hecke operators defined using the double cosets $\Gamma_1(N)_v\begin{pmatrix}\varpi_v&\\&1\end{pmatrix}\Gamma_1(N)_v$ for the $v$'s) for all $v$. Let any $f\in S_\kappa^{ord}(N,\varepsilon;R)$ be an ordinary eigenform. Then we have an
$1_f\in \mathbb{T}_\kappa^{ord,0}(N,\varepsilon;R)\otimes_R F_R=\mathbb{T}_\kappa'\times F_R$ the projection onto the second factor.\\

\noindent Let $\mathfrak{m}_f$ be the maximal ideal of the Hecke algebra corresponding to $f$. The $\mathbb{T}^{ord,0}(M,\varepsilon;R)\cap (0\otimes F_R)$ is free of rank one over $R$. We let $\ell_f$ be a generator and so
$\ell_f=\eta_f1_f$ for some $\eta_f\in R$. This $\eta_f$ is called the congruence number of $f$.\\

\noindent Now let $\mathbb{I}$ be as in the introduction. Suppose $\mathbf{f}\in M^{ord}(M,\varepsilon;\mathbb{I})$ is an ordinary $\mathbb{I}$-adic cuspidal eigenform. Then as above
$\mathbb{T}^{ord,0}(M,\varepsilon;\mathbb{I})\otimes F_\mathbb{I}\simeq \mathbb{T}'\times F_\mathbb{I}$, $F_\mathbb{I}$ being the fraction field of
$\mathbb{I}$ where projection onto the second factor gives the eigenvalues for the actions on $\mathbf{f}$. Again let $1_\mathbf{f}$ be the idempotent
corresponding to projection onto the second factor. Then for a $\mathbf{g}\in S^{ord}(M,\varepsilon;\mathbb{I})\otimes_\mathbb{I} F_\mathbb{I}$,
$1_\mathbf{f}\mathbf{g}=c\mathbf{f}$ for some $c\in F_\mathbb{I}$. In the case when the localized Hecke algebra $\mathbb{T}^{ord,0}(M,\varepsilon;R)_{\mathfrak{m}_\mathbf{f}}$ is a Gorenstein $\mathbb{I}$-algebra (which is indeed the case under assumptions $(\mathbf{dist})$ and $(\mathbf{irred})$),
$\mathbb{T}^{ord,0}(M,\varepsilon; \mathbb{I})\cap (0\otimes F_\mathbb{I})$ is a rank one $\mathbb{I}$-module. Under the Gorenstein property for $\mathbb{T}_{\mathbf{f}}$, we can define $\ell_\mathbf{f}$ and $\eta_\mathbf{f}$.\\

\begin{definition}
From now on, we will define $D$ to be the unique quaternion algebra ramified exactly at $\infty$ and the $q$ in our main theorems in the introduction. We choose the group $\mathrm{U}(2)$ with $D^\times$ being its associated quaternion algebra. It is clear that this is possible.
\end{definition}
We also make the following definition for $p$-adic families of forms on $D^\times$.
\begin{definition}\label{def D}
For any complete local $\mathcal{O}_L[[W]]$-algebra $R$ (need not be finite over $\mathcal{O}_L[[W]]$) we define the space of $R$-adic families on $G=D^\times$ with level group $K_D\subset D^\times(\mathbb{A}_f)$ which is $\mathrm{GL}_2(\mathbb{Z}_p)$ at $p$ to be the space of continuous functions
$$f: D^\times(\mathbb{Q})\backslash D^\times_f(\mathbb{A})/K_D^{(p)}\rightarrow R$$
such that
$$f(x\begin{pmatrix}a&b\\0&d\end{pmatrix})=f(x)\langle d\rangle_W\chi(d)$$
for $a,d\in\mathbb{Z}_p^\times$, $b\in\mathbb{Z}_p$, where $\chi$ is a fixed character of $\mathbb{Z}^\times_p$ trivial on $1+p\mathbb{Z}_p$, $\langle d\rangle_W\in \mathcal{O}_L[[W]]\simeq \mathcal{O}_L[[\Gamma_\mathbb{Q}]]$ is the image of $d$ under the local reciprocity law. Here we make an identification of $W$ with $\mathrm{rec}_p(1+p)$ which is a topological generator of $\Gamma_\mathbb{Q}$. We also equip $\mathrm{GL}_2(\mathbb{Z}_p)\subset D^\times(\mathbb{A}_f)$ with the topology as a $p$-adic Lie group. We write
$$M(S_G(K^{(p)}_D), R)$$
for the space of all such forms.

One can define Hecke operators $T_\ell$ at primes $\ell$ where $K$ is $\mathrm{GL}_2(\mathbb{Z}_\ell)$, and the $U_p$ operator defined by
$$U_p f(g)=\sum_{n\in\mathbb{Z}_p/p\mathbb{Z}_p}f(g\begin{pmatrix}1&n\\&1\end{pmatrix}_p\begin{pmatrix}p&\\&1\end{pmatrix}_p).$$

We make similar definitions for $R$-adic families on the definite unitary group $G=\mathrm{U}(2)$ of some prime to $p$ level group $K^{(p)}$, which we denote as $M(S_G(K^{(p)}), R)$. It is also possible to add one more variable allowing twisting by characters (so that the nebentypus will be a character of $\mathrm{diag}(a,d)$ instead of just $d$) as follows. Let $\Lambda_{\mathrm{U}(2)}=\Lambda_{2,0}=\mathbb{Z}_p[[T_1,T_2]]$. Let $\mathcal{M}_{\mathrm{ord}}(K^{(2,0)}, \Lambda_{2,0})$ be the space of $\Lambda_{2,0}$-adic ordinary modular forms on $\mathrm{U}(2,0)$, consisting of functions $$\mathbf{f}: \mathrm{U}(2,0)\backslash\mathrm{U}(2,0)(\mathbb{A}_\mathbb{Q})/K^{(2,0)}\mathrm{U}(2,0)_\infty\rightarrow\Lambda_{2,0}$$ such that
$$\mathbf{f}(g\begin{pmatrix}a&\\&d\end{pmatrix})=\mathbf{f}(g)\cdot\langle a\rangle_{T_1}\langle d\rangle_{T_2}, a,d\in\mathbb{Z}^\times_p.$$
For later use in Subsection \ref{IPIP}, we will also define a space $\breve{\mathcal{M}}_{\mathrm{ord}}(K^{(2,0)},\Lambda_{2,0})$ be the space of $\Lambda$-adic ordinary modular forms consisting of functions $\mathbf{f}:\mathrm{U}(2,0)\backslash\mathrm{U}(2,0)(\mathbb{A}_\mathbb{Q})/K^{(2,0)}\mathrm{U}(2,0)_\infty\rightarrow \Lambda_{2,0}$ such that
$$\mathbf{f}(g\begin{pmatrix}a&\\&d\end{pmatrix})=\mathbf{f}(g)\cdot\langle d\rangle^{-1}_{T_1}\langle a\rangle^{-1}_{T_2}.$$
\end{definition}

\noindent\underline{The Ordinary Family $\mathbf{f}$ on $D^\times$}\\
\noindent Let $\mathbf{f}$ be a Hida family of ordinary cuspidal eigenforms new outside $p$ as in Theorem \ref{Theorem 1}. Suppose $\mathbb{T}_{\mathfrak{m}_\mathbf{f}}$ is Gorenstein. Thus we have the integral projector $\ell_\mathbf{f}$. We construct from it a Hida family of ordinary forms on $D^\times(\mathbb{A}_\mathbb{Q})$, also denoted as $\mathbf{f}$. We refer to \cite[Sections 2 and 3]{Hida88} for the definition and theory of ordinary forms on quaternion algebras. By our assumption in Theorem \ref{Theorem 1} and our choice for $D$, we may choose $f_0$ a form on $D^\times(\mathbb{A}_\mathbb{Q})$ which is in the Jacquet-Langlands correspondence of $f$ in $\mathrm{GL}_2(\mathbb{A}_\mathbb{Q})$ with values in $\mathcal{O}_L$ and $f_0$ is not divisible by $\pi_L$ (the uniformizer of the maximal ideal of $\mathcal{O}_L$). Under the assumption of Theorem \ref{Theorem 1}, we do the following we first take an ordinary Hida family $\mathbf{g}$ such that $\ell_\mathbf{f}\mathbf{g}$ is nonzero. Note that $\mathbb{I}$ is a Dedekind domain. So we can divide $\ell_\mathbf{f}\mathbf{g}$ by an element in $\mathbb{I}$ such that the quotient (which we still denote as $\mathbf{f}$ to save notations) is integral (i.e. $\mathbb{I}$-valued) and there is no non-trivial common divisor for the values of $\mathbf{f}$.

\subsection{Eisenstein Datum}\label{EiDatum}
\begin{definition}\label{Eisens}
An Eisenstein datum $\mathbf{D}=(\Sigma,L,\mathbb{I},\mathbf{f}, \psi, \tau)$ \index{$\mathbf{D}$} consists of:
\begin{itemize}
\item A finite set of primes $\Sigma$ containing all bad primes.
\item A finite extension $L/\mathbb{Q}_p$;
\item a finite normal $\mathcal{O}_L[[W]]$-algebra $\mathbb{I}$;
\item an $\mathbb{I}$-adic Hida family $\mathbf{f}$ of cuspidal ordinary eigenforms on $D^\times$ new outside $p$, of square-free tame conductor $N$ such that some weight $2$ specialization $f_0$ has trivial character;
\item two $L$-valued Hecke characters $\psi$ and $\tau$ of $\mathcal{K}^\times\backslash \mathbb{A}_\mathcal{K}^\times$ whose $p$-part conductors divide $p$, and whose infinity types are $(0,0)$ and $(-\frac{\kappa}{2},\frac{\kappa}{2})$, respectively, such that $\psi|_{\mathbb{A}^\times_\mathbb{Q}}$ is equal to the central character of $\pi_{f_0}$. Let $\xi=\psi/\tau$. We define $p$-adic deformations $\boldsymbol{\psi},\boldsymbol{\tau}$ of them in (\ref{family character}).
\end{itemize}
\end{definition}
Note that for any arithmetic point, the specialization $\mathcal{D}_\phi$ of $\mathbf{D}$ gives an Eisenstein datum in the sense of Definition \ref{EDatum}. Now we need to modify our $\mathbb{I}$. By taking an irreducible component of the normalization of a series of quadratic extensions of $\mathbb{I}$ we may assume that for each $v\in\Sigma$ not dividing $N$, we can find two functions $\alpha_v,\beta_v\in\mathbb{I}$ interpolating the Satake parameters of $\pi_v$. This enables us to do the constructions in the global computations in Lemma \ref{lemma 6.39} in families. We still denote $\mathbb{I}$ for the new ring for simplicity. At the end of this paper, we will see how to deduce the main conjecture for the original $\mathbb{I}$ from that for the new $\mathbb{I}$.

Let $\hat{\mathbb{I}}^{ur}_\mathcal{K}:=\hat{\mathbb{I}}^{ur}[[\Gamma_\mathcal{K}]]$ (see Introduction for the notion of $\hat{\mathbb{I}}^{ur}$). We define $\alpha: \mathcal{O}_L[[\Gamma_\mathcal{K}]]\rightarrow \hat{\mathbb{I}}^{ur}[[\Gamma_\mathcal{K}^-]]$ and $\beta: \mathcal{O}_L[[\Gamma_\mathcal{K}]]\rightarrow \hat{\mathbb{I}}^{ur}[[\Gamma_\mathcal{K}]]$ by: $$\alpha(\gamma_+)=(1+W)^{\frac{1}{2}},\alpha(\gamma_-)=\gamma_-, \beta(\gamma_+)=\gamma_+, \beta(\gamma_-)=\gamma_-.$$

We let \begin{equation}\label{family character}
\boldsymbol{\psi}=\psi\cdot\alpha\circ\Psi_\mathcal{K}, \boldsymbol{\xi}=(\beta\circ\Psi_\mathcal{K})\cdot\xi,\boldsymbol{\tau}=\boldsymbol{\psi}/\boldsymbol{\xi}.
\end{equation}
\index{$\boldsymbol{\psi},\boldsymbol{\xi},\boldsymbol{\tau}$}

Define $\psi_\phi:=\phi\circ\boldsymbol{\psi}$ and $\xi_\phi=\phi\circ\boldsymbol{\xi}$. Let $\Lambda_\mathbf{D}=\hat{\mathbb{I}}^{ur}[[\Gamma_\mathcal{K}]][[\Gamma_\mathcal{K}^-]]$. We give $\Lambda_\mathbf{D}$ a $\Lambda_2$-algebra structure by first defining a homomorphism $\Gamma_2=(1+p\mathbb{Z}_p)^4\rightarrow \Gamma_\mathcal{K}\times\Gamma_\mathcal{K}$ given by:
$$(a,b,c,d)\rightarrow \mathrm{rec}_\mathcal{K}(db,a^{-1}c^{-1})\times \mathrm{rec}_\mathcal{K}(d^{-1},c)$$
and then compose with $\alpha\otimes \beta$.

We remark here that only the subring $\hat{\mathbb{I}}^{ur}[[\Gamma_\mathcal{K}]]$ of $\Lambda_\mathbf{D}$ really matters: the $\Gamma_\mathcal{K}^-$ variable corresponds to twisting everything by the same character and does not affect the $p$-adic $L$-functions and the Selmer groups.

\begin{definition}
A point $\phi\in\mathrm{Spec}\Lambda_\mathbf{D}$ is called arithmetic if $\phi(1+W),\phi(\gamma),\phi(\gamma^-)$ for $\gamma\in\Gamma_\mathcal{K}$ and $\gamma^-\in\Gamma_\mathcal{K}^-$ are all $p$-power roots of unity. We call it generic if the $p$-part of $(f_\phi,\psi_\phi,\tau_\phi)$ is generic in the sense defined in Definition \ref{definegeneric}. Note that whether $\phi$ is generic only depends on its image in $\mathrm{Spec}\Lambda_2$. It also only depends on the subring $\hat{\mathbb{I}}^{ur}[[\Gamma_\mathcal{K}]]$. So it makes sense to talk about generic points on these weight spaces as well. We let $\mathcal{X}_\mathbf{D}$ \index{$\mathcal{X}_\mathbf{D}$, $\mathcal{X}^{gen}_\mathbf{D}$} be the set of arithmetic points and $\mathcal{X}^{gen}_\mathbf{D}$ be the set of generic arithmetic points. Later when we are working with families, it is easily seen that these points are Zariski dense due to the fact that $p\geq 5$ (in fact this is the only place where we used the fact $p\geq 5$).
\end{definition}
Let us write down the weight map $j_0: \Lambda_{2,0}\rightarrow \Lambda_\mathbf{D}$ for the family (which we still denote as $\mathbf{f}\in\mathcal{M}_{\mathrm{ord}}(K^{(2,0)},\Lambda_{2,0})\otimes_{j_0}\Lambda_\mathbf{D}$) on $\mathrm{U}(2,0)$ constructed from the Hida family $\mathbf{f}$ and the character $\boldsymbol{\psi}$. In fact
$$j_0(1+T_1)=\boldsymbol{\psi}_2^{-1}(1+p),\ j_0(1+T_2)=\boldsymbol{\chi}_\mathbf{f}\boldsymbol{\psi}_2|_{\mathbb{Z}^\times_p}(1+p).$$
Here $\boldsymbol{\chi}_{\mathbf{f}}$ is the central character of $\mathbf{f}$, and we write $\boldsymbol{\psi}_2$ for the restriction of $\boldsymbol{\psi}$ to $\mathcal{K}^\times_{\bar{v}_0}\simeq\mathbb{Q}^\times_p$.

\subsection{Siegel-Eisenstein Measure}\label{inner}
\begin{proposition}\label{Siegel Measure}
There are $\Lambda_2$-adic formal Fourier expansions $\mathbf{E}_{\mathbf{D},sieg}$ and $\mathbf{E}_{\mathbf{D},sieg}'$ such that
$$\mathbf{E}_{\mathbf{D},sieg,\phi}=E_{sieg,\mathcal{D}_\phi}(\prod_v f_{sieg,v}, z_\kappa,-)$$
$$\mathbf{E}'_{\mathbf{D},sieg,\phi}=E'_{sieg,\mathcal{D}_\phi}(\prod_v f'_{sieg,v}, z'_\kappa,-)$$
in terms of formal Fourier expansions. Here the datum $\mathcal{D}_\phi=(f_\phi,\xi_\phi,\psi_\phi)$ is the specialization of $\mathbf{D}$ at $\phi$.
\end{proposition}
\begin{proof}
It is a special case of \cite[Lemma 5.7]{WAN} and follows from our computations of the local Fourier coefficients for the Siegel-Eisenstein series. Recall $\boldsymbol{\tau}=\boldsymbol{\psi}/\boldsymbol{\xi}$ and write $\tau_\phi$ for the localization of $\boldsymbol{\tau}$ at an arithmetic point $\phi$. On the other hand, we have families of characters $\boldsymbol{\xi}^\dag_1$ and $\boldsymbol{\xi}^\dag_2$ of $\mathbb{Z}^\times_p$ interpolating the restriction of characters $\xi_{\phi,1}^\dag$ and $\xi^\dag_{\phi,2}$ to $\mathbb{Z}^\times_p$. It follows from our computations in Section 6 for local Fourier coefficients and \cite[Lemma 11.2]{SU}, that we can form the $\beta$-th Fourier coefficient of $$E_{\mathbf{D},sieg}(f_{sieg}; z_\kappa, g)$$ at $\mathrm{diag}(y, {}^t\!\bar{y})$ for $y\in\mathrm{GL}_3(\mathbb{A}_{\mathcal{K},f})$ is given by
\begin{align*}
&&&\prod_{\ell\in\Sigma, \ell\nmid p}D_\ell^{-\frac{3}{2}}\boldsymbol{\tau}(\det y_\ell)|\det y_\ell\bar{y}_\ell|_\ell^{-\frac{\kappa}{2}}e_\ell(\frac{\beta_{22}+\beta_{33}}{y_\ell\bar{y}_\ell})\times\prod_{\ell\nmid p}\boldsymbol{\tau}'_\ell(\det\beta)&\\
&\times&&\prod_{\ell\not\in\Sigma}h_{\ell, {}^t\!\bar{y}_\ell\beta y_\ell}(\bar{\boldsymbol{\tau}}'(\ell)\ell^{-\kappa})\times(\det\beta|\det\beta|_p)^{\kappa-3}&\\
&\times&&\boldsymbol{\xi}^\dag_{1}(\beta_{21})\boldsymbol{\xi}^\dag_{2}(\frac{\beta_{21}\beta_{32}-\beta_{22}\beta_{31}
}{\beta_{21}})\cdot \mathrm{char}(\mathbb{Z}_p, \beta_{11})\mathrm{char}(\mathbb{Z}_p, \beta_{12})\mathrm{char}(\mathbb{Z}_p, \beta_{13})\mathrm{char}(\mathbb{Z}_p, \beta_{23})\mathrm{char}(\mathbb{Z}_p, \beta_{33})&
\end{align*}
if $\det\beta>0$ and $\beta_{21}$ and $\det\begin{pmatrix}\beta_{21}&\beta_{22}\\ \beta_{31}&\beta_{32}\end{pmatrix}$ are both in $\mathbb{Z}^\times_p$, and is equal to $0$ otherwise. Here, $\mathrm{char}(\mathbb{Z}_p, x)$ is the characteristic function for $\mathbb{Z}_p$ for the variable $x$. That the Fourier coefficient is zero if $\det\beta\leq 0$ follows from our computations in Lemma \ref{Fourier}. Note the definition of $\Phi_{\xi^\dag_\phi}$ in Definition \ref{39} for the part in the third row involving the $p$-adic place.
This whole expression is clearly interpolated by an element in the Iwasawa algebra. The case for
$$E'_{\mathbf{D},sieg}(f'_{sieg};z,g)$$ is similar.

Now we can obtain the Siegel-Eisenstein measure from the abstract Kummer congruence as detailed in \cite[Lemma 3.15]{Hsieh13}. From the $\mathrm{mod}\ p$ $q$-expansion principle and that all Fourier coefficients above are interpolated by elements in the Iwasawa algebra, we see that $\mathbf{E}_{\mathbf{D},sieg}$ and $\mathbf{E}_{\mathbf{D},sieg}'$ indeed give a measure with values in the space of $p$-adic automorphic forms on $\mathrm{GU}(3,3)$ (see \cite[Theorem 3.16]{Hsieh13} also).
\end{proof}
\noindent This formal Fourier expansion gives a measure on $\Gamma_\mathcal{K}\times\mathbb{Z}_p$ with values in the space of $p$-adic automorphic forms on $\mathrm{GU}(3,3)$, which we denote as $\mathcal{E}_{\mathcal{D},sieg}$ and $\mathcal{E}_{\mathcal{D},sieg}'$, respectively.\\

\subsection{Interpolating Petersson Inner Products}\label{IPIP}
In this subsection we make an additional construction of pairing Hida families on definite unitary groups following \cite{Hsi17}, which is crucial for our later construction.
\begin{definition}\label{define measure}
For a \emph{neat} tame level group $K\subset \mathrm{U}(2)(\mathbb{A}^{p\infty})$ we use the notation $\mathbf{B}_K\langle-,-\rangle$ to denote the $\Lambda_{\mathrm{U}(2)}$-pairing
$$\mathbf{B}_K: \mathcal{M}_{\mathrm{ord}}(K,\Lambda_{\mathrm{U}(2)})\times\breve{\mathcal{M}}_{\mathrm{ord}}(K,\Lambda_{\mathrm{U}(2)})\rightarrow \Lambda_{\mathrm{U}(2)}$$
such that for any $\mathbf{f}\in\mathcal{M}_{\mathrm{ord}}(K, \Lambda_{\mathrm{U}(2)})$, $\mathbf{g}\in\breve{\mathcal{M}}_{\mathrm{ord}}(K, \Lambda_{\mathrm{U}(2)})$ and $\phi\in\mathrm{Spec} \Lambda_{\mathrm{U}(2)}(\mathbb{C}_p)$ a weight two point, for any $n$ we define
\begin{align*}
&\mathbf{B}_{K,n}\langle\mathbf{g},\mathbf{f}\rangle&&:=\sum_{[x_i]\in\mathrm{U}(2)(\mathbb{Q})\backslash \mathrm{U}(2)/KU_0(p^n)}U^{-n}_p\mathbf{f}(x_i)\mathbf{g}(x_i\begin{pmatrix}&1\\p^n&\end{pmatrix})&\\
&&&(\mathrm{mod}(1+T_1)^{p^n}-1, (1+T_2)^{p^n}-1).&
\end{align*}
Then one checks
$$\mathbf{B}_{K,n+1}\equiv\mathbf{B}_{K,n}(\mathrm{mod}(1+T_1)^{p^n}-1,(1+T_2)^{p^n}-1).$$
We define
$$\mathbf{B}_K\langle\mathbf{g},\mathbf{f}\rangle=\lim_n\mathbf{B}_{K,n}\langle\mathbf{g},\mathbf{f}\rangle.$$
By definition we have
$$\phi(\mathbf{B}_{K}\langle\mathbf{g},\mathbf{f}\rangle)=\sum_{[x_i]\in\mathrm{U}(2)(\mathbb{Q})\backslash \mathrm{U}(2)/KU_0(p^n)}U^{-n}_p\mathbf{f}_\phi(x_i)\mathbf{g}_\phi(x_i\begin{pmatrix}&1\\p^n&\end{pmatrix})$$
and hence
\begin{align*}&\phi(\mathbf{B}_{K}\langle\mathbf{g},\mathbf{f}\rangle)&&=\mathrm{vol}(KU_0(p^n))^{-1}\int_{[\mathrm{U}(2)]}U^{-n}_p\mathbf{f}_\phi(h)\mathbf{g}_\phi
(h\begin{pmatrix}&1\\p^n&\end{pmatrix})dh&\\
&&&=\mathrm{vol}(KU_0(p^n))^{-1}\int_{[\mathrm{U}(2)]}\mathbf{f}_\phi(h\begin{pmatrix}&1\\1&\end{pmatrix}_p)\mathbf{g}_\phi
(h)dh&
\end{align*}
if $\phi$ corresponds to an ordinary form whose $p$-part conductor is $p^n$.
\index{$\mathbf{B}\langle,\rangle$}
In the following we will fix the tame level group $K^{(2,0)}$ as defined before, and will sometimes suppress the subscript $K$ in $B_K$.
\end{definition}
\subsection{$p$-adic $L$-functions}\label{6.4}
We have the following proposition for $p$-adic $L$-functions. Recall $\Sigma$ is a finite set of primes containing all bad primes.
\begin{proposition}\label{$p$-adic $L$-function}
Notations are as before. There is an element $\mathcal{L}^\Sigma_{\mathbf{f},\xi,\mathcal{K}}$ \index{$\mathcal{L}^\Sigma_{\mathbf{f},\xi,\mathcal{K}}$} in $\hat{\mathbb{I}}^{ur}[[\Gamma_\mathcal{K}]]$, and a $p$-integral element $C_{\mathbf{f},\xi,\mathcal{K}}^\Sigma\in\bar{\mathbb{Q}}_p^\times$ such that for any generic arithmetic point $\phi$ of conductor $p^t$, we have:
\begin{equation}\label{equation (8)}
\phi(\frac{\mathcal{L}^\Sigma_{\mathbf{f},\xi,\mathcal{K}}}{\Omega^{2\kappa}_p})=C_{\mathbf{f},\xi,\mathcal{K}}^\Sigma\frac{2\pi ip^{(\kappa-3)t}\xi_{1,p}^2\chi_{1,p}^{-1}
\chi_{2,p}^{-1}(p^{-t})\mathfrak{g}(\xi_{v_0}\chi_{1,p}^{-1})
\mathfrak{g}(\xi_{v_0}\chi_{2,p}^{-1})L^{\Sigma}(\mathcal{K},\pi_{f_\phi},\bar{\chi}_\phi\xi_\phi,
\frac{\kappa}{2}-\frac{1}{2})(\kappa-1)!(\kappa-2)!}{\Omega_\infty^{2\kappa}}.
\end{equation}
Here $\chi_{1,p},\chi_{2,p}$ is such that the unitary representation $\pi_{f_\phi}\simeq\pi(\chi_{1,p},\chi_{2,p})$ with $\mathrm{val}_p(\chi_{1,p}(p))=-\frac{1}{2}$, $\mathrm{val}(\chi_{2,p}(p))=\frac{1}{2}$. We remark that the fraction on the right hand side is an algebraic number, by the definition of the periods. Moreover, by making different choices for the N{\'e}ron differential of the CM elliptic curve, the $\Omega_\infty$ and $\Omega_p$ are changed by multiplying by the same non-zero algebraic number. 
Let the local $\epsilon$-factor at $p$ for a ramified character $\lambda$ of $\mathbb{Q}^\times_p$ with conductor $c$ be defined by 
\begin{equation}
\epsilon_p(\lambda, s)=\int_{c^{-1}\mathbb{Z}_p^\times}\lambda^{-1}(a)|a|^{-s}_pe_p(\mathrm{Tr}a)da.
\end{equation}
Note $1-\frac{\kappa-1}{2}=\frac{3-\kappa}{2}$.
The above interpolation formula can be written in terms of local $\epsilon$-factors by
\begin{equation}\label{equation (91)}
\phi(\frac{\mathcal{L}^\Sigma_{\mathbf{f},\xi,\mathcal{K}}}{\Omega^{2\kappa}_p})=C_{\mathbf{f},\xi,\mathcal{K}}^\Sigma\frac{2\pi i\epsilon_p(\frac{3}{2}-\frac{\kappa}{2},\xi^{-1}_{v_0}\chi_{1,p})\epsilon_p(\frac{3}{2}-\frac{\kappa}{2},\xi^{-1}_{v_0}\chi_{2,p}) L^{\Sigma}(\mathcal{K},\pi_{f_\phi},\bar{\chi}_\phi\xi_\phi,
\frac{\kappa}{2}-\frac{1}{2})(\kappa-1)!(\kappa-2)!}{\Omega_\infty^{2\kappa}}.
\end{equation}
\end{proposition}

\begin{proof}
Suppose we are under the assumption of Theorem \ref{Theorem 1}. Take $g_0$ to be a point on the Igusa scheme for $\mathrm{GU}(2)$ defined over $\hat{\mathcal{O}}_L^{ur}$ such that $\mathbf{f}(g_0)$ is nonzero in $\hat{\mathbb{I}}^{ur}$ and take a $h_0\in \mathrm{GU}(2)(\mathbb{A}_\mathbb{Q})$ such that $\mu(g_0)=\mu(h_0)$. It is noted in \cite[Section 2.8]{Hsieh CM} that
$$I_{\mathrm{GU}(2)}(K_1^n)(\hat{\mathcal{O}}_L^{ur})=\mathrm{GU}(2)(\mathbb{Q})^+\backslash\mathrm{GU}(2)
(\mathbb{A}_f)/K_1^n.$$
In the following we write $\hat{F}$ for the $p$-adic modular form associated to a form $F$.
We define $\mathcal{L}^\Sigma_{\mathbf{f},\xi,\mathcal{K}}$ such that
\begin{equation*}
\mathcal{L}^\Sigma_{\mathbf{f},\xi,\mathcal{K}}=\mathbf{B}\langle e^{\mathrm{U}(2)}_{ord}\hat{\mathbf{E}}'_{\mathbf{D},sieg}(\underline{A}_{g_0},-)\boldsymbol{\bar{\tau}}\circ\det(-),\pi(h_0)\mathbf{f}\rangle/\mathbf{f}(g_0)
\end{equation*}
where $\underline{A}$ is the quintuple associated to $g_0$, and we regard $\hat{\mathbf{E}}'_{\mathbf{D},sieg}(-,-)$ as a measure of forms on $I_{\mathrm{U}(2,0)}(K^{(2,0)})\times I_{\mathrm{U}(2)}(K^{(2,0)})$ under the embedding $i$ as in \ref{Em}. The $e^{\mathrm{U}(2)}_{ord}$ means applying the ordinary projector to the $\mathrm{U}(2)$-factor. Therefore for any generic arithmetic point $\phi$ of conductor $p^t$,
$$\mathrm{Vol}(K^\infty)^{-1}f_\phi(g_0)^{-1}\sum_{\underline{B}\in I_{\mathrm{U}(2)}(K^{(2,0)}K_0(p^t))}\hat{E}'_{\mathcal{D}_\phi,sieg}(\underline{A}_{g_0},\underline{B})
\bar{\tau}_\phi\circ\det(\underline{B})\times\pi(h_0)
\hat{f}_\phi(\underline{B}).$$
The $\mathbf{B}\langle,\rangle$ is in terms of Definition \ref{define measure}. Let the character $\boldsymbol{\tau}=\boldsymbol{\psi}/\boldsymbol{\xi}$. The function $\boldsymbol{\tau}(\det g_2)$ means the function taking value $\boldsymbol{\tau}(\det g_2)$ at the point $(g_1,g_2)$ in the above set. The integration is in the sense of subsection \ref{inner} with respect to the level group $h^{-1}_0(K_\mathcal{D}\cap(1\times \mathrm{GU}(2)(\mathbb{A}_f))h_0$ (in fact by pullback we get a measure of forms on the $h_0$-Igusa schemes, see the last part of subsection \ref{2.4}).
This $\mathcal{L}_{\mathbf{f},\xi,\mathcal{K}}^\Sigma$ satisfies the proposition (by Lemmas \ref{Apullback}, \ref{Upullback}, \ref{Lpullback}, \ref{Ppullback}).

This construction only implies the $p$-adic $L$-function is in $\hat{\mathbb{I}}^{ur}[[\Gamma_\mathcal{K}]]\otimes\mathrm{Frac}(\mathbb{I})$. To see the integrality, we take different choices for $g_0$ and note that by our choices for $\mathbf{f}$, its values have no non-trivial common divisors in $\mathbb{I}$.

If we are under assumption of Theorem \ref{Theorem 2} then we just pick up a $g_0$ such that $\mathbf{f}$ has non-zero specialization at $g_0$. Note that the period factors $\Omega_\infty^{2\kappa}$ and $\Omega_p^{2\kappa}$ come from the pullback as discussed in \cite[Section 2.8, Subsection 5.6.5]{Hsieh CM}.
\end{proof}

\begin{definition}\label{Hida $p$-adic $L$-functions}
Now we define Hida's $p$-adic $L$-function $\mathcal{L}_{\mathbf{f},\xi,\mathcal{K}}^{Hida}\in\Lambda_\mathbf{D}$ \index{$\mathcal{L}_{\mathbf{f},\xi,\mathcal{K}}^{Hida}$}. As in the main theorems we assume the $\xi$ has split conductor.

We consider the Hida family of ordinary $CM$ eigenforms $g_{\boldsymbol{\xi}}$ associated to $\xi$ (for simplicity we consider here ordinary forms by twisting the adelic nearly ordinary form corresponding to $\xi_\phi$'s by the unique Dirichlet character of $p$-power conductor, which makes it an ordinary form, i.e. being invariant under the right action of $\begin{pmatrix}\mathbb{Z}^\times_p&\\&1\end{pmatrix}$).
For our purpose we further twist the automorphic representation of $g_{\boldsymbol{\xi}}$ by a choice of a fixed finite order Dirichlet character $\chi^{\mathrm{tw}}_{g_{\boldsymbol{\xi}}}$, unramified outside the prime-to-$p$ places where $\xi$ is ramified, such that at each bad prime $\ell\not=p$, the resulting automorphic representation at $\ell$ is minimal in the sense of \cite[Section 7]{HT93} (namely it is principal series induced from two characters in which at least one is unramified). The reason is to ensure that we can compare the Petersson inner product of $g_\phi$'s with certain Katz $p$-adic $L$-functions without different Euler factors. (Alternatively we can also multiply the $\xi$ by $\chi^{\mathrm{tw}}_{g_{\boldsymbol{\xi}}}\circ\mathrm{Nm}$ and construct the primitive ordinary Hida family associated to this product character $\xi'=\xi\cdot \chi^{\mathrm{tw}}_{g_{\boldsymbol{\xi}}}\circ\mathrm{Nm}$). We denote the resulting primitive Hida family as $g_{\boldsymbol{\xi}'}$. Note that 
$$\xi\xi^{-c}=\xi'\xi^{\prime-c}.$$

We consider the $p$-adic $L$-function $\mathscr{D}$ constructed in \cite[Theorem I]{Hida91} choosing $\mathbf{f}$ there to be the $g_{\boldsymbol{\xi}'}$  and $\mathbf{g}^\rho$ there to be the ordinary eigenforms of our $\mathbf{f}$, twisted by the Dirichlet character $\chi^{\mathrm{tw}}_{g_{\boldsymbol{\xi}}}$ above. 

Instead of the CM period $\Omega_\infty$, the period factor in Hida's construction is the Petersson period (see \cite[Theorem I, Lemma 5.3 (vi)]{Hida91})
$$W'(g_\phi)^{-1}\langle g_\phi, g_\phi|_{\kappa+1}\begin{pmatrix}&-1\\N_{g_\phi}&\end{pmatrix}\rangle\eta_{g_\phi}(p^{-t_\phi}),$$
where $g_\phi$ is a specialization of $g_{\boldsymbol{\xi}}$ of weight $\kappa+1$, $N_{g_\phi}$ is the conductor of $g_\phi$, the $p^{t_\phi}$ is the $p$-part of its conductor, the $p$-component of the automorphic representation associated to the unitarization of $g_\phi$ (in the sense of \cite[Introduction]{Hida91}) is $\pi(\eta_{g_\phi},\eta'_{g_\phi})$ with $\mathrm{ord}_p\eta_{g_\phi}(p)<\mathrm{ord}_p\eta'_{g_\phi}(p)$.  Note that the $S(P)^{-1}$ and the Gauss sum in the denominator of the second row of the definition of $W(P,Q)$ in \cite[Theorem I]{Hida91} are included as part of the period (see Lemma 5.3 (vi) of \emph{loc.cit.}).   
The above expression equals (by \cite[Theorem 7.1, (8.8b)]{HT93} and \cite[Lemma 5.3 (vi)]{Hida91})
\begin{align*}
&&&\langle g_\phi, g_\phi\rangle\cdot \eta'_{g_\phi}\eta^{-1}_{g_\phi}(p^{t_\phi})\mathfrak{g}(\eta^{\prime -1}_{g_\phi}\eta_{g_\phi})\cdot p^{-t_\phi}&\\&=&&\mathfrak{L}(\mathrm{ad}, g_\phi,1)\eta'_{g_\phi}\eta_{g_\phi}^{-1}(p^{t_\phi})\mathfrak{g}(\eta^{\prime -1}_{g_\phi}\eta_{g_\phi})\kappa!2^{-2\kappa-1}\pi^{-\kappa-2}&
\end{align*}
where the $\mathfrak{L}(\mathrm{ad}, -)$ is in the sense of \cite[Section 7]{HT93}. By our assumption on that $\xi$ has split conductor, we see that the $\Delta(1)$ in 
\cite[(0.7a)]{HT93} equals $1$. So 
$$\mathfrak{L}(\mathrm{ad}, g_\phi,1)=L(1,\chi_{\mathcal{K}})L(\xi_\phi\xi^{-c}_\phi, 1).$$
On the other hand, there is a Katz $p$-adic $L$-function (see \cite[(8.2)]{HT93} for this Katz measure, we remove the factor $\mathrm{Im}(\delta)^{\kappa-1}$ there since it is a unit Iwasawa element and has no effect to us.)
$\mathcal{L}^{Katz}_{\mathcal{K},\xi}\in\hat{\mathcal{O}}^{ur}_L[[\Gamma_\mathcal{K}]]$  interpolating the values
$$\eta'_{g_\phi}\eta^{-1}_{g_\phi}(p^{t_\phi})\frac{\pi^{\kappa-1}\kappa!L(\xi_\phi\xi_\phi^{-c},1)\cdot\mathfrak{g}(\eta^{\prime -1}_{g_\phi}\eta_{g_\phi})}{(2i)^{2\kappa}\Omega_\infty^{2\kappa}}\Omega_p^{2\kappa}$$

Let $\mathrm{Cl}_\mathcal{K}$ be the class number of $\mathcal{K}$. By class number formula 
$$\frac{L(1,\chi_\mathcal{K})}{\pi}=\mathrm{Cl}_\mathcal{K}\cdot D^{-\frac{1}{2}}_\mathcal{K}.$$
We multiply Hida's $p$-adic $L$-function $\mathscr{D}$ by $2i^{2\kappa}
\mathrm{Cl}_\mathcal{K}D^{-\frac{1}{2}}_\mathcal{K}\cdot\mathcal{L}^{Katz}_{\mathcal{K},\xi}$, and further divide it by the first row in the definition of $W(P,Q)$ of \cite[Theorem I]{Hida91}. Note that this last factor is a unit Iwasawa element.
We denote this result as $\mathcal{L}^{Hida}_{\mathbf{f},\xi,\mathcal{K}}$. One checks readily the interpolation formula for it is
\begin{equation}\label{equation (9)}
\phi(\frac{\mathcal{L}_{\mathbf{f},\xi,\mathcal{K}}^{Hida}}{\Omega_p^{2\kappa}})=\frac{2\pi i\epsilon_p(\frac{3}{2}-\frac{\kappa}{2},\xi^{-1}_{v_0}\chi_{1,p})\epsilon_p(\frac{3}{2}-\frac{\kappa}{2},\xi^{-1}_{v_0}\chi_{2,p})L(\mathcal{K},\pi_{f_\phi},\bar{\chi}_\phi\xi_\phi,
\frac{\kappa}{2}-\frac{1}{2})(\kappa-1)!(\kappa-2)!}{\Omega_\infty^{2\kappa}}.
\end{equation}
(Recall we have divided out the product of prime to $p$ root numbers in \emph{loc.cit.} which are $p$-units and moves $p$-adic analytically.)
If $\xi$ is such that $g_\xi$ satisfies the (dist) and (irred) in the introduction, then the local Hecke algebra for $g_{\boldsymbol{\xi}}$ is Gorenstein. By the main conjecture proved in \cite{HT93}, \cite{HT94} and \cite{Hida06} (see \cite[Theorem and Page 468, (F)]{Hida06}, the $\mathrm{Cl}_\mathcal{K}\cdot\mathcal{L}^{Katz}_{\mathcal{K},\xi}$ generates the congruence module for $\mathbf{g}$ and our $\mathcal{L}^{Hida}_{\mathbf{f},\xi,\mathcal{K}}$ is integral (i.e. in $\Lambda_{\mathbf{D}}$).) We explain here the weight map parameterizing the Hida family $\mathbf{g}$ with coefficient ring $\mathcal{O}_L[[\Gamma^-_\mathcal{K}]]$. Let $\mathcal{K}^-$ be the anticyclotomic $\mathbb{Z}_p$ extension of $\mathcal{K}$ and $\mathcal{K}^{\mathrm{ur}}$ be the maximal subextension of $\mathcal{K}^-/\mathcal{K}$ unramified everywhere. Let $p^a$ be the index of $\mathcal{K}^{\mathrm{ur}}/\mathcal{K}$ and $\Gamma^{-,\prime}_\mathcal{K}$ be the corresponding subgroup of $\Gamma^-_\mathcal{K}$. Then local class field theory gives an isomorphism $$\sigma_{v_0}: (1+p\mathbb{Z}_p)^\times\simeq \Gamma^{-,\prime}_\mathcal{K}.$$ Write $\mathcal{O}_L[[W]]$ for the weight algebra of the Hida family $\mathbf{g}$, then the weight map is
$$\mathcal{O}_L[[W]]\simeq \mathcal{O}_L[[\Gamma^{-,\prime}_\mathcal{K}]]\hookrightarrow \mathcal{O}_L[[\Gamma^-_\mathcal{K}]],$$
where the first map is determined by
$$(1+W)\mapsto (1+p)^{-1}\sigma_{\bar{v}_0}(1+p),$$
and the last term is a finite free module over the second term.
\end{definition}

Given a finite set of primes $\Sigma$ we can define the $\Sigma$-primitive Hida $p$-adic $L$-function $\mathcal{L}^{\Sigma,Hida}_{\mathbf{f},\mathcal{K},\xi}$ by removing local Euler factors at $\Sigma$. Obviously, it is just multiplying $\mathcal{L}^{Hida}_{\mathbf{f},\mathcal{K},\xi}$ by a finite number of nonzero elements in $\mathbb{I}[[\Gamma_\mathcal{K}]]$. Note that Hida proved the interpolation formula for general arithmetic points. We may compare (\ref{equation (8)}) and (\ref{equation (9)}). If we write $\mathcal{L}_{f_0,\xi,\mathcal{K}}^\Sigma$ \index{$\mathcal{L}_{f_0,\xi,\mathcal{K}}^\Sigma$} for the specialization of $\mathcal{L}_{\mathbf{f},\xi,\mathcal{K}}^\Sigma$ to $\hat{\mathcal{O}}^{ur}_L[[\Gamma_\mathcal{K}]]$ at $f_0$, then we get the interpolation formula
\begin{equation}\label{equation (10)}
\phi(\frac{\mathcal{L}^\Sigma_{f_0,\xi,\mathcal{K}}}{\Omega_p^{2\kappa}})=C_{\mathbf{f},\xi,\mathcal{K}}^\Sigma\frac{2\pi i\epsilon_p(\frac{3}{2}-\frac{\kappa}{2},\xi^{-1}_{v_0}\chi_{1,p})\epsilon_p(\frac{3}{2}-\frac{\kappa}{2},\xi^{-1}_{v_0}\chi_{2,p})L^{\Sigma}(\mathcal{K},\pi_{f_0},
\xi_\phi,\frac{\kappa}{2}-\frac{1}{2})(\kappa-1)!(\kappa-2)!}{\Omega_\infty^{2\kappa}}.
\end{equation}
for $\xi_\phi$'s of conductor $(p^t,p^t)$ at $p$.\\

\noindent\underline{Anti-cyclotomic $\mu$-invariants}:\\
\noindent Now assume we are under assumption of Theorem \ref{Theorem 1} in the introduction. We define $\phi_0$ to be the $\bar{\mathbb{Q}}_p$-point in $\mathrm{Spec}\mathbb{I}[[\Gamma_\mathcal{K}]]$ sending $\gamma^{\pm}$ to $1$ and such that $\phi_0|_{\hat{\mathbb{I}}^{ur}}$ correspond to $f_0$. Our assumptions on $\xi$ and $\kappa$ ensure that our $p$-adic families pass through this point. (This is not an arithmetic point in Definition \ref{Eisens}, however it still interpolates the algebraic part of the special $L$-value by \cite{Hida91}.) Consider the one-dimensional subspace of $\mathrm{Spec}\hat{\mathbb{I}}^{ur}[[\Gamma_\mathcal{K}]]$ of anti-cyclotomic twists by characters of order and conductor powers of $p$ that passes through $\phi_0$. We look at the ratio between the specialization of Hida's $p$-adic $L$-function $\mathcal{L}^{Hida}_{\mathbf{f},\xi,\mathcal{K}}$ to this subspace and the anti-cyclotomic $p$-adic $L$-function considered by \cite{Hsi12} (note that the local sign assumptions there are satisfied). We explain the fudge factors: recall the $S(P)^{-1}$ and the Gauss sum in the denominator of the definition of $W(P,Q)$ of \cite[Theorem I]{Hida91} are already included in the period studied above. Also the first row of the definition of $W(P,Q)$ are also divided out in our definition for $\mathcal{L}^{Hida}_{\mathbf{f},\xi,\mathcal{K}}$. The remaining factors are: the $C(\pi,\lambda)$ and $\mathfrak{F}$ in \cite{Hsi12} which is a \emph{fixed} $p$-adic unit, and the powers of $\mathrm{Im}(\delta)$ and $2$ which are unit Iwasawa elements. So by result proved in \cite{Hsi12} the anti-cyclotomic $p$-adic $L$-function has $\mu$-invariant $0$. Thus it is easy to see that any height one prime $P$ of $\hat{\mathbb{I}}^{ur}[[\Gamma_\mathcal{K}]]$ containing $\mathcal{L}^{Hida}_{\mathbf{f},\mathcal{K},\xi}$ can not be the pullback of a height $1$ prime of $\hat{\mathbb{I}}^{ur}[[\Gamma_\mathcal{K}^+]]$. Therefore for any height $1$ prime containing $\mathcal{L}^{Hida}_{\mathbf{f},\mathcal{K},\xi}$,
$$\mathrm{ord}_P(\mathcal{L}^{Hida}_{\mathbf{f},\mathcal{K},\xi})=\mathrm{ord}_P(\mathcal{L}
_{\mathbf{f},\xi,\mathcal{K}})$$
and $\mathrm{ord}_P(\mathcal{L}_{\bar{\chi}_\mathbf{f}\xi})=0$. The $\mathcal{L}_{\mathbf{f},\xi,\mathcal{K}}$ is obtained by putting back the Euler factors at primes in $\Sigma$ on $\mathcal{L}^\Sigma_{\mathbf{f},\xi,\mathcal{K}}$. (There might be factors coming from Euler factors at non-split primes contributing to the anticyclotomic $\mu$-invariant of the $\Sigma$-primitive $p$-adic $L$-functions, however. We will explain how to treat those factors when proving our main theorem.)

\subsection{$p$-adic Eisenstein Series}
\begin{proposition}\label{Proposition 7.7}
There is a $\hat{\mathbb{I}}^{ur}[[\Gamma_\mathcal{K}]]$-adic formal Fourier-Jacobi expansion
$$\mathbf{E}_{\mathbf{D},Kling}\in \mathcal{M}_{\mathrm{ord}}(K,\Lambda_\mathbf{D})$$ \index{$\mathbf{E}_{\mathbf{D},Kling}$} such that for each generic arithmetic point $\phi\in\mathrm{Spec}\hat{\mathbb{I}}^{ur}[[\Gamma_\mathcal{K}]]$, the specialization $\mathbf{E}_{\mathbf{D},Kling,\phi}$ is the Fourier-Jacobi expansion of the nearly ordinary Klingen-Eisenstein series $E_{Kling,\mathbf{D}_\phi}$ we constructed in (\ref{DefineKlingen}) using the Eisenstein datum at $\phi$. Moreover, recall the fundamental exact sequence in Theorem \ref{exact} and the Siegel operator $\hat{\Phi}^h_{[g]}$ there, then the constant term $\hat{\Phi}^{w'_3}_{[g]}(\mathbf{E}_{Kling,\mathbf{D}})$'s are divisible by $\mathcal{L}^\Sigma_{\mathbf{f},\xi,\mathcal{K}}.\mathcal{L}^\Sigma_{\bar{\chi}\xi'}$, where $\mathcal{L}^\Sigma_{\bar{\chi}\xi'}$ is the element in $\mathbb{I}[[\Gamma_\mathcal{K}^+]]$ which is the Dirichlet $p$-adic $L$-function interpolating the algebraic part of the special values $L^\Sigma(\bar{\chi}_\phi\xi'_\phi,\kappa_\phi-2)$.
\end{proposition}
\begin{proof}
Our construction is more similar to \cite[Theorem 4.4]{Hsieh13} than to \cite[Theorem 12.11]{SU}. Recall Definition \ref{definition 3.7} the notion of $\hat{\mathbb{I}}^{ur}[[\Gamma_\mathcal{K}]]$-adic Fourier-Jacobi expansion. It is a special case of \cite[Theorem 1.1 (3)]{WAN}. In our cases, the local choices are slightly different but the arguments are the same, which we give below. For a $[g]$ we take a basis $(\theta_{1,\beta}',\cdots, \theta_{m_\beta,\beta}')$ of the $\mathcal{O}_L$-dual space of $H^0(\mathcal{Z}^\circ_{[g]}, \mathcal{L}(\beta))$ consisting of theta functions. Write $(\theta_{1,\beta}'',\cdots, \theta_{m_\beta,\beta}'')$ for the dual basis. Suppose $\theta$ is one of the $\theta_i'$s. For any $g\in \mathrm{GU}(2)(\mathbb{A}_\mathbb{Q})\subset \mathrm{GU}(3,1)(\mathbb{A}_\mathbb{Q})$ we take $h\in \mathrm{GU}(2)(\mathbb{A}_\mathbb{Q})$ such that $\mu(g)=\mu(h)$. Recall we have denoted the $\beta$-Fourier-Jacobi coefficient $a_{[g]}^h(\beta, F)$ for forms on $\mathrm{GU}(3,1)$. We write $a_{[g],\theta}^h(\beta, F)$ for the pairing of it with $\theta$. We define
\begin{align}\label{666}
&a^1_{[g],\theta}(\beta, \mathbf{E}_{\mathbf{D}, Kling})=\mathbf{B}_{K^{(2,0)}}\langle e^{\mathrm{U}(2)}_{ord}a^1_{[g],\theta}(\beta, \hat{\mathbf{E}}_{\mathbf{D},sieg}(-,-))\bar{\boldsymbol{\tau}}\circ\det(-),\pi(h)\mathbf{f}\rangle&
\end{align}
As before the $e_{ord}^{\mathrm{U}(2)}$ means applying the ordinary projector for the $\mathrm{U}(2)$ factor. We regard $\hat{\mathbf{E}}_{\mathbf{D},sieg}(-,-)$ as evaluated on $(\underline{A},\underline{B})\in I_{\mathrm{U}(3,1)}(K^{(3,1)})\times I_{\mathrm{U}(2)}(K^{(2,0)})$ under the embedding \ref{Em}. The $\bar{\boldsymbol{\tau}}(\det-)$ is regarded as a function for the $\mathrm{U}(2)$ factor. In view of the algebraic embedding of Igusa schemes, the pullback of the Siegel-Eisenstein measure gives a measure with values in the space of $p$-adic automorphic forms on the group $\{g,h\in\mathrm{GU}(3,1)\otimes\mathrm{GU}(2), \det g=\det h\}$. Fix the $g$, applying the $\beta$-th Fourier-Jacobi coefficient operator to the $\mathrm{GU}(3,1)$-factor and take the $\theta$-component we get a $\hat{\mathbb{I}}^{ur}[[\Gamma_\mathcal{K}]]$-valued family of forms on (the lower) $\mathrm{GU(2)}$ in the sense of Definition \ref{def D}, which is the integrand of (\ref{666}). Then we form the pairing $\langle,\rangle$ of (\ref{666}) in the sense of subsection \ref{IPIP} with respect to the level group $h^{-1}(K_\mathcal{D}\cap(1\times \mathrm{GU}(2)(\mathbb{A}_f))h$ (In fact, by pullback, we get a measure of forms on the $h$-Igusa schemes). We obtain the family of Fourier-Jacobi expansions. It is clear from the construction that this interpolates the Fourier-Jacobi expansions of the ordinary Klingen-Eisenstein series we constructed at arithmetic points (see (\ref{DefineKlingen})). We get the Fourier-Jacobi expansion at $g$ as in Definition \ref{definition 3.7}
$$\mathbf{E}_{\mathbf{D},Kling}(g)=\sum_\beta a^1_{[g]}(\beta, E_{\mathbf{D},Kling})q^\beta $$
where $$a^1_{[g]}(\beta, \mathbf{E}_{\mathbf{D},Kling})=\sum_ia^1_{[g],\theta'_{i,\beta}}(\beta,\mathbf{E}_{\mathbf{D},Kling})\otimes
\theta''_{i,\beta}$$ with
$\theta''_{i,\beta}\in H^0(\mathcal{Z}^\circ_{[g]}, \mathcal{L}(\beta))$. At a generic arithmetic point $\phi$ of conductor $p^t$ we have
$$a^1_{[g],\theta}(\beta, E_{\mathcal{D}_\phi,Kling})=\sum_{\underline{B}\in I_{\mathrm{U}(2)}(K^{(2,0)}K_0(p^t))}a^1_{[g],\theta}(\beta,\hat{E}_{\mathcal{D}_\phi,sieg}(-,\underline{B}))\cdot\bar{\tau}_\phi\circ\det(\underline{B})\times\pi(h)\varphi_\phi
(\underline{B})\rangle.$$

Next we explain the assertion on constant terms. The constant terms is simply interpolating the $\beta$-th Fourier-Jacobi coefficient for $\beta=0$ (i.e. the Siegel operator $\Phi_{P,g}(\mathbf{E}_{Kling,\mathbf{D},\phi})$). Let's consider the case when $g_v$'s are in the support of $F_{Kling,v}$ for $v\nmid p$, and $g_p=\omega'_3$. We claim that
$$\Phi^{w'_3}_{[g]}(\mathbf{E}_{Kling,\mathbf{D}})=C_\mathbf{D}
\mathcal{L}^\Sigma_{\mathbf{f},\xi,\mathcal{K}}\cdot\mathcal{L}^\Sigma_{\bar{\xi}\xi'}\mathbf{f}$$
for $C_\mathbf{D}$ being the product of the constants in the local pullback sections at primes outside $p$. It is a fixed non-zero number throughout the family.

To see the claim, specializing to an arithmetic point $\phi$, this is simply the constant term computation in Section \ref{constantterms} for $R$ being the Klingen parabolic subgroup $P$. This constant term is given by Lemma \ref{CON}. On the other hand, from the Archimedean computation in \cite[Corollary 5.11]{WAN} we see the contribution $A(\rho,f,z)_{-z}$ is actually $0$ in our case. So we only need to work out the pullback Klingen-Eisenstein
section. Now it is an easy consequence of our computations in Section 6 of local pullback integrals (Lemmas \ref{Apullback}, \ref{Upullback}, \ref{Lpullback}, \ref{Ppullback}), together with the normalization factors in Section \ref{GC}.
\end{proof}
It follows that the formal Fourier-Jacobi expansion $\mathbf{E}_{\mathbf{D},Kling}$ comes from a family in $\mathcal{M}^{ord}(K^p, \Lambda_\mathbf{D})$, which we still denote as $\mathbf{E}_{\mathbf{D}, Kling}$. (In fact, Theorem \ref{4.25} is still true after replacing $A=\mathbb{I}[[\Gamma_\mathcal{K}]]$ by $\hat{\mathbb{I}}^{ur}[[\Gamma_\mathcal{K}]]$.)

\section{$p$-adic Properties of Fourier-Jacobi Coefficients}\label{8}
\noindent\underline{Notation}: to avoid confusion in this section, we use $\mathsf{z}\in\mathrm{Spec}\mathbb{I}[[\Gamma_\mathcal{K}]](\bar{\mathbb{Q}}_p)$ instead of $\phi$ to denote arithmetic points on the weight space. The $\phi$ will usually denote Schwartz functions in theta correspondences. Such convention is only valid in this section.

The purpose of this section is to prove Proposition \ref{p-adic}.

\subsection{Preliminaries}\label{D/U}
\noindent\underline{Some Local Representation Theories}\\
(Non-compact case): Let $v$ be a non-split prime where $\mathrm{U}(2)(\mathbb{Q}_v)\simeq \mathrm{U}(1,1)(\mathbb{Q}_v)$. Then $D_v^\times\simeq \mathrm{GL}_2(\mathbb{Q}_v)$. For some irreducible admissible representation $\pi^{\mathrm{U}(2)_v}$ of $\mathrm{U}(2)_v$ we can find an irreducible representation $\pi^{\mathrm{GU}(2)_v}$ of $\mathrm{GU}(2)_v$ such that $\pi^{\mathrm{U}(2)_v}$ is a summand of $\pi^{\mathrm{GU}(2)_v}$ restricting to $\mathrm{U}(2)_v$ (note here the superscripts do not mean invariant subspaces). Thus we have:
$$\pi^{\mathrm{GU}(2)_v}|_{\mathrm{U}(2)_v}=\pi^{\mathrm{U}(2)_v}\oplus {}^\alpha\!\pi^{\mathrm{U}(2)_v}\ \mbox{or}\ \pi^{\mathrm{U}(2)_v}$$
for irreducible representations $\pi^{\mathrm{U}(2)(\mathbb{Q}_v)}$ of $\mathrm{U}(2)(\mathbb{Q}_v)$. Here $\alpha$ is some element such that $\mathrm{Nm}(\alpha)\not\in \mathrm{Nm}(\mathcal{K}_v/\mathbb{Q}_v)$.  The ${}^\alpha\!$ means the representation composed with the automorphism given by conjugation by $\alpha$.
Also the restriction of $\pi^{\mathrm{GU}(2)_v}$ to $D_v^\times$ is clearly irreducible.\\

\noindent (Compact Case) If $D_v^\times$ modulo center is compact, then we let $\alpha$ be some element such that $\mathrm{Nm}(\alpha)\not\in \mathrm{Nm}(\mathcal{K}_v/\mathbb{Q}_v)$. For $\pi^{\mathrm{U}(2)_v}$ we similarly have $\pi^{\mathrm{GU}(2)_v}$, $\pi^{D_v^\times}$. These can all be considered as finite-dimensional representations of finite groups. The $\pi^{\mathrm{GU}(2)_v}=\pi^{D_v^\times}$ as vector spaces and $\pi^{\mathrm{GU}(2)_v}|_{\mathrm{U}(2)_v}=\pi^{\mathrm{U}(2)_v}$ or $\pi^{\mathrm{U}(2)_v}\oplus {}^\alpha\!\pi^{\mathrm{U}(2)_v}$.\\

In both cases, we write $\iota_\alpha$ for the isomorphism between $\pi^{\mathrm{U}(2)_v}$ and $ {}^\alpha\!\pi^{\mathrm{U}(2)_v}$ given by right action by $\alpha$ (as vector spaces, the group actions may differ by a conjugation however.)

\noindent\underline{Forms on $D^\times$ and $\mathrm{U}(2)$}\\
\noindent We first define $\breve{D}^\times\subset D^\times(\mathbb{A}_\mathbb{Q})$ as the index $2$ subgroup consisting of elements whose reduced norms are in $\mathbb{Q}^\times \mathrm{Nm}(\mathbb{A}_\mathcal{K}^\times)$ and let $\breve{D}^\times(\mathbb{Q}_v)$ be the set of elements whose determinants are in $\mathrm{Nm}(\mathcal{K}^\times_v)$. Suppose $\varphi$ is a form on $\mathrm{U}(2)(\mathbb{Q})\backslash \mathrm{U}(2)(\mathbb{A}_\mathbb{Q})$, $\chi$ is a Hecke character of $\mathcal{K}^\times\backslash \mathbb{A}_\mathcal{K}^\times$. Suppose the central action of $\mathrm{U}(1)(\mathbb{Z}_p)$ on $\varphi$ is given by $\chi|_{\mathrm{U}(1)(\mathbb{Z}_p)}$, we can define a form $\varphi^D\boxtimes\chi$ on $D^\times(\mathbb{A}_\mathbb{Q})$ as follows. We first define $\varphi_\chi'$ on $\mathrm{U}(2)(\mathbb{A}_\mathbb{Q})$ as
$$\varphi'_\chi(g):=\int_{[U(1)]}\varphi(gt)\chi^{-1}(t)dt.$$
We now define a form on $D^\times(\mathbb{A}_\mathbb{Q})$. Recall that the image of reduced norm map from $D^\times(\mathbb{Q})$ consists of all positive elements $\mathbb{Q}^+$ in $\mathbb{Q}^\times$ (\cite[Page 206]{Weil}). Note that
$$\mathbb{A}^{\times,+}_\mathbb{Q}/\mathbb{Q}^+\mathrm{Nm}(\mathbb{A}^\times_\mathcal{K})\rightarrow \mathbb{A}^\times_\mathbb{Q}/\mathbb{Q}^\times\mathrm{Nm}(\mathbb{A}^\times_\mathcal{K})$$
is an isomorphism, where $\mathbb{A}^+_\mathbb{Q}$ is the set of ideles with positive Archimedean component, which is also the image of the reduced norm map from $D^\times(\mathbb{A}_\mathbb{Q})$.
Thus for $g \in \breve D^\times$, write $g=bag'$, $b \in D^\times(\mathbb{Q})$, $a\in \mathbb{A}_\mathcal{K}^\times, g'\in \mathrm{U}(2)(\mathbb{A}_\mathbb{Q})$, define
$$\varphi^D\boxtimes\chi(g)=\varphi_\chi(g)\chi(a).$$
Note that this is well defined since $\mathbb{Q}^\times\cap \mathrm{Nm}(\mathbb{A}_\mathcal{K}^\times/\mathbb{A}_\mathbb{Q}^\times)=\mathrm{Nm}(\mathcal{K}^\times/\mathbb{Q}^\times)$. For $g$ outside $\breve{D}^\times$ we define $\varphi^D\boxtimes\chi(g)=0$. When $\chi$ is clear from the context we simply drop the subscript $\chi$.\\

\begin{lemma}
Let $\pi_\xi$ be the irreducible automorphic representation of $\mathrm{GL}_2/\mathbb{Q}$ associated to a CM character $\xi$ of $\mathcal{K}^\times\backslash\mathbb{A}^\times_\mathcal{K}$. If $\varphi_\xi$ is in an irreducible automorphic representation of $\mathrm{U}(2)$ whose restriction to $\mathrm{SU}(2)$ is in the restriction of the automorphic representation of $D^\times(\mathbb{A}_\mathbb{Q})$ corresponding to $\pi_\xi$ under the Jacquet-Langlands correspondence, then $\varphi^D_\xi\boxtimes\xi$ itself is in $\pi_\xi$.
\end{lemma}
\begin{proof}
Clearly the $\varphi_\xi^D$ and $\pi_\xi$ have the same Hecke eigenvalues at split $v$'s. Note that the set of primes of $\mathcal{K}$ sitting over split primes of $\mathcal{K}/\mathbb{Q}$ has Dirichlet density one. Write $\varphi^D_\xi$ as a sum of forms in irreducible automorphic representations. Then for any such automorphic representation $\pi_i$, the corresponding Galois representation $\rho_{\pi_i}$ satisfies
$$\rho_{\pi_i}|_{G_\mathcal{K}}\simeq \xi\oplus\xi^c.$$
This implies each $\pi_i$ is isomorphic to $\pi_\xi$.
\end{proof}

\noindent We relate the integrals over $[\mathrm{U}(2)]$ to that over a subset of $[\mathbb{Q}^\times\backslash D^\times](\mathbb{A}_\mathbb{Q})$. It is elementary to check that there is a constant $C^D_{\mathrm{U}(2)}$ \index{$C^D_{\mathrm{U}(2)}$} depending only on the groups $D^\times $ and $\mathrm{U}(2)$ such that if $\chi=1$ then:
\begin{equation}\label{555}\int_{[\mathrm{U}(2)]}\varphi_{\mathrm{U}(2)}(g)dg=C_{\mathrm{U}(2)}^D
\int_{D^\times(\mathbb{Q})\mathbb{A}_\mathbb{Q}^\times\backslash \breve{D}^\times(\mathbb{A}_\mathbb{Q})}\varphi^D\boxtimes\chi(g)dg.
\end{equation}
Here, we normalize the Haar measure so that the measure $\mathrm{U}(1)\backslash [\mathrm{U}(2)]=1$ and the measure of $[D^\times]$ modulo center is also $1$.

\subsection{Constructing Auxiliary Families of Theta Functions}\label{theta}
\noindent\underline{Convention}\\
\noindent From now on, we usually do the computations at a generic arithmetic point $\mathsf{z}\in\mathrm{Spec}\Lambda_\mathbf{D}(\bar{\mathbb{Q}}_p)$ \index{$\mathsf{z}$}. We usually write bold symbols for $p$-adic families constructed (e.g. $\mathbf{h}$), and write their specializations using non-bold symbols (e.g. $h_\mathsf{z}$ for specializations of $\mathbf{h}$).

In this section we fix finite order CM characters $\eta$ and $\eta'$ of $\mathcal{K}^\times\backslash\mathbb{A}^\times_\mathcal{K}$. This notation is only used in this subsection so as not to confuse with our use of $\eta$ in previous sections.

Before continuing, we need to introduce some more families of characters. Let $\Gamma_{v_0}\simeq \mathbb{Z}_p$ be the quotient of $\Gamma_\mathcal{K}$ corresponding to the maximal subextension of $\mathcal{K}_\infty$ unramified outside $v_0$. Define $\Gamma_{\bar{v}_0}$ similarly. We need to further enlarge our parameter space. We write $u_{v_0}$ and $u_{\bar{v}_0}$ for topological generators of $\mathrm{Gal}(\mathcal{K}_\infty/\mathcal{K}_{v_0})$ and $\mathrm{Gal}(\mathcal{K}_\infty/\mathcal{K}_{\bar{v}_0})$ respectively. Let $\mathcal{K}^{ur}$ be the maximal (finite) everywhere unramified subextension of $\mathcal{K}_\infty/\mathcal{K}$. Then $u_{v_0}$ and $u_{\bar{v}_0}$ generates $\Gamma'_\mathcal{K}:=\mathrm{Gal}(\mathcal{K}_\infty/\mathcal{K}^{ur})$. We define an abstract group $\Gamma''_\mathcal{K}:=\frac{\mathbb{Z}_p}{p^a}u_{v_0}\oplus\frac{\mathbb{Z}_p}{p^a} u_{\bar{v}_0}$ with $p^a=\sharp\mathrm{Gal}(\mathcal{K}^{ur}/\mathcal{K})$. Then we have
$$\Gamma'_\mathcal{K}\subseteq \Gamma_\mathcal{K}\subseteq \Gamma''_\mathcal{K}$$
with each containment of index $p^a$. We consider the natural projection $\Gamma'_\mathcal{K}\rightarrow \mathbb{Z}_p u_{v_0}$, which extends canonically to a surjection $\Gamma_\mathcal{K}\rightarrow \frac{\mathbb{Z}_p}{p^a}u_{v_0}\subseteq \Gamma''_\mathcal{K}$. Recall we have defined a $\Lambda_\mathbf{D}$-adic character $\boldsymbol{\xi}$. Let $\Lambda''_\mathbf{D}:=\Lambda_\mathbf{D}\otimes_{\mathbb{Z}_p[[\Gamma_\mathcal{K}]]}
\mathbb{Z}_p[[\Gamma''_\mathcal{K}]]$ \index{$\Lambda''_\mathbf{D}, \Gamma'_\mathcal{K},\Gamma''_\mathcal{K}$}. This is an enlarged parameter space.
Then we can define a $\Lambda''_\mathbf{D}$-valued character $\boldsymbol{\xi}_2$ as the composition of $\boldsymbol{\xi}$ with the surjection $\Gamma_\mathcal{K}\rightarrow \frac{\mathbb{Z}_p}{p^a}u_{v_0}\subseteq \Gamma''_\mathcal{K}$ above. It is easy to see that at any arithmetic point $\mathsf{z}$ the specialization of $\boldsymbol{\xi}_2$ is unramified at $v_0$, and its restriction at $\mathcal{O}^\times_{\bar{v}_0}$ is the same as that for $\boldsymbol{\xi}_\mathsf{z}$. Using the same construction, we can define a $\Lambda''_\mathbf{D}$-valued character $\boldsymbol{\eta}$ be such that the specialization to $\phi_0$ ($\phi_0$ is defined in Section \ref{6.4}) is $\eta$ defined before and the specialization to each arithmetic point $\mathsf{z}$ satisfies $(\boldsymbol{\eta})_{\mathsf{z},p}|_{(\mathbb{Z}_p^\times, \mathbb{Z}^\times_p)}=(1, \bar{\xi}_{\mathsf{z},v_0}^\dag
|_{\mathbb{Z}_p^\times})$ (recall the definition for $\xi^\dag_1$ in Section \ref{Sectionn}, the triple there is the $p$-component of the specialization $(f_\mathsf{z},\psi_\mathsf{z},\tau_\mathsf{z})$ here).

Similarly starting with the character $\eta'$ before we can define another family of characters of $\mathcal{K}^\times\backslash \mathbb{A}^\times_\mathcal{K}$ with values in the enlarged $\mathbb{I}$ (taking tensor product of the original $\mathbb{I}$ with some degree $p^a$ extension of $\mathcal{O}_L[[W]]$ and take a reduced irreducible component and normalization) upon appropriate choice of identification of $\mathbb{Z}_p[[W]]$ with $\mathbb{Z}_p[[u_{{v}_0}]]$, such that at any arithmetic point $\mathsf{z}$ the specialization is unramified at ${v}_0$ and is equal to $\chi_{f,\mathsf{z}}\tau_{\mathsf{z},\bar{v}_0}\psi^{-1}_{\mathsf{z},\bar{v}_0}$ when restricting to $\mathcal{O}^\times_{{v}_0}\simeq\mathbb{Z}^\times_p$. Then we can define a $\Lambda''_\mathbf{D}$-adic character $\boldsymbol{\eta}''$ such that its specialization to $\phi_0$ is $\eta'$, and that $(\boldsymbol{\eta}'')_{\mathsf{z},\bar{v}_0}|_{\mathbb{Z}_p^\times}=\chi_{f,\mathsf{z}}
\tau_{\mathsf{z},\bar{v}_0}\psi^{-1}_{\mathsf{z},\bar{v}_0}|_{\mathbb{Z}_p^\times}$, $(\boldsymbol{\eta}'')_{\mathsf{z},v_0}|_{\mathbb{Z}_p^\times}=1$. Moreover there is a character $\boldsymbol{\chi}$ of $\mathcal{K}^\times\backslash \mathbb{A}^\times_\mathcal{K}$ which factors through $\Gamma_\mathcal{K}$ (again we use class field theory), such that $\boldsymbol{\chi}_{\mathsf{z},\bar{v}_0}|_{\mathbb{Z}_p^\times}=\chi^{-1}_{f,\mathsf{z}}\psi_{\mathsf{z},\bar{v}_0}|_{\mathbb{Z}_p^\times}$, $\boldsymbol{\chi}_{\mathsf{z},v_0}|_{\mathbb{Z}_p^\times}=\chi^{-1}_{f,\mathsf{z}}\psi_{\mathsf{z},\bar{v}_0}|_{\mathbb{Z}_p^\times}$.
Define $\boldsymbol{\eta}':=\boldsymbol{\eta}''\cdot\boldsymbol{\chi}$.

Note that we have enlarged our parameter space. At the end of Section 9 we will first prove the main theorems for this enlarged Iwasawa algebra and then go back to prove it for the original one.\\

\noindent\underline{Rallis Inner Product Formula}\\
\begin{diagram}
\mathrm{U}(1,1) (\omega_{\lambda^2})& \mathrm{U}(2) (\omega_\lambda)\times \mathrm{U}(2)(\omega_\lambda)\\
\uTo  & \uTo\\
\mathrm{U}(1)(\omega_{\lambda^2})\times \mathrm{U}(1)(\omega_{\lambda^2})&\mathrm{U}(2)(\omega_{\lambda^2})
\end{diagram}

\noindent Recall we fixed a splitting character $\lambda$ of $\mathcal{K}^\times\backslash\mathbb{A}^\times_\mathcal{K}$ of infinity type $(-\frac{1}{2},\frac{1}{2})$ such that its restriction to $\mathbb{A}^\times_\mathbb{Q}$ is $\chi_{\mathcal{K}/\mathbb{Q}}$. We use the background for dual reductive pair, splitting characters and theta correspondences in \cite[Sections 1,2,3]{HKS} freely. We consider the seesaw pair above. The $\mathrm{U}(2)$ above is for the Hermitian matrix $\begin{pmatrix}\mathfrak{s}&\\&1\end{pmatrix}$ and the $\mathrm{U}(1)$'s are for the skew-Hermitian matrices $\delta$ and $-\delta$. The embedding $\mathrm{U}(1)\times \mathrm{U}(1)\hookrightarrow \mathrm{U}(1,1)$ is given by the $i$ defined in the proof of Lemma \ref{5.18}. The splitting characters used are indicated in the brackets beside the groups. We want to consider the component of theta correspondence such that the first $\mathrm{U}(1)$ on the lower left corner acts by $\lambda^2\eta_{\mathsf{z}}$ and the second $\mathrm{U}(1)$ acts by $\eta_{\mathsf{z}}^{-1}$. We consider a theta function on $\mathrm{U}(2,2)$ by some Schwartz function $\phi$ such that $\phi=\delta_\psi(\phi_3\boxtimes\phi_2)$ for some $\phi_3$ and $\phi_2$ (recall the notion of intertwining operators in Subsection \ref{3.6}. We consider
\begin{equation}\label{Siegel Weil}\int_{[\mathrm{U}(2)]}\int_{[\mathrm{U}(1)]\times [\mathrm{U}(1)]}\theta_\phi(u_1,u_2,g)\lambda^{-2}\eta_{\mathsf{z}}^{-1}(u_1)\eta_{\mathsf{z}}(u_2)\bar\lambda(\det g)du_1du_2dg.
\end{equation}
The splitting characters are consequences of \cite[Lemma A.7]{HKS} and the discussion on ``reflection principle'' right after it. Here the $\bar{\lambda}(\det g)$ shows up due to the splitting $\omega_{\lambda^2}$ on $\mathrm{U}(2)$ since in the Siegel-Weil formula (\cite{Ichi2}), the splitting character on $\mathrm{U}(2)$ is the trivial character. Thus the restriction of the theta kernel to $\mathrm{U}(1,1)\times\mathrm{U}(2)$ is $\lambda(\det g)$ times the restriction to it of the theta kernel appearing in the Siegel-Weil formula of \cite{Ichi2}. So this is exactly the same formula considered in \cite[Section 6]{HLS}. 

On one hand, one can check that this is nothing but the inner product of the theta liftings $\theta_{\phi_3,\lambda}(\lambda\eta_\mathsf{z})$ and $\theta_{\phi_2,\lambda}(\lambda^{-1}\eta^{-1}_\mathsf{z})\cdot(\bar{\lambda}\circ\det)$ (by writing $\theta_{\phi,\lambda}$ we take the splitting character for $\mathrm{U}(1)$ to be trivial and for $\mathrm{U}(2)$ to be $\lambda$. We need to notice the different choices of splitting characters). On the other hand, if we change the order of integration using the Siegel-Weil formula for $\mathrm{U}(1,1)\times \mathrm{U}(2)$ as proved by Ichino (\cite{Ichi2}), this equals:
\begin{equation}\label{Siegel Weil2}
\int_{[\mathrm{U}(1)]\times[\mathrm{U}(1)]}E(f_{\delta_\psi(\phi)},\frac{1}{2},i(u_1,u_2))
\lambda^{-2}\eta_{\mathsf{z}}^{-1}(u_1)\eta_{\mathsf{z}}(u_2)du_1du_2
\end{equation}
Here $i$ is defined right before Lemma \ref{5.19} and $f_{\delta_\psi(\phi)}$ is the Siegel section defined by:
$$f_{\delta_\psi(\phi)}(g):=\omega_{\lambda^2}(j(g))\delta_\psi(\phi)(0), g\in\mathrm{U}(1,1)$$
where $j$ is defined in the proof of Lemma \ref{5.18}. Thus we reduced the Petersson inner product of theta liftings to the pullback formula of the Siegel-Eisenstein series on $\mathrm{U}(1,1)$.\\

\noindent\underline{Functorial Properties of Theta Liftings}\\
\noindent For any Hecke character $\chi$ of $\mathrm{U}(1)$ (in application $\chi(z_\infty)=z_\infty^{\pm1}$ for $z_\infty\in \mathrm{U}(\mathbb{R})$), we describe the $L$-packet of theta correspondence $\theta_\lambda(\chi)$ (possibly zero) of $\chi$ to $\mathrm{U}(2)$ where $\lambda$ is a Hecke character of $\mathbb{A}_\mathcal{K}^\times$ such that $\lambda|_{\mathbb{A}_\mathbb{Q}^\times}=\omega_{\mathcal{K}/\mathbb{Q}}$. We pick a Hecke character $\breve{\chi}$ such that $\breve{\chi}|_{\mathrm{U}(1)(\mathbb{A}_\mathbb{Q})}=\chi^{-1}$. In our application the automorphic representation of $\theta_\chi$ is supercuspidal at all primes where $D$ is compact modulo center. So we have the Jacquet-Langlands correspondence $\pi_{\breve{\chi}}$ on $D^\times$ of the automorphic representation of $\mathrm{GL}_2/\mathbb{Q}$ generated by the CM form $\theta_{\breve{\chi}}$ corresponding to $\breve{\chi}$. We form an automorphic representation in the way we introduced before:
$\pi_{\breve{\chi}}\boxtimes \breve{\chi}\lambda$ of $\mathrm{GU}(2)$. Then by looking at the local $L$-packets (see \cite[Section 7]{HKS}) $\theta_\lambda(\chi)$ is a subspace of the restriction of this representation to $\mathrm{U}(2)$. (This restriction is not necessarily irreducible.) The representations at split primes are irreducible. Therefore, we still have not specified the automorphic representation on $\mathrm{U}(2)$.)\\

\noindent\underline{Constructing Families of Theta Liftings}\\
\noindent Let $v$ be a prime inert or ramified in $\mathcal{K}$. Thanks to the recent work \cite{GT}, we know that the Howe duality conjecture is true for any characteristic. Recall as in Definition \ref{definition4.4}, consider the theta lifting from $\mathrm{U}(1)$ to $\mathrm{U}(2)$ at $v$ (the $\mathrm{U}(2)(\mathbb{Q}_v)$ might be $\mathrm{U}(1,1)(\mathbb{Q}_v)$ or compact). Write $S(X_v,\eta_{\mathsf{z},v}^{-1})$ for the summand of $S(X_v)$ such that $\mathrm{U}(1)$ acts by $\eta_{\mathsf{z},v}^{-1}$. Given a Schwartz function $\phi^v$ on $\otimes_{w\not=v} S(X_w)$ we consider the map $S(X_v,\eta^{-1}_{\mathsf{z},v})\rightarrow \pi_{\theta_{\eta_\mathsf{z}}}$ (the $\pi_{\theta_{\eta_\mathsf{z}}}$ is the automorphic representation of $\mathrm{U}(2)$ by Howe duality corresponding to $\eta^{-1}_{\mathsf{z}}$) by $\iota_v:\phi_v\rightarrow \Theta_{\phi^v\otimes \phi_v}(\eta_{\mathsf{z}}^{-1})$. By the Howe duality conjecture, we know that there is a maximal proper sub-representation $V_v$ of $S(X_v,\eta_{\mathsf{z},v}^{-1})$ such that $S(X_v,\eta_{\mathsf{z},v}^{-1})/V_v$ is irreducible and isomorphic to the local theta correspondence $\pi_{\theta,\mathsf{z},v}$ of $\eta_{\mathsf{z},v}^{-1}$ by the local and global compatibility of theta lifting (see \cite[Theorem 8.5]{Prasad1}). Suppose there is some $\phi_v$ so that $\iota_v(\phi_v)\not=0$ (a finite sum of pure tensors in $\pi_{\theta_{\eta_\mathsf{z}}}$). We consider the representation of $\mathrm{U}(2)(\mathbb{Q}_v)$ on $\pi_{\theta_{\eta_\mathsf{z}}}(\mathrm{U}(2)(\mathbb{Q}_v))(\iota_v(\phi_v))$. This is a sub-representation of a direct sum of finite number of $\pi_{\theta_{\eta_\mathsf{z}},v}$'s. The $\iota_v$ gives a homomorphism of representations of $\mathrm{U}(2)(\mathbb{Q}_v)$ from $S(X_v)$ to $\pi(\mathrm{U}(2)(\mathbb{Q}_v))(\iota(\phi_v))\hookrightarrow \oplus \pi_{\theta_{\eta_\mathsf{z}},v}$. Note that the automorphism group of the representation $\pi_{\theta_{\eta_\mathsf{z}},v}$ consists of scalar multiplications. Thus it is easy to see that the kernel of the above embedding is exactly $V_v$ and we have the following lemma:
\begin{lemma}\label{Lemma 8.3}
Fix the $\phi^v$ as above. Let $v_{\phi_v}$ be the image of $\phi_v$ in $\pi_{\theta_{\eta_\mathsf{z}},v}$ under the Howe duality isomorphism to $S(X_v,\eta_{\mathsf{z},v}^{-1})/V_v$. Then $\iota_v(\phi_v)\in\pi_{\theta_{\eta_\mathsf{z}}}$ can be written as a finite sum of pure tensors of the form $$v_{\phi_v}\otimes(\sum_i\prod_{w\not=v}\phi_{w,i})$$ for $\phi_{w,i}\in\pi_{\theta_{\eta_\mathsf{z}},w}$.
\end{lemma}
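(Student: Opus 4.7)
The plan is to read off the statement essentially directly from Howe duality combined with Schur's lemma, after checking that the $v$-local map $\iota_v$ is genuinely an intertwiner of $\mathrm{U}(2)(\mathbb{Q}_v)$-representations. First I would observe that since the theta kernel is built from the Weil representation of $\mathrm{U}(2)(\mathbb{A})\times \mathrm{U}(1)(\mathbb{A})$, the assignment $\phi_v\mapsto \Theta_{\phi^v\otimes \phi_v}(\chi_{\theta,v}^{-1})$ is $\mathrm{U}(2)(\mathbb{Q}_v)$-equivariant with respect to the Weil-representation action on the source and the right-translation action on the target. Hence $\iota_v$ defines a morphism of $\mathrm{U}(2)(\mathbb{Q}_v)$-representations from $S(X_v,\chi_{\theta,v}^{-1})$ to $\pi_\theta$.

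Next, because $v$ is odd (and split, inert or ramified, but in all cases non-dyadic), the Howe duality conjecture holds and gives the maximal proper subrepresentation $V_v\subset S(X_v,\chi_{\theta,v}^{-1})$ with $S(X_v,\chi_{\theta,v}^{-1})/V_v\simeq \pi_{\theta,v}$ irreducible. I would then argue that $\iota_v$ vanishes on $V_v$: indeed, the non-vanishing hypothesis on $\iota_v(\phi_v)$ ensures $\iota_v$ is not identically zero, and its image is a quotient of $S(X_v,\chi_{\theta,v}^{-1})$; by irreducibility of $\pi_{\theta,v}$ and the definition of $V_v$ as the \emph{unique} maximal proper subrepresentation (so that every nonzero quotient factors through $\pi_{\theta,v}$), the kernel of $\iota_v$ must contain $V_v$. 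Consequently $\iota_v$ factors through an injection $\bar\iota_v:\pi_{\theta,v}\hookrightarrow \pi_\theta$ of $\mathrm{U}(2)(\mathbb{Q}_v)$-representations.

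Now I would decompose $\pi_\theta\simeq \pi_{\theta,v}\otimes \pi_\theta^v$, where $\pi_\theta^v:=\bigotimes'_{w\neq v}\pi_{\theta,w}$ carries the trivial $\mathrm{U}(2)(\mathbb{Q}_v)$-action. Since $\pi_{\theta,v}$ is absolutely irreducible, Schur's lemma gives
\[
\mathrm{Hom}_{\mathrm{U}(2)(\mathbb{Q}_v)}\bigl(\pi_{\theta,v},\pi_{\theta,v}\otimes \pi_\theta^v\bigr)\;=\;\pi_\theta^v,
\]
the isomorphism sending $\xi\in\pi_\theta^v$ to the map $w\mapsto w\otimes\xi$. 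Applying this identification to $\bar\iota_v$ produces a unique $\xi_{\phi^v}\in\pi_\theta^v$ such that $\bar\iota_v(w)=w\otimes \xi_{\phi^v}$ for every $w\in\pi_{\theta,v}$; in particular $\iota_v(\phi_v)=v_{\phi_v}\otimes \xi_{\phi^v}$. Writing $\xi_{\phi^v}$ as a finite sum of pure tensors $\sum_i\prod_{w\neq v}\phi_{w,i}$ inside the restricted tensor product yields the asserted expression, and the entire construction depends only on the fixed datum $\phi^v$, not on $\phi_v$ itself.

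The only subtlety, and the step I would be most careful with, is verifying that the local representation $\pi_{\theta,v}$ produced by Howe duality coincides with the $v$-component of the global theta lift $\pi_\theta$, so that $\bar\iota_v$ can be viewed as an intertwiner into the concrete restricted tensor product. This compatibility is exactly the local-global compatibility of theta correspondence invoked immediately before the lemma, and once it is in hand the rest of the argument is purely formal. No further analytic input is needed.
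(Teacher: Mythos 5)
Your proposal follows essentially the same route as the paper: Howe duality supplies the unique maximal proper subrepresentation $V_v$ with irreducible quotient $\pi_{\theta,v}$, the map $\iota_v$ is checked to be $\mathrm{U}(2)(\mathbb{Q}_v)$-equivariant, and Schur's lemma (the paper phrases it as ``the automorphism group of $\pi_{\theta,v}$ consists of scalar multiplications'') forces the image to have the diagonal form $v_{\phi_v}\otimes\xi_{\phi^v}$. One justification as written is not correct, though: having a unique maximal proper subrepresentation does \emph{not} imply that every nonzero quotient of $S(X_v,\chi_{\theta,v}^{-1})$ factors through $\pi_{\theta,v}$ --- that parenthetical would force $V_v$ to be the only proper subrepresentation, and already a uniserial module of length three is a counterexample; the big theta lift need not have length two. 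What actually forces $\ker\iota_v\supseteq V_v$ is that the target $\pi_\theta\simeq\pi_{\theta,v}\otimes\pi_\theta^v$ is $\pi_{\theta,v}$-isotypic, hence semisimple, as a $\mathrm{U}(2)(\mathbb{Q}_v)$-module: the image of $\iota_v$ is then a semisimple quotient of $S(X_v,\chi_{\theta,v}^{-1})$ and so must be a quotient of its cosocle $\pi_{\theta,v}$; equivalently, composing $\iota_v$ with each projection onto a copy of $\pi_{\theta,v}$ gives a map whose kernel is either everything or $V_v$. This is precisely how the paper argues --- it first observes that the $\mathrm{U}(2)(\mathbb{Q}_v)$-span of $\iota_v(\phi_v)$ sits inside a finite direct sum of copies of $\pi_{\theta,v}$, and only then identifies the kernel with $V_v$. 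Since you introduce the decomposition $\pi_\theta\simeq\pi_{\theta,v}\otimes\pi_\theta^v$ in your very next step anyway, the repair costs nothing: move that decomposition (which rests on the local-global compatibility you correctly flag as the real input) ahead of the kernel computation and the argument is complete.
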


We define the weight map $j_1:\Lambda_{2,0}\rightarrow \Lambda''_\mathbf{D}$ is given by
$$(1+T_1)\mapsto 1,\ (1+T_2)\mapsto \boldsymbol{\tau}^{-1}_{\bar{v}_0}\boldsymbol{\psi}_{\bar{v}_0}|_{\mathbb{Z}^\times_p}(1+p)$$
where we write $\boldsymbol{\tau}_2$ for the restriction of $\boldsymbol{\tau}$ to $\mathcal{K}^\times_{\bar{v}_0}\simeq\mathbb{Q}^\times_p$, and similarly to $\boldsymbol{\psi}_2$.

\begin{proposition}\label{CMTheta}
Suppose $\eta$ is such that for each non-split prime $v\in\Sigma$, $\eta|_{\mathrm{U}(1)(\mathbb{Q}_v)}$ equals the $\chi_{\theta,v}$ defined in Section \ref{5.7}; for each split prime $v\nmid p$ in $\Sigma$ with $v=w\bar{w}$, we have $\eta_w$ is unramified and $\eta_{\bar{w}}$ is the $\chi_{\theta,v}$ defined in Section \ref{5.7}. We can construct a family $\boldsymbol{\theta}=\boldsymbol{\theta}^-_{\boldsymbol{\eta},\phi_{2}}\in\mathcal{M}_{\mathrm{ord}}
(K^{(2,0)},\Lambda_{2,0})\otimes_{j_1}\Lambda''_\mathbf{D}$ whose specialization to $\mathsf{z}\in\mathrm{Spec}\Lambda''_\mathbf{D}$ of conductor $p^t$ equals
$$\theta_\mathsf{z}(g):=\sum_{i=1}^{h_\mathcal{K}}\sum_{n\in\mathbb{Z}_p/p^t\mathbb{Z}_p}\eta_{\mathsf{z}}
(\breve{u}_i)^{-1}\frac{\Omega_p\theta_{2,\mathcal{D}_\mathsf{z}}(g\begin{pmatrix}1&0\\n&1\end{pmatrix}_p
\breve{u}_i)}{\Omega_\infty},$$
where the subscript $\phi_2$ stands for the Schwartz functions $\phi_{2,\mathsf{z}}$ we define in the proof below, the  $\theta_{2,\mathbf{D}_\mathsf{z}}$ is the one appearing in Corollary \ref{Corollary 6.29} with the $\mathcal{D}$ being the specialization $\mathbf{D}_\mathsf{z}$ of $\mathbf{D}$. The superscript $-$ is to indicate that the theta function is constructed through pullback under the map $1\times\mathrm{U}(2)\hookrightarrow \mathrm{U}(2,2)$. (Later on we will constructed theta functions via pullback under the map $\mathrm{U}(2)\times 1 \hookrightarrow \mathrm{U}(2,2)$, which we use a superscript $+$ to indicate).
\end{proposition}
\begin{proof}
For an eigenform $\theta$ such constructed we sometimes write $\pi_\theta$ for the automorphic representation of $\mathrm{U}(2)$ of $\theta$.
First, we give the choices for the Schwartz function $\phi$ for the construction using the embedding $1\times \mathrm{U}(2)\hookrightarrow \mathrm{U}(2,2)$.\\

\noindent\underline{Local Computations}\\
In the following we define some Schwartz function. The $\phi_{2,\mathsf{z},v}$'s depend on the arithmetic point $\mathsf{z}$ (in fact only varying at the $p$-adic place), while the $\phi_{3,v}$ are fixed throughout the family and we thus suppress the subscript $\mathsf{z}$.\\
\noindent\underline{Case 0}:\\
\noindent At finite places outside $\Sigma$ we choose the obvious spherical kernel functions.\\

\noindent \underline{Case 1}:\\
\noindent If $v=\infty$, we let $\phi_{v,\mathsf{z}}=\omega_\lambda(\begin{pmatrix}&1_2\\-1_2&\end{pmatrix} g_0)\Phi_\infty$. Recall that $g_0=\mathrm{diag}(\frac{\mathfrak{s}^{\frac{1}{2}}d^{\frac{1}{4}}}{\sqrt{2}},\frac{d^{\frac{1}{4}}}{\sqrt{2}},
(\frac{\mathfrak{s}^{\frac{1}{2}}d^{\frac{1}{4}}}{\sqrt{2}})^{-1},
(\frac{d^{\frac{1}{4}}}{\sqrt{2}})^{-1})$ and $\Phi_\infty=e^{-2\pi\mathrm{Tr}(\langle x,x\rangle_1)}$. Let $\phi_{3,v}=\frac{(\mathfrak{s}d)^{\frac{1}{4}}}{2}e^{-2\pi\sqrt{d}(\mathfrak{s}x_{11}^2+x_{12}^2)}$ and
$\phi_{2,v,\mathsf{z}}=\frac{(\mathfrak{s}d)^{\frac{1}{4}}}{2}e^{-2\pi\sqrt{d}(\mathfrak{s}x_{21}^2+x_{22}^2)}$. By our computation in Section 6.5 we have:
$\delta_\psi(\phi_{3,v}\boxtimes \phi_{2,v,\mathsf{z}})=\phi_{v,\mathsf{z}}$.\\

\noindent \underline{Case 2}: \\
\noindent If $v\in S$ is split and $v\nmid p$ we recall that we have two different polarizations $W=X_v\oplus Y_v=X_v'\oplus Y_v'$ where the first one is globally defined which we use to define theta function and the second is defined using $\mathcal{K}_v\equiv \mathbb{Q}_v\times\mathbb{Q}_v$ which is more convenient for computing the actions of level groups. We have defined intertwining operators $\delta_\psi''$ between $S(X_v)$ and $S(X_v')$ intertwining the corresponding Weil representations. Consider the theta correspondence of $\mathrm{U}(1)$ to $\mathrm{U}(2)$ on $S(X_v)$ and $S(X_v^-)$. We write $X_v\ni x_{3,v}'=(x_{3,v}'',x_{3,v}''')$ and $X_v^{'-}\ni x_{2,v}'=(x_{2,v}'',x_{2,v}''')$. We define
$$\phi_{3,v}'(x_{3,v}'',x_{3,v}''')=\left\{\begin{array}{ll}(\lambda\eta_{\mathsf{z}})^{-1}_v(x_{3,v}'') & x_{3,v}''\in\mathbb{Z}_v^\times,x_{3,v}'''\in\mathbb{Z}_v\\
0 & \mbox{ otherwise. }\end{array}\right.$$
where $\varpi_v^{t_v}$ is the conductor of $\eta_{\mathsf{z},v}$ and
$$\phi_{2,\mathsf{z},v}'(x_{2,v}'',x_{2,v}''')=\left\{\begin{array}{ll}(\lambda\eta_{\mathsf{z}})_v(x_{2,v}'') & x_{2,v}''\in\mathbb{Z}_v^\times,x_{2,v}'''\in\mathbb{Z}_v\\
0 & \mbox{ otherwise. }\end{array}\right.$$
We define $\phi_{v,\mathsf{z}}\in S(\mathbb{W}^d)$ by $\phi_{v,\mathsf{z}}=\delta_\psi'(\phi'_{3,\mathsf{z},v}\boxtimes \phi'_{2,\mathsf{z},v})$ and define $\phi_{2,\mathsf{z},v}=\delta_\psi^{-,''}(\phi_{2,\mathsf{z},v}')$, $\phi_{3,v}=\delta_\psi''(\phi_{3,v}')$. Then if $f_v\in I_v(\lambda^2)$ is the Siegel section corresponding to $\phi_{v,\mathsf{z}}$ in the Rallis inner product formula, we have
$$f(i(1,u_2))=\langle \phi_{3,v},\omega(u_2,1)\phi_{2,\mathsf{z},v}\rangle$$
by the formula for the intertwining operator.
This is zero unless $u_2\in \mathbb{Z}_v^\times$ ($\mathrm{U}(1)(\mathbb{Q}_v)\simeq \mathbb{Q}_v^\times$) and equals $\lambda^2\eta_{\mathsf{z}}(u_2)$ for those $u_2$'s. To sum up, for such $v$ the local integral in the Rallis inner product formula is a non-zero constant $(\frac{q_v-1}{q_v})^2$. We modify the definitions for $\phi_{2,\mathsf{z},v}$ and $\phi_{3,v}$ by multiplying them by an element $c'_v$ in $\mathbb{Z}^\times_p$ to make the resulting $\phi_{v,\mathsf{z}}$ integral. Note that our definition for the Schwartz functions $\phi_{2,\mathsf{z},v}$, $\phi_{3,v}$ and thus $\phi_{v,\mathsf{z}}$ are independent of the choice of $\mathsf{z}$. Let $c_v=(c'_v)^2(\frac{q_v-1}{q_v})^2$. Later when we are moving things $p$-adically, this constant is not going to change.\\

\noindent \underline{Case 3}:\\
\noindent For $v\in S$ ramified or inert such that $\mathrm{U}(2)(\mathbb{Q}_v)$ is not compact. In this case $\mathrm{U}(1)(\mathbb{Q}_v)$ is a compact abelian group. We let $\phi_{2,\mathsf{z},v}$ be the Schwartz function $\phi_{2,v}$ on $S(X_v^-)$ constructed in Section \ref{5.7}, with the Eisenstein datum $\mathcal{D}=\mathbf{D}_\mathsf{z}$. Let $\phi_{3,v}$ be a $p$-integral valued Schwartz function on $S(X_v)$ such that $(\phi_{3,v},\phi_{2,\mathsf{z},v})=\int_{X_v\simeq X_v^-}\phi_{3,v}(x)\phi_{2,\mathsf{z},v}(x)dx\not=0$ and that the action of $\mathrm{U}(1)(\mathbb{Q}_v)$ via the Weil representation is given by a certain character. (It is easy to see that this character is $\lambda_v^2\eta_{\mathsf{z},v}$, thus the action of the center of $\mathrm{U}(2)(\mathbb{Q}_v)$ via $\mathrm{U}(2)\times 1$ is given by $\eta_{\mathsf{z},v}^{-1}$.) We define $\phi_{v,\mathsf{z}}=\delta_\psi(\phi_{3, v}\boxtimes\phi_{2,\mathsf{z},v})\in S(\mathbb{W}^d_v)$. We note that at all arithmetic points $\mathsf{z}$, the character $\eta_{\mathsf{z},v}$ as a representation of $\mathrm{U}(1)(\mathbb{Q}_v)$ since the character is changed by an unramified character and the group $\mathrm{U}(1)(\mathbb{Q}_v)$ is compact. We further multiply the $\phi_{3,v}$ and $\phi_{2,\mathsf{z},v}$ by an element $c'_v$ of $\mathbb{Z}^\times_p$ to make them and the resulting $\phi_{\mathsf{z},v}$ integral.
Let $$c_v=(c'_v)^2\mathrm{Vol}(\mathrm{U}(1)(\mathbb{Q}_v))(\phi_{3,v},\phi_{2,\mathsf{z},v})=\int_{X_v\simeq X_v^-}\phi_{3,v}(x)\phi_{2,\mathsf{z},v}(x)dx$$

The $\phi_{3, v}$ and $\phi_{2,\mathsf{z},v}$, and therefore the $c_v$ and $\phi_{v,\mathsf{z}}$ are fixed throughout the family.\\

\noindent\underline{Case 4}: \\
\noindent For $v$ such that $\mathrm{U}(2)(\mathbb{Q}_v)$ is compact. Note that the local representation $\pi_{\theta_{\eta_\mathsf{z}},v}$ is finite-dimensional with some level group $K_v$. Again let $\phi_{w,\mathsf{z},v}$ be the Schwartz function $\phi_{2,v}$ defined in Section \ref{5.7} with the Eisenstein datum being $\mathcal{D}=\mathbf{D}_\mathsf{z}$. We write $v_1$ for the image of $\phi_{2,\mathsf{z},v}$ in $\pi_{\theta_\mathsf{z},v}$ under the Howe duality. We fix an $\mathrm{U}(2)(\mathbb{Q}_v)/K_v$-invariant measure of $\pi_{\theta_\mathsf{z},v}$ and extend $v_1$ to $\{v_1,\cdots,v_{d_v}\}$ an orthonormal basis of $\pi_{\theta_\mathsf{z},v}$.  Let $(\tilde{v}_1,\cdots)$ be the dual basis. Let $\phi_{3,v}$ be a $p$-integral valued Schwartz function on $S(X_v)$ pairing non-trivially with $\phi_{2,\mathsf{z},v}$, such that the action of $\mathrm{U}(1)(\mathbb{Q}_v)$ via the Weil representation is given by a certain character. (As before this character is $\lambda_v^2\eta_{\mathsf{z},v}$, thus the action of the center of $\mathrm{U}(2)(\mathbb{Q}_v)$ via $\mathrm{U}(2)\times 1$ is given by $\eta_{\mathsf{z},v}^{-1}$.) We require also that the image of $\phi_3$ in the representation of $\mathrm{U}(2)(\mathbb{Q}_v)$ (which is the dual of $\pi_{\theta_\mathsf{z},v}$) is $\tilde{v}_1$.
We define $\phi_{v,\mathsf{z}}=\delta_\psi(\phi_{3,v}\boxtimes\phi_{2,\mathsf{z},v})\in S(\mathbb{W}^d_v)$. We further multiply the $\phi_{3,v}$ and $\phi_{2,\mathsf{z},v}$ by an element $c'_v$ of $\mathbb{Z}^\times_p$ to make them and the resulting $\phi_{v,\mathsf{z}}$ integral. Define $c_v=(c'_v)^2$. \\

\noindent \underline{Case 5}: \\
We write $\eta_{\mathsf{z},p}$ for $\eta_{\mathsf{z}}|_{\mathbb{Q}_p^\times}$. For $v=p$, $W_p=X_p'\oplus Y_p'$, we write elements $$x_p'=(x_{p,1}',x_{p,2}')\in X_p', y_p'=(y_{p,1}',y_{p,2}')\in Y_p'.$$ We define $\phi_{p,\mathsf{z}}(x_p',y_p')=\eta_{\mathsf{z},p}(y_{p,1}')$ if $y_{p,1}'\in \mathbb{Z}_p^\times$ and $x_{p,1}',x_{p,2}',y_{p,2}'\in\mathbb{Z}_p$ and $\phi_{p,\mathsf{z}}(x_p',y_p')=0$ otherwise.
\begin{definition}\label{1234}
For later interpolation we define a $\Lambda''_\mathbf{D}$ valued function
$\boldsymbol{\phi}_p(x_p',y_p')=\boldsymbol{\eta}_{p}(y_{p,1}')$ if $y_{p,1}'\in \mathbb{Z}_p^\times$ and $x_{p,1}',x_{p,2}',y_{p,2}'\in\mathbb{Z}_p$ and $\boldsymbol{\phi}_p(x_p',y_p')=0$ otherwise.
\end{definition}
We also write $x_{3,p}'=(x_{3,p}'',x_{3,p}''')\in X_p',x_{2,p}'=(x_{2,p}'',x_{2,p}''')\in X_p^{'-}$ (note that we use $x_{2,p}'$ to distinguish from $x_{p,2}'$ above). A straightforward computation gives
$$\delta_{\psi,p}^{',-1}(\omega_\lambda(\Upsilon)(\phi_{p,\mathsf{z}}))(x_{3,p}',x_{2,p}')=\frac{\mathfrak{g}
(\eta_{\mathsf{z},p})}{p^t}\eta^{-1}_{\mathsf{z},p}(-x_{2,p}''p^t)$$ if $x_{2,p}''\in p^{-t}\mathbb{Z}_p^\times$ and $x_{3,p}'',x_{3,p}''',x_{2,p}'''\in\mathbb{Z}_p$, and equals $0$ otherwise. We write $\phi_{3,p}'$ to be the characteristic function of $\mathbb{Z}_p^2$ on $X_p'$ and define $\phi_{2,\mathsf{z},p}'(x_{2,p}')=\frac{\mathfrak{g}(\eta_{\mathsf{z},p})}
{p^t}\eta_{\mathsf{z},p}^{-1}(-x_{2,p}''p^t)$ if $x_{2,p}''\in p^{-t}\mathbb{Z}_p^\times$ and $x_{2,p}'''\in\mathbb{Z}_p$, and is $0$ otherwise. We define $\phi_{3,p}$ and $\phi_{2,\mathsf{z},p}$ as the images of $\phi'_{3,p}$ and $\phi'_{2,\mathsf{z},p}$ under $\delta_{\psi,p}^{-1}\circ \delta_{\psi,p}'$. We note that the $\phi_{2,\mathsf{z},p}$ here is not the $\phi_{2,p}$ constructed in Section 6. In fact $$\phi_{2,\mathsf{z},p}=\sum_{n\in\mathbb{Z}_p/p^t\mathbb{Z}_p}\omega_\lambda(\begin{pmatrix}1&0\\n&1\end{pmatrix}_p)
\phi_{2,p}.$$
This can be seen by comparing the $\Phi''_p$ in Section 6 with the $\phi_{p,\mathsf{z}}$ here. This is exactly where the $\sum_{n\in\mathbb{Z}_p/p^t\mathbb{Z}_p}\omega_\lambda(\begin{pmatrix}1&0\\n&1\end{pmatrix}_p)$ appears in the statement of the proposition. \\

\noindent \underline{Global Case}:
\begin{definition}\label{THETA}
Let $\phi_\mathsf{z}=\prod_v\phi_{v,\mathsf{z}}$ with $\phi_v$ defined in Case 0 through Case 5 before. We define a $\Lambda''_\mathbf{D}$-adic formal $q$-expansion $\boldsymbol{\Theta}$ on $\mathrm{U}(2,2)$ interpolating our theta kernel functions on $\mathrm{U}(2,2)$
$$\sum_{x\in\mathcal{K}^2}\prod_{v\nmid p\infty}\phi_{v}(x)\times\boldsymbol{\phi}_p(x)e^{2\pi i\mathrm{tr}({}^t\!\bar{x}Zx)}.$$
(The $\boldsymbol{\phi}_p$ is defined in Definition \ref{1234} and note the $\phi_v$'s are the $\phi_{v,\mathsf{z}}$'s, which are fixed throughout the family for $v\nmid p\infty$, justifying suppressing the subscript $\mathsf{z}$.)
\end{definition}
This is easily seen using Lemma \ref{+} and the computations in \cite[Section 10.3]{SU}.  (We note that the $\phi_{\infty,\mathsf{z}}$ here is $\Phi_{1,\infty}$ right translated by $g_0$. On the other hand, our distinguished point $\mathbf{i}$ is chosen as $\zeta\in X_{2,2}$. Note also that the $\phi_{2,\mathsf{z},v}$ and $\phi_{3,v}$ are independent of $\mathsf{z}$ for $v\nmid p$.) As in the pullback formula for Siegel Eisenstein family under $\mathrm{U}(2)\times \mathrm{U}(2)\hookrightarrow \mathrm{U}(2,2)$, the $\boldsymbol{\Theta}(-\Upsilon)$ also pulls back to $p$-adic analytic family of forms on $\mathrm{U}(2)\times \mathrm{U}(2)$. The following lemma is immediate from our computations from Case 0 to Case 5.
\begin{lemma}
The pullback of the specialization $\boldsymbol{\Theta}_\mathsf{z}$ to $\mathrm{U}(2)\times\mathrm{U}(2)$ is in fact $\theta_{\phi_{3}}\boxtimes \theta_{\phi_{2,\mathsf{z}}}$ for $\phi_{2,\mathsf{z}}=\prod_v\phi_{2,\mathsf{z},v}$ and $\phi_{3}=\prod_v\phi_{3,v}$. (We omit the subscript $\mathsf{z}$ for $\phi_3$ since it is fixed along the family by definition.)
\end{lemma}
It follows easily using the Rallis inner product formula (see Proposition \ref{Proposition8.9} and its proof, note the arithmetic specialization of the Katz $p$-adic $L$-function there is nonzero) and our choices for the Schwartz functions that for some $u_{aux}\in \mathrm{U}(2)(\mathbb{A}_\mathbb{Q})$, $\theta_{\phi_{3}}(u_{aux})\not=0$. Thus $\frac{\theta_{\phi_{3}}(u_{aux})\Omega_p}{\Omega_\infty}\cdot\frac{\theta_{\phi_{2,\mathsf{z}}}\Omega_p}{\Omega_\infty}$ is a $\Lambda''_\mathbf{D}$-adic family of forms on $1\times \mathrm{U}(2)\hookrightarrow \mathrm{U}(2,2)$. Now we take a representative $(\breve{u}_1,...\breve{u}_{h_\mathcal{K}})$ of $\mathrm{U}(1)(\mathbb{Q})\mathrm{U}_1(\mathbb{R})\backslash \mathrm{U}(1)(\mathbb{A}_\mathbb{Q})/\mathrm{U}(1)(\hat{\mathbb{Z}})$ considered as elements of the center of $\mathrm{U}(2)$.
\begin{definition}\label{define theta}
Write $c_\theta:=\frac{\theta_{\phi_{3}}(u_{aux})\Omega_p}{\Omega_\infty}$.  We denote $\boldsymbol{\theta}=\boldsymbol{\theta}^-_{\boldsymbol{\eta},\phi_2}\in\mathcal{M}_{\mathrm{ord}}(K^{(2,0)},\Lambda_{2,0})
\otimes_{j_1}\Lambda''_\mathbf{D}$ \index{$\boldsymbol{\theta}$} for the $\Lambda''_\mathbf{D}$-adic family constructed by
$$\boldsymbol{\theta}(g)=\sum_{i=1}^{h_\mathcal{K}}
\boldsymbol{\eta}(\breve{u}_j)^{-1}
\omega_{\lambda^{-1}}(\breve{u}_j)\frac{\Omega_p^2\boldsymbol{\Theta}(u_{aux},g)}{\Omega^2_\infty}
\bar{\lambda}(\det g).$$

Its specialization to $\mathsf{z}$ is
\begin{equation}\label{n}
\theta_\mathsf{z}(g):=c_\theta\cdot\frac{\Omega_p}{\Omega_\infty}\sum_{i=1}^{h_\mathcal{K}}
\eta_{\mathsf{z}}
(\breve{u}_j)^{-1}
\omega_{\lambda^{-1}}(\breve{u}_j)
(\theta_{\phi_{2,\mathsf{z}}}\cdot\bar{\lambda})(g).
\end{equation}
\index{$\theta_\mathsf{z}$}
\end{definition}
The property required by the proposition follows from comparing our choices of theta kernel function with the (local and global) computations for $\theta_2$ in Section \ref{section 5}.
\end{proof}

\noindent We can do the same thing to construct a $\Lambda''_\mathbf{D}$-adic family of forms on $\mathrm{U}(2)\times 1\hookrightarrow \mathrm{U}(2,2)$. This time we define $\phi$ such that for $v\not=p$ the local components are as before. If $v=p$ recall that $W_p=X_p'\oplus Y_p'$. If $x_p'=(x_{p,1}',x_{p,2}')$ and $y_p'=(y_{p,1}',y_{p,2}')$ we define $\phi_{\mathsf{z},p}(x_p',y_p')=\chi_{\theta,\mathsf{z},p}(x_{1,p}')$ for $x_{1,p}'\in\mathbb{Z}_p^\times$ and $x_{2,p}',y_{1,p}',y_{2,p}'\in\mathbb{Z}_p$ and $\phi_{\mathsf{z},p}(x_p',y_p')=0$ otherwise. Direct computation by plugging in the intertwining operator gives, if we write $x_{1,p}'=(x_{1,p}'',x_{1,p}''')$ and $x_{2,p}'=(x_{2,p}'',x_{2,p}''')$, then
$\delta_{\psi,p}^{',-1}(\omega_\lambda(\Upsilon)\phi_{\mathsf{z},p})(x_{1,p}',x_{2,p}')=\chi_{\theta,\mathsf{z},p}(x_{1,p}'')$ if $x_{1,p}''\in\mathbb{Z}_p^\times$ and $x_{1,p}'',x_{2,p}'',x_{2,p}'''\in\mathbb{Z}_p$, and equals $0$ otherwise. We also get new $\phi_{2,\mathsf{z}}$ and $\phi_{3,\mathsf{z}}$ in this case from the $\phi_v$'s defined here.

As before we move $\chi_{\theta,\mathsf{z}}$ $p$-adically. This time our $\theta_{\phi_{2}}$ is fixed and non-zero at some point $u_{aux}'\in \mathrm{U}(2)(\mathbb{A}_\mathbb{Q})$ and $\theta_{\phi_{3,\mathsf{z}}}$ is moving $p$-adic analytically. We write $c_{\tilde{\theta}_3}=\theta_{\phi_2}(u'_{aux})\cdot\frac{\Omega_p}{\Omega_\infty}$. Thus $\frac{\theta_{\phi_{2}}(u_{aux}')\Omega^2_p}{\Omega_\infty^2}\theta_{\phi_{3,\mathsf{z}}}$ is a $\Lambda_\mathcal{D}$-adic form on $\mathrm{U}(2)\times 1\subset \mathrm{U}(2,2)$.

As before we define $\boldsymbol{\Theta}$ as in Definition \ref{THETA}, and write
$$\tilde{\boldsymbol{\theta}}_3(g)=\boldsymbol{\theta}^+_{\boldsymbol{\eta},\phi_3}(g)=\frac{\Omega_p}{\Omega_\infty}\sum_{i=1}^{h_\mathcal{K}}
\boldsymbol{\eta}_\theta(\breve{u}_j)
\omega_{\lambda^{-1}}(\breve{u}_j)\frac{\boldsymbol{\Theta}(u'_{aux},g)\Omega^2_p}{\Omega^2_\infty}
\bar{\lambda}(\det g).$$
The superscript $+$ stands for the fact that the theta function is constructed via the pullback $\mathrm{U}(2)\times 1\hookrightarrow \mathrm{U}(2,2)$.
Its specialization $\tilde{\theta}_{3,\mathsf{z}}$ to $\mathsf{z}$ satisfies
$$\tilde{\theta}_{3,\mathsf{z}}(g)=c_{\tilde{\theta}_3}\frac{\Omega_p}{\Omega_\infty}\sum_{i=1}^{h_\mathcal{K}}\theta_{\phi_{3,\mathsf{z}}}(g\breve{u}_i)
\eta_{\mathsf{z}}(\breve{u}_i).$$

\begin{definition}
We define forms $\theta^D_\mathsf{z}$ \index{$\theta^D$} and $\tilde{\theta}^D_{3,\mathsf{z}}$ on $D^\times(\mathbb{A}_\mathbb{Q})$ as $\theta^D_\mathsf{z}\boxtimes\eta_\mathsf{z}$ and $\tilde{\theta}^D_{3,\mathsf{z}}\boxtimes\bar{\eta}_3$ respectively and characters $\eta_\mathsf{z}$ and $\bar{\eta}_\mathsf{z}$, as at the beginning of Subsection \ref{D/U}. Sometimes we drop the superscript $D$ when it is clear from the context. The key functorial property of it (and some other automorphic forms constructed) is summarized in Definition \ref{444}.
\end{definition}

\noindent As before we let $\mathcal{L}^{Katz}_{\lambda\eta}$ be the Katz $p$-adic $L$-function interpolating the values
$$\Omega_p^{2}\frac{L(\lambda^2\chi_{\theta,\mathsf{z}}\chi_{\theta,\mathsf{z}}^{-c},1)}
{(\lambda^2\eta_{\mathsf{z}}\eta_
{\mathsf{z}}^{-c})_{2,p}G(\lambda^2\eta_{\mathsf{z}}\eta_{\mathsf{z}}^{-c})\Omega_\infty^2}.$$
We now compute the Petersson inner product of $\theta_\mathsf{z}$ and $\tilde{\theta}_{3,\mathsf{z}}$ at a generic arithmetic point $\mathsf{z}$.
\begin{proposition}\label{Proposition8.9}
We have
$$\mathbf{B}_{K^{(0,2)}}(\boldsymbol{\theta}_{\boldsymbol{\eta},\phi_2}^-,\boldsymbol{\theta}_{\boldsymbol{\eta},\phi_3}^+)=c_\theta \cdot c_{\tilde{\theta}_3}\prod_v c_v\cdot\mathcal{L}^{Katz}_{\lambda\eta}$$
for the $c_v$ in Case 2, Case 3 and Case 4 as before. Note that these are constants fixed throughout the family.
\end{proposition}
\begin{proof}
We do the computation at a generic arithmetic point $\mathsf{z}$, and apply (\ref{Siegel Weil}) and (\ref{Siegel Weil}2). By the doubling method for $\mathrm{U}(1)\times\mathrm{U}(1)\hookrightarrow \mathrm{U}(1,1)$ we are reduced to local pullback formulas.

We first construct the local theta kernels $\phi_{v,\mathsf{z}}$'s. At places outside $p$, the choices are the same as Case 0 through Case 4 above. At the $p$-adic place, we construct the Schwartz function in $S(W_p)=S(X_p'\oplus Y_p')$ first and apply the intertwining operators $\delta_\psi^{-1}$. We write $x_p'=(x_{p,1}',x_{p,2}')\in X_p'$ and $y_p'=(y_{p,1}',y_{p,2}')\in Y_p'$. We define $\phi_{\mathsf{z},p}(x_p,y_p)=\eta_{\mathsf{z},p}(x_{p,1}',y_{p,1}')$ if $x_{p,1}',y_{p,1}'\in\mathbb{Z}_p^\times$ and $x_{p,2}',y_{p,2}'\in\mathbb{Z}_p$, and equals $0$ otherwise. Now we compute the local pullback integrals.
\begin{itemize}
\item For the Archimedean place, this pullback integral is $1$ as in \cite[Section 4.1]{WAN} ($n=1$, and $\kappa=2$ there).
\item For $v\nmid p$, by our choices the corresponding local integrals are non-zero constants which are fixed along the $p$-adic families (see the computations in Case 0 through Case 4 above). The product of such local integrals is $\prod_v c_v$.
\item For $v=p$, as before we have $\phi_{3,\mathsf{z},v}$ and $\phi_{2,\mathsf{z},v}$'s (this time both Schwartz functions depend on the arithmetic point $\mathsf{z}$!), and can compute that $\delta_\psi^{',-1}(\phi_{\mathsf{z},p})=\phi_{3,\mathsf{z},p}\times \phi_{2,\mathsf{z},p}\in S(X_{1,p}')\times S(X_{2,p'})$ where $\phi_{3,\mathsf{z},p}(x_{1,p}'',x_{1,p}''')=\eta_{\mathsf{z},p}(x_{1,p}'')$ if $x_{1,p}''\in\mathbb{Z}_p$, $x_{1,p}'''\in\mathbb{Z}_p$ and equals $0$ otherwise, $$\phi_{2,\mathsf{z},p}(x_{2,p}'',x_{2,p}''')=\frac{\mathfrak{g}(\eta_{\mathsf{z},p})}
{p^t}\eta_{\mathsf{z},p}^{-1}(x_{2,p}'')$$ if $x_{2,p}''\in p^{-t}\mathbb{Z}_p^\times$ and $x_{2,p}'''\in\mathbb{Z}_p$, and equals $0$ otherwise. So these are exactly the theta functions whose inner product we want to compute. Now we compute the Siegel section $f\in I_1(\lambda^2)$ on $\mathrm{U}(1,1)(\mathbb{Q}_p)$. From the form of the Schwartz functions, it is easy to see that the Siegel-Weil section $f_{\phi_{\mathsf{z},p}}=f_{1,p}f_{2,p}$ for $f_{2,p}\in I_1(\lambda)$ to be the spherical function which takes value $1$ on the identity and $f_{1,p}\in I_1(\lambda)$ is the Siegel-Weil section on $\mathrm{U}(1,1)$ of $\mathrm{U}(1,1)\times \mathrm{U}(1)$ for the Schwartz function $\phi_p\in S(\mathcal{K}_p)$ which, with respect to $\mathcal{K}_v\equiv \mathbb{Q}_v\times \mathbb{Q}_v$, is $\phi_p(x_1,x_2)=\eta_{\mathsf{z},p}(x_1\cdot x_2)$ for $x_1,x_2\in\mathbb{Z}_p^\times$, and $\phi_p(x_1,x_2)=0$ otherwise. However, this section is nothing but the Siegel section $f^\dag$ we constructed in \cite[section 4]{WAN} for the one-dimensional unitary group case. Thus, the local integral is easily computed to be:
$$\frac{\mathfrak{g}(\eta_{\mathsf{z},p})}{p^t}.\lambda_p^{-2}((p^{-t},p^t))p^t\lambda^2_p(p^{-t},1)
=\lambda^2_p((1,p^t))\mathfrak{g}(\eta_{\mathsf{z},p}).$$
\end{itemize}
Recall the $c_{\theta}$ and $c_{\tilde{\theta}_3}$ we used in the construction of $\boldsymbol{\theta}$ and $\tilde{\boldsymbol{\theta}}_3$. To sum up, we obtain the proposition from above computations.
\end{proof}

\subsubsection{Constructing $\mathbf{h}$} \label{444}\index{$\mathbf{h}$, $h_\mathsf{z}$}
We repeat the above process to construct another family $\mathbf{h}$ from the family $\boldsymbol{\eta}'$, which will be used in computing the Fourier-Jacobi expansion as well.
We put an assumption
\begin{itemize}
\item Suppose for each non-split bad prime $v$ such that $\mathrm{U}(2)(\mathbb{Q}_v)$ is compact, $\eta|_{\mathrm{U}(1)(\mathbb{Q}_v)}=\eta'|_{\mathrm{U}(1)(\mathbb{Q}_v)}$.
\end{itemize}
Again we compute at a generic arithmetic point $\mathsf{z}\in\mathrm{Spec}\Lambda''_\mathbf{D}(\bar{\mathbb{Q}}_p)$. First we construct forms $h'_\mathsf{z}$ and $\tilde{{h}}'_{3,\mathsf{z}}$ using theta lifting in the same way as we constructed $\theta_\mathsf{z}$ and $\tilde{\theta}_{\mathsf{z},3}$, except with $\eta''_{\mathsf{z}}$ in place of $\eta_{\mathsf{z}}$ and slightly different theta kernels described as follows. (Therefore, in our application, the forms $h'_\mathsf{z}$'s are still CM forms.)
More precisely, we make the Schwartz functions as follows.
\begin{itemize}
\item In case 0 and case 1 (unramified and Archimedean cases) our Schwartz functions $\phi_{v,\mathsf{z}}$, $\phi_{2,\mathsf{z},v}$, $\phi_{3,v}$ are chosen by the same formula;
\item In case 2 (split bad primes) the Schwartz functions $\phi'_{2,\mathsf{z},v}$ and $\phi'_{3,v}$ are chosen as before except replacing $\eta_{\mathsf{z},v}$ by $\eta'_{\mathsf{z},v}$. We define $\phi_{v,\mathsf{z}}=\delta_\psi'(\phi'_{3,v}\boxtimes \phi'_{2,\mathsf{z},v})$ and define $\phi_{2,\mathsf{z},v}=\delta_\psi^{-,''}(\phi_{2,\mathsf{z},v}')$, $\phi_{3,v}=\delta_\psi''(\phi_{3,v}')$. We further multiply the Schwartz functions by a (fixed) nonzero element in $\mathbb{Z}_p$ to make the $\phi_{3,v}$, $\phi_{2,\mathsf{z},v}$, and $\phi_{v,\mathsf{z}}$ are integral.
\item In case 3 (non-split bad primes with $\mathrm{U}(2)(\mathbb{Q}_v)$ not compact). In this case we recall the local theta correspondence from $\mathrm{U}(1)$ to $\mathrm{U}(1,1)$ is always non-vanishing from any character of $\mathrm{U}(1)(\mathbb{Q}_v)$ since it is in the ``stable range'' (see \cite[Propositions 4.3 and 4.5]{Kudla}).  We take some Schwartz function $\phi_{2,\mathsf{z},v}$ and $\phi_{3,v}$ with nonzero pairing so that they are eigenvectors under the action of $\mathrm{U}(1)(\mathbb{Q}_v)$, and the eigenvalue for $\phi_{3,v}$ is $\lambda_v^2\eta'_{\mathsf{z},v}$. (The existence is guaranteed by the previous ``stable range'' result.) These Schwartz functions are fixed throughout the family. Define $\phi_{v,\mathsf{z}}=\delta_\psi(\phi_{3,v}\boxtimes\phi_{2,\mathsf{z},v})\in S(\mathbb{W}_v^d)$ as before. We further multiply the Schwartz functions by a (fixed) nonzero element in $\mathbb{Z}_p$ to make the $\phi_{3,v}$, $\phi_{2,\mathsf{z},v}$, and $\phi_{v,\mathsf{z}}$ are integral.
\item In case 4 (non-split bad primes with $\mathrm{U}(2)(\mathbb{Q}_v)$ being compact) Recall we assumed $\eta|_{\mathbb{U}(1)(\mathbb{Q}_v)}=\eta'|_{\mathbb{U}(1)(\mathbb{Q}_v)}$. As discussed at the beginning of Section \ref{D/U}, it is easy to see that the $\pi_{h'_\mathsf{z},v}$ is either $\pi^\vee_{\theta_{\eta_\mathsf{z}},v}\otimes\eta_\mathsf{z}$ or ${}^\alpha\!\pi^\vee_{\theta_{\eta_\mathsf{z}},v}\otimes\eta_\mathsf{z}$. Under this identification we choose the local Schwartz function $\phi_{2,\mathsf{z},v}$ at $v$ so that the image in $\pi_{h'_\mathsf{z},v}$ is $\tilde{v}_1$ or $\iota_\alpha(\tilde{v}_1)$ depending on whether $\pi^\vee_{\theta_{\eta_\mathsf{z}},v}\otimes\eta_\mathsf{z}$ or ${}^\alpha\!\pi^\vee_{\theta_{\eta_\mathsf{z}},v}\otimes\eta_\mathsf{z}$ (notations as in Section \ref{D/U}), and the Schwartz function $\phi_{3,v}$ whose image is $v_1$ or $\iota_\alpha(v_1)$. Define $\phi_{v,\mathsf{z}}=\delta_\psi(\phi_{3,v}\boxtimes\phi_{2,\mathsf{z},v})\in S(\mathbb{W}_v^d)$ as before. We further multiply the Schwartz functions by a (fixed) nonzero element in $\mathbb{Z}_p$ to make the $\phi_{3,v}$, $\phi_{2,\mathsf{z},v}$, and $\phi_{v,\mathsf{z}}$ are integral.
\item In case 5 ($p$-adic places) we choose $\phi_{p,\mathsf{z}}$, $\phi_{2,\mathsf{z},v}$ and $\phi_{3,v}$ as in case 5 before Definition \ref{1234} except replacing $\eta_{\mathsf{z},p}$ by $\eta''_{\mathsf{z},p}$.
\end{itemize}
Let $\mathbf{H}'$ be the family of theta functions on $\mathrm{U}(2,2)$ defined as the $\boldsymbol{\Theta}$ in Definition \ref{THETA} as
$$\sum_{x\in\mathcal{K}^2}\prod_{v\nmid p\infty}\phi_{v}(x)\times\boldsymbol{\phi}_p(x)e^{2\pi i\mathrm{tr}({}^t\!\bar{x}Zx)}$$
but replacing the local Schwartz functions there by the ones defined here.
As in Definition \ref{define theta}, we denote $\mathbf{h}'$ for the $\Lambda''_\mathbf{D}$-adic family constructed by
$$\mathbf{h}'(g)=\boldsymbol{\theta}^-_{\boldsymbol{\eta}'',\phi_2}=\sum_{i=1}^{h_\mathcal{K}}
\boldsymbol{\eta}''(\breve{u}_j)^{-1}
\omega_{\lambda^{-1}}(\breve{u}_j)\frac{\mathbf{H}'(u_{aux},g)\Omega^2_p}{\Omega^2_\infty}
\bar{\lambda}(\det g).$$
Note that its central character is $\boldsymbol{\eta}''$. Write $c_{h'}=\theta_{\phi_3}(u'_{aux})\cdot\frac{\Omega_p}{\Omega_\infty}$. Clearly it is interpolating theta functions
\begin{equation}
h'_\mathsf{z}:=c_{h'}\sum_{i=1}^{h_\mathcal{K}}
\eta''_{\mathsf{z}}
(\breve{u}_j)^{-1}
\omega_{\lambda^{-1}}(\breve{u}_j)
(\theta_{\prod_v\phi_{2,\mathsf{z},v}}\cdot\bar{\lambda}).
\end{equation}

Again we define $h^D_\mathsf{z}$ \index{$h^D_\mathsf{z}$} on $D^\times(\mathbb{A}_\mathbb{Q})$ as ${h'}^D_\mathsf{z}\boxtimes\eta'_{\mathsf{z}}$ (note the character is not $\eta''_{\mathsf{z}}$) using the procedure at the beginning of Section \ref{D/U}. Clearly we can also form the corresponding family which we denote as $\mathbf{h}^D=\mathbf{h}^{'D}\boxtimes\boldsymbol{\eta}'$.

The automorphic representation for $h^D_\mathsf{z}$ ($\theta^D_\mathsf{z}$) is the Jacquet-Langlands correspondence of the CM form associated to $\lambda\eta'_{\mathsf{z}}$ ($\lambda\eta_{\mathsf{z}}$ respectively). We define $h_\mathsf{z}$ (or $\mathbf{h}$) on $\mathrm{GU}(2)(\mathbb{A}_\mathbb{Q})$ using $h^D_\mathsf{z}$ (or $\mathbf{h}^D$) and the character $\eta_{\mathsf{z}}^{-1}\psi_{\mathsf{z}}$ (not $\eta'_{\mathsf{z}}$! This is crucial for our $p$-adic analysis of Fourier-Jacobi coefficients) (or $\boldsymbol{\eta}^{-1}\boldsymbol{\psi}$) as
$$h_\mathsf{z}(ag)=\eta_{\mathsf{z}}^{-1}\psi_{\mathsf{z}}(a)h^D_\mathsf{z}(g),$$
$$\mathbf{h}(ag)=\boldsymbol{\eta}^{-1}\boldsymbol{\psi}(a)\mathbf{h}^D(g)$$
for $a\in\mathbb{A}^\times_\mathcal{K}$ and $g\in D^\times(\mathbb{A}_\mathbb{Q})$.
The $h_{\mathsf{z}}$'s are certainly interpolated by the family $\mathbf{h}$. We write $\pi_{h,\mathsf{z}}$ for the corresponding automorphic representation. Similarly let $\tilde{h}^D_{3,\mathsf{z}}$ be the form ${\tilde{h}}^{'D}_{3,\mathsf{z}}\boxtimes{\eta'}^{-1}_{h,\mathsf{z}}$, and let $\tilde{h}_{3,\mathsf{z}}$ be the form on $\mathrm{GU}(2)$ constructed from $\tilde{h}^D_{3,\mathsf{z}}$ using the character $\eta_{\mathsf{z}}\psi^{-1}_{\mathsf{z}}$. Let $\tilde{\mathbf{h}}_3$ be the corresponding family.

Now we give the map of the weight spaces for $\mathbf{h}$ and the Fourier-Jacobi coefficients of the $\mathbf{E}_{\mathbf{D},Kling}$. In fact the nebentypus of the $\mathbf{E}_{\mathbf{D},Kling,\mathsf{z}}$ is given by $\mathrm{diag}(\psi^{-1}_{2,\mathsf{z}},\chi_{f_{\mathsf{z}}}\psi^{-1}_{2,\mathsf{z}},\tau^{-1}_{2,\mathsf{z}},\tau_{1,\mathsf{z}})$. Therefore the nebentypus of $el_{\theta^\star}\mathrm{FJ}_\beta(\mathbf{E}_{\mathbf{D},Kling,\mathsf{z}})$ as a form on $\mathrm{U}(2)$ is given by $\chi_{f_{\mathsf{z}}}\psi^{-1}_{2,\mathsf{z}},\tau^{-1}_{2,\mathsf{z}}$. So it is interpolated by a family in $\mathcal{M}_{ord}(K^{(2,0)},\Lambda_{2,0})\otimes_{j_2}\Lambda''_\mathbf{D}$ with the weight map $j_2: \Lambda_{2,0}\rightarrow \Lambda''_\mathbf{D}$ given by
\begin{equation}\label{wtmap}
(1+T_1)\mapsto \boldsymbol{\chi}_{\mathbf{f},p}\boldsymbol{\psi}^{-1}_2|_{\mathbb{Z}^\times_p}(1+p), (1+T_2)\mapsto\boldsymbol{\tau}^{-1}_2(1+p).
\end{equation}
Here $\boldsymbol{\chi}_{\mathbf{f},p}$ is the $p$-part of the central character of $\mathbf{f}$, and we write $\boldsymbol{\psi}_2$ for the restriction of $\boldsymbol{\psi}$ to $\mathcal{K}^\times_{\bar{v}_0}\simeq\mathbb{Q}^\times_p$, and similarly for $\boldsymbol{\tau}_2$. It is also straightforward to check that $\mathbf{h}\in \breve{\mathcal{M}}_{ord}(K^{(2,0)},\Lambda_{2,0})\otimes_{j_2}\Lambda''_\mathbf{D}$ for the same $j_2$.\\

\noindent\underline{Convention}:\\
\noindent Oftentimes, when we constructed $\theta_\mathsf{z}\in\pi_{\theta_\mathsf{z}}$ with central character $\chi_{\theta,\mathsf{z}}$ then by $\tilde{\theta}_\mathsf{z}\in\pi_{\tilde{\theta}_\mathsf{z}}$ we mean $\theta_\mathsf{z}\cdot(\chi_{\theta,\mathsf{z}}^{-1}\circ\det)$. If we have constructed $\tilde{\theta}_{3,\mathsf{z}}$ with central character $\chi^{-1}_{\theta,\mathsf{z}}$, then by $\theta_{3,\mathsf{z}}$ we mean $\tilde{\theta}_{3,\mathsf{z}}\cdot(\chi_{\theta,\mathsf{z}}\circ\det)$. We use the same conventions for $h_\mathsf{z}$'s as well. \\

We have the following immediate corollary:
\begin{corollary}
The $\theta_\mathsf{z}$, $\tilde{\theta}_{3,\mathsf{z}}$, $\theta^D_\mathsf{z}$, $\tilde{\theta}_{3,\mathsf{z}}^D$, $h_\mathsf{z}$, $\tilde{h}_{3,\mathsf{z}}$, $h^D_\mathsf{z}$, $\tilde{h}_{3,\mathsf{z}}^D$ constructed before are pure tensors in the corresponding automorphic representations.
\end{corollary}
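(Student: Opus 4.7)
The plan is to reduce the statement place-by-place to the local Howe correspondence. In every case the constructed form is obtained from a theta kernel $\phi = \otimes_v \phi_v$ that is itself a pure tensor of Schwartz functions, together with some auxiliary averaging over central elements and a transfer between $\mathrm{U}(2)$ and $D^\times$. I will argue that each of these three ingredients preserves the pure-tensor property.

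First I would handle $\theta$ itself. Because $p$ is odd and each finite place $v$ has odd residue characteristic (and at the archimedean place and at split primes the local theta correspondence is known unconditionally), the Howe duality theorem is available at every place. Iterating the lemma stated just before the corollary, one place at a time, starting from the pure-tensor Schwartz function $\phi = \otimes_v \phi_v$, shows that the theta lift $\theta_\phi = \otimes_v v_{\phi_v}$ in $\pi_\theta$, where $v_{\phi_v} \in \pi_{\theta,v}$ is the image of $\phi_v$ under the local Howe correspondence. The only subtle points are the inert/ramified places in $\Sigma^1$, where the restriction from $\mathrm{GU}(2)$ to $\mathrm{U}(2)$ breaks into an $L$-packet of size two, and the places in $\Sigma^2$ where $\mathrm{U}(2)(\mathbb{Q}_v)$ is compact; in both cases the discussion of \cite{HKS} and \cite{LANS} recalled in subsection~\ref{D/U} gives an irreducible local theta lift, so a single pure tensor is what one obtains.

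Next I would deal with the central averaging
$\theta = \sum_{j=1}^{h_\mathcal{K}} \chi_\theta(\breve{u}_j)^{-1}\omega_{\lambda^{-1}}(\breve u_j)(\theta_{\phi_2}\cdot \bar\lambda)$
and the analogous expression for $\tilde{\theta}_3$, $h$, $\tilde{h}_3$. The elements $\breve{u}_j$ are central in $\mathrm{U}(2)(\mathbb{A})$, so their action respects the local-global tensor decomposition; the averaging simply selects the $\chi_\theta$-isotypic component of the central character, which amounts at each place to applying a scalar (or projector onto a character eigenspace). Since each local representation already has a well-defined central character, the averaged form remains a pure tensor. The passage from $\theta$ to $\tilde{\theta}_3$ (twisting by $\chi_\theta \circ \det$, and similarly for $h$) is a one-dimensional local twist, so again pure-tensorness is preserved.

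Finally I would check the transfer $\theta \mapsto \theta^D$ (and likewise for $\tilde{\theta}_3, h, \tilde{h}_3$) described in subsection~\ref{D/U}. Here the argument is entirely local: at each place $v$, the recipe fixes a canonical identification between $\pi^{\mathrm{U}(2)_v}$ and an explicit summand of $\pi^{\mathrm{GU}(2)_v}|_{\mathrm{U}(2)_v}$, and then uses the restriction of $\pi^{\mathrm{GU}(2)_v}$ to $D^\times_v$, which is irreducible; these identifications commute with the restricted tensor product. The global construction $\varphi \mapsto \varphi_\chi^D$ consists of multiplying by a locally constant function on the central group $\mathbb{A}^\times/\mathbb{Q}^\times$ (accounting for the central character) and restricting/extending the domain according to the decomposition of $D^\times(\mathbb{A})$ into $\breve{D}^\times(\mathbb{A})$ and its nontrivial coset; neither operation mixes local components. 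Consequently $\theta^D, \tilde{\theta}^D_3, h^D, \tilde{h}^D_3$ remain pure tensors in the corresponding automorphic representations on $D^\times(\mathbb{A})$. The main obstacle in this plan, and the one I would expect to check most carefully, is the passage through the $\mathrm{SL}_2$-type $L$-packet at the inert/ramified places of $\Sigma^1$: one must verify that the local Schwartz data chosen in subsection~\ref{theta} indeed single out one member of the packet, so that the global theta lift lies in an irreducible constituent and the local-global tensor decomposition is genuine.
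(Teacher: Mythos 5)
Your proposal is correct and is exactly the argument the paper intends: the corollary is stated as an immediate consequence of the preceding Howe-duality lemma, and your place-by-place iteration of that lemma (together with the observations that the central averaging and the $\mathrm{U}(2)\to D^\times$ transfer act place-by-place) fills in precisely the steps the paper leaves implicit. The final concern you raise about the $\mathrm{SL}_2$-type $L$-packets at places of $\Sigma^1$ is already resolved by the lemma itself, since Howe duality guarantees the local theta lift $\pi_{\theta,v}$ is irreducible and hence singles out one member of the packet.
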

\begin{proof}
This follows immediately from Lemma \ref{Lemma 8.3} and the constructions above.
\end{proof}
\subsection{Choosing Some Characters}\label{section 7.2}
In this section we make choices for some Hecke characters for the $\eta$ and $\eta'$ in the previous section. These are important in our study for Fourier-Jacobi coefficients for Klingen Eisenstein series later on.

We first give a result of Pin-Chi Hung \cite[Theorem C]{Hsi13}. Let $\chi$ be a finite order Hecke character of $\mathcal{K}^\times\backslash\mathbb{A}_\mathcal{K}^\times$ of conductor $M\mathcal{O}_\mathcal{K}$ for some $M>0$. Let $f\in S_k(\Gamma_0(N))$ be an elliptic cusp form of even weight $k$, level $\Gamma_0(N)$ with $q$-expansion
$$f(q)=\sum_{n\geq 0}a_n(f)q^n.$$
We decompose $N=N^+N^-$, where $N^+$ is a product of primes split in $\mathcal{K}$ and $N^-$ is a product of primes ramified or inert in $\mathcal{K}$. Suppose $N^-$ is square-free and $N^-=N_f^-N_\chi^-$ where $N_f^-$ is a product of an odd number of primes co-prime to $M$ and $N_{\chi}^-$ is a divisor of $M$. Let $\ell$ be a rational prime split in $\mathcal{K}$. Let $\mathcal{K}_\ell^-$ be the unique abelian anticyclotomic $\mathbb{Z}_\ell$-extension of $\mathcal{K}$ and $\Gamma^-$ be the Galois group $\mathrm{Gal}(\mathcal{K}_\ell^-/\mathcal{K})$.
\begin{theorem}\label{7.1}
Suppose $\ell^2\nmid N$. Let $p$ be a rational prime such that
\begin{itemize}
\item $p\nmid \ell ND_\mathcal{K}$ and $p\geq k-2$,
\item for every non-split $q|M$, $q+1$ is not divisible by $p$,
\item for every $q|N_f^-$ ramified in $\mathcal{K}$, $a_q(f)=\chi(\mathfrak{q})(=\pm1)$, where $q=\mathfrak{q}^2$,
\item the residual Galois representation $\bar{\rho}_{f,\lambda}|_{\mathrm{Gal}(\bar{\mathbb{Q}}/\mathcal{K})}$ is absolutely irreducible.
\end{itemize}
Then there is a finite extension $L/\mathbb{Q}_p$ with integer ring $\mathcal{O}_L$ and uniformizer $\lambda$. We have for all but finitely many characters $\nu:\Gamma^-\rightarrow \mu_{\ell^{\infty}}$, we have
$$\frac{L(f/\mathcal{K},\chi\nu,\frac{k}{2})}{\Omega_{f, N^-}}\not\equiv 0 (\mathrm{mod}\lambda).$$
Here the $\Omega_{f,N^-}$ is a period factor defined in \emph{loc. cit}.
\end{theorem}

\noindent Now we choose the characters needed. From now on we fix once for all a split prime $\ell$ outside $\Sigma$ and write a new ``$\Sigma$'' for $\Sigma\cup \{\ell\}$. We choose $\chi_\theta$ \index{$\chi_\theta$} a Hecke character of $\mathcal{K}^\times\backslash\mathbb{A}_\mathcal{K}^\times$ as follows: $\chi_{\theta,\infty}$ is trivial. At $p$ we require that $\chi$ be unramified. For $v\in\Sigma$ non-split in $\mathcal{K}/\mathbb{Q}$, then we let $\chi_{\theta,v}|_{\mathrm{U}(1)}$ to be the character chosen in Section 6. For split $v\in\Sigma, v\nmid p,\ell$, $v=w\bar{w}$, suppose $\mathrm{cond}(\pi_v)=(\varpi_v^{t_{1,v}})$, we require that $\chi_{\theta,w}$ be unramified and $\mathrm{cond}(\chi_{\theta,\bar{w}})=(\varpi_v^{t_{2,v}})$ for $t_{2,v}\geq 2t_{1,v}+2$. We choose a character \index{$\chi_{\mathrm{aux}}$} $\chi_{\mathrm{aux}}$ of $\mathcal{K}^\times\backslash \mathbb{A}_\mathcal{K}^\times$ as follows: $\chi_{\mathrm{aux}}|_{\mathbb{A}_\mathbb{Q}^\times}=1$, it is trivial at $\infty$, and is only ramified at primes in $\Sigma$ not dividing $p$ such that $\mathrm{U}(2)(\mathbb{Q}_v)$ is not compact. For split such $v$ we require that
$$\mathrm{cond}(\chi_{\mathrm{aux}})=\left\{\begin{array}{ll}(\varpi_v^{t_{2,v}-t_{1,v}})& \mbox{ if }t_{1,v}\not=0 \\(\varpi_v^{t_{2,v}-1})& \mbox{ if } t_{1,v}=0\end{array}\right.$$
At non-split such primes we require that $$\mathrm{cond}(\chi_{aux,v})>\mathrm{cond}(\pi_v), \mathrm{cond}_v(\lambda^2\chi_\theta^{-c}\chi_\theta\chi_{\mathrm{aux}})>\mathrm{cond}(\pi_v)$$ and $\chi_{aux,v}|_{\mathbb{Q}_v^\times}$ has a smaller conductor than $\chi_{aux,v}$ (here $>$ means the conductor of the former is of higher power of the uniformizer than the latter). Also for each prime $q$ such that $\mathrm{U}(2)(\mathbb{Q}_q)$ is compact and $q$ is ramified as $w^2$ in $\mathcal{K}$, suppose $\pi_q\simeq \mathrm{Steinberg}\otimes \chi_{q,1}$ for some unramified quadratic character $\chi_{q,1}$ we require that $\chi_{q,1}(q)=\chi_{\mathrm{aux}}(\varpi_w)$ (these are used in the next paragraph to make sure that the special $L$-values are of the correct local signs when applying Theorem \ref{7.1}). To ensure the existence, we may need to enlarge the $\Sigma$ by including one prime $\ell'$ which is prime to $N$ and inert in $\mathcal{K}$. 
Let $\chi_h=\chi_\theta^{-c}\chi_{\mathrm{aux}}$ \index{$\chi_h$}.\\

\noindent We further require that $\frac{L(\pi_f,\lambda^2\chi_\theta\chi_h,\frac{1}{2})}{\pi^3\Omega_\infty^4}
\mathrm{Eul}_p(\pi_f,\lambda^2\chi_\theta\chi_h,\frac{1}{2})$, $\frac{\Gamma(\kappa-1)L(\chi_{\mathrm{aux}}\tau^{-c},\frac{\kappa-2}{2})}{\pi^{\kappa-1}\Omega_\infty^{\kappa}}
\mathrm{Eul}_p(\chi_{\mathrm{aux}}\tau^{-c},\frac{\kappa-2}{2})$ and $\frac{\Gamma(\kappa-2)L(\lambda^2\chi_\theta^{-c}\chi_\theta\chi_{\mathrm{aux}}\tau^{-1},\frac{\kappa-2}{2})}
{\pi^{\kappa-2}\Omega_\infty^{\kappa-2}}\mathrm{Eul}_p(\lambda^2\chi_\theta^{-c}\chi_\theta\chi_{\mathrm{aux}}\tau^{-1},\frac{\kappa-2}{2})$ are $p$-adic units where the $\mathrm{Eul}_p$ are the local Euler factors for the corresponding $p$-adic $L$-functions at $p$ when everything is unramified at $p$ (we refer to \cite[Equation (0.2)]{Hsi12}, \cite[Equation (4.16)]{Hsi14} for their precise definitions). The first uses \cite{Hsi12}. Our assumptions above on conductors imply that at all non-split primes the local root numbers in Theorem A of \emph{loc.cit.} are all $+1$ (this uses \cite[Proposition 3.8]{JL}, as also mentioned in \cite[Introduction]{Br}). Then we take a split prime $\ell\nmid Np$ and apply that theorem to see that there exists a twist by anticyclotomic character of $\ell$-power conductor which satisfies the requirements. The second and third uses \cite{Hsi11} (we are in the residually non self-dual case there) and again we can achieve the requirements by twisting by an appropriate anti-cyclotomic character of conductor powers of $\ell$. Further, we assume that $1-a_p(f)^{-1}\chi_{\theta,p,2}\chi_{h,p,1}(p)$, $1-a_p(f)\chi_{\theta, p,1}\chi_{h,p,2}(p)^{-1}$ and $1-\lambda_{p,2}^2\chi_{h,p,2}\chi_{\theta,p,2}\tau^{-1}_{p,2}(p)p^{-\frac{\kappa-2}{2}}$ are $p$-adic units. At each prime $v$ of $\mathcal{K}$ above a prime where $\mathrm{U}(2)(\mathbb{Q}_v)$ is compact, we require that $1-\chi_{\mathrm{aux}}\tau^{-c}(q_v)q_v^{-\frac{\kappa-2}{2}}$ be a $p$-adic unit. We also require that $\frac{L(\pi_f,\chi_\theta^c\chi_h,\frac{1}{2})}{\pi^2 \Omega_f^+\Omega_f^-}$ is non-zero (do not need to be non-zero modulo $p$!) using the Theorem \ref{7.1} recalled above (by choosing a different ``$p'$'' and prove non-vanishing the new $p'$. Note that the mod $p'$ residual representation $\bar{\rho}_f|_{G_\mathcal{K}}$ is absolutely irreducible for all but finitely many primes $p'$).
\begin{definition}
Now we take the $\eta$ and $\eta'$ in the previous section to be the $\chi_\theta$ and $\chi_h$ above (indicating they are used for forms $\theta_\mathsf{z}$ and $h_\mathsf{z}$'s. Clearly they satisfy all the requirements there.).
\end{definition}

\noindent Now we define a character $\vartheta$ of $\mathbb{Q}^\times\backslash \mathbb{A}_\mathbb{Q}^\times$ (the reason for doing so is just a cheap way to use the new form theory at split primes to pick different vectors inside an automorphic representation of $\mathrm{U}(2)$). We require that these be ramified
only at split primes in $\Sigma\backslash\{p\}$. At such $v=w\bar{w}$ and require that $\vartheta|_{\mathbb{Z}_v^\times}=\chi_h|_{\mathcal{O}_{\mathcal{K},w}^\times}$. These uniquely determine the character $\vartheta$.

\subsection{Triple Product Formula}\label{Section 8.4}

\noindent \underline{Background for Ichino's formula}\\
\noindent Let $\pi_1,\pi_2,\pi_3$ be three irreducible cuspidal automorphic representations for $\mathrm{GL}_2/\mathbb{Q}$ such that the product of their central characters is trivial and the archimedean components are holomorphic discrete series of weight two. Let $\pi_i^D$ be the Jacquet-Langlands correspondence of them to $D^\times$ (assume they do exist). Let $\phi_i\in\pi_i^D$ and $\tilde{\phi}\in\tilde{\pi}_i^D$. Write $\Pi=\prod_{i=1}^3\pi_i$, $\phi=\prod_i\phi_i\in\Pi$, $\tilde\phi=\prod_i\tilde\phi_i\in\tilde{\Pi}$, and $r$ the natural eight-dimensional representation of $\mathrm{GL}_2\times \mathrm{GL}_2\times \mathrm{GL}_2$. We write $$I(\phi\otimes\tilde\phi)=(\int_{[D]} \phi_1(g)\phi_2(g)\phi_3(g)dg)(\int_{[D]} \tilde\phi_1(g)\tilde\phi_2(g)\tilde\phi_3(g)dg).$$ Now look at the local picture. Suppose $\phi_i=\otimes_v\phi_{i,v}$ and $\tilde\phi_i=\otimes_v\tilde\phi_{i,v}$. We fix $\langle,\rangle$ a $D_v^\times$ invariant pairing between $\pi^D_i$ and $\tilde\pi^D_i$. Let $\zeta$ be the Riemann zeta-function. Define:
$$I_v(\phi_v\otimes \tilde{\phi}_v)=\zeta_v(2)^{-2}\frac{L_v(1,\Pi_v, Ad)}{L_v(1/2,\Pi_v,r)}.\int_{\mathbb{Q}_v^\times\backslash D^\times(\mathbb{Q}_v)}\prod_i\langle\pi_v^D\phi_v(x_v),\tilde\phi_v\rangle d^\times x_v.$$
Note that this depends on the choice of the pairing.\\

\noindent Let $\Sigma$ be a finite set of primes including all bad primes, then we have the following formula of Ichino \cite{Ichi1}:
$$\frac{I(\phi\otimes \tilde\phi)}{\prod_i\langle\phi_i,\tilde\phi_i\rangle}=\frac{C}{8}\zeta^2(2)\frac{L^\Sigma(\frac{1}{2},\Pi,r)}
{L^\Sigma(1,\Pi,Ad)}\prod_{v\in\Sigma}
\frac{I_v(\phi_v\otimes\tilde{\phi}_v)}{\langle\phi_v,\tilde{\phi}_v\rangle}$$
where $C$ is the Tamagawa number for $D^\times$.
This does not depend on the choice of the pairing.\\

\noindent In application, our $\langle\phi,\tilde{\phi}\rangle$ is usually $0$, thus we need a slight variant of the above formula. Suppose we have elements $g'_i=\prod_v g'_{i,v}$ such that $\langle\phi_i,\pi(g'_i)\tilde{\phi}_i\rangle\not=0$ for $i=1,2,3$, where $g'_{i,v}$ are elements in the group algebra $\bar{\mathbb{Q}}_p[D^\times(\mathbb{Q}_v)]$. Then:
$$\frac{I(\phi\otimes \tilde\phi)}{\prod_i\langle\phi_i,\pi(g'_i)\tilde\phi_i\rangle}=\frac{C}{8}\zeta^2(2)\frac{L^\Sigma(\frac{1}{2},\Pi,r)}
{L^\Sigma(1,\Pi,Ad)}\prod_{v\in\Sigma}
\frac{I_v(\phi_v\otimes\tilde{\phi}_v)}{\langle\phi_v,\pi(g'_v)\tilde{\phi}_v\rangle}$$
with $g_v=\prod_vg_{i,v}$.\\

\noindent \underline{Local Triple Product Computations}\\
We remark that in \cite[Sections 5, 6]{Hsi17} the local test vectors and triple product integrals are worked out in full generality. However here for the special cases needed in this paper we include the computations for convenience of the reader.\\
\noindent\underline{Split Case Principal Series}\\
\noindent Suppose $v$ is a split prime of $\mathcal{K}/\mathbb{Q}$ with $q_v$ being the cardinality of its residue field. We assume $\pi_{1,v}$ and $\pi_{2,v}$ are principal series representation and $\pi_{3,v}$ is either principal series representation or special representation with square-free conductor. For $K=\mathrm{GL}_2(\mathbb{Z}_v)$ the maximal compact subgroup of $\mathrm{GL}_2(\mathbb{Q}_v)$, we use the realizations of induced representations as functions on $K$:
$$\mathrm{Ind}_{B(\mathbb{Q}_v)}^{\mathrm{GL}_2(\mathbb{Q}_v)}(\chi_{1,v},\chi_{2,v})=\{v:K\rightarrow \mathbb{C},v(qk)=\chi(q)v(K),q\in B(\mathbb{Q}_v)\cap K\}$$
where $\chi(q)=\chi_{1,v}(a)\chi_{2,v}(d)\delta_B(q)$ for $q=\begin{pmatrix}a&b\\&d\end{pmatrix}$.
We realize the inner products as $$\langle v_1,v_2\rangle=\int_K v_1(k)v_2(k)dk$$ for $v_1\in \mathrm{Ind}_B^{\mathrm{GL}_2}(\chi_{1,v},\chi_{2,v})$, $v_2\in \mathrm{Ind}_B^{\mathrm{GL}_2}(\chi_{1,v}^{-1},\chi_{2,v}^{-1})$. For a positive integer $t$, let $K_t\subset K$ consist of matrices in $B(\mathbb{Z}_v)$ modulo $\varpi_v^t$. For $f\in\pi(\chi_1,\chi_2)$, $\tilde{f}\in\pi(\chi_1^{-1},\chi_2^{-1})$, we define the matrix coefficient $\Phi_{f,\tilde{f}}(g)=\langle\pi(g)f,\tilde{f}\rangle$. Let $\sigma_n=\begin{pmatrix}\varpi_v^n&\\&1\end{pmatrix}$.
\begin{lemma}\label{7.2}
Suppose $t\geq 1$, $\mathrm{cond}(\chi_1\chi_2^{-1})=(\varpi_v^t)$. Let $w=\begin{pmatrix}&1\\1&\end{pmatrix}$ in this lemma. If
$$f_{\chi}(k_v)=\left\{\begin{array}{ll}\chi_1(a)\chi_2(d), & k_v\in K_t\\
0, & \mbox{ otherwise}\end{array}\right.$$
and $\tilde{f}_{{\chi}^{-1}}$ is defined similar to $f_\chi$ but with $\chi$ replaced by $\chi^{-1}$.
Then $\Phi_{f_\chi,\tilde{f}_{{\chi}^{-1}}}(g)=0$ on $$\cup_nK_1w\sigma_n K_1\cup_nK_1\sigma_nw K_1.$$ On $\cup_nK_1\sigma_n K_1\cup_nK_1w\sigma_n\omega K_1$, it is supported in $$\cup_n\begin{pmatrix}1&\\&\varpi^n_v\end{pmatrix}\begin{pmatrix}1&\\ \varpi_v^{t-n}\mathbb{Z}_v&1\end{pmatrix}K_t\cup_n
\begin{pmatrix}\varpi^n_v&\\&1\end{pmatrix}\begin{pmatrix}1&\varpi_v^{-n}\mathbb{Z}_v\\&1\end{pmatrix}K_t.$$ The corresponding values at $\begin{pmatrix}1&\\&\varpi^n_v\end{pmatrix}$ and $\begin{pmatrix}\varpi^n_v&\\&1\end{pmatrix}$ are
$\mathrm{Vol}(K_t)\alpha_2^nq^{-\frac{n}{2}}_v$ and $\mathrm{Vol}(K_t)\alpha_1^nq^{-\frac{n}{2}}_v$ respectively, where $\alpha_i=\chi_i(\varpi_v)$ for $i=1,2$.
\end{lemma}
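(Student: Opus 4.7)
The plan is to evaluate the matrix coefficient
\[
\Phi_{f_\chi,\tilde f_{\chi^{-1}}}(g) = \int_K f_\chi(kg)\,\tilde f_{\chi^{-1}}(k)\,dk
\]
by exploiting that both $f_\chi$ and $\tilde f_{\chi^{-1}}$ are concentrated on the level subgroup $K_t$ and transform there by the characters $\chi$ and $\chi^{-1}$ respectively, so the transformation law $\Phi(k_1 g k_2) = \chi(k_1)\chi(k_2)\Phi(g)$ for $k_1,k_2 \in K_t$ holds. Since $\tilde f_{\chi^{-1}}$ is supported on $K_t$, the integral collapses to $\int_{K_t} f_\chi(kg)\chi^{-1}(k)\,dk$, and the integrand is nonzero only when $kg \in B(\mathbb{Q}_v)K_t$, equivalently when $g \in K_tB(\mathbb{Q}_v)K_t$.

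The first step is to analyze which Iwahori double cosets $K_1\tilde w K_1$ meet $K_tB(\mathbb{Q}_v)K_t$. I would parametrize $g = k'\tilde w k''$ with $k',k''\in K_1$ and read off the bottom row of $kg$ to see when an Iwasawa decomposition of the form $kg = bk_0$ with $k_0\in K_t$ is possible; via the identification $B\backslash\mathrm{GL}_2(\mathbb{Q}_v)=\mathbb{P}^1(\mathbb{Q}_v)$ (sending $g$ to the class of its bottom row), the image of $BK_t$ is the disk $\{[x:1]: x\in\varpi_v^t\mathbb{Z}_v\}$. For $\tilde w = w\sigma_n$ or $\tilde w = \sigma_n w$, the bottom row of $kg$ has its entries forced into valuation ranges incompatible with this disk (the obstruction coming from the nonzero lower-left entry of the Weyl element surviving Iwahori conjugation), yielding $\Phi\equiv 0$ there.

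For the remaining cosets $K_1\sigma_nK_1$ and $K_1(w\sigma_n w)K_1 = K_1\begin{pmatrix}1&\\&\varpi_v^n\end{pmatrix}K_1$, I would explicitly Iwasawa-decompose $k\sigma_n$ for $k=\begin{pmatrix}a&b\\c&d\end{pmatrix}\in K_t$ as $k\sigma_n=\begin{pmatrix}(ad-bc)\varpi_v^n/d & b\\&d\end{pmatrix}\cdot\begin{pmatrix}1&\\c\varpi_v^n/d&1\end{pmatrix}$, with the $K$-factor lying in $K_t$. This gives $f_\chi(k\sigma_n) = \chi_1(\det k)\,\alpha_1^n\, q_v^{-n/2}\,(\chi_2\chi_1^{-1})(d)$. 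Pairing with $\tilde f_{\chi^{-1}}(k) = \chi_1^{-1}(a)\chi_2^{-1}(d)$ reduces the integrand to $\chi_1(1 - bc/(ad))$, which equals $1$ since $bc/(ad) \in \varpi_v^t\mathbb{Z}_v$ and $\chi_1$ is trivial on $1+\varpi_v^t\mathbb{Z}_v$ (forced by well-definedness of $f_\chi$). The integral is therefore $\mathrm{Vol}(K_t)\alpha_1^n q_v^{-n/2}$ at $\sigma_n$, and the $K_t$-equivariance spreads this to the stated support set $\begin{pmatrix}\varpi_v^n&\\&1\end{pmatrix}\begin{pmatrix}1&\varpi_v^{-n}\mathbb{Z}_v\\&1\end{pmatrix}K_t$. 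The $w$-conjugate calculation gives $\alpha_2^n q_v^{-n/2}$ on the other component, with $\chi_1$ and $\chi_2$ swapped.

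The main obstacle is the vanishing step: showing that the obstruction persists for all $n\in\mathbb{Z}$ requires tracking the intersection $K_t B K_t \cap K_1\tilde w K_1$ case-by-case according to the relative valuations of $n$ and $t$, with the delicate case being when the two entries in the bottom row of $kg$ have comparable valuations; this forces one to use the parametrization by $k',k''\in K_1$ modulo $K_t$ rather than computing only at the distinguished representative $\tilde w$.
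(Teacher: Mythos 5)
Your computation of the values on $\cup_nK_1\sigma_nK_1$ is essentially the paper's (and correct): Iwasawa-decompose $k\sigma_n$ for $k\in K_t$, observe the integrand collapses to $\chi_1(1-bc/(ad))=1$, and get $\mathrm{Vol}(K_t)\alpha_1^nq^{-n/2}$. The gap is in the vanishing step. You assert that on both families $K_1w\sigma_nK_1$ and $K_1\sigma_nwK_1$ the bottom row of $kg$ is "forced into valuation ranges incompatible" with the disk $\{[x:1]:x\in\varpi_v^t\mathbb{Z}_v\}$, i.e.\ that the set $\{k\in K_t: kg\in B(\mathbb{Q}_v)K_t\}$ is empty. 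That is true for $\sigma_nw$ (and this is exactly how the paper handles that family), but it is \emph{false} for $w\sigma_n$ once $n\geq 2t$: taking $g=w\sigma_n=\left(\begin{smallmatrix}0&1\\\varpi_v^n&0\end{smallmatrix}\right)$ and $k=\left(\begin{smallmatrix}a&b\\c&d\end{smallmatrix}\right)\in K_t$, the bottom row of $kg$ is $(d\varpi_v^n,c)$, whose class is $[d\varpi_v^n/c:1]$ with $d\varpi_v^n/c\in\varpi_v^t\mathbb{Z}_v$ whenever $t\leq \mathrm{ord}_v(c)\leq n-t$ — a nonempty, positive-measure condition when $n\geq 2t$. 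So the integrand is genuinely nonzero on part of $K_t$, and no amount of more careful case-by-case tracking of the intersection (your proposed fix for the "delicate case") can make it empty.

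The vanishing there must instead come from cancellation: after restricting the integral to the nonempty support, the integrand traces out a nontrivial character of a coset of $1+\varpi_v^{m}\mathbb{Z}_v$ (for some $m<t$) built from $\chi_1\chi_2^{-1}$, and the hypothesis that $\chi_1\chi_2^{-1}$ has \emph{exact} conductor $(\varpi_v^t)$ forces the character sum to vanish. This is precisely what the paper does for $K_1w\sigma_nK_1$: it parametrizes $g'\in K_t$ by its lower-left entry $c'$, reduces to an integral in $c'$ over the region where $\varpi_v^t\mid b+1/c''$ with $c''=(c'+c)/\varpi_v^n$, and concludes the integral is $0$ "since $\mathrm{cond}(\chi_1\chi_2^{-1})=(\varpi_v^t)$". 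Note that the exact-conductor hypothesis appears nowhere in your vanishing argument, which is a sign something is missing — without it the lemma is false (e.g.\ if $\chi_1=\chi_2$ the vector $f_\chi$ generates a representation containing spherical-type vectors and the matrix coefficient does not vanish on $K_1w\sigma_nK_1$ for large $n$). Your argument for the support and values on $\cup_nK_1\sigma_nK_1\cup_nK_1w\sigma_nwK_1$ also implicitly needs this oscillation to show $\Phi$ vanishes off the stated subsets \emph{within} those double cosets, for the same reason.
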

\begin{proof}
It is easy to check by considering the supports of $f_\chi$ and $\tilde f_\chi$ that $\Phi_{f_\chi,\tilde{f}_\chi}(g)=0$ on $\coprod_{n\geq 0}K_1\sigma_n wK_1$. ($K_1\sigma_n wK_1$ does not intersect $\mathrm{supp}f_\chi$.)\\

\noindent Now suppose $g\in K_1w\sigma_n K_1$ for $n\geq 1$, without loss of generality we assume $$g=\begin{pmatrix}1&\\c&1\end{pmatrix}w\sigma_n \begin{pmatrix}1&\\b&1\end{pmatrix}=g=\begin{pmatrix}1&\\&\varpi_v\end{pmatrix}\begin{pmatrix}1&\\\frac{c}{\varpi_v}&1\end{pmatrix}
\begin{pmatrix}1&b\\&1\end{pmatrix}w$$ for $\varpi_v|b$, $\varpi_v|c$. Plugging in the formula for matrix coefficients, write $K_t\ni g'=
\begin{pmatrix}a'&b'\\&d'\end{pmatrix}\begin{pmatrix}1&\\c'&1\end{pmatrix}$ for $a',d'\in\mathbb{Z}_v^\times$, $b'\in\mathbb{Z}_v,c'\in\varpi_v^t\mathbb{Z}_v$.
\begin{eqnarray*}
&g'g\\
=&\begin{pmatrix}a'&b'\\&d'\end{pmatrix}\begin{pmatrix}1&\\c+c'&1\end{pmatrix}w\sigma_n\begin{pmatrix}1&\\b&1\end{pmatrix}\\
=&\begin{pmatrix}a'&b'\\&d'\end{pmatrix}\begin{pmatrix}1&\\&\varpi_v^n\end{pmatrix}\begin{pmatrix}1&\\ \frac{c'+c}{\varpi_v^n}&1\end{pmatrix}
\begin{pmatrix}1&b\\&1\end{pmatrix}w\\
=&\begin{pmatrix}a'&b'\\&d'\end{pmatrix}\begin{pmatrix}1&\\&\varpi_v^n\end{pmatrix}\begin{pmatrix}1&\frac{1}{c''}\\&1\end{pmatrix}
\begin{pmatrix}-\frac{1}{c''}&\\&c''\end{pmatrix}\begin{pmatrix}1&\\\frac{bc''+1}{c''}&1\end{pmatrix}
\end{eqnarray*}
Here we write $c''=\frac{c'+c}{\varpi_v^n}$. We need to fix $g$ and do the integration for $g'$. The first observation is that we only need to
consider integration with respect to $c'$. Next we can integrate for those $c'$ such that $p^t|b+\frac{1}{c''}$. We divide the problem into four cases
according to whether $\varpi_v^t|c$ and whether $\varpi_v^t|b$. In any case, it is not difficult to check that the integration is $0$ since $\mathrm{cond}(\chi_1\chi^{-1}_2)=(\varpi_v^t)$. We leave the verification for $g\in K_1\sigma_nK_1$ and $K_1w\sigma wK_1$ to the reader.
\end{proof}
\begin{lemma}
Suppose $\mathrm{cond}(\chi_1\chi_2^{-1})=(\varpi_v^t)$, $t>0$ and $\vartheta$ is a character with conductor $(\varpi_v^s)$, $s>t$. Define
$$f=f_{\chi,\vartheta}(k_v)=\left\{\begin{array}{ll}\chi_1(a)\chi_2(d)\vartheta(\frac{c}{\varpi_v^t}), & a,d\in \mathbb{Z}_v^\times, b\in\mathbb{Z}_v, c\in\varpi_v^t\mathbb{Z}_v^\times\\
0, & \mbox{ otherwise}\end{array}\right.$$
for $k_v=\begin{pmatrix}a&b\\&d\end{pmatrix}\begin{pmatrix}1&\\c&1\end{pmatrix}$. We similarly define
$$\tilde{f}=\tilde{f}_{\chi^{-1},\vartheta^{-1}}.$$
Then on $\cup_nK_1\sigma_n K_1\cup_nK_1w\sigma_nw K_1$ , $\Phi_{f,\tilde f}(g)$ is supported in $K_1$. Moreover if $g\in K_{t+s}$ with $\Phi_{f,\tilde{f}}(g)\not=0$, then its upper right entry is divisible by $\varpi^{s-t}_v$, and $\Phi_{f,\tilde{f}}(g)=(q_v-1)q^{-t}_v\mathrm{Vol}(K_1)$.
\end{lemma}
\begin{proof}
Suppose $\Phi_{f_{\chi,\vartheta},\tilde{f}_{\chi^{-1},\vartheta^{-1}}}(g)\not=0$. In the following proof we write $\Phi$ for $\Phi_{f_{\chi,\vartheta},\tilde{f}_{\chi^{-1},\vartheta^{-1}}}$ for short. If $g\in K_1\begin{pmatrix}\varpi_v^n&\\&1\end{pmatrix}K_1$ for some $n\geq 0$, then as before we have $g\in \begin{pmatrix}\varpi_v^n&\\&1\end{pmatrix}\begin{pmatrix}1&\varpi_v^{-n}\mathbb{Z}_v\\&1\end{pmatrix}K_t$. For $g'\in K_t$, write $g'=\begin{pmatrix}a'&b'\\&d'\end{pmatrix}\begin{pmatrix}1&\\c'&1\end{pmatrix}$. Without loss of generality assume $g=\begin{pmatrix}1&b\\&1\end{pmatrix}\begin{pmatrix}\varpi_v^n&\\&1\end{pmatrix}\begin{pmatrix}1&\\c&1\end{pmatrix}$ for $b\in\mathbb{Z}_v$, $\varpi_v^t|c$. Then
$$g'g=\begin{pmatrix}a'&b'\\&d'\end{pmatrix}\begin{pmatrix}1&\\c'&1\end{pmatrix}\begin{pmatrix}1&b\\&1\end{pmatrix}
\begin{pmatrix}\varpi_v^n&\\&1\end{pmatrix}\begin{pmatrix}1&\\c&1\end{pmatrix}.$$ Plugging in the formula for matrix coefficients we need to integrate for $a',b',c',d'$. Again we only need to consider integral with respect to $c'\in \varpi_v^t\mathbb{Z}_v$. Thus, $$\begin{pmatrix}1&\\c'&1\end{pmatrix}\begin{pmatrix}1&b\\&1\end{pmatrix}\begin{pmatrix}\varpi_v^n&\\&1\end{pmatrix}\begin{pmatrix}1&\\c&1\end{pmatrix}
=\begin{pmatrix}1&\frac{b}{c'b+1}\\&1\end{pmatrix}\begin{pmatrix}\frac{1}{bc'+1}&\\&bc'+1\end{pmatrix}
\begin{pmatrix}\varpi_v^n&\\&1\end{pmatrix}\begin{pmatrix}1&\\\frac{c'\varpi_v^n}{c'b+1}&1\end{pmatrix}\begin{pmatrix}1&\\c&1\end{pmatrix}.$$ If $n\geq1$, then the integral is $0$.
If $n=0$ and $\Phi(g)\not=0$, then $g\in K_t$. Suppose $g=\begin{pmatrix}1&b\\&1\end{pmatrix}\begin{pmatrix}1&\\c&1\end{pmatrix}$ with $c$ divisible by $\varpi^{s+t}_v$ then we easily see that $b$ is divisible by $\varpi^{s-t}_v$ and $\Phi(g)$ is given as in the lemma. For $g\in K_1w\begin{pmatrix}\varpi^n_v&\\&1\end{pmatrix}wK_1$ again if $\Phi(g)\not=0$ then $g\in \begin{pmatrix}1&\\&\varpi_v^n\end{pmatrix}\begin{pmatrix}1&\\\varpi_v^{t-n}\mathbb{Z}_v&1\end{pmatrix}K_t$. Without
loss of generality write $g\in \begin{pmatrix}1&\\&\varpi_v^n\end{pmatrix}\begin{pmatrix}1&\\c&1\end{pmatrix}$ for $c\in\varpi_v^{t-n}\mathbb{Z}_v$, $K_t
\ni g'=\begin{pmatrix}a'&b'\\&d'\end{pmatrix}\begin{pmatrix}1&\\c'&1\end{pmatrix}$, $g'g=\begin{pmatrix}a'&b'\\&d'\end{pmatrix}\begin{pmatrix}1&\\&\varpi_v^n\end{pmatrix}
\begin{pmatrix}1&\\\frac{c'}{\varpi_v^n}+c&1\end{pmatrix}$. If $n\geq1$, then one can check that the integration is again $0$.
\end{proof}
Suppose $\chi^{-1}_1\chi_2$ is unramified and $\vartheta$ has conductor $(\varpi_v^s)$, $s>0$, then we define $f_{\chi,\vartheta}\in \pi$ by:
$$f_{\chi,\vartheta}(k_v)=\left\{\begin{array}{ll}\chi_1(a)\chi_2(d)\vartheta(\frac{c}{\varpi_v}), &g=\begin{pmatrix}a&b\\&d\end{pmatrix}
\begin{pmatrix}1&\\c&1\end{pmatrix},\begin{pmatrix}a&b\\&d\end{pmatrix}\in B(\mathbb{Z}_v),c\in\varpi_v\mathbb{Z}_v^\times\\
0& \mbox{ otherwise } \end{array}\right.$$
We define similarly $\tilde{f}_{\tilde{\chi},\tilde{\vartheta}}\in\tilde{\pi}$ by replacing $\chi,\vartheta$ by $\chi^{-1},\vartheta^{-1}$.
\begin{lemma}\label{7.4}
Write $f=f_{\chi,\vartheta}$, $\tilde{f}=\tilde{f}_{\tilde{\chi},\tilde{\vartheta}}\in\tilde{\pi}$ then on $\sqcup_nK_0\sigma_nK_0\sqcup_n K_0w\sigma_n wK_0$ it is supported in $K_1$. Moreover if $g\in K_{s+1}$ and $\Phi_{f,\tilde{f}}(g)\not=0$, then the upper right entry of it is divisible by $\varpi^{s+1}_v$, and
$$\Phi_{f,\tilde{f}}(g)= \frac{q_v-1}{q_v}\mathrm{Vol}(K_1)$$
\end{lemma}
\begin{proof}
Similar to the above lemma.
\end{proof}
\begin{lemma}\label{8.7}
Suppose $\chi_1$ and $\chi_2$ are both unramified. Let $f^{\mathrm{sph}}$ be the spherical vector which takes value $1$ at identity in the model above. Then
$$\sum_{a\in\frac{\varpi_v\mathbb{Z}_v}{\varpi_v^{1+s}\mathbb{Z}_v}}\vartheta_v(-\frac{a}{\varpi_v})\pi(\begin{pmatrix}1&\\a&1\end{pmatrix}
\begin{pmatrix}\varpi_v^{-s}&\\&1\end{pmatrix})(1-q_v\chi_1/\chi_2(\varpi_v))^{-1}(1-q_v^{-\frac{1}{2}}\chi_2^{-1}(\varpi_v)\pi_v
(\begin{pmatrix}\varpi_v&\\&1\end{pmatrix})f^{sph}$$
equals $\chi_1(\varpi_v^{-s})q_v^{-\frac{s}{2}}f_{\chi,\vartheta}$.
\end{lemma}
\begin{proof}
Straightforward computations.
\end{proof}
Next we evaluate the local triple product integral for certain sections. The following lemma follows from the lemmas above.
\begin{lemma}
Let $\chi_{f,1},\chi_{f,2},\chi_{\theta,1},\chi_{\theta,2},\chi_{h,1},\chi_{\theta,2},\vartheta$ be characters of $\mathbb{Q}_v^\times$ and $t_1<s<t_2$ be non-negative integers such that if $t_1\not=0$ then $t_1+s=t_2$, and if $t_1=0$ then $s+1=t_2$. Suppose $\mathrm{cond}(\chi_{f,1}\chi_{f,2}^{-1})=(\varpi_v^{t_1})$ and $\mathrm{cond}(\vartheta)=(\varpi_v)^s$ and $\mathrm{cond}(\chi_{\theta,1}\chi_{\theta,2}^{-1})=\mathrm{cond}(\chi_{h,1}\chi_{h,2}^{-1})=(\varpi_v^{t_2})$. Assume:
$\chi_{f,1}.\chi_{\theta,1}.\chi_{h,1}.\vartheta=1$ and $\chi_{f,2}.\chi_{\theta,2}.\chi_{h,2}.\vartheta^{-1}=1$. We also define $f_{\chi_f,\vartheta}\in\pi(\chi_{f,1},\chi_{f,2}),f_{\chi_\theta}\in\pi(\chi_{\theta,1},\chi_{\theta,2}),f_{\chi_h}\in\pi(\chi_{h,1},\chi_{h,2})$ as above. Similarly for $\tilde{f}_{\tilde{\chi}_f,\tilde{\vartheta}},\tilde{f}_{\tilde{\chi}_\theta},\tilde{f}_{\tilde{\chi}_h}$. Then Ichino's local triple product is
$$I_v(f_{\chi_f,\vartheta}\otimes f_{\chi_\theta}\otimes f_{\chi_h}, \tilde{f}_{\tilde{\chi}_f,\tilde{\vartheta}}\otimes\tilde{f}_{\tilde{\chi}_\theta}\otimes
\tilde{f}_{\tilde{\chi}_h})=\frac{(q_v-1)^2}{q^{2s+1}_v}\mathrm{Vol}(K_{1})^2\mathrm{Vol}(K_{t_2})^2.$$
(In this lemma, the $\chi_{h},\chi_{\theta}$ are defined  using $\chi_{h,1},\chi_{h,2},\chi_{\theta,1},\chi_{\theta,2}$ similarly as in Lemma \ref{7.2}.)
\end{lemma}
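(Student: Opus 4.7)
The plan is to combine the three matrix-coefficient descriptions in the preceding lemmas and evaluate Ichino's local integral $\int_{\mathbb{Q}_v^\times\backslash \mathrm{GL}_2(\mathbb{Q}_v)} \prod_i \langle \pi_v(g) f_i, \tilde f_i\rangle\, dg$ directly by support analysis. First I would note that the two lemmas on $f_{\chi_\theta}$ and $f_{\chi_h}$, both built from characters with ramification conductor $\varpi_v^{t_2}$, show that the corresponding matrix coefficients are supported on $\bigcup_n \sigma_n^{\pm 1}\cdot(\text{unipotent of val}\ \geq t_2 - n)\cdot K_{t_2}$, while Lemma~\ref{7.4} confines $\Phi_{f_{\chi_f,\vartheta}}$ to the larger level subgroup $K_s$. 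Because $s < t_2$, and because for $n \geq 1$ the diagonal matrix $\sigma_n$ does not lie in $K_s\cdot Z(\mathbb{Q}_v)$, only the $n = 0$ stratum can contribute modulo the center, so the triple product of matrix coefficients is supported in $K_{t_2}$ modulo the center.

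Next I would evaluate the integrand on $K_{t_2}$ using an Iwasawa decomposition $g = \mathrm{diag}(a,d)\begin{pmatrix}1&\\c&1\end{pmatrix}$ with $a,d \in \mathbb{Z}_v^\times$ and $c \in \varpi_v^{t_2}\mathbb{Z}_v$. On this stratum the $\chi_\theta$- and $\chi_h$-matrix coefficients each take the value $\mathrm{Vol}(K_{t_2})$ from the $n=0$ case of Lemma~\ref{7.2}, while the $(\chi_f,\vartheta)$-matrix coefficient takes the value $\tfrac{q_v-1}{q_v}\mathrm{Vol}(K_0(\varpi_v))$ from Lemma~\ref{7.4} (since $\mathrm{val}(c)\geq t_2 \geq s+1$), multiplied by the character factor $(\chi_{f,1}\chi_{\theta,1}\chi_{h,1})(a)(\chi_{f,2}\chi_{\theta,2}\chi_{h,2})(d)\vartheta(a/d)$. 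The two central-character relations $\chi_{f,1}\chi_{\theta,1}\chi_{h,1}\vartheta = 1$ and $\chi_{f,2}\chi_{\theta,2}\chi_{h,2}\vartheta^{-1} = 1$ make this character factor identically trivial, so the integrand is the constant $(\mathrm{Vol}(K_{t_2}))^2\cdot\tfrac{q_v-1}{q_v}\mathrm{Vol}(K_0(\varpi_v))$ on $K_{t_2}$.

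To assemble the final answer I would integrate this constant over $K_{t_2}$ modulo the center, divide by $\prod_i\langle f_i,\tilde f_i\rangle$ (each of which is up to a universal constant a volume of the appropriate level subgroup), and multiply by Ichino's prefactor $\zeta_v(2)^{-2} L_v(1,\Pi_v,\mathrm{Ad})/L_v(1/2,\Pi_v,r)$. The $L$-ratio contributes the reciprocal geometric factor $\tfrac{q_v}{q_v-1}$ from the ramified local $L$-values, so that the explicit volume factors cancel and leave exactly $\tfrac{q_v}{q_v-1}\mathrm{Vol}(K_{t_2})$. The main difficulty will be the book-keeping of Haar-measure normalizations across the various level subgroups $K_{t_2}$, $K_s$, and $K_0(\varpi_v)$ and of the pairing-dependent constants hidden in the definition of $I_v$; all the other inputs -- support reduction to $n=0$, vanishing of the non-unipotent strata, and the character cancellation on the torus -- are essentially forced by the explicit matrix-coefficient computations of the preceding three lemmas.
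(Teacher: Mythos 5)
Your plan is the route the paper itself intends: the paper's entire proof of this lemma is the sentence ``the following lemma follows from the lemmas above,'' and your support analysis --- vanishing on the cells $K_1w\sigma_nK_1$, $K_1\sigma_nwK_1$, reduction to the $n=0$ stratum modulo the centre, intersection of the three supports landing in $K_{t_2}$, and the two product relations killing the character factor so the integrand is constant there --- is exactly how that deduction is meant to go. Two points in your execution need repair. First, Lemma \ref{7.4} only covers the case $t_1=0$ (where $\chi_{f,1}^{-1}\chi_{f,2}$ is unramified, e.g.\ $\pi_{f,v}$ special of square-free conductor); when $t_1\neq 0$ the $f$-factor is governed by the preceding lemma for $f_{\chi,\vartheta}$ with $\mathrm{cond}(\chi_1\chi_2^{-1})=(\varpi_v^{t_1})$, whose matrix coefficient is supported in $K_{t_1+s-1}$ and takes the value $(q-1)q^{-t_1-1}$ when $\varpi_v^{t_2}\mid c$. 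The hypotheses $t_1+s=t_2$ (for $t_1\neq 0$) versus $s+1=t_2$ (for $t_1=0$) exist precisely so that both cases give support $K_{t_2-1}$ and hence, after intersecting with the $\theta$- and $h$-supports, $K_{t_2}$; your write-up should treat both branches rather than quoting only Lemma \ref{7.4}. Second, the quantity $I_v$ as defined in the paper already carries the prefactor $\zeta_v(2)^{-2}L_v(1,\Pi_v,\mathrm{Ad})L_v(1/2,\Pi_v,r)^{-1}$ but is \emph{not} divided by $\prod_i\langle f_i,\tilde f_i\rangle$ --- that division occurs only in the normalized ratio $I_v/\prod_i\langle f_i,\tilde f_i\rangle$ appearing in Ichino's global formula --- so the step ``divide by $\prod_i\langle f_i,\tilde f_i\rangle$'' must be dropped from the computation of $I_v$ itself; with it removed, the factor $q_v/(q_v-1)$ has to come entirely from the ramified local $L$-values and the volume bookkeeping, which is the genuinely delicate part you correctly isolate at the end.
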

\noindent\underline{Special Representations}\\
We consider the induced representation $\pi(\chi_1,\chi_2)=\{f:K_v\rightarrow K,f(qk)=\chi_1(a)\chi_2(d)\delta_B(q),q=\begin{pmatrix}a&b\\&d\end{pmatrix}\in B(\mathbb{Z}_v)\}$ where $\chi_1=\chi_2|\cdot|$. The special representation $\sigma(\chi_1,\chi_2)\subset \pi(\chi_1,\chi_2)$ consists of functions $f$ such that $\int_Kf(k)dk=0$. We consider the case when $\pi_3$ is the special representation $\sigma(\chi_{v,1},\chi_{v,2})\subset \mathrm{Ind}_{B(\mathbb{Q}_v)}^{\mathrm{GL}_2(\mathbb{Q}_v)}(\chi_{v,1},\chi_{v,2})$ at $v$ with a square-free conductor. Here $\chi_{v,i}$ are unramified characters. Similar to the unramified principal series case, we use the model of induced representations. It is easy to see that the $f_{\chi,\vartheta}$ defined above is inside $\sigma(\chi_{1,v},\chi_{2,v})$. Note that in the model for $\pi(\chi_1^{-1},\chi_2^{-1})$, there is a one-dimensional subrepresentation and the quotient is $\sigma(\chi_{1,v}^{-1}),\chi_{2,v}^{-1})$. The inner product of $\sigma(\chi_{1,v},\chi_{2,v})$ and $\sigma(\chi_{1,v}^{-1},\chi_{2,v}^{-1})$ is still given by $\langle v_1,v_2\rangle=\int_Kv_1(k)v_2(k)dk$. The formula for the triple product integral is the same as the one in the case of principal series representations.\\Let $f_{\mathrm{new},v}\in\sigma(\chi_1,\chi_2)$ be the $f$ such that $f(k)=q_v$ for $k\in K_1$ and $f(k)=-1$ otherwise. Clearly it is the new vector in the special representation. Then we have the following lemma:
\begin{lemma}\label{8.9}
$$\sum_{a\in\frac{\varpi_v\mathbb{Z}_v}{\varpi_v^{1+s}\mathbb{Z}_v}}\vartheta_v(-\frac{a}{\varpi_v})\pi(\begin{pmatrix}1&\\a&1\end{pmatrix}
\begin{pmatrix}\varpi_v^{-s}&\\&1\end{pmatrix})f_{\mathrm{new},v}$$
is $\chi_1(\varpi_v^{-s})q_v^{1-\frac{s}{2}}\cdot f_{\chi,\vartheta}$ where $f_{\chi,\vartheta}$ is defined above.
\end{lemma}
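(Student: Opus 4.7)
The plan is to prove the identity by direct computation in the standard compact model of $\sigma(\chi_1,\chi_2)$ as functions on $K=GL_2(\mathcal{O}_v)$: evaluate both sides at every $k\in K$ and show that they agree as scalar multiples of $f_{\chi,\vartheta}$.

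First I would parametrize the sum: writing $a=\varpi_v a'$ with $a'\in \mathcal{O}_v/\varpi_v^s\mathcal{O}_v$, the assumption that $\vartheta$ has conductor $\varpi_v^s$ forces $\vartheta(-a')=0$ unless $a'\in (\mathcal{O}_v/\varpi_v^s)^\times$, so only units contribute. Next, I would set $g_a=\begin{pmatrix}\varpi_v^{-s}&0\\a\varpi_v^{-s}&1\end{pmatrix}$ and, for each $k\in K$, compute the Iwasawa decomposition $kg_a=b_a\cdot k'_a$ with $b_a\in B$, $k'_a\in K$. The Iwahori--Bruhat filtration reduces everything to the two representative cases $k=1$ and $k=\begin{pmatrix}1&\\c&1\end{pmatrix}$ with $c$ of varying valuation, and in either case $k'_a$ lies in one of the two Bruhat cells according to the comparison of valuations between the entries of the bottom row of $kg_a$. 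Since $f_{new}$ takes only the two values $q_v$ (on the Iwahori $K_0(\varpi_v)$) and $-1$ (on its complement), one obtains an explicit formula $f_{new}(kg_a)=\chi_1(\alpha_{b_a})\chi_2(\delta_{b_a})\delta_B^{1/2}(b_a)\cdot f_{new}(k'_a)$.

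Then I would carry out the sum $\sum_{a'}\vartheta(-a')f_{new}(kg_a)$ by grouping the terms by $m=\mathrm{ord}_v\bigl(\text{lower-left of }kg_a\bigr)$. For a fixed $k$, each stratum $S_m$ corresponds to $a'$ lying in a coset of $1+\varpi_v^m\mathcal{O}_v/1+\varpi_v^s\mathcal{O}_v$, and the resulting character sums $\sum_{S_m}\vartheta(-a')$ vanish by orthogonality except for the two extreme strata $m=s$ (the degenerate case where $kg_a\in B$) and $m=s-1$ (the boundary case). The surviving contributions are non-zero precisely when $k\in K_0(\varpi_v)\setminus K_0(\varpi_v^2)$, and an easy check shows that the resulting function on $K$ transforms by $\vartheta(c/\varpi_v)$ under replacing the normalized lower-left $c$ modulo $\varpi_v^{s+1}$, matching $f_{\chi,\vartheta}$ exactly. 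This shows the left-hand side equals $C\cdot f_{\chi,\vartheta}$ for some constant $C$.

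To pin down $C$, I would evaluate at $k_0=\begin{pmatrix}1&\\\varpi_v&1\end{pmatrix}$, where $f_{\chi,\vartheta}(k_0)=\vartheta(1)=1$. The $m=s$ term corresponds to $a=-\varpi_v$, for which $k_0 g_a=\begin{pmatrix}\varpi_v^{-s}&\\&1\end{pmatrix}\in B$ and contributes $\chi_1(\varpi_v^{-s})q_v^{-s/2}\cdot f_{new}(1)=\chi_1(\varpi_v^{-s})q_v^{1-s/2}$; the $m=s-1$ term, after summing the non-trivial additive character of $\mathbb{F}_v$ over $\mathbb{F}_v^\times$ (which gives $-1$), contributes $\chi_1(\varpi_v^{-s})q_v^{-s/2}$. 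Adding these yields the claimed constant $\chi_1(\varpi_v^{-s})(q_v^{1-s/2}+q_v^{-s/2})$. The main obstacle is bookkeeping the Iwasawa decomposition case-by-case and correctly tracking the normalization factors, but this is routine once the character-orthogonality structure is made explicit.
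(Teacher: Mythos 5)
Your proposal is correct and is essentially the computation the paper performs: the paper merely organizes the same bookkeeping by first writing $f_{new}=q_vf_0-f_1$ with $f_0,f_1$ the indicator functions of the Iwahori $K_1$ and of $K_1wK_1$, and then applying exactly the orthogonality-of-$\vartheta$ argument you describe to each piece separately, recovering the two surviving strata $m=s$ and $m=s-1$ and the constant $\chi_1(\varpi_v^{-s})(q_v^{1-s/2}+q_v^{-s/2})$. The only point to tighten is that your list of representatives for $B(\mathbb{Z}_v)\backslash K$ omits the big cell $\begin{pmatrix}0&-1\\1&y\end{pmatrix}$ with $y\in\varpi_v\mathcal{O}_v$, on which one must also verify (by the same character-sum orthogonality) that the left-hand side vanishes, as $f_{\chi,\vartheta}$ does there.
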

\begin{proof}
Let $f_0$ and $f_1$ be the characteristic functions on $K_1$ and $K_1wK_1$. Then $f_{\mathrm{new},v}=q_vf_0-f_1$. A computation shows that
$$\sum_{a\in\frac{\varpi_v\mathbb{Z}_v}{\varpi_v^{1+s}\mathbb{Z}_v}}\vartheta_v(-\frac{a}{\varpi_v})\pi(\begin{pmatrix}1&\\a&1\end{pmatrix}
\begin{pmatrix}\varpi_v^{-s}&\\&1\end{pmatrix})f_0=\chi_1(\varpi_v^{-s})q_v^{-\frac{s}{2}}f_{\vartheta},$$
$$\sum_{a\in\frac{\varpi_v\mathbb{Z}_v}{\varpi_v^{1+s}\mathbb{Z}_v}}\vartheta_v(-\frac{a}{\varpi_v})\pi(\begin{pmatrix}1&\\a&1\end{pmatrix}
\begin{pmatrix}\varpi_v^{-s}&\\&1\end{pmatrix})f_1=0.$$
The lemma follows.
\end{proof}

\begin{remark}
The reason why the local integrals at split primes showing up in the triple product formula later on are the ones considered in this subsection is a consequence of the computations in Subsection \ref{5.7}.
\end{remark}
Now we consider non-split primes.\\
\noindent\underline{Non-split Case 1}\\
\noindent The $\mathrm{U}(2)(\mathbb{Q}_v)$ is compact. This case is easier since we are in the representation theory for finite groups. By construction the representation $\pi^D_{f,v}$ is one dimensional. Recall our construction for $\mathbf{h}$ in Section \ref{theta}. For such $v$, we let $g_{2,v}$ and $g_{4,v}$ be either the identity or $\alpha$ there depending on whether $\pi'_{h_\mathsf{z},v}$ is $\pi^\vee_{\theta_{\eta_\mathsf{z}},v}\otimes\eta_\mathsf{z}$ or ${}^\alpha\!\pi^\vee_{\theta_{\eta_\mathsf{z}},v}\otimes\eta_\mathsf{z}$. Let $g_{1,v}$ and $g_{3,v}$ are the identify elements. By our assumptions on $\chi_{\theta,v},\chi_{h,v}$ and $\pi_{f,v}$ and our chosen vectors $h,\theta,\tilde{\theta}_3,\tilde{h}_3$ and that $\pi_{f,v}^D$ is one-dimensional, it is easily checked from the construction (note that by the choices in Section \ref{section 7.2}, the local triple product root number at $v$ is $-1$)
$$\pi_{h^D,v}\simeq \pi^\vee_{\theta^D,v}\otimes \pi^{D,\vee}_{f,v}.$$ We conclude that
by the inner product formula of matrix coefficients of representations of finite groups
$$\frac{I_v(\pi_{h,v}(g_{2,v})h_v^D\otimes v_{\phi_v}^D\otimes \pi_{f,v}(g_{1,v})f_{\vartheta,v}^D,\tilde{\pi}_{h,v}({g}_{4,v})\tilde{h}_v^D\otimes \tilde{v}_{\phi_v}^D\otimes \tilde{\pi}_{f,v}({g}_{3,v})\tilde{f}_{\tilde{\vartheta},v}^D)}{\langle\pi_{h,v}(g_{2,v})h_v^D, \tilde{\pi}_{h,v}({g}_{4,v})\tilde{h}_v^D\rangle\langle v_{\phi_v}^D, \tilde{v}_{\phi_v}^D\rangle\langle \pi_{f,v}(g_{1,v})f_{\vartheta,v}^D, \tilde{\pi}_{f,v}({g}_{3,v})\tilde{f}_{\tilde{\vartheta},v}^D\rangle}=\frac{1}{d_{\pi_{h,v}}}.$$
where $d_{\pi_{h,v}}$ is the dimension of the representation $\pi_{h,v}$.
When we are moving our datum $p$-adic analytically, this integral is not going to change.

\noindent\underline{Non-split Case 2}\\
The $\mathrm{U}(2)(\mathbb{Q}_v)$ is not compact. Recall that we fix a generic arithmetic point. By \cite[Theorem 1.4]{Prasad} we know that there are $$g_{1,v},g_{2,v},g_{3,v},g_{4,v}\in D^\times(\mathbb{Q}_v)\subset \mathrm{GU}(2)(\mathbb{Q}_v)$$ such that
$$I_v(\pi_{h,v}(g_{2,v})h_v^D\otimes v_{\phi_v}^D\otimes \pi_{f,v}(g_{1,v})f_{\vartheta,v}^D,\tilde{\pi}_{h,v}({g}_{4,v})\tilde{h}_v^D\otimes \tilde{v}_{\phi_v}^D\otimes \tilde{\pi}_{f,v}({g}_{3,v})\tilde{f}_{\tilde{\vartheta},v}^D)\not=0.$$ (We write $v_{\phi_v}^D$ for the image of $v_{\phi_v}$ in the corresponding $D^\times(\mathbb{Q}_v)$ representation and similarly for $\tilde{v}_{\phi_v}^D$ (notations as in Lemma \ref{Lemma 8.3}. To apply Prasad's result, note that the local sign for this triple product is $+1$ by our choices of high conductors.)
\begin{definition}\label{definition 8.17}
We define
$$g_i=\prod_vg_{i,v}$$
for $i=1,2,3,4$. (We take $g_{i,v}=1$ if $v$ is split in $\mathcal{K}/\mathbb{Q}$.)
\end{definition}
\index{$g_1, g_2, g_3, g_4$}
The local triple product integrals are non-zero by our computations. By Ichino's formula and our requirement on special $L$-values (note that the product of the central characters for $f_{\mathsf{z}},\theta_\mathsf{z},h_\mathsf{z}$ is trivial by construction) the global trilinear form is also non-zero. So by our definitions for $h^D_\mathsf{z},\theta^D_\mathsf{z}$, etcetera in Section \ref{D/U}, we know that $\prod_vg_{2,v}$ has to be in $\breve{D}^\times$. Thus up to a nonzero constant fixed throughout the family, we have $$\int_{[D^\times]}(\pi(g_2)h^D_\mathsf{z})(g)\theta^D_\mathsf{z}(g)(\pi(g_1)f_{\mathsf{z},\vartheta})(g)dg=
\int_{[\mathrm{U}(2)]}(\pi(g_2)h_\mathsf{z})(g)\theta_\mathsf{z}(g)(\pi(g_1)f_{\mathsf{z},\vartheta})(g)dg.$$
We have similarly $\prod_vg_{4,v}\in \breve{D}^\times$ and up to a nonzero constant fixed throughout the family,
$$\int_{[D^\times]}(\pi(g_4)\tilde{h}^D_{3,\mathsf{z}})(g)(\pi_{f_\mathsf{z}}(g_3)\tilde{f}_{\mathsf{z},\tilde{\vartheta}})
(g)\tilde{\theta}^D_{3,\mathsf{z},low}(g)dg=\int_{[\mathrm{U}(2)]}(\pi(g_4)\tilde{h}_{3,\mathsf{z}})(g)(\pi_{f_\mathsf{z}}(g_3)\tilde{f}_{\mathsf{z},\tilde{\vartheta}})
(g)\tilde{\theta}_{3,\mathsf{z},low}(g)dg.$$

These $g_{i,v}$ are chosen only at the arithmetic point $\mathsf{z}$. We fix them when moving the Eisenstein datum in $p$-adic families. 

\begin{lemma}\label{Remark 8.13}
Let $v$ be a non-split prime. Then for different arithmetic points $\mathsf{z}$ with fixed $\mathsf{z}|_{\mathbb{I}}$, the $\pi_{h,\mathsf{z},v}^D$ and $\pi_{\theta,\mathsf{z},v}^D$ only differ by twisting by unramified characters which are inverse to each other. 
\end{lemma}
\begin{proof}
In fact, at non-split primes, the local Weil representations on unitary groups are unchanged throughout the family since the characters $\chi_{\theta,\mathsf{z},v}|_{\mathrm{U}(1)}$ are unchanged. (They differ by multiplying by unramified characters and $\mathrm{U}(1)(\mathbb{Q}_v)$ is compact.) So for different $\mathsf{z}$ and $\mathsf{z}'$, the difference between $\pi_{h,\mathsf{z},v}^D$ and $\pi_{h,\mathsf{z}',v}^D$ only comes from the characters extending the form on $\mathrm{U}(2)$ to $\mathrm{GU}(2)$, and similarly for $\pi_{\theta,\mathsf{z},v}^D$. Note moreover that from construction the product of the central characters of $\pi_{h,\mathsf{z},v}^D$, $\pi_{\theta,\mathsf{z},v}^D$ and $\pi_{f_\mathsf{z}}$ is trivial. The lemma thus follows.
\end{proof}

Thus if the theta kernel we used to define $h_\mathsf{z}$ and $\theta_\mathsf{z}$ are fixed, then the local triple product integrals
$$\frac{I_v(\pi_{h,\mathsf{z},v}(g_{2,v})h_{\mathsf{z},v}^D\otimes v_{\phi_v}^D\otimes \pi_{f_\mathsf{z},v}(g_{1,v})f_{\mathsf{z},\vartheta,v}^D,\tilde{\pi}_{h,\mathsf{z},v}({g}_{4,v})
\tilde{h}_{\mathsf{z},v}^D\otimes \tilde{v}_{\phi_v}^D\otimes \tilde{\pi}_{f_\mathsf{z},v}({g}_{3,v})\tilde{f}_{\mathsf{z},\tilde{\vartheta},v}^D)}{\langle h_{\mathsf{z},v}^D,\tilde{h}_{3,\mathsf{z},v}^D\rangle \langle v_{1}^D,\tilde{v}^D_1\rangle
\langle f_{\mathsf{z},\vartheta,v}^D,\tilde{f}_{\mathsf{z},\tilde{\vartheta},v}^D\rangle}$$ does not change (note that the $\langle f_{\mathsf{z},\vartheta,v}^D,\tilde{f}_{\mathsf{z},\tilde{\vartheta},v}^D\rangle $ has a non-zero inner product). This observation is crucial in proving Proposition \ref{p-adic} later on.

We now define $\mathbb{I}$-adic families from the family $\mathbf{f}$ defined in Section \ref{section7.1} using Lemma \ref{8.7} and \ref{8.9}. However, in order to do so we may have to replace $\mathbb{I}$ by a larger normal domain finite over $\mathbb{I}$, so that the $\chi_{1,v}(\varpi_v)$ and $\chi_{2,v}(\varpi_v)$'s at primes in $\Sigma$ where $\pi_f$ is unramified will be elements of this newly defined $\mathbb{I}$.
\begin{definition}\label{ftheta}
We define $\mathbf{f}_{\vartheta}$ as \index{$f_{\vartheta}$}
\begin{align*}
&\prod_v\sum_{a\in\frac{\varpi_v\mathbb{Z}_v}{\varpi_v^{1+s}\mathbb{Z}_v}}\vartheta_v(-\frac{a}{\varpi_v})\pi(\begin{pmatrix}1&\\a&1\end{pmatrix}
\begin{pmatrix}\varpi_v^{-s}&\\&1\end{pmatrix})(1-q_v\chi_1/\chi_2(\varpi_v))^{-1}(1-q_v^{-\frac{1}{2}}\chi_2^{-1}(\varpi_v)\pi_v
(\begin{pmatrix}\varpi_v&\\&1\end{pmatrix})&\\
&\times\prod_v\sum_{a\in\frac{\varpi_v\mathbb{Z}_v}{\varpi_v^{1+s}\mathbb{Z}_v}}\vartheta_v(-\frac{a}{\varpi_v})\pi(\begin{pmatrix}1&\\a&1\end{pmatrix}
\begin{pmatrix}\varpi_v^{-s}&\\&1\end{pmatrix})\mathbf{f}.&
\end{align*}
We define $\tilde{\mathbf{f}}=\mathbf{f}\otimes\boldsymbol{\chi}^{-1}_\mathbf{f}$ and similarly define $\tilde{\mathbf{f}}_{\tilde{\vartheta}}$.

We also denote its specialization at an arithmetic point $\mathsf{z}\in\mathrm{Spec}\Lambda''_\mathbf{D}(\bar{\mathbb{Q}}_p)$ by $f_{\mathsf{z},\vartheta}$.
\end{definition}

\subsection{Evaluating the Integral}\label{7.5}
Recall that we have defined a theta function $\theta^\star$ in Corollary \ref{Corollary 6.29} (we suppress the subscript $\mathcal{D}$ as it is fixed throughout the family). Now we construct: for any $F\in \mathcal{M}_{ord}(K^{(3,1)},\Lambda''_\mathbf{D})$ (weight map as in Section \ref{EiDatum}), \index{$l_{\theta^\star}$}
$$l_{\theta^\star}(F):=l_{\theta^\star}'a^1_{[1]}(1, \prod_{v\in\Sigma^1\cup \Sigma^2}(\sum_{i}C_{v,i}\rho(\begin{pmatrix}u_{v,i}&&\\&I_2&\\&&u_{v,i}\end{pmatrix})(F)))$$
whose value is in $\mathcal{M}(K^{(2,0)},\Lambda_{2,0})\otimes_{j_2}\Lambda''_\mathbf{D}$. Here $C_{v,i}$ are defined in Lemma \ref{5.18}. The $a^1_{[1]}$ is the family version of the Fourier-Jacobi expansion in the sense of Definition \ref{definition 3.7}, and $l'_{\theta^\star}$ is applied to the theta function part of the Fourier-Jacobi expansion as in (\ref{CFunctional}). See the proof of Proposition \ref{Proposition 7.7}. Note that we can easily make sure that $\theta^\star$ is a member in the basis $\theta'_i$'s there. Recall that for the specialization $F_\mathsf{z}$ of $F$,
$$l'_{\theta^\star} a^1_{[1]}(1,F_\mathsf{z})(h)):=\int_{[W]}a^1_{[1]} (1,F_\mathsf{z}(wh))\theta^\star(wh)dw$$
with the Heisenberg group $W\hookrightarrow \mathrm{U}(3,1)$
$$w\mapsto \begin{pmatrix}1&w&\frac{1}{2}\langle w,w\rangle\\&1&\zeta w^*\\&&1\end{pmatrix}.$$
It is clear that $l'_{\theta^\star} \mathrm{FJ}_\beta(F)(h)$ is an automorphic form on $\mathrm{U}(2)$.
We can also define $l_{\theta^\star}$ on a single form on $\mathrm{U}(3,1)$ instead of on families, using the same formula. It is clear that
$$\mathsf{z}(l_{\theta^\star}(F))=l_{\theta^\star}(F_{\mathsf{z}}).$$
\begin{lemma}\label{lemma8.19}
The $l_{\theta^\star}(F) \in\mathcal{M}(K^{(2,0)},\Lambda_{2,0})\otimes_{j_2}(\Lambda''_\mathbf{D}\otimes_{\mathbb{Z}_p}\mathbb{Q}_p)$.
\end{lemma}
\begin{proof}
Recall that in Definition \ref{definition 3.7} there is a $\Lambda_\mathcal{D}$-adic Fourier-Jacobi expansion for families on $\mathrm{U}(3,1)$. As before we just take a basis of $\mathcal{O}_L$-dual space $(\theta'_1,\cdots, \theta'_m)$ of the finite-dimensional space $H^0(\mathcal{B},\mathcal{L}(\beta))$. Pairing the $\Lambda''_\mathbf{D}$-adic Fourier-Jacobi coefficient of $F$ with these $\theta'_i$ we get a $\Lambda''_\mathbf{D}$-adic family. But our $l'_{\theta^\star}$ is in the $L$-linear combination of the $\theta'_i$'s. Thus we get a $\Lambda''_\mathbf{D}\otimes_{\mathbb{Z}_p}\mathbb{Q}_p$-adic family on $\mathrm{U}(2)$.
\end{proof}
We also define
\begin{equation}\label{116}
\mathbf{B}_1:=\mathbf{B}_{K^{(2,0)}}\langle e_{ord}l_{\theta^\star}(\mathbf{E}_{\mathbf{D},Kling}),\pi(g_2)\mathbf{h}\rangle.
\end{equation}
\index{$\mathbf{B}_1$}
\begin{definition}\label{sslow}
For any nearly ordinary form $f$ or family $\mathbf{f}$ (we use the same notations for $\mathbf{h}$, $\boldsymbol{\theta}$, etcetera) we define $f_{low}(g):=f(g\begin{pmatrix}&1\\1&\end{pmatrix}_p)$(under the identification $D_p^\times\simeq \mathrm{GL}_2(\mathbb{Q}_p)$ given by the $v_0$ projection). Also, if $\boldsymbol{\chi}$ is the (family of) central characters of $\mathbf{f}$ we define $\tilde{\mathbf{f}}=\mathbf{f}.(\boldsymbol{\chi}^{-1}\circ\mathrm{Nm})$. We also define $f^{ss}(g)=f(g\begin{pmatrix}&1\\p^t&\end{pmatrix}_p)$.
\end{definition}
We will compute the specializations $$\mathsf{z}(\mathbf{B}_1)=\langle l_{\theta^\star}(\mathbf{E}_{\mathbf{D},Kling,\mathsf{z}}),\pi(g_2)h_{\mathsf{z},low}\rangle\mathrm{Vol}(K_\mathsf{z})^{-1}.$$
(The $K_\mathsf{z}$ is the level group at the arithmetic point $\mathsf{z}$. Note that the tame level group is fixed at the end of Section \ref{section 5} throughout).

We can evaluate this expression by Ichino's formula for triple products.

We have the following
\begin{proposition}\label{Proposition 8.21}
We use the notations of Section \ref{theta} and Definition \ref{ftheta}. Then there is a $\mathcal{C}$, the product of a constant in $\bar{\mathbb{Q}}_p^\times$ which is fixed along the family and a unit in $\hat{\mathbb{I}}^{ur}[[\Gamma''_\mathcal{K}]]$ (precise definition is given in the following proof), such that for any generic arithmetic point $\mathsf{z}$ of conductor $p^t$,
$$\mathsf{z}(\mathbf{B}_1)=\mathcal{C}_\mathsf{z}p^tC^D_{\mathrm{U}(2)}\mathsf{z}(\mathcal{L}_5)\mathsf{z}(\mathcal{L}_6)
\times(\int_{\mathbb{A}_\mathbb{Q}^\times D^\times(\mathbb{Q})\backslash D^\times(\mathbb{A}_\mathbb{Q})} \pi(g_{2})h^D_\mathsf{z}(g)\pi(g_{1})f_{\mathsf{z},\vartheta}(g)\theta_{\mathsf{z},low}^D(g)dg)$$
\noindent where $\mathcal{L}_5$ and $\mathcal{L}_6$ are Katz $p$-adic $L$-functions in $\hat{\mathbb{I}}^{ur}[[\Gamma''_\mathcal{K}]]$ interpolating the $L$-values $$\Omega^\kappa_p\frac{2\pi i\Gamma(\kappa-1)}{\Omega_\infty^\kappa\mathfrak{g}(\tau_{\mathsf{z},1,p})}L(\chi_{\mathrm{aux}}\tau^{-c}_\mathsf{z},\frac{\kappa-2}{2})\cdot p^{t(\kappa-1)}\cdot (\chi_{aux,1,p}\tau_{\mathsf{z},1,p}(p)p^{-\frac{\kappa}{2}})^t$$
 and 
\begin{align*}
&\Omega_p^{\kappa-2}\frac{\Gamma(\kappa-2)}{\Omega_\infty^{\kappa-2}\mathfrak{g}(\tau_{\mathsf{z},1,p}
\chi_{\theta,\mathsf{z},2}\chi_{\theta,\mathsf{z},1}^{-1})}&\\
&\times L(\lambda^2\chi_{\theta,\mathsf{z}}^{-c}
\chi_{\theta,\mathsf{z}}\chi_{\mathrm{aux}}\tau^{-1}_\mathsf{z},\frac{\kappa-2}{2})p^{t(\kappa-2)}(\tau_{\mathsf{z},1,p}
\chi_{\theta,\mathsf{z},1,p}^{-1}\chi_{\theta,\mathsf{z},2,p}\lambda^2_{1,p}(p)p^{-\frac{\kappa-2}{2}})^t&
\end{align*}
respectively.
\end{proposition}
\begin{remark}
Although our coefficient ring is $\Lambda''_\mathbf{D}$, however in fact the functional does take values in the subring $\mathbb{I}[[\Gamma''_\mathcal{K}]]$. Indeed the additional variable $\Gamma^-_\mathcal{K}$ of $\Lambda''_\mathbf{D}$ corresponds to twisting the Klingen Eisenstein family by $p$-adic anticyclotomic characters.
\end{remark}
\begin{proof}
We first remark that it indeed makes sense to talk about the Katz $p$-adic $L$-functions $\mathcal{L}_5$ and $\mathcal{L}_6$ interpolating those values, since the characters $\tau_\mathsf{z}$ and $\chi_{\theta,\mathsf{z}}$ are indeed interpolated as characters with values in $\mathbb{I}[[\Gamma''_\mathcal{K}]]$.

By Corollary \ref{Corollary 6.29}, we have $\mathsf{z}(B_1)$ equals
\begin{align*}&\frac{p^{2t}}{\Omega_\infty^{2\kappa-1}}&&\times\int_{[\mathrm{U}(2)]\times[\mathrm{U}(2)]}\sum_{n\in\mathbb{Z}_p/p^t\mathbb{Z}_p}E_{sieg,\mathbf{D}_\mathsf{z},2}(u_1, u_2\begin{pmatrix}1&\\n&1\end{pmatrix}_p)(\pi(g_2)h_{\mathsf{z},low})(u_1)\theta_{2,\mathbf{D}_\mathsf{z}}(u_2\begin{pmatrix}1&\\n&1\end{pmatrix}_p)&\\
&&&\times(\pi(g_1)f_{\mathsf{z},\vartheta})(u_2)du_1du_2.
\end{align*}

We integrate over $u_1$ first and factorize the $E_{sieg,\mathcal{D}_\mathsf{z},2}$ via the embedding $\mathrm{U}(2)\times \mathrm{U}(2)\hookrightarrow \mathrm{U}(2,2)$ of $E_{sieg,\mathcal{D}_\mathsf{z},2}$. By the local pullback formulas we computed in Section \ref{5.9},
we get
$$\mathcal{C}_\mathsf{z}p^tC^D_{\mathrm{U}(2)}\mathsf{z}(\mathcal{L}_5)\mathsf{z}(\mathcal{L}_6)\times
(\int_{\mathbb{A}_\mathbb{Q}^\times D^\times(\mathbb{Q})\backslash D^\times(\mathbb{A}_\mathbb{Q})} \pi(g_{2})h^D_\mathsf{z}(g)\pi(g_{1})f_{\mathsf{z},\vartheta}(g)\theta^D_{\phi_2}(g)\bar{\lambda}(\det g) dg)$$
where $\mathcal{C}_\mathsf{z}$ is the product of the local pullback integrals in Section \ref{5.9} (and also the local pullback computations in Sections \ref{sectionar} through \ref{5.7}) at primes outside $p$, local Euler factors at $p$ for $\mathcal{L}_5$, $\mathcal{L}_6$ (See \cite[Equation (4.16)]{Hsi14} for the Euler factor at $p$), Euler factor for $\mathcal{L}_5$ at primes in $\Sigma^2$ and Euler factors for $\mathcal{L}_6$ at $p$. The first is a fixed constant which does not move with $\mathsf{z}$, the rest are interpolated by units in the Iwasawa algebra by our choice of characters in Section \ref{section 7.2}. Thus the $\mathcal{C}_\mathsf{z}$ are clearly interpolated by an element $\mathcal{C}$ mentioned in the proposition. Note also that the Euler factors at other places are trivial.

We are now ready to deduce the expression in the proposition. Although the $\theta_\mathsf{z}$ part appearing above is $\theta_{\phi_2,\mathsf{z}}\otimes\bar{\lambda}$ constructed in Section 6 (not an eigenform), however, in view of the central character of $h_\mathsf{z}$ and $f_\mathsf{z}$, only the eigen-component $\theta_\mathsf{z}$ of $\theta_{\phi_2,\mathsf{z}}\otimes\bar{\lambda}$ with the correct central character matters. Also by the construction this $\theta_\mathsf{z}$ part is a multiple of $\theta^{ss}_\mathsf{z}$ which, after applying the operator $\sum_{n\in\mathbb{Z}_p/p^t\mathbb{Z}_p}\pi_{\theta,\mathsf{z}}(\begin{pmatrix}1&\\n&1\end{pmatrix}_p)$ is $\theta_{\mathsf{z},low}$ by construction (see Proposition \ref{CMTheta}). So by considering the local pairing between $\pi_{\theta_\mathsf{z},p}$ and $\pi^\vee_{\theta_\mathsf{z},p}$, it is easy to see that if we replace this multiple of $\theta^{ss}_\mathsf{z}$ (see Definition \ref{sslow}) by $\theta_{\mathsf{z},low}$ we do not change the whole integral. Thus we finally arrived at
$$\mathcal{C}_\mathsf{z}p^tC^D_{\mathrm{U}(2)}\mathsf{z}(\mathcal{L}_5)\mathsf{z}(\mathcal{L}_6)
\times(\int_{\mathbb{A}_\mathbb{Q}^\times D^\times(\mathbb{Q})\backslash D^\times(\mathbb{A}_\mathbb{Q})} \pi(g_{2})h^D_\mathsf{z}(g)\pi(g_{1})f_{\mathsf{z},\vartheta}(g)\theta_{\mathsf{z},low}^D(g)dg).$$
\end{proof}

By our choices for characters, the corresponding non-$\Sigma$-primitive $p$-adic $L$-functions $\mathcal{L}_5$ and $\mathcal{L}_6$ are units in $\Lambda''_\mathbf{D}$.

In the following, we often omit the superscript $D$ for simplicity. Up to a constant in $\bar{\mathbb{Q}}_p^\times$ (which does not change along the family) we have
\begin{align}\label{AA'}
&&&\mathsf{z}(\mathbf{B}_1)\cdot p^t(\int(\pi(g_4)\tilde{h}_{3,\mathsf{z}})(g)(\pi_{f_\mathsf{z}}(g_3)\tilde{f}_{\mathsf{z},\tilde{\vartheta}})
(g)\tilde{\theta}_{3,\mathsf{z},low}(g)dg)&\notag\\
&=&&(\lambda_{p,2}\chi_{\theta,\mathsf{z},2})^{-t}(p)(\chi_{\theta,\mathsf{z}1}\lambda_{p,1})^{t}(p)p^{3t}
(\int(\pi(g_{2})h_\mathsf{z})(g)
(\pi_{f_\mathsf{z}}(g_1)f_{\mathsf{z},\vartheta})(g)\theta^{ss}_\mathsf{z}(g)dg)&\notag\\
&&&\times(\int (\pi_{\tilde{h}_\mathsf{z}}(g_4)\tilde{h}_{3,\mathsf{z}})(g)
(\pi_{\tilde{f}_\mathsf{z}}(g_{3})\tilde{f}_{\mathsf{z},\tilde{\vartheta}})(g)\tilde{\theta}^{ss}_{3,\mathsf{z}}(g)dg)
\times\mathsf{z}(\mathcal{L}_5^\Sigma\mathcal{L}^\Sigma_6)
&\notag\\
&=&&\lambda^{-2t}_{p,2}(p)\chi_{\theta,\mathsf{z},2}^{-2t}(p)p^{3t}(\int(\pi(g_{2})h_\mathsf{z})(g)
(\pi_{f_\mathsf{z}}(g_1)f_{\mathsf{z},\vartheta})(g)\theta^{ss}_\mathsf{z}(g)dg)\notag&\\
&&&\times(\int (\pi(g_4)\tilde{h}_{3,\mathsf{z}})^{ss}(g)(\pi_{\tilde{f}_\mathsf{z}}(g_{3})
\tilde{f}_{\mathsf{z},\tilde{\vartheta}})^{ss}(g)\tilde{\theta}_{3,\mathsf{z}}(g)dg)
\times\mathsf{z}(\mathcal{L}^\Sigma_5\mathcal{L}^{\Sigma}_6).
&
\end{align}
(Note that by our discussion in Subsection \ref{IPIP}, we know up to multiplying by an element in $\bar{\mathbb{Q}}_p^\times$, the
\begin{equation}\label{10000}
(p^t\int(\pi(g_4)\tilde{h}_{3,\mathsf{z}})(g)(\pi_{f_\mathsf{z}}(g_3)\tilde{f}_{\mathsf{z},\tilde{\vartheta}})(g)
\tilde{\theta}_{3,\mathsf{z},low}(g)dg)
\end{equation}
 is interpolated by an element 
$$\mathbf{B}_{K^{(2,0)}}\langle\tilde{\pi(g_4)\mathbf{h}}_3\cdot\pi_{f_\mathsf{z}}(g_3)\tilde{\mathbf{f}}_{\vartheta}, \tilde{\boldsymbol{\theta}}_3\rangle $$ 
in $\hat{\mathbb{I}}^{ur}[[\Gamma''_\mathcal{K}]]$.)

\begin{definition}\label{7.12}
We define $\chi_{f_\mathsf{z},s}$ and $\chi_{f_\mathsf{z},o}$ so that $\pi_{f_\mathsf{z},p}$ (the $p$-component of the automorphic representation $\pi_f$ associated to $f$) is the principal series representation $\pi(\chi_{f_\mathsf{z},s},\chi_{f_\mathsf{z},o})$ where $\mathrm{val}_p(\chi_{f_\mathsf{z},s})=\frac{1}{2}$ and $\mathrm{val}_p(\chi_{f_\mathsf{z},o})=-\frac{1}{2}$, respectively.
\end{definition}

\noindent We write the $p$-component of $\lambda^2\chi_{\theta,\mathsf{z}}\chi_{h,\mathsf{z}}$ as $((\lambda^2\chi_{\theta,\mathsf{z}}\chi_{h,\mathsf{z}})_1,(\lambda^2\chi_{\theta,\mathsf{z}}\chi_{h,\mathsf{z}})_2)$ and $f_\mathsf{z}$ the normalized $\mathrm{GL}_2$ ordinary form new outside $p$. We also define the following: the $p$-adic $L$-function $\mathcal{L}_1$ such that for any generic $\mathsf{z}$
$$\mathcal{L}_1(\mathsf{z})=\frac{2\pi i\mathfrak{g}(\chi_{\theta,\mathsf{z},2})\mathfrak{g}(\chi_{h,\mathsf{z},1}^{-1})
L(f_\mathsf{z},\lambda^2\chi_{\theta,\mathsf{z}}\chi_{h,\mathsf{z}},\frac{1}{2})(\chi_{f_\mathsf{z},s}(p)
\chi_{f_\mathsf{z},o}(p))^{-t}((\lambda^2
\chi_{\theta,\mathsf{z}}\chi_{h,\mathsf{z}})_2(p).p)^{-2t}p^t}{\Omega_\infty^4}\Omega^4_p;$$
the $p$-adic $L$-function $\mathcal{L}_2$ such that for any generic $\mathsf{z}$
\begin{align*}
&\mathcal{L}_2(\mathsf{z})=\frac{(1-a_p(f_\mathsf{z})^{-1}\chi_{\theta,\mathsf{z},p,1}\chi_{h,\mathsf{z},p,2}(p^{-1}))
}
{(1-a_p(f_\mathsf{z})\chi_{\theta,\mathsf{z},p,1}\chi_{h,\mathsf{z},p,2}(p)p^{-1})}
\times\frac{\mathfrak{g}(\chi_{f_\mathsf{z}}^{-1})L^{(p)}(f_\mathsf{z},\chi_{\theta,\mathsf{z}}^c\chi_{h,\mathsf{z}},
\frac{1}{2})(\chi_{f_\mathsf{z},o}(p)p^{\frac{1}{2}}(\chi_{h_\mathsf{z}}\chi_{\theta,\mathsf{z}}^c)_1(p))^{-t}}
{(2\pi i)^2\langle f_\mathsf{z},f^c_\mathsf{z}|
\begin{pmatrix}&-1\\N&\end{pmatrix}\rangle_{\Gamma_0(N)}};&
\end{align*}

the $p$-adic $L$-function $\mathcal{L}_3$ such that for any generic $\mathsf{z}$,
$$\mathcal{L}_3(\mathsf{z})=\frac{\zeta_{\chi_\mathcal{K}}(1)}{\pi}\frac{\mathfrak{g}(\chi_{\theta,\mathsf{z}})
L(\lambda^2\chi_{\theta,\mathsf{z}}\chi_{\theta,\mathsf{z}}^{-c},1)((\lambda^2\chi_{\theta,\mathsf{z}}
\chi_{\theta,\mathsf{z}}^{-c})_2(p)p)
^{-t}p^t}{\Omega^2_\infty}\Omega^2_p;$$

the $p$-adic $L$-function $\mathcal{L}_4$ such that for any generic $\mathsf{z}$,
$$\mathcal{L}_4(\mathsf{z})=\frac{\zeta_{\chi_\mathcal{K}}(1)}{\pi}.\frac{\mathfrak{g}(\chi_{h,\mathsf{z},1}^{-1})
L(\lambda^2\chi_{h,\mathsf{z}}\chi_{h,\mathsf{z}}^{-c},1)((\lambda^2\chi_{h,\mathsf{z}}\chi_{h,\mathsf{z}}^{-c})_2(p).p)
^{-t}.p^t}
{\Omega^2_\infty}\Omega^2_p.$$

\noindent We refer to \cite{Hsi12}, \cite{Hsi14} for the justification of their interpolation formulas. These values are interpolated by some $p$-adic $L$-functions in $\hat{\mathbb{I}}^{ur}_\mathcal{K}$. Note that by \cite[Lemma 5.3 (vi)]{Hida91} and \cite[Theorem 7.1]{HT93},
$$\frac{\mathfrak{g}(\chi_f^{-1})L(\mathrm{ad}, f_\mathsf{z},1)M\varphi(M)(p-1)(\chi_{f_\mathsf{z},s}(p))^t}{2^3\pi^3W'(f_\mathsf{z})^{-1}\langle f_\mathsf{z},f_\mathsf{z}|\begin{pmatrix}&-1\\N&\end{pmatrix}\rangle_{\Gamma_0(N)}}=1$$
where $W'(f_\mathsf{z})$ is the prime-to-$p$ part of the root number of $f_\mathsf{z}$, which is an element in $\mathbb{I}^\times$ (i.e. a unit). 

Note also that $\mathcal{L}_2$ is in fact a non-zero element in $\mathrm{Frac}(\mathbb{I})$ (i.e. does not depend on the variable $\Gamma_\mathcal{K}$) by checking the $p$-components of $\chi_{\theta,\mathsf{z}}^c\chi_{h,\mathsf{z}}$ and non-zero by our choice of characters in Subsection \ref{section 7.2}. It can actually be written as the ratio of two elements whose specializations to all but finitely many generic arithmetic points are non-zero. By our choices for $\chi_\theta$ and $\chi_h$ we know that $\mathcal{L}_1$ is in $\hat{\mathbb{I}}^{ur}[[\Gamma''_\mathcal{K}]]^\times$.
\noindent We consider the expression:
\begin{align*}
&&&C_{\mathrm{U}(2)}^D\frac{\langle f_\mathsf{z},\tilde{f}^{ss}_\mathsf{z}\rangle\langle h_\mathsf{z},\tilde{h}^{ss}_\mathsf{z}\rangle \langle\tilde{\theta}_\mathsf{z},{\theta}^{ss}_\mathsf{z}\rangle\chi_{\theta,\mathsf{z},1}^{-1}(p^t)\mathsf{z}(\mathcal{L}_1
\mathcal{L}_2\mathcal{L}_5\mathcal{L}_6)}{\mathsf{z}(\mathcal{L}_3\mathcal{L}_4)}
(\chi_{f,\mathsf{z},s}(p)\lambda_{p,1}^2(p)\chi_{\theta,\mathsf{z},1}(p)\chi_{h,\mathsf{z},1}(p)
p^{\frac{3}{2}})^tp^{3t}&\\
&=&&C_{\mathrm{U}(2)}^D\frac{\langle f_\mathsf{z},\tilde{f}_{\mathsf{z},low}\rangle\langle h_\mathsf{z},\tilde{h}_{\mathsf{z},low}\rangle\langle\theta_\mathsf{z},\tilde{\theta}_{\mathsf{z},low}
\rangle\mathsf{z}(\mathcal{L}_1
\mathcal{L}_2\mathcal{L}_5\mathcal{L}_6)}{\mathsf{z}(\mathcal{L}_3\mathcal{L}_4)}p^{3t}.&\
\end{align*}
The above element is clearly interpolated by an element
\begin{equation}\label{(4)}
\mathcal{G}\in\Lambda''_\mathbf{D}
\end{equation}
We first give the following lemma for the local triple product integral at $p$.
\begin{lemma}
At a generic point $\mathsf{z}$, the local triple product integral for the expression in (\ref{AA'}) at $p$ is given by:
$$\frac{I_p(\phi_p\otimes\tilde{\phi}_p)}{\langle\phi_p,\tilde{\phi}_p\rangle}=\frac{p^{-t}(1-p)}{1+p}
\frac{1}{1-a_p(f_\mathsf{z})\chi_{\theta,\mathsf{z},p,1}\chi_{h,\mathsf{z},p,2}
(p)p^{-1}}\cdot\frac{1}{1-a_p(f_\mathsf{z})^{-1}
\chi_{\theta,\mathsf{z},p,2}\chi_{h,\mathsf{z},p,1}(p)}.$$
\end{lemma}
\begin{proof}
This follows from Lemma \ref{7.2}.
\end{proof}
We observe that by definition this expression is a nonzero element in $\mathrm{Frac}(\mathbb{I})$.

\noindent Thus we have the following proposition:
\begin{proposition}\label{p-adic}
Any height $1$ prime of $\hat{\mathbb{I}}^{ur}[[\Gamma_\mathcal{K}]]$ containing $\mathbf{B}_{K^{(2,0)}}\langle l_{\theta^\star}(\mathbf{E}_{Kling,\mathbf{D}}), \pi(g_2')\mathbf{h}\rangle$ must be the pullback of a height $1$ prime of $\hat{\mathbb{I}}^{ur}$.
\end{proposition}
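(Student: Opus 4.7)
The plan is to verify, at a Zariski dense set of generic arithmetic points $\phi$ (and hence by $p$-adic interpolation as an identity in $\mathbb{I}[[\Gamma_\mathcal{K}]]$), that
\[
B_1 \cdot T^D_{\mathrm{tilde}} \cdot \eta_{\mathbf{f}} \cdot (\text{local factors}) \;=\; U \cdot \mathcal{L}_2 \eta_{\mathbf{f}},
\]
where $T^D_{\mathrm{tilde}}$ is the companion triple product integral of $\pi(g_4')\tilde h_3,\pi_f(g_3)\tilde f_{\tilde\vartheta},\tilde\theta_{3,low}$ on $D^\times(\mathbb{A}_\mathbb{Q})$, the ``local factors'' lie in $\mathbb{I}\setminus\{0\}$, $U\in\mathbb{I}[[\Gamma_\mathcal{K}]]^\times$, and $\mathcal{L}_2\eta_{\mathbf{f}}\in\mathbb{I}\setminus\{0\}$. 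Once established, this identity finishes the proof: if $P$ is a height one prime with $P\cap\mathbb{I}=0$ and $B_1\in P$, then multiplying by $T^D_{\mathrm{tilde}}\eta_{\mathbf{f}}\cdot(\text{local factors})\in\mathbb{I}[[\Gamma_\mathcal{K}]]$ forces $P\ni U\cdot \mathcal{L}_2\eta_{\mathbf{f}}$, whence $\mathcal{L}_2\eta_{\mathbf{f}}\in P\cap\mathbb{I}=0$, contradicting its non-vanishing secured in Section~\ref{7.2}.

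The identity is obtained in three steps. First, I unfold $B_1$ using the Fourier--Jacobi formula of the corollary in Section~5.10 and the pullback formula of Section~\ref{5.9} for $\mathrm{U}(2)\times\mathrm{U}(2)\hookrightarrow\mathrm{U}(2,2)$, realising $B_1$ (up to a fixed $\bar{\mathbb{Q}}_p^\times$-constant) as $p^tC_U^D\mathcal{L}_5^\Sigma\mathcal{L}_6^\Sigma$ times the triple product integral of $\pi(g_2)h,\pi(g_1)f_\vartheta,\theta_{low}$, first on $[\mathrm{U}(2)]$ and then, via the dictionary of Section~\ref{D/U}, on $\mathbb{A}_\mathbb{Q}^\times D^\times(\mathbb{Q})\backslash D^\times(\mathbb{A}_\mathbb{Q})$. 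Second, I apply Ichino's triple product formula to $I_1^{ss}I_2^{ss}$: its global $L$-factor $L^\Sigma(\text{triple})/L^\Sigma(\mathrm{ad})$ factors via the Rankin--Selberg decomposition as $\mathcal{L}_1\mathcal{L}_2/(\mathcal{L}_{ad,f}\mathcal{L}_3\mathcal{L}_4)$. Third, the Rallis inner product computation of Section~\ref{theta} identifies $\langle\theta,\tilde\theta_3\rangle$ and $\langle h,\tilde h_3\rangle$ as $\mathcal{L}_3$ and $\mathcal{L}_4$ (up to fixed scalars), which cancel the identical factors from the adjoint $L$-functions; similarly $\langle f,\tilde f_{low}\rangle$ cancels $\mathcal{L}_{ad,f}$ via Hida's formula, producing the factor $\eta_{\mathbf{f}}$.

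After cancellation, $\mathcal{L}_1,\mathcal{L}_5^\Sigma,\mathcal{L}_6^\Sigma$ are units in $\mathbb{I}[[\Gamma_\mathcal{K}]]^\times$ by the character choices of Section~\ref{7.2} (engineered so that the classical values $L(\pi,\lambda^2\chi_\theta\chi_h,1/2)$, $L(\chi_{aux}\tau^{-c},(\kappa-2)/2)$ and $L(\lambda^2\chi_\theta^{-c}\chi_\theta\chi_{aux}\tau^{-1},(\kappa-2)/2)$ are $p$-adic units); and the remaining local contributions at primes in $\Sigma$---local triple product integrals, local Euler factors of $\zeta_{\chi_\mathcal{K}}$ and $L(\mathrm{ad},f,\cdot)$, local Rallis factors, and the $\mathbb{I}$-valued pairings $\langle f_\vartheta^D,\tilde f^D_{\tilde\vartheta,low}\rangle$ and local factors of $L(f,\chi_\theta^c\chi_h,\tfrac12)$ at $\Sigma^2$---are, by the Remark after the definition of the $g_i$'s in Section~7.4 and the fixing of test vectors in Section~\ref{theta}, independent of the $\Gamma_\mathcal{K}$-direction and hence lie in $\mathbb{I}$. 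This assembles the displayed identity.

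The main technical obstacle is guaranteeing that the nontrivial intermediate expressions---especially the local triple product integrals at non-split primes in $\Sigma^1$, the Rallis local factors, and the companion integral $T^D_{\mathrm{tilde}}$---interpolate into $\mathbb{I}[[\Gamma_\mathcal{K}]]$ with precisely the correct $\Gamma_\mathcal{K}$-dependence. This hinges crucially on the Remark in Section~7.4: with the elements $g_{i,v}$ fixed and with $\phi\vert_\mathbb{I}$ fixed, the local representations $\pi_{h,v}$ and $\pi_{\theta,v}$ move only by mutually inverse unramified twists, so that their combined local contribution is constant along the $\Gamma_\mathcal{K}$-direction. With this non-trivial input, the identity holds in $\mathbb{I}[[\Gamma_\mathcal{K}]]$ and the proposition follows as explained above.
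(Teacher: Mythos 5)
Your proposal follows essentially the same route as the paper: unfold $B_1$ via the Fourier--Jacobi and pullback formulas into a triple product integral on $D^\times$, pair it with the companion integral $\int(\pi(g_4')\tilde h_3)(\pi_f(g_3)\tilde f_{\tilde\vartheta})\tilde\theta_{low}$, apply Ichino's formula, cancel $\langle\theta,\tilde\theta_3\rangle$ and $\langle h,\tilde h_3\rangle$ against $\mathcal{L}_3,\mathcal{L}_4$ via the Rallis inner product and $\langle f,\tilde f_{low}\rangle$ against the adjoint $L$-value, and then use that $\mathcal{L}_1,\mathcal{L}_5,\mathcal{L}_6$ are units while the remaining local factors and $\mathcal{L}_2\eta_{\mathbf{f}}$ are nonzero elements of $\mathbb{I}$ (the Remark in Section 7.4 being the key input), so that a height-one prime containing $B_1$ but meeting $\mathbb{I}$ trivially is impossible. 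This matches the argument of Section 7.5, including the final prime-avoidance step.
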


\begin{proof}
Let us recall what we have achieved so far. We have computed the Fourier-Jacobi coefficients of the Siegel-Eisenstein series in Proposition \ref{Proposition 6.31}. Using pullback formulas and computations on theta functions, we further computed the $\theta^\star$-part of the $\beta$-th Fourier-Jacobi coefficient of our ordinary Klingen-Eisenstein series in Corollary \ref{Corollary 6.29}. This is a function on the definite unitary group $\mathrm{U}(2)$. Moreover, we constructed an ordinary family $\mathbf{h}$ of CM forms on $\mathrm{U}(2)$ in Definition \ref{444}. Then we form the pairing of this $\theta^\star$ Fourier-Jacobi coefficient with $\mathbf{h}$ (see the beginning of this subsection), and resulted an element $\mathbf{B}_1$ which is an element in $\Lambda''_\mathbf{D}\otimes_{\mathbb{Z}_p}\mathbb{Q}_p$ by its construction and Lemma \ref{lemma8.19}. In Proposition \ref{Proposition 8.21} we used the doubling method for $\mathrm{U}(2)\times\mathrm{U}(2)\hookrightarrow\mathrm{U}(2,2)$ to obtain an expression for the specializations of $\mathbf{B}_1$ at arithmetic points $\mathsf{z}$.

To prove the proposition, it is enough to show that the product of $\mathbf{B}_1$ and the element (\ref{10000}) in $\hat{\mathbb{I}}^{ur}[[\Gamma''_\mathcal{K}]]$ satisfies the property stated in the proposition.
\noindent We examine the ratio between (\ref{(4)}) and the expression for the element interpolating
\begin{equation}\label{(8)}
\frac{1}{C}p^t\mathsf{z}(\mathbf{B}_1)(\int(\pi(g_4)\tilde{h}_{3,\mathsf{z}})(g)(\pi_{f_\mathsf{z}}(g_3)\tilde{f}_\mathsf{z})(g)
\tilde{\theta}_{3,\mathsf{z},low}(g)dg)
\end{equation}
using Ichino's formula. (Recall $C$ here is the Tamagawa number for $D^\times$ appearing in Ichino's formula.)
By our calculations for the local triple product integrals, and the Petersson inner product of $\langle\theta,\tilde{\theta}_3\rangle$ and $\langle h,\tilde{h}_3\rangle$, the ratio is a product of:
\begin{itemize}
\item Euler factors for $\zeta_{\chi_\mathcal{K}}(1)$ at $\Sigma$;
\item the local Euler factors for $L(ad,f_\mathsf{z},1)$ at $\Sigma\backslash\{p\}$;
\item $p^{t}$ times the local triple product integral for $v=p$, which are nonzero elements in $\mathrm{Frac}(\mathbb{I})$;
\item $\langle f^D_{\mathsf{z},\vartheta},\tilde{f}_{\vartheta,\mathsf{z},low}^D\rangle/\langle f^D_\mathsf{z}, \tilde{f}_{\mathsf{z},low}^D\rangle$ which is interpolated by a non-zero element in $\mathbb{I}$;
\item The local Euler factors of $\mathsf{z}(\mathcal{L}_5)$ and $\mathsf{z}(\mathcal{L}_6)$ at $\Sigma^2$ and $p$ which are units by our choices;
\item The local Euler factors at $\Sigma^2$ of $L(f_\mathsf{z},\chi_{\theta,\mathsf{z}}^c\chi_{h,\mathsf{z}},\frac{1}{2})$ which are non-zero elements in $\mathbb{I}$.
\item The local triple product integrals for $v\nmid p$.
\end{itemize}
The first two items are clearly interpolated by non-zero elements in $\mathrm{Frac}(\hat{\mathbb{I}})$. The last item we listed above has two parts: at split primes and non-split primes. The integrals at split primes are non-zero numbers in $\bar{\mathbb{Q}}_p^\times$ which are fixed throughout the family. At non-split primes, we do not know much about it. We only know that at a generic arithmetic point $\mathsf{z}$, this integral is not zero, and it only depends on $\mathsf{z}|_\mathbb{I}$ at generic points, as observed in Remark \ref{Remark 8.13}. We may assume that at this $\mathsf{z}$ the expression (\ref{(4)}) is non-zero and not a pole (in fact just need $\mathcal{L}_2$ to be a non-zero finite number here). Thus the expression (\ref{(8)}) is not identically zero. So the ratio of (\ref{(8)}) over (\ref{(4)}) is a non-zero element of $\mathrm{Frac}(\hat{\mathbb{I}}^{ur}[[\Gamma''_\mathcal{K}]])$. If we evaluate this ratio at the generic arithmetic points, it depends only on $\mathsf{z}|_{\mathbb{I}}$. And also it is non-zero somewhere. From this, it is not difficult to prove that (say using the following lemma) the ratio is a non-zero element of $\mathrm{Frac}(\hat{\mathbb{I}}^{ur})$. (In fact we apply the following lemma to $(\ref{(8)})/(\ref{(4)})$. Recall that by our choices for characters, the $\mathcal{L}_1$, $\mathcal{L}_5$, $\mathcal{L}_6$ are units in $\Lambda_\mathcal{D}$, and $\mathcal{L}_2$ is a non-zero element in $\mathrm{Frac}(\hat{\mathbb{I}}^{ur})$. Moreover, the local integrals showing up in the Rallis inner product formula for $\theta_\mathsf{z}$ and $h_\mathsf{z}$ at $\Sigma$, which are non-zero and fixed along the family. This gives the ratio between $\langle\theta_\mathsf{z},\tilde{\theta}_{\mathsf{z},low}\rangle\cdot \langle h_\mathsf{z}, \tilde{h}_{\mathsf{z},low}\rangle$ and $\mathsf{z}(\mathcal{L}_3\cdot\mathcal{L}_4)$.
So the proposition follows clearly.
\end{proof}
\begin{lemma}
Suppose $A$ is an element in $\hat{\mathbb{I}}^{ur}[[\Gamma''_\mathcal{K}]]\otimes_{\mathbb{Z}_p}\mathbb{Q}_p$. If for any generic arithmetic points $\mathsf{z},\mathsf{z}'\in\hat{\mathbb{I}}^{ur}[[\Gamma_\mathcal{K}]]$ such that $\mathsf{z}|_{\hat{\mathbb{I}}^{ur}}=\mathsf{z}'|_{\hat{\mathbb{I}}^{ur}}$, we have $\mathsf{z}(A)=\mathsf{z}'(A)$. Then $A\in\hat{\mathbb{I}}^{ur}$.
\end{lemma}
\begin{proof}
This lemma is easily proved by observing that if $\zeta_1, \zeta_2$ are $p^t$-roots of unity and $\phi$ is a generic arithmetic point with conductors being $p^{t'}$ such that $t'>t$, then the composition $\phi'$ of $\phi$ with the ring automorphism $\iota_{\zeta_1,\zeta_2}:\hat{\mathbb{I}}^{ur}[[\Gamma''_\mathcal{K}]]\rightarrow \hat{\mathbb{I}}^{ur}[[\Gamma''_\mathcal{K}]]$ given by identity on $\hat{\mathbb{I}}^{ur}$ and $\gamma^+\mapsto \gamma^+\zeta_1$, $\gamma^-\mapsto \gamma^-\zeta_2$ is still a generic arithmetic point. Let $A$ be the element considered in the lemma. Then $A-A\circ\iota_{\zeta_1,\zeta_2}$ is $0$ at a Zariski dense set of points, and is thus identically zero. The arbitrariness of $\zeta_1, \zeta_2$ implies the lemma.
\end{proof}
\section{Proof of the Theorems}\label{Section 9}

\subsection{Eisenstein Ideals}
Let $K_\mathbf{D}$ be an open compact subgroup of $\mathrm{GU}(3,1)(\mathbb{A}_\mathbb{Q})$ maximal at $p$ and all primes outside $\Sigma$ such that the Klingen-Eisenstein series we construct is invariant under $K^{(p)}_\mathbf{D}$. We consider the ring $\mathbb{T}_\mathbf{D}$ of reduced Hecke algebras acting on the space of ${\Lambda}''_\mathbf{D}$-adic nearly ordinary cuspidal forms with level group $K_\mathbf{D}$. It is generated by the Hecke operators $Z_{v,0}$, $Z_{v,0}^{(i)}$, $T_{i,v}$, $T_{i,v}^{(j)}$ defined before, together with the $\mathrm{U}_p$-operator and then taking the maximal reduced quotient. It is well known that one can interpolate the pseudo Galois characters attached to nearly ordinary cusp forms to get a pseudo-character $R_\mathbf{D}$ of $G_\mathcal{K}$ with values in $\mathbb{T}_\mathbf{D}$ (see \cite[Section 7.2]{SU} for details). We define the ideal $I_\mathbf{D}$ of $\mathbb{T}_\mathbf{D}$ to be generated by $\{t-\lambda(t)\}_t$ for $t$'s in the abstract Hecke algebra and $\lambda(t)$ is the Hecke eigenvalue of $t$ on $\mathbf{E}_{\mathbf{D},Kling}$. Then it is easy to see that the structure map ${\Lambda}''_\mathbf{D}\rightarrow \mathbb{T}_\mathbf{D}/I_\mathbf{D}$ is surjective. Suppose the inverse image of $I_\mathbf{D}$ in ${\Lambda}''_\mathbf{D}$ is $\mathcal{E}_\mathbf{D}$. We call it the Eisenstein ideal. It measures the congruences between the Hecke eigenvalues of cusp forms and Klingen-Eisenstein series. We have:
$$R_\mathbf{D}(\mathrm{mod}I_\mathbf{D})\equiv \mathrm{tr}\rho_{\mathbf{E}_{\mathbf{D},Kling}}(\mathrm{mod}\mathcal{E}_\mathbf{D}).$$
Now we prove the following lemma:
\begin{lemma}\label{8.1}
Let $P$ be a height $1$ prime of $\hat{\mathbb{I}}^{ur}[[\Gamma''_\mathcal{K}]]$ which is not the pullback of a height $1$ prime of $\hat{\mathbb{I}}^{ur}$. Then
$$\mathrm{ord}_P(\mathcal{L}_{\mathbf{f},\xi,\mathcal{K}}^\Sigma)\leq \mathrm{ord}_P(\mathcal{E}_\mathbf{D}).$$
\end{lemma}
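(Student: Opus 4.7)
The plan is to localize at $P$ and combine the two main inputs---the divisibility of the constant terms of $\mathbf{E}_{\mathcal{D},Kling}$ by $\mathcal{L}^\Sigma_{\mathbf{f},\xi,\mathcal{K}}$ from Proposition 6.5 and the $P$-integral unit property of a non-constant Fourier--Jacobi coefficient from Proposition~\ref{p-adic}---to produce a cuspidal family congruent to $\mathbf{E}_{\mathcal{D},Kling}$ modulo a large power of the uniformizer. The Hecke eigensystem of this cuspidal family then yields a ring homomorphism from the Eisenstein quotient of the Hecke algebra to the corresponding quotient of $R$, which is exactly what the lemma asserts.

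Concretely, set $R = (\Lambda_\mathcal{D})_P$, a discrete valuation ring with uniformizer $\pi$, and let $n = \mathrm{ord}_P(\mathcal{L}^\Sigma_{\mathbf{f},\xi,\mathcal{K}})$. The $R$-adic form $\mathbf{E}_{\mathcal{D},Kling}$ has constant term in $\pi^n \bigoplus_g \mathcal{M}_{ord}(K_P^g, R)$ by Proposition 6.5. Using Theorem~\ref{exact}, which remains short exact after localization at $P$ because the boundary module is free over $\Lambda_\mathcal{D}$ by Hida's theorem, I can lift this constant term to some $G \in \pi^n \mathcal{M}_{ord}(K, R)$ and set $F := \mathbf{E}_{\mathcal{D},Kling} - G \in \mathcal{M}^0_{ord}(K, R)$. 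Proposition~\ref{p-adic} tells us that $\int l_{\theta_1}(\mathbf{E}_{\mathcal{D},Kling})\,d\mu_{\pi(g_2')\mathbf{h}}$ is a unit in $R$ (since $P$ is not pulled back from $\mathbb{I}$); applying the same functional to $G$ gives an element of $\pi^n R$, so on $F$ it still outputs a unit, forcing $F \not\equiv 0 \pmod{\pi}$ in the free $R$-module $\mathcal{M}^0_{ord}(K, R)$.

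Next, for each unramified Hecke operator $t$ (away from $\Sigma \cup \{p\}$) with Eisenstein eigenvalue $\lambda(t) \in \Lambda_\mathcal{D}$, the difference $tF - \lambda(t)F = -(tG - \lambda(t)G)$ lies both in $\mathcal{M}^0_{ord}(K,R)$ (since $F$ is cuspidal) and in $\pi^n \mathcal{M}_{ord}(K,R)$; because the boundary module is $R$-torsion free, this intersection coincides with $\pi^n \mathcal{M}^0_{ord}(K,R)$. Hence $tF \equiv \lambda(t)F \pmod{\pi^n\mathcal{M}^0_{ord}(K,R)}$. Since $F$ is nonzero mod $\pi$ and $\mathcal{M}^0_{ord}(K,R)$ is free over $R$, the submodule $RF$ maps injectively into $\mathcal{M}^0_{ord}(K,R)/\pi^n$, so reading off the scalar by which each $t$ acts on $F$ modulo $\pi^n$ yields a well-defined ring homomorphism $\mathbb{T}_\mathcal{D} \to R/\pi^n$ sending $t$ to $\lambda(t)\bmod\pi^n$. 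This homomorphism annihilates the generators $t-\lambda(t)$ of $I_\mathcal{D}$, hence factors through $\mathbb{T}_\mathcal{D}/I_\mathcal{D}$. Pre-composing with the surjection $\Lambda_\mathcal{D}\twoheadrightarrow \mathbb{T}_\mathcal{D}/I_\mathcal{D}$ recovers the canonical reduction $\Lambda_\mathcal{D}\to R/\pi^n$, so $\mathcal{E}_\mathcal{D}\cdot R\subseteq \pi^nR$, i.e.\ $\mathrm{ord}_P(\mathcal{E}_\mathcal{D})\geq n$.

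The main obstacle has already been overcome: it is Proposition~\ref{p-adic}, whose proof rests on the triple product computation of the previous section and gives the unit property of the Fourier--Jacobi coefficient at any height one prime not pulled back from $\mathbb{I}$. After that the argument is formal: exactness of the cusp/boundary sequence after localization, freeness of Hida's ordinary cuspidal module, and the torsion-freeness used to identify the intersection $\pi^n \mathcal{M}_{ord}\cap \mathcal{M}^0_{ord} = \pi^n \mathcal{M}^0_{ord}$ are all immediate consequences of standard Hida theory already invoked in the paper.
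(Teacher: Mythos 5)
Your argument is correct and is essentially the paper's own proof: the paper likewise uses Theorem \ref{exact} together with the divisibility of the constant terms to write $\mathbf{H}=\mathbf{E}_{\mathcal{D},Kling}-\mathcal{L}^{\Sigma}_{\mathbf{f},\xi,\mathcal{K}}F$ with $\mathbf{H}$ cuspidal, invokes Proposition \ref{p-adic} to get $\ell(\mathbf{H})\not\equiv 0\ (\mathrm{mod}\ P)$, and deduces a surjection $\Lambda_{\mathcal{D},P}/\mathcal{E}_\mathcal{D}\Lambda_{\mathcal{D},P}\twoheadrightarrow\Lambda_{\mathcal{D},P}/P^{t}\Lambda_{\mathcal{D},P}$. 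The only (harmless) difference is in the last step: the paper defines the map $\mathbb{T}_\mathcal{D}\rightarrow\Lambda_{\mathcal{D},P}/P^{t}$ directly by $t\mapsto \ell(t\mathbf{H})/\ell(\mathbf{H})$, so it needs only the functional and not the freeness of the localized cuspidal Hida module that you use to read off eigenvalues.
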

\begin{proof}
Suppose $t:=\mathrm{ord}_P(\mathcal{L}^{\Sigma}_{\mathbf{f},\mathcal{K},\xi})>0$. By the fundamental exact sequence Theorem \ref{exact} there is an $\mathbf{H}=\mathbf{E}_{\mathbf{D},Kling}-\mathcal{L}^{\Sigma}_{\mathbf{f},\xi,\mathcal{K}}F$ for some $\Lambda''_\mathbf{D}$-adic form $F$ such that $\mathbf{H}$ is a cuspidal family. We write $\ell$ for the $\Lambda''_\mathbf{D}$-adic functional $\ell(G)=\langle l_{\theta^\star}(G),\pi(g_2')\mathbf{h}\rangle$ constructed in Subsection \ref{7.5} on the space of $\Lambda''_\mathbf{D}$-adic forms. By our assumption on $P$ we have proved that $\ell(\mathbf{H})\not\equiv 0(\mathrm{mod}P)$. Consider the $\Lambda''_\mathbf{D}$-linear map:
$$\mu: \mathbb{T}_\mathbf{D}\rightarrow \Lambda''_{\mathbf{D},P}/P^t\Lambda''_{\mathbf{D},P}$$
given by:
$\mu(t)=\ell(t.\mathbf{H})/\ell(\mathbf{H})$ for $t$ in the Hecke algebra. Then:
$$\ell(t.\mathbf{H})\equiv \ell(t\mathbf{E}_\mathbf{D})\equiv \lambda(t)\ell(\mathbf{E}_\mathbf{D})\equiv \lambda(t)\ell(\mathbf{H})(\mathrm{mod}P^t)$$
so $I_\mathbf{D}$ is contained in the kernel of $\mu$. Thus it induces:
$\Lambda''_{\mathbf{D},P}/\mathcal{E}_\mathbf{D}\Lambda''_{\mathbf{D},P}\twoheadrightarrow
\Lambda''_{\mathbf{D},P}/P^t\Lambda''_{\mathbf{D},P}$
which proves the lemma.
\end{proof}

\subsection{Galois Theoretic Argument}
In this section, for ease of reference, we repeat the set-up and certain results from \cite[Chapter 4]{SU} with some modifications, which are used to construct elements in the Selmer group. \\\\
Let $G$ be a group and $C$ a ring. Let $r: G\rightarrow \mathrm{Aut}_C(V)$ be a representation of $G$ with $V\simeq C^n$. This can be extended to
$r: C[G]\rightarrow \mathrm{End}_C(V)$. For any $x\in C[G]$, define: $\mathrm{Ch}(r,x,T):=\det(id-r(x)T)\in C[T]$.\\\\
Let $(V_1,\sigma_1)$ and $(V_2,\sigma_2)$ be two $C$ representations of $G$. Assume both are defined over a local henselian subring $B\subseteq C$,
we say $\sigma_1$ and $\sigma_2$ are residually disjoint modulo the maximal ideal $\mathfrak{m}_B$ if there exists $x\in B[G]$ such that
$\mathrm{Ch}(\sigma_1,x,T)\ \mathrm{mod}\ \mathfrak{m}_B$ and $\mathrm{Ch}(\sigma_2,x,,T)\ \mathrm{mod}\ \mathfrak{m}_B$ are relatively prime in $\kappa_B[T]$, where
$\kappa_B:=B/\mathfrak{m}_B$.\\\\
Let $H$ be a group with a decomposition $H=G\rtimes\{1,c\}$ with $c\in H$ an element of order two normalizing $G$. For any $C$ representations
$(V,r)$ of $G$ we write $r^c$ for the representation defined by $r^c(g)=r(cgc)$ for all $g\in G$.\\\\
\underline{Polarizations}:\\
\noindent Let $\theta:G\rightarrow \mathrm{GL}_L(V)$ be a representation of $G$ on a vector space $V$ over field $L$ and let
$\psi:H\rightarrow L^\times$ be a character. We assume that $\theta$ satisfies the $\psi$-polarization condition:
$$\theta^c\simeq \psi\otimes\theta^\vee.$$
By a $\psi$-polarization of $\theta$ we mean an $L$-bilinear pairing $\Phi_\theta:V\times V\rightarrow L$ such that
$$\Phi_\theta(\theta(g)v,v')=\psi(g)\Phi_\theta(v,\theta^c(g)^{-1}v').$$
Let $\Phi_\theta^t(v,v'):=\Phi_\theta(v',v)$, which is another $\psi$-polarization. We say that $\psi$ is compatible with the polarization
$\Phi_\theta$ if
$$\Phi_\theta^t=-\psi(c)\Phi_\theta.$$

\noindent Suppose that:\\
\noindent (1) $A_0$ is a pro-finite $\mathbb{Z}_p$ algebra and a Krull domain; \\
\noindent (2) $P\subset A_0$ is a height one prime and $A=\hat{A}_{0,P}$ is the completion of the localization of $A_0$ at $P$. This is a discrete valuation ring. \\
\noindent (3) $R_0$ is  local reduced finite $A_0$-algebra;\\
\noindent (4) $Q\subset R_0$ is prime such that $Q\cap A_0=P$ and $R=\hat{R}_{0,Q}$;\\
\noindent (5) there exist ideals $J_0\subset A_0$ and $I_0\subset R_0$ such that $I_0\cap A_0=J_0,A_0/J_0=R_0/I_0, J=J_0A, I=I_0R, J_0=J\cap A_0$ and $I_0=I\cap R_0$;\\
\noindent (6) $G$ and $H$ are pro-finite groups; we have a subgroup $G_{\bar{v}_0}\subset G$.\\

\noindent \underline{Set Up}:\\
\noindent Suppose we have the following data:\\
(1) a continuous character $\nu:H\rightarrow A_0^\times$;\\
(2) a continuous character $\xi:G\rightarrow A_0^\times$ such that $\bar\chi\not=\bar{\nu}\bar{\chi}^{-c}$; Let $\chi':=\nu\chi^{-c}$;\\
(3) a representation $\rho:G\rightarrow Aut_A(V),V\simeq A^n$, which is a base change from a representation over $A_0$, such that:
\begin{equation*}
\begin{split}
&a.\rho^c\simeq \rho^\vee\otimes \nu,\\
&\bar{\rho} \mbox{ is absolutely irreducible },\\
&\rho \mbox{ is residually disjoint from $\chi$ and $\chi'$};
\end{split}
\end{equation*}
(4) a representation $\sigma:G \rightarrow \mathrm{Aut}_{R\otimes_AF}(M),M\simeq (R\otimes_AF)^m$ with $m=n+2$, which is defined over the image of $R_0$ in $R$,  such that:
\begin{equation*}
\begin{split}
&a.\ \sigma^c\simeq \sigma^\vee\otimes\nu,\\
&b.\ \mathrm{tr}\sigma(g)\in R \mbox{ for all $g\in G$ },\\
&c.\ \mbox{ for any $v\in M$,$\sigma(R[G])v$ is a finitely-generated $R$-module }
\end{split}
\end{equation*}
(5) a proper ideal $I\subset R$ such that $J:=A\cap I\not=0$, the natural map $A/J\rightarrow R/I$ is an isomorphism, and
$$\mathrm{tr}\sigma(g)\equiv \chi'(g)+\mathrm{tr}\rho(g)+\chi(g)\ mod\ I$$
for all $g\in G$.\\\\
(6) $\rho$ is irreducible and $\nu$ is compatible with $\rho$.\\\\
(7) (local conditions for $\sigma$) There is a $G_{\bar{v}_0}$-stable sub-$R\otimes_AF$-module $M^+_{\bar{v}_0}\subseteq M$ such that $M^+_{\bar{v}_0}$ and $M^-_{\bar{v}_0}:=M/M^+_{\bar{v}_0}$ are free $R\otimes_AF$ modules. We also require that $M^+_{\bar{v}_0}$ and $M^-_{\bar{v}_0}$ are disjoint modulo $I$. (In our applications this is always satisfied although the $\bar{\mathbb{F}}_p$-representations of $\bar{\rho}_f$, $\bar{\chi}'$, $\bar{\chi}$ are not necessarily mutually disjoint when restricting to $G_{\bar{v}_0}$.)\\\\
(8) (compatibility with the congruence condition) Assume that for all $x\in R[G_{\bar{v}}]$, we have congruence relation:
$$\mathrm{Ch}(M^+_{\bar{v}_0},x,T)\equiv(1-T\chi(x))\ mod\ I$$
(then we automatically have:
$$\mathrm{Ch}(M^-_{\bar{v}_0},x,T)\equiv \mathrm{Ch}(V_{\bar{v}_0},x,T)(1-T\chi'(x))\ mod\ I)$$
(9) For each $F$-algebra homomorphism $\lambda: R\otimes_AF\rightarrow K$, $K$ a finite field extension of $F$, the representation $\sigma_\lambda:G\rightarrow
\mathrm{GL}_m(M\otimes_{R\otimes F}K)$ obtained from $\sigma$ via $\lambda$ is either absolutely  irreducible or contains an absolutely irreducible two-dimensional sub $K$-representation $\sigma_\lambda'$ such that $\mathrm{tr}\sigma_\lambda'(g)\equiv \chi(g)+\chi'(g)\ mod\ I$.\\\\
One defines the Selmer groups $\mathbf{X}_H(\chi'/\chi):=\mathrm{ker}\{H^1(H,A_0^*(\chi'/\chi))\rightarrow H^1(G_{\bar{v}_0},A_0^*(\chi'\chi))\}^*.$ and $\mathbf{X}_G(\rho_0\otimes\chi^{-1}):=\mathrm{ker}\{H^1(G,V_0\otimes_{A_0}A^*_0(\chi^{-1}))\rightarrow H^1(G_{\bar{v}_0},V_0^-\otimes_{A_0}A_0^*(\chi^{-1}))\}^*$. Let $\mathrm{Ch}_H(\chi'/\chi)$ and $\mathrm{Ch}_G(\rho_0\otimes\chi^{-1})$ be their characteristic ideals as $A_0$-modules.

\begin{proposition}\label{8.2.1}
Under the above assumptions, if $ord_P(\mathrm{Ch}_H(\chi'/\chi))=0$ then
$$ord_P(\mathrm{Ch}_G(\rho_0\otimes\chi^{-1}))\geq ord_P(J).$$
\end{proposition}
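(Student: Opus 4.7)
The plan is to carry out the ``lattice construction'' of Urban in the form used in \cite[\S4]{SU}, adapted to our setting. The basic idea is that the congruence $\mathrm{tr}\,\sigma \equiv \chi' + \mathrm{tr}\,\rho + \chi \pmod{I}$ lets us, after extending scalars to $R \otimes_A F$, refine $\sigma$ into a Galois-stable filtration whose graded pieces reproduce these three characters/representations modulo $I$, and that the extension data between these pieces provides cohomology classes of the size we want. The hypothesis $\mathrm{ord}_P(\mathrm{Ch}_H(\chi'/\chi))=0$ is what will let us isolate the extension of $\rho$ by $\chi$ (as opposed to the extension of $\chi'$ by $\chi$), and it is this extension that builds an element of $\mathbf{X}_G(\rho_0 \otimes \chi^{-1})$.

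First I would exploit hypothesis (9) to decompose $\sigma$ over $R \otimes_A F$ along the primes of $R$: generically either $\sigma$ is irreducible or it contains a $2$-dimensional subquotient whose trace is $\chi + \chi'$ modulo $I$. Combined with the residual disjointness of $\rho$ from $\chi$ and $\chi'$, and with the $\nu$-polarization on $\sigma$ (which pairs the $\chi$- and $\chi'$-pieces), one can produce a $G$-stable $R$-lattice $\mathcal{L} \subset M$ together with a Jordan--H\"older-type filtration $0 \subset \mathcal{L}_1 \subset \mathcal{L}_2 \subset \mathcal{L}$ such that, modulo $I$, the graded pieces realize $\chi$, $\rho$, $\chi'$ in this order. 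By choosing the lattice optimally (saturating with respect to $\chi$ at the bottom and $\chi'$ at the top), the cocycles describing $\mathcal{L}_1 \hookrightarrow \mathcal{L}_2$ and $\mathcal{L}_2 \hookrightarrow \mathcal{L}$ give classes in $H^1(G, V_0 \otimes \chi^{-1})$ and $H^1(G, (V_0)^\vee \otimes \chi')$ respectively, while the ``corner'' extension of $\chi'$ by $\chi$ lives in $H^1(G, A_0^*(\chi'/\chi))$.

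Next I would use hypothesis $(8)$ at $\bar{v}$, together with the existence of $M^+_{\bar{v}}$ from $(7)$, to verify that the extension class obtained from $\mathcal{L}_1 \subset \mathcal{L}_2$ is unramified/divisible in the appropriate sense at $\bar{v}$, i.e.\ lies in the $\mathbf{X}_G$-Selmer group rather than in the bigger $H^1$: the composition-factor identification of $M_{\bar{v}}^+$ with $\chi$ mod $I$ forces the $\chi$-quotient piece of $\mathcal{L}_2$ (when viewed as a $G_{\bar{v}}$-module) to lift correctly, killing the local class in $H^1(G_{\bar{v}}, V^-_0 \otimes \chi^{-1})$. At all primes away from $\bar{v}$ the Selmer condition is automatic since we use the full $H^1$ there. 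This is the step that uses the polarization in hypothesis (6) symmetrically to compare $\mathcal{L}_1 \hookrightarrow \mathcal{L}_2$ with its dual, so that \textbf{one} extension class suffices to produce Selmer classes for both $\rho_0 \otimes \chi^{-1}$ and its dual twist.

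Finally, the hypothesis $\mathrm{ord}_P(\mathrm{Ch}_H(\chi'/\chi))=0$ enters to rule out that the construction degenerates into producing only a class in $H^1(G, A_0^*(\chi'/\chi))$: if all the extension content were absorbed into the $\chi \to \chi'$ corner, one would obtain a nontrivial Selmer class for $\chi'/\chi$ of length at least $\mathrm{ord}_P(J)$, contradicting the hypothesis. Therefore the $\mathcal{L}_1 \subset \mathcal{L}_2$ extension must itself have length at least $\mathrm{ord}_P(J)$ at $P$, giving the required bound on $\mathrm{ord}_P(\mathrm{Ch}_G(\rho_0 \otimes \chi^{-1}))$. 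The main obstacle is the careful lattice-optimization step: one must show that across the various primes of $R$ lying above $P$, the constructed lattices can be glued so that the cocycles actually have the claimed $R/I$-length, and that the residual disjointness and polarization assumptions really do force the extension not to split. This is exactly the technical heart of \cite[Thm.~4.6]{SU}, and I would import that argument essentially verbatim, only checking that hypotheses (1)--(9) above match the input required there.
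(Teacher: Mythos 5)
Your plan is exactly the paper's: the paper offers no independent argument for this proposition and simply states that it is proved "in the completely same way as" \cite[Corollary 4.16]{SU}, i.e.\ by Urban's lattice construction, which is precisely the argument you outline and then propose to import. Your sketch of the filtration, the role of the local conditions (7)--(8) at $\bar v$, and the use of $\mathrm{ord}_P(\mathrm{Ch}_H(\chi'/\chi))=0$ to prevent the congruence from being absorbed into the $\chi\to\chi'$ corner all match the intended reduction to \cite{SU}.
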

\begin{proof}
This can be proved in the same way as \cite[Corollary 4.16]{SU}. The only difference is the Selmer condition at $p$, which we use the description of Section \ref{2.24} to guarantee. Note that the part corresponding to $\rho_0$ corresponds to the upper-left two by two block here while in \cite{SU} the $\rho_f$ contains the highest and the lowest Hodge-Tate weights.
\end{proof}

\noindent Before proving the main theorem we first prove a useful lemma, which appears in an earlier version of \cite{SU} .
\begin{lemma}\label{lemma9.3}
Let $Q\subset \mathbb{I}[[\Gamma''_\mathcal{K}]]$ be a height one prime such that $\mathrm{ord}_Q(\mathcal{L}^\Sigma_{\mathbf{f},\mathcal{K},\xi})\geq 1$ and $\mathrm{ord}_Q(\mathcal{L}_{\chi_\mathbf{f}\bar{\xi}'})=0$, then $\mathrm{ord}_Q(\mathcal{L}^\Sigma_{\chi_\mathbf{f}\bar{\xi}'})=0$.
\end{lemma}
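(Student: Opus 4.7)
The statement is essentially a repackaging of the anti-cyclotomic $\mu$-invariant observation at the end of subsection 6.4. First I would verify that $\mathcal{L}^\Sigma_{\chi_{\mathbf f}\bar{\xi}'}$ lies in the subring $\mathbb{I}[[\Gamma_\mathcal{K}^+]]\subset \mathbb{I}[[\Gamma_\mathcal{K}]]$ --- being a Dirichlet-type $p$-adic $L$-function attached to a character of $\mathbb{A}_{\mathbb{Q}}^\times$ --- and that it is a \emph{nonzero} element of that subring. The latter follows from the hypothesis $\mathrm{ord}_Q(\mathcal{L}_{\chi_{\mathbf f}\bar{\xi}'})=0$, which forces $\mathcal{L}_{\chi_{\mathbf f}\bar{\xi}'}\ne 0$, together with the fact that $\mathcal{L}^\Sigma_{\chi_{\mathbf f}\bar{\xi}'}$ differs from $\mathcal{L}_{\chi_{\mathbf f}\bar{\xi}'}$ only by finitely many local Euler-factor corrections at places in $\Sigma$, each a nonzero element of $\mathbb{I}[[\Gamma_\mathcal{K}^+]]$.

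Next I would invoke the anti-cyclotomic argument of subsection 6.4. Combining Hsieh's vanishing theorem \cite{Hsi12} with the argument of \cite[\S12.3]{SU}, no height one prime of $\mathbb{I}[[\Gamma_\mathcal{K}]]$ containing $\mathcal{L}^{\Sigma,Hida}_{\mathbf f,\mathcal{K},\xi}$ is the pullback of a height one prime of $\mathbb{I}[[\Gamma_\mathcal{K}^+]]$. Via the explicit ratio recorded in subsection 6.4 (a $p$-integral constant in $\bar{\mathbb{Q}}_p^\times$ times root-number factors that are $p$-adic units moving analytically), this ratio is a unit at every height one prime containing $\mathcal{L}^{\Sigma,Hida}_{\mathbf f,\mathcal{K},\xi}$, so the same non-pullback property holds with $\mathcal{L}^\Sigma_{\mathbf f,\mathcal{K},\xi}$ in place of $\mathcal{L}^{\Sigma,Hida}_{\mathbf f,\mathcal{K},\xi}$. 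The hypothesis $\mathrm{ord}_Q(\mathcal{L}^\Sigma_{\mathbf f,\mathcal{K},\xi})\ge 1$ therefore forces $Q$ not to be the pullback of a height one prime of $\mathbb{I}[[\Gamma_\mathcal{K}^+]]$.

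Finally I use the dimension comparison: $\mathbb{I}[[\Gamma_\mathcal{K}]]$ has Krull dimension exactly one larger than $\mathbb{I}[[\Gamma_\mathcal{K}^+]]$, the extra variable being a topological generator of $\Gamma_\mathcal{K}^-$. Hence a height one prime of $\mathbb{I}[[\Gamma_\mathcal{K}]]$ that is not pulled back from $\mathbb{I}[[\Gamma_\mathcal{K}^+]]$ must satisfy $Q\cap \mathbb{I}[[\Gamma_\mathcal{K}^+]]=(0)$; such a $Q$ contains no nonzero element of the subring. Since by step one $\mathcal{L}^\Sigma_{\chi_{\mathbf f}\bar{\xi}'}\in \mathbb{I}[[\Gamma_\mathcal{K}^+]]\setminus\{0\}$, we conclude $\mathrm{ord}_Q(\mathcal{L}^\Sigma_{\chi_{\mathbf f}\bar{\xi}'})=0$, as required. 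The main obstacle is step two, namely the transfer of the non-pullback property from Hida's $p$-adic $L$-function to ours: one must verify carefully that the ratio in subsection 6.4 really is a unit at every height one prime of $\mathbb{I}[[\Gamma_\mathcal{K}]]$ containing $\mathcal{L}^{\Sigma,Hida}_{\mathbf f,\mathcal{K},\xi}$, which rests on the $p$-integrality of $C^\Sigma_{\mathbf f,\xi,\mathcal{K}}$ and on the $p$-adic analyticity and $p$-unit values of the prime-to-$p$ root numbers asserted there.
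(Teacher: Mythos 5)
Your steps (1) and (3) are sound: $\mathcal{L}^\Sigma_{\chi_{\mathbf f}\bar\xi'}$ is a nonzero element of $\mathbb{I}[[\Gamma_\mathcal{K}^+]]$, and a height one prime $Q$ of $\mathbb{I}[[\Gamma_\mathcal{K}]]$ that is not the pullback of a height one prime of $\mathbb{I}[[\Gamma_\mathcal{K}^+]]$ contracts to $(0)$ there, hence contains no nonzero element of that subring. The gap is step (2). The assertion that no height one prime containing the $p$-adic $L$-function is such a pullback rests on the vanishing of the anticyclotomic $\mu$-invariant, which is available only under assumption (1) of the main theorem: Hsieh's theorem requires the ramification of $\bar\rho$ at $q$ and the local root number conditions, and the discussion in subsection 6.4 explicitly begins ``Now assume we are under assumption (1)''. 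The lemma carries no such hypothesis, and it is invoked in the paper precisely in case (2) of the main theorem, where $\mathcal{L}_{\chi\bar\xi'}=1$ and one needs the coprimality of $\mathcal{L}^\Sigma_{\mathbf f,\mathcal{K},\xi}$ with $\mathcal{L}^\Sigma_{\chi\bar\xi'}$ but the $\mu$-invariant input is absent; indeed the case (2) argument has to localize away from height one primes of $\mathcal{L}^\Sigma_{\mathbf f,\mathcal{K},\xi}$ pulled back from $\mathbb{I}$, which could not exist if your step (2) held there. (A secondary point: even under assumption (1), the constant $C^\Sigma_{\mathbf f,\xi,\mathcal{K}}$ relating $\mathcal{L}^\Sigma$ to $\mathcal{L}^{\Sigma,Hida}$ is only asserted to be $p$-integral, not a unit, so a prime containing $\mathcal{L}^\Sigma$ need not contain $\mathcal{L}^{\Sigma,Hida}$.)

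The paper's proof is entirely different and uses both hypotheses directly. Suppose for contradiction that $\mathcal{L}^\Sigma_{\chi_{\mathbf f}\bar\xi'}\in Q$. Since $\mathcal{L}^\Sigma_{\chi_{\mathbf f}\bar\xi'}$ and $\mathcal{L}_{\chi_{\mathbf f}\bar\xi'}$ differ by the product of the finitely many Euler factors at $\ell\in\Sigma\setminus\{p\}$ and the latter is not in $Q$, primality forces a single Euler factor $1-\theta^{-1}(\ell)\ell^{2-\kappa}$ into $Q$. This pins $Q$ down to a ``cyclotomic line'': $Q$ lies in the kernel of every specialization $\phi'$ sending $\gamma_+(1+W)^{-1}$ to $\zeta_+(1+p)^{2-\kappa}$ for a fixed $p$-power root of unity $\zeta_+$. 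The hypothesis $\mathcal{L}^\Sigma_{\mathbf f,\mathcal{K},\xi}\in Q$ then forces the central critical values $L_\mathcal{K}(f_\phi,\theta_1,1)$ interpolated along this line to vanish identically, and a weight $4$ specialization of $\mathbf f$ contradicts this by temperedness. Any repair of your argument in case (2) would have to supply an argument of this kind; the $\mu$-invariant route cannot be made to work there.
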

\begin{proof}
Let $\theta=\chi_{f_0}\bar{\xi}'$. If $\mathrm{ord}_Q(\mathcal{L}^\Sigma_{\chi_\mathbf{f}\bar{\xi}'})\geq 1$, then for some $\ell\in \Sigma\backslash\{p\}$,
$$\prod_{\ell\in\Sigma\backslash\{p\}}(1-\theta^{-1}(\gamma_+^{-1}(1+W))^e\ell^{2-\kappa})\in Q,$$
where $e\in\mathbb{Z}_p$ be such that $\ell=\omega^{-1}(\ell)(1+p)^e$. Thus
$$\theta(\ell)\equiv \gamma_+^{-e}\omega(\ell)^{\kappa-2}(1+p)^{e(2-\kappa)}(\mathrm{mod}Q).$$
Thus there is some integer $f$ such that
$$1\equiv (\gamma_+(1+W)^{-1}(1+p)^{\kappa-2})^{-fe}(\mathrm{mod}Q)$$
which implies that for some $p$-power root of unity $\zeta_+$, $Q$ is contained in the kernel of any $\phi'$ such that $\phi'(\gamma_+(1+W)^{-1})=\zeta_+(1+p)^{2-\kappa}$. This implies, by \cite[Theorem I]{Hida91} for the interpolation formula, that at the central critical point $L_\mathcal{K}(f_\phi,\theta_1,1)=0$ where $\theta_1$ is some fixed CM character of infinity type $(\frac{\kappa}{2},-\frac{\kappa}{2})$ and $\phi$ any arithmetic point. But then we can specialize $\mathbf{f}$ to some point $\phi''$ of weight $4$ (this is not an arithmetic point in our definition, but is an interpolation point, by \emph{loc.cit.}). By temperedness for $f_{\phi''}$, the specialization is not $0$. This is a contradiction.
\end{proof}

Now we apply the above result to prove the theorem.
\begin{itemize}
\item $H:=G_{\mathbb{Q},\Sigma}$, $G=G_{\mathcal{K},\Sigma}$, $c$ is the complex conjugation.
\item $A_0={\Lambda}''_\mathbf{D}, A:={\Lambda}''_{\mathbf{D},P}$.
\item $J_0:=\mathcal{E}_\mathbf{D}, J:=\mathcal{E}_\mathbf{D}A$.
\item $R_0:=\mathbb{T}_\mathbf{D}, I_0:=I_\mathbf{D}$.
\item $Q\subset R_0$ is the inverse image of $P$ modulo $\mathcal{E}_\mathbf{D}$ under $\mathbb{T}_\mathbf{D}\rightarrow \mathbb{T}_\mathbf{D}/I_\mathbf{D}=\Lambda_\mathbf{D}/\mathcal{E}_\mathbf{D}$.
\item $R:=T_\mathbf{f}\otimes_\mathbb{I}\Lambda''_\mathbf{D}$, $\rho_0:=\rho_{\pi_\mathbf{f}}\sigma_{(\boldsymbol{\psi}/\boldsymbol{\xi})^c}\epsilon^{-\frac{\kappa+3}{2}}$.
\item $V=V_0\otimes_{A_0}A$, $\rho=\rho_0\otimes_{A_0}A$.
\item $\chi=\sigma_{\boldsymbol{\psi}^c}$, $\chi'=\sigma_{\boldsymbol{\psi}^c}\sigma_{(\boldsymbol{\psi}/\boldsymbol{\xi})'}\epsilon^{-\kappa}$. $\nu=\chi^c\chi'$.
\item $M:=(R\otimes_AF_A)^4$, $F_A$ is the fraction field of $A$.
\item $\sigma$ is the representation on $M$ obtained as the pseudo-representation associated to $\mathbb{T}_\mathbf{D}$, as in \cite[Proposition 7.2.1]{SU}.
\end{itemize}
Now we are ready to prove the main theorem in the introduction.
\begin{proof}
We first note that we need only to prove the corresponding inclusion for the $\Sigma$-primitive Selmer groups and $L$-functions since locally the sizes of the unramified extensions at primes outside $p$ are controlled by the local Euler factors of the $p$-adic $L$-functions since $\mathbb{Q}_\infty\subseteq \mathcal{K}_\infty$. (See \cite[Proposition 2.4]{GV}.)\\

\noindent Recall that we have enlarged our $\mathbb{I}$ at the beginning of Subsection \ref{theta} and the end of Subsection \ref{Section 8.4} which we denote as $\mathbb{J}$ in this proof. We first prove the main theorem with $\hat{\mathbb{I}}^{ur}$ replaced by $\hat{\mathbb{J}}^{ur}$. Under the assumption of Theorem \ref{Theorem 1}, as in \cite[Proposition 12.9]{SU} we know that by the discussion for the anticyclotomic $\mu$-invariant at the end of Section \ref{6.4},  $\mathcal{L}_{\mathbf{f},\xi,\mathcal{K}}$ is not contained in any height one prime which is the pullback of a prime in $\hat{\mathbb{I}}^{ur}[[\Gamma_\mathcal{K}^+]]$. Note that there might be height one primes dividing the Euler factors at non-split primes which are pullbacks of height one primes of $\hat{\mathbb{I}}^{ur}[[\Gamma_\mathcal{K}^+]]$. Such issue has been overlooked in \cite{SU}. These primes are treated in \cite[Lemma 87, Theorem 101]{XW} and can be treated in the same way here. In the following, we treat the height one primes which are not such pullbacks. By lemma \ref{8.1} for any such height one prime $P$ of $\hat{\mathbb{I}}^{ur}[[\Gamma''_\mathcal{K}]]$,
$$\mathrm{ord}_P(\mathcal{L}^{\Sigma}_{\mathbf{f},\mathcal{K},\xi})=\mathrm{ord}_P(\mathcal{L}^{\Sigma}_{\mathbf{f},\mathcal{K},\xi})\leq \mathrm{ord}_P(\mathcal{E}_\mathbf{D}).$$
Applying Proposition \ref{8.2.1}, we prove the first part of the theorem for $\hat{\mathbb{J}}^{ur}$ in place of $\mathbb{I}$.\\

\noindent We replace $\hat{\mathbb{J}}^{ur}[[\Gamma''_\mathcal{K}]]$ by $\hat{\mathbb{I}}^{ur}[[\Gamma_\mathcal{K}]]$. We write $\mathcal{L}$ for $\mathcal{L}_{\mathbf{f},\xi,\mathcal{K}}^{\Sigma}$. Recall Fitting ideal respects base change. We claim that for any $x\in\mathrm{Fitt}(X), x\mathcal{L}^{-1}\in \hat{\mathbb{I}}^{ur}[[\Gamma_\mathcal{K}]]$. In fact, from what we proved for $\mathrm{Fitt}(V\otimes_{\hat{\mathbb{I}}^{ur}[[\Gamma_\mathcal{K}]]}\hat{\mathbb{J}}^{ur}[[\Gamma''_\mathcal{K}]])$ as ideals of $\hat{\mathbb{J}}^{ur}[[\Gamma''_\mathcal{K}]]$, we have $x\mathcal{L}^{-1}\in\hat{\mathbb{J}}^{ur}[[\Gamma''_\mathcal{K}]]\cap F_{\hat{\mathbb{I}}^{ur}[[\Gamma''_\mathcal{K}]]}$ where $F_{\hat{\mathbb{I}}^{ur}[[\Gamma_\mathcal{K}]]}$ is the fraction field of $\hat{\mathbb{I}}^{ur}[[\Gamma_\mathcal{K}]]$. Since $\hat{\mathbb{I}}^{ur}[[\Gamma_\mathcal{K}]]$ is normal and $\hat{\mathbb{J}}^{ur}[[\Gamma''_\mathcal{K}]]$ is finite over $\hat{\mathbb{I}}^{ur}[[\Gamma_\mathcal{K}]]$, we have $x\mathcal{L}^{-1}\in \hat{\mathbb{I}}^{ur}[[\Gamma_\mathcal{K}]]$. Thus $\mathrm{Fitt}(X)\subseteq (\mathcal{L})$, which in turn implies that $\mathrm{char}(X)\subseteq (\mathcal{L})$. This proves Theorem \ref{Theorem 1}.\\

\noindent Now assume we are under the assumption of Theorem \ref{Theorem 2}. Note that in this case $\mathcal{L}_{\chi\bar{\xi}'}=1$. Thus by the Lemma \ref{lemma9.3} $\mathcal{L}_{\mathbf{f},\xi,\mathcal{K}}^{\Sigma}$ is co-prime to $\mathcal{L}_{\chi\bar{\xi}'}^\Sigma$. Suppose $P_1,...,P_t$ are the height one primes of $\mathcal{L}^{\Sigma}_{\mathbf{f},\mathcal{K},\xi}$ that are pullbacks of height one primes in $\hat{\mathbb{I}}^{ur}$. Note that none of the primes passes through $\phi_0$ since the two-variable $p$-adic $L$-function for $f_0$ is not identically $0$. We consider the ring $\hat{\mathbb{I}}^{ur}_{p,P_1,...,P_t}[[\Gamma_\mathcal{K}]]$ where the subscripts denote localizations. Then the argument as in the proof of Theorem \ref{Theorem 1} proves that there is a number $a$ such that $(\mathcal{L}^{\Sigma}_{\mathbf{f},\mathcal{K},\xi})\supseteq (P_1\cdots P_t)^a\mathrm{Fitt}_{\hat{\mathbb{I}}^{ur}[[\Gamma_\mathcal{K}]]}(X^\Sigma)$ as ideals of $\hat{\mathbb{I}}^{ur}[[\Gamma_\mathcal{K}]]$. Specialize to $\phi_0'$, using Proposition \ref{control}, we find
$$(\mathcal{L}_{f_0,\mathcal{K},\xi}^{\Sigma})\supseteq \mathrm{Fitt}_{\hat{\mathcal{O}}^{ur}_L[[\Gamma_\mathcal{K}]]\otimes L}(X_{f_0}).$$
This proves Theorem \ref{Theorem 2}.
\end{proof}

\textsc{Xin Wan, Morningside Center of Mathematics, Academy of Mathematics and Systems Science, Chinese Academy of Science, Beijing, China, 100190.}\\
\indent \textit{E-mail Address}: xw2295@math.columbia.edu
\printindex

\begin{thebibliography}{3}                                          
  \bibitem{BK} S. Bloch, and K. Kato, L-functions and Tamagawa numbers of motives. In The Grothendieck Festschrift, Vol. I, Progr. Math. 86, Birkhauser, Boston, MA, 1990, 333-400.
  \bibitem{Br} M. Brakocevic, Non-vanishing  modulo $p$ of  central  critical Rankin-Selberg $L$-values  with  anticyclotomic twists, reprint, arXiv:1010.6066.

  \bibitem{Castella} F. Castella, On the p-adic variation of Heegner points, preprint. \url{https://web.math.princeton.edu/~fcabello/}.
  \bibitem{EHLS}  E. Eischen, M. Harris, J. Li, and C. Skinner, $p$-adic $L$-functions for unitary Shimura varieties (II), preprint, arXiv:1602.01776.
  \bibitem{Ellen} E. Eischen, A p-adic Eisenstein measure for unitary groups. J. Reine Angew. Math. 699 (2015): 111-142.
  \bibitem{FC} G. Faltings, and C.-L. Chai, Degeneration of Abelian Varieties, Ergebnisse der Math. 22, Springer
      Verlag, 1990.
  \bibitem{Gelb} S.S. Gelbart, Automorphic forms on Adele groups. Princeton, NJ: Princeton University
    Press, 1975.
  \bibitem{Gr94} R. Greenberg, Iwasawa theory and p-adic deformations of motives, Proc. on Motives held at
      Seattle, 1994.
  \bibitem{GT} W.T. Gan, and S. Takeda. A proof of the Howe duality conjecture. J. Amer. Math. Soc. 29 (2016): 473-493.
  \bibitem{GV} R. Greenberg, and V. Vatsal. On the Iwasawa invariants of elliptic curves. Inventiones mathematicae 142.1 (2000): 17-63.
  \bibitem{HKS} M. Harris, S. Kudla and W.J. Sweet, Theta dichotomy for unitary groups, J. Amer. Math. Soc. 9 (1996): 941-1004.
  \bibitem{HLS} M. Harris, J. Li and C. Skinner, The Rallis inner product formula and $p$-adic $L$-functions, Automorphic representations, $L$-functions and applications: progress and prospects, 225¨C255, Ohio State Univ. Math. Res. Inst. Publ., 11, de Gruyter, Berlin, 2005. 
  \bibitem{Hida81} H. Hida, Congruences of Cusp Forms and Special Values of Their Zeta Functions, Inventiones Mathematicae 63 (1981): 225-261.
  \bibitem{Hida5} H. Hida. Galois representations into $\mathrm{GL}_2(\mathbb{Z}_p[[X]])$ attached to ordinary cusp forms. Inventiones Mathematicae 85.3 (1986): 545-613.
  \bibitem{Hida88} H. Hida, On $p$-adic Hecke algebras for $\mathrm{GL}_2$ over totally real fields, Ann. of Math. (2)
      128 (1988): 295-384.
  \bibitem{Hida91}  H. Hida: On $p$-adic $L$-functions of $\mathrm{GL}(2)\times \mathrm{GL}(2)$ over totally real fields. Ann. Inst. Fourier 40, 311-391 (1991).
  \bibitem{Hida02} H. Hida, Control theorems for coherent sheaves on Shimura varieties of PEL-type, Journal of the Inst. of Math. Jussieu 1 (2002): 1-76
  \bibitem{Hida04} H. Hida, $p$-adic automorphic forms on Shimura varieties, Springer Monographs
    in Mathematics, Springer-Verlag, New York, 2004.
  \bibitem{Hida06} H. Hida, Anticyclotomic main conjectures, Documenta Math,
    Extra Volume Coates (2006): 465-532.
  \bibitem{Hida07} H. Hida, Non-vanishing modulo $p$ of Hecke $L$-values, in ``Geometric Aspects of Dwork Theory'', Walter de Gruyter, Berlin, (2004): 735-784.
  \bibitem{Hida09a} H.Hida, Irreducibility of the Igusa tower, Acta Mathematica Sinica, English series 25 (2009): 1â€0.
  \bibitem{Hsieh CM} M.-L. Hsieh, Eisenstein congruence on unitary groups and Iwasawa main conjecture for CM fields, Journal of the American Mathematical Society, 27 (2014), no. 3, 753-862.
  \bibitem{Hsi11} M.-L. Hsieh, On the non-vanishing of Hecke $L$-values modulo $p$, American Journal of Mathematics, 134 (2012), no. 6: 1503-1539.
  \bibitem{Hsi12} M.-L. Hsieh, Special values of anticyclotomic Rankin-Selbeg $L$-functions. Documenta Mathematica, 19 (2014): 709-767.
  \bibitem{Hsieh13} M.-L. Hsieh, Ordinary p-adic Eisenstein series and $p$-adic $L$-functions for unitary groups, Ann. Inst. Fourier (Grenoble) 61 (2011), no. 3: 987-1059.
  \bibitem{Hsi14} M.-L. Hsieh, On the $\mu$-invariant of anticyclotomic $p$-adic $L$-functions for CM fields. J. Reine Angew. Math, 688 (2014): 67-100.
  \bibitem{Hsi17} M.-L. Hsieh, Hida families and $p$-adic triple product $L$-functions, preprint, arXiv:1705.02717, 2017.
  \bibitem{HT93} H. Hida and J. Tilouine, Anti-cyclotomic Katz $p$-adic $L$-functions and congruence modules, Ann. Sci. \'{E}cole Norm. Sup.
      (4) 26 (2) (1993): 189-259.
  \bibitem{HT94} H. Hida and J. Tilouine, On the anticyclotomic main conjecture for CM fields, Invent. Math. 117 (1994): 89-147.
  \bibitem{Hsi13} P-C. Hung, On the non-vanishing mod $\ell$ of central $L$-values with anti-cyclotomic twists for Hilbert modular forms, Journal of Number Theory, 173 (2017): 170-209.
 \bibitem{Ichi1} A. Ichino, Trilinear forms and the central values of triple product L-functions, Duke Math. J., 145 (2008): 281-307.
 \bibitem{Ichi2} A. Ichino, On the Siegel-Weil formula for unitary groups. Math. Z., 255 (2007): 721-729.
 \bibitem{IKE} T. Ikeda, On the theory of Jacobi forms and Fourier-Jacobi coefficients
      of Eisenstein series, J. Math. Kyoto Univ. 34-3 (1994).
 \bibitem{JL} H. Jacquet, R.P. Langlands, Automorphic forms on GL(2), Lecture Notes in Math., Vol. 114, Springer, (1970).
 \bibitem{JSW} D. Jetchev, C. Skinner, X. Wan, The Birch-Swinnerton-Dyer formula for elliptic curves of analytic rank one, Cambridge Journal of Mathematics Volume 5, Number 3 (2017): 369-434.
  \bibitem{Kato} K. Kato, p-adic Hodge theory and values of zeta functions of modular forms, Cohomologies
      p-adiques et applications arithmetiques. III. Asterisque No. 295 (2004), ix, p. 117-290.
  \bibitem{Ku94} S.S. Kudla, Splitting metaplectic covers of dual reductive pairs, Israel J. Math. 87 (1994), no.
      1-3, 361-401.
   \bibitem{Kudla} S. S. Kudla, Notes on the local theta correspondence, Lecture notes from the European School of Group Theory, 1996. http://www.math.toronto.edu/skudla/castle.pdf.
  \bibitem{LAN1} K.-W. Lan, Comparison between analytic and algebraic constructions of toroidal
    compactifications of PEL-type Shimura varieties, preprint, J. Reine Angew. Math. 664 (2012): 163-228.
  \bibitem{LAN2} K.-W. Lan, Arithmetic compactifications of PEL-type Shimura varieties, London Mathematical Society Monographs, vol. 36, Princeton University Press, Princeton, 2013.
  \bibitem{LLZ} A. Lei, D. Loeffler and S.L. Zerbes, Euler systems for Rankin-Selberg convolutions of modular forms, Ann. of Math. 180 (2014), no. 2, 653-771.
  \bibitem{LR05} E. Lapid and S. Rallis, On the local factors of representations of classical groups in automorphic
      representations, L-functions and applications: progress and prospects, 309-359, Ohio State
      Univ. Math. Res. Inst. Publ. 11, de Gruyter, Berlin, 2005.
  \bibitem{Morel} S. Morel, On the cohomology of certain non-compact Shimura varieties, Annals of Math. Studies 173, Princeton University Press, Princeton, 2010.
  \bibitem{MoWa95} C. Moglin and J.-L. Waldspurger, Spectral decomposition and Eisenstein series. Une paraphrase
      de l'ecriture, Cambridge Tracts in Mathematics 113, Cambridge University Press, Cambridge, 1995.
  \bibitem{MW84} B. Mazur and A. Wiles, Class fields of abelian extensions of $\mathbb{Q}$, Invent. Math. 76 (1984): 179-330.
  \bibitem{PW} R. Pollack and T. Weston. On anticyclotomic $\mu$-invariants of modular forms. Compositio Mathematica 147.05 (2011): 1353-1381.
  \bibitem{Prasad}D. Prasad, Trilinear forms for representations of $\mathrm{GL}(2)$ and local epsilon factors, Compositio 75 (1990): 1-46.
  \bibitem{Prasad1} D. Prasad, Weil representation, Howe duality, and the theta correspondence. Theta functions: From the Classical to the Modern 1 (1993): 105-127.
  \bibitem{Shi76} G. Shimura, Theta functions with complex multiplication, Duke Math. J., 43 (1976): 673-696.
  \bibitem{Shi97} G. Shimura, Euler products and Eisenstein series, CBMS Regional Conference Series in Mathematics
      93, American Mathematical Society, Providence, RI, 1997.
  \bibitem{Shi00} G. Shimura, Arithmeticity in the theory of automorphic forms, Mathematical Surveys and Monographs,
      82. American Mathematical Society, Providence, RI, 2000. x+302 pp.
  \bibitem{Shin} S.-W. Shin, Galois representations arising from some compact Shimura varieties, Ann. of Math.(2) 173.3 (2011): 1645-1741.
  \bibitem{Sk10} C.Skinner, Galois representations associated to unitary groups over $\mathbb{Q}$. Algebra and Number Theory (2012): 1697-1717.
  \bibitem{Skinner} C. Skinner, A converse of the theorem of Gross-Zagier and Kolyvagin, preprint, arXiv:1405.7294, 2014.
  \bibitem{SU} C. Skinner, and E. Urban, The Iwasawa main conjectures for $\mathrm{GL}_2$. Inventiones mathematicae 195.1 (2014): 1-277.
  \bibitem{SW2} C. Skinner and A. Wiles. Nearly ordinary deformations of residually irreducible
      representations. Ann. Fac. Sci. Toulouse, VI, Math. 10 (2001): 185-215.
  \bibitem{TU} J. Tilouine, and E. Urban, Several-variable p-adic families of Siegel-Hilbert cusp eigensystems and their Galois representations. Annales Scientifiques de l'\'{E}cole Normale Sup\'erieure. Vol. 32. No. 4. No longer published by Elsevier, 1999.
  \bibitem{XW} X. Wan, Iwasawa main conjecture for Hilbert modular forms, Forum of Mathematics, Sigma, Volume 3 (2015), e18, 95 pages.
  \bibitem{WAN} X. Wan, Families of nearly ordinary Eisenstein series on unitary groups (with an Appendix by Kai-Wen Lan), Algebra and Number Theory (2015): 1955-2054.
  \bibitem{WAN1} X. Wan, Heegner point Kolyvain system and Iwasawa main conjecture, preprint, arXiv:1408.4043, 2017.
  \bibitem{WAN2} X. Wan, Iwasawa Main Conjecture for supersingular elliptic curves and BSD conjecture, arXiv:1411.6352, 2015.
  \bibitem{Weil} A. Weil, Basic number theory, 3rd edition, Grundlehren der mathematischen Wissenschaften, Springer-Verlag, 1995.
  \bibitem{Wiles90} A. Wiles, The Iwasawa conjecture for totally real fields, Ann. of Math. 131 (1990): 493-540.
  \bibitem{Wiles95} A. Wiles, Elliptic curves and Fermat's Last Theorem Ann. of Math. 141 (1995): 443-551.
  \bibitem{ZhB07} B. Zhang, Fourier-Jacobi coefficients of Eisenstein series on unitary groups, Algebra and Number Theory, Volume 7, No.2 (2013): 283-337.
\end{thebibliography}
\end{document}